\definecolor{gG}{RGB}{ 60, 186,  84}
\definecolor{gY}{RGB}{244, 194,  13}
\definecolor{gB}{RGB}{48., 88.6667, 158.}
\definecolor{gR}{RGB}{219,  50,  54}
\tikzstyle{mycircle}=[circle,draw=black!80,thick, minimum size=1em]
\newtheorem{thm}{Theorem}[section]
\newtheorem{theorem}[thm]{Theorem}
\newtheorem{main theorem}[thm]{Main Theorem}
\newtheorem{corollary}[thm]{Corollary}
\newtheorem*{main}{Main Theorem}
\newtheorem{lemma}[thm]{Lemma}
\newtheorem{prop}[thm]{Proposition}
\newtheorem{conjecture}[thm]{Conjecture}
\theoremstyle{definition}
\newtheorem{defn}[thm]{Definition}
\newtheorem{remark}[thm]{Remark}
\newcommand{\size}[1]{\mathrm{size}(#1)}
\newcommand{\ceil}[1]{\lceil #1 \rceil}
\newcounter{relctr} 
\everydisplay\expandafter{\the\everydisplay\setcounter{relctr}{0}} 
\newcommand\labelrel[2]{%
  \begingroup
    \refstepcounter{relctr}%
    \stackrel{\textnormal{(\arabic{relctr})}}{\mathstrut{#1}}%
    \originallabel{#2}%
  \endgroup
}
\DeclareMathOperator{\Int}{int}
\DeclareMathOperator{\tr}{tr}
\newcommand{\Chat}[0]{\widehat{\mathbb{C}}}
\newcommand{\Arc}[2]{\textrm{Arc}_#1\left(#2\right)}
\newcommand{\Sec}[2]{\textrm{Sec}_#1\left(#2\right)}
\def\CQ{\ensuremath{\mathbb{Q}[i]}}
\title[Complexity of the independence polynomial]{Zeros, chaotic ratios and the computational complexity of approximating the independence polynomial }
\author{David de Boer}
\author{Pjotr Buys}
\author{Lorenzo Guerini}
\author{Han Peters}
\author{Guus Regts}
\date{\today}
\thanks{$\ddagger$ DdB and PB are funded by the Netherlands Organisation of Scientific Research (NWO): 613.001.851}
\address[David de Boer, Pjotr Buys, Lorenzo Guerini, Han Peters, Guus Regts]{Korteweg de Vries Institute for Mathematics, University of Amsterdam. P.O. Box 94248  
1090 GE Amsterdam  
The Netherlands}
\email{\{daviddeboer2795,pjotr.buys,lorenzo.guerini92,hanpeters77,guusregts\}@\texttt{gmail.com}}
\begin{document}

\begin{abstract}
The independence polynomial originates in statistical physics as the partition function of the hard-core model.
The location of the complex zeros of the polynomial is related to phase transitions, and plays an important role in the design of efficient algorithms to approximately compute  evaluations of the polynomial.

In this paper we directly relate the location of the complex zeros of the independence polynomial to computational hardness of approximating evaluations of the independence polynomial. 
We do this by moreover relating the location of zeros to chaotic behaviour of a naturally associated family of rational functions; the occupation ratios.

\quad \\{\bf Keywords.} Hard-core model, independence polynomial, computational complexity, occupation ratio, normal family
\end{abstract}
\maketitle

\section{Introduction}

The independence polynomial of a graph $G = (V,E)$ is defined by 
$$
Z_G(\lambda) = \sum_{I \subseteq V} \lambda^{|I|},
$$
where the sum is taken over all \emph{independent} subsets $I$ of the vertex set $V$. Recall that $I$ is said to be independent if no two vertices in $I$ are connected by an edge. Note that $Z_G(1)$ equals the number of independent subsets of $V$.

In statistical physics the independence polynomial is known as the partition function of the hard-core model. Of particular interest from a physics perspective is the location of the zeros of the partition function for certain classes of graphs. Away from these zeros the free energy is analytic, i.e. there are no phase transitions in the Lee-Yang sense cf. ~\cite{LYI52}.

It turns out that exact computation of the independence polynomial for large graphs is not feasible for most values of $\lambda$; it is a \#P-Hard problem\footnote{The complexity class \#P may be seen as the counting version of the complexity class NP. For example, the problem of deciding whether a graph on $n$ vertices contains an independent set of size $k$ is a problem in NP, while the problem of determining the number of independent sets of size $k$ is in \#P. See~\cite{Val79,AB09} for further background.}, cf.~\cite{R96,G00,V01}. 
A question that has received significant interest is for which $\lambda \in \mathbb C$ there exist polynomial time algorithms that approximate $Z_G(\lambda)$, up to small multiplicative constants. See e.g.~\cite{Weitz,Slysun,Barbook,PaR17,Galanisetal20,Anarietal} and the references therein.

Surprisingly, much like absence of zeros implies absence of phase transitions (in the Lee-Yang sense), absence of zeros implies the existence of efficient algorithms for this computational problem. 
More formally, on the maximal simply connected open set containing the origin on which the independence polynomial does not vanish for all graphs of a given maximum degree $\Delta$ there exists an efficient algorithm for approximating the independence polynomial~\cite{Barbook,PaR17}. Let us denote this maximal `zero-free' set by $\mathcal{U}_\Delta$.
For real values of $\lambda$ in the complement of the closure of $\mathcal{U}_\Delta$, approximating the partition function is computationally hard~\cite{Scott_2005,Slysun,PR19,Galanisetal20}. 
In other words, the absence/presence of complex zeros near the real axis marks a transition in the computational complexity of approximating the independence polynomial  of graphs of bounded degree $\Delta$ for real values of $\lambda$. The transition point for positive $\lambda$ coincides with the phase transition for the hard-core model on the Cayley tree of degree $\Delta$.

\begin{figure}
    \centering
    \includegraphics{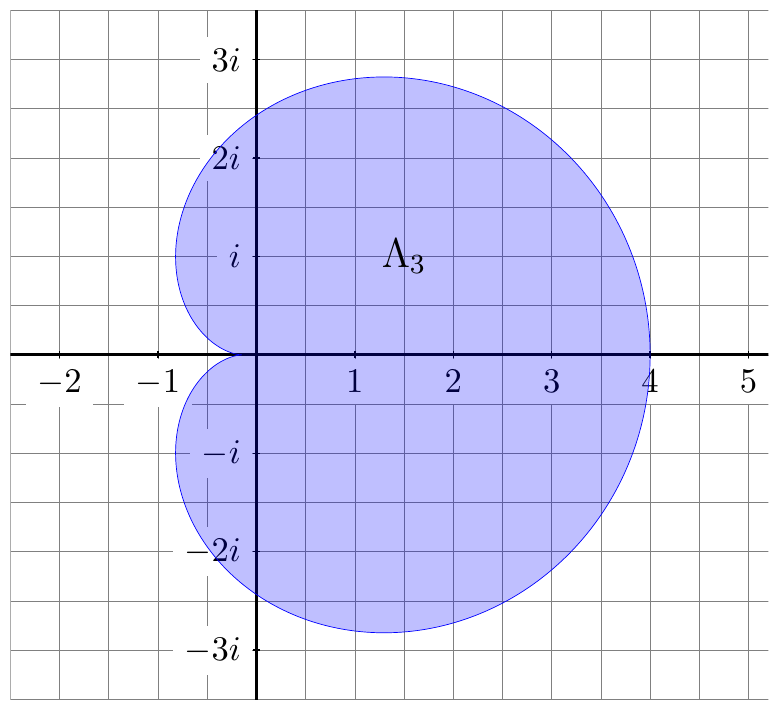}
    \caption{The cardioid $\Lambda_3$.}
    \label{fig:cardioid3}
\end{figure}

A natural question is whether a similar phenomenon manifests itself for non-real $\lambda$.
Bezakov\'a, Galanis, Goldberg and \v{S}tefankovi\v{c}~\cite{Galanisetal20} made an important contribution towards solving this question, by showing that for any integer $\Delta\geq 3$ and non-real $\lambda$ outside a certain cardioid\footnote{Although the domain $\Lambda_\Delta$ resembles a cardioid, it is formally not a cardioid. However, as discussed in section \ref{sec: Cayley trees}, it plays an analogous role as the Main Cardioid of the Mandelbrot set, justifying our use of the term cardioid.}, $\Lambda_\Delta$, approximation of the independence polynomial for graphs of bounded degree at most $\Delta$ is computationally hard. (In fact \#P-hard.) See Figure \ref{fig:cardioid3} for a picture of $\Lambda_3$ and Definition \ref{def:cardioiddef} for the definition of $\Lambda_\Delta$. 
Earlier it was shown by two of the authors of the present paper~\cite{PR19} that zeros of the independence polynomial of graphs of maximum degree at most $\Delta$ accumulate on the entire boundary of  $\Lambda_\Delta$.
In particular the `zero-free' set $\mathcal{U}_\Delta$ is contained in the cardioid; their intersections with the real axis in fact coincide~\cite{Scott_2005,PR19}. 
Buys~\cite{buys20} however showed that $\Lambda_\Delta$ does contain zeros of the independence polynomial of graphs of bounded degree $\Delta$. This in particular indicates that the result of~\cite{Galanisetal20} does not fully answer the question how zeros relate to computational hardness for non-real $\lambda$. 

The goal of the present paper is to solve this question by directly relating, for any fixed integer $\Delta\geq 3$, the zeros for the family of graphs of maximum degree at most $\Delta$ to the parameters where approximating evaluations of the independence polynomial is computationally hard. 
Our result is obtained by studying a natural family of rational maps associated to this family of graphs, using techniques and ideas from complex dynamics. We show that `chaotic behaviour' of this family is equivalent to the presence of zeros, and implies computational hardness.

\subsection{Occupation ratios}
Given $\Delta \in \mathbb{Z}_{\geq 2}$, we define $\mathcal{G}_\Delta$
as the collection of finite simple rooted graphs $(G,v)$ such that the maximum degree of $G$ is at most $\Delta$.
For $i \in \{1, \dots, \Delta\}$ we define
$
    \mathcal{G}_\Delta^i = \{(G,v) \in \mathcal{G}_\Delta: \deg(v) \leq i\}.
$
The \emph{occupation ratio}, or \emph{ratio} for short, of a rooted graph $(G,v)$ is defined by the rational function
$$
R_{G,v}(\lambda) := \frac{Z_G^\mathrm{in}(\lambda)}{Z_G^\mathrm{out}(\lambda)},
$$
where ``in'' means that in the definition of $Z_G(\lambda)$ the sum is taken only over independent sets $I$ that contain the marked point $v$, while ``out'' means that the independent sets do not contain $v$. 
The ratio is a very useful tool in studying the zeros of the independence polynomial, see Lemma \ref{lemma:equivalence} below, and has been key in several of the aforementioned works. 
The ratio is also relevant from a statistical physics perspective as it is closely related to the \emph{free energy}.


When $(G,v)$ is a rooted Cayley tree of depth $n-1$ and down-degree $d = \Delta-1$, the ratio satisfies
$$
R_{G,v}(\lambda) = f_{\lambda,d}^{n}(0),
$$
where 
$$
f_{\lambda,d}(z) := \frac{\lambda}{(1+z)^d}
$$
and throughout the paper we write $f^n$ for the $n$-th iterate of the map $f$.

In this context it is therefore natural to consider $\lambda$ as the parameter which determines the orbit of the marked point $0$. This type of setting is often studied in complex dynamical systems, where one is interested in the sets where the parameter $\lambda$ is \emph{active} or \emph{passive}. A parameter $\lambda_0$ is said to be passive if the family of rational functions $\{\lambda \mapsto f_{\lambda,d}^{\circ n}(0)\}$ is normal at $\lambda_0$, i.e. there exists a neighborhood such that every sequence in this family has a subsequence that converges uniformly. A parameter is active if it is not passive. The most well-known activity-locus is undoubtedly the boundary of the Mandelbrot set, where the iterates of the functions $z^2+c$ are considered.
Following this terminology we define the \emph{activity-locus}, $\mathcal{A}_\Delta$, by
\[
\mathcal{A}_\Delta:=\{\lambda_0\in \mathbb{C} \mid  \{\lambda\mapsto R_{G,v}(\lambda)\mid (G,v)\in \mathcal{G}_\Delta\} \text{ is not locally normal at } 
\lambda_0 \}. 
\]

Another notion of chaotic behaviour of the ratios appears in the proof of the result of Bezakov\'a, Galanis, Goldberg and \v{S}efankovi\v{c}~\cite{Galanisetal20}.
An important step towards proving \#P-hardness is showing that for every non-real $\lambda$ outside of the closed cardioid $\overline{\Lambda_\Delta}$ the set of values $\{R_{G,v}(\lambda)\mid (G,v)\in \mathcal{G}_{\Delta}^1\}$ is dense in $\hat{\mathbb C}$. 
Motivated by this we define
\[
\mathcal{D}_\Delta:=\{\lambda\in \mathbb{C} \mid \{R_{G,v}(\lambda)\mid (G,v)\in \mathcal{G}_{\Delta}^1\} \text{ is dense in }\hat{\mathbb C}\}
\]
and refer to the closure of $\mathcal{D}_\Delta$ as the \emph{density-locus}.
We will prove it is equal to the activity-locus, thereby showing that these two notions of chaotic behaviour of the ratios are essentially equivalent.

\subsection{Main result}
To state our main result connecting the presence of zeros to computational hardness, 
we define the \emph{zero-locus} as the closure of
$$
\mathcal{Z}_\Delta = \{\lambda \in \mathbb{C}: Z_G(\lambda) = 0 \text{ for some } G \in \mathcal{G}_\Delta\}.
$$
We informally define the \emph{$\#\mathcal{P}$-locus} as the closure of the collection of $\lambda$ for which approximating $Z_G(\lambda)$ is \#P-hard for $G\in \mathcal{G}_\Delta$. See subsection \ref{subsec: Plocus} below for a formal definition.

The main results of this paper can now be stated succinctly as follows.
\begin{main}
    \label{thm: main theorem}
    For any integer $\Delta \geq 3$ the zero-locus, the activity-locus and the density-locus are equal and contained in the $\#\mathcal{P}$-locus. In other words:
    $$
        \overline{\mathcal{Z}_\Delta} = \mathcal{A}_\Delta = \overline{\mathcal{D}_{\Delta}} \subseteq \overline{\#\mathcal{P}_{\Delta}}.
    $$
\end{main}

We remark that the topological structure of the complement of the zero-locus is not yet understood.
We have the following conjecture.
\begin{conjecture}\label{conj:connected}
For each integer $\Delta\geq 3$, the set $\mathbb{C}\setminus \overline{\mathcal{Z}_\Delta}$ is connected.
\end{conjecture}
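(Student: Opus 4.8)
I first reduce to a statement about the interior of the cardioid. By the density result of Bezakov\'a--Galanis--Goldberg--\v{S}tefankovi\v{c}~\cite{Galanisetal20} recalled above, every non-real $\lambda\notin\overline{\Lambda_\Delta}$ lies in $\mathcal{D}_\Delta$, hence in $\overline{\mathcal{Z}_\Delta}$ by the Main Theorem; since $\overline{\mathcal{Z}_\Delta}$ is closed and every real $\lambda\notin\overline{\Lambda_\Delta}$ is a limit of such non-real parameters, $\mathbb{C}\setminus\overline{\Lambda_\Delta}\subseteq\overline{\mathcal{Z}_\Delta}$; together with the accumulation of zeros on all of $\partial\Lambda_\Delta$ from~\cite{PR19} this gives $\mathbb{C}\setminus\Lambda_\Delta\subseteq\overline{\mathcal{Z}_\Delta}$, so that
\[
\mathbb{C}\setminus\overline{\mathcal{Z}_\Delta}\;=\;\Lambda_\Delta\setminus\overline{\mathcal{Z}_\Delta}.
\]
Since $\Lambda_\Delta$ is a bounded simply connected domain with $0\in\Lambda_\Delta$ (it resembles a cardioid and contains the open set $\mathcal{U}_\Delta$), there is no unbounded complementary component, and Conjecture~\ref{conj:connected} becomes the assertion that the relatively closed set $\overline{\mathcal{Z}_\Delta}\cap\Lambda_\Delta$ does not separate the disc $\Lambda_\Delta$ --- equivalently, that the zeros found inside the cardioid in~\cite{buys20} are organised ``tree-like'', attached to $\partial\Lambda_\Delta$ and enclosing no region, so that $\mathbb{C}\setminus\overline{\mathcal{Z}_\Delta}$ coincides with the connected component of the origin. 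By the Main Theorem one may argue throughout on the passivity locus, since $\Lambda_\Delta\setminus\overline{\mathcal{Z}_\Delta}$ is exactly where the family $\{\lambda\mapsto R_{G,v}(\lambda):(G,v)\in\mathcal{G}_\Delta\}$ is normal.

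\textbf{Main approach.} I would study the parameter space of the random holomorphic dynamical system generated by the maps $\{f_{\lambda,d}\}_{d\le\Delta-1}$ together with the marked orbit of $0$, whose bifurcation locus is exactly $\overline{\mathcal{Z}_\Delta}=\mathcal{A}_\Delta$. On the passivity locus the limit set $\mathcal{R}(\lambda):=\overline{\{R_{G,v}(\lambda):(G,v)\in\mathcal{G}_\Delta\}}\subseteq\hat{\mathbb{C}}$ is a holomorphically moving compactum that omits $-1$ and is forward invariant under all $f_{\lambda,d}$; I would try to extend this motion across boundary arcs of $\overline{\mathcal{Z}_\Delta}\cap\Lambda_\Delta$ and follow the parameter at which $\mathcal{R}(\lambda)$ first reaches $-1$ --- in analogy with the landing of parameter rays on the boundary of the Mandelbrot set --- to conclude that the components of $\overline{\mathcal{Z}_\Delta}\cap\Lambda_\Delta$ cannot close up on themselves or cut across $\Lambda_\Delta$. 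A more combinatorial alternative for the same conclusion: a Hurwitz-type argument showing that a loop in $\Lambda_\Delta$ on which zeros of graphs in $\mathcal{G}_\Delta$ accumulate densely would, in a limit, produce a single graph whose independence polynomial vanishes at a point enclosed by the loop --- contradicting that the enclosed region is zero-free.

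\textbf{Where it is hard.} The Douady--Hubbard template for the Mandelbrot set is unavailable here for two reasons. First, $\mathbb{C}\setminus\overline{\mathcal{Z}_\Delta}$ is \emph{bounded}, so there is no positive harmonic function on it with vanishing boundary values: the escape-rate Green's function of the complement does not exist, and the maximum-principle argument that rules out bounded complementary components of the Mandelbrot set has no analogue. Second, there is no single rational map driving the parameter --- the occupation ratios come from an entire family of graphs, equivalently from the non-autonomous semigroup $\{f_{\lambda,d}\}_{d\le\Delta-1}$ together with non-tree operations --- so the dynamical tools needed above (existence and holomorphic dependence of $\mathcal{R}(\lambda)$, analogues of parameter rays and their landing, and control of parabolic and ``implosion'' phenomena along $\partial\Lambda_\Delta$) would have to be developed from scratch in this random/semigroup setting. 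Even granting that, the decisive step --- excluding a zero-free island encircled by $\overline{\mathcal{Z}_\Delta}$ strictly inside the cardioid, equivalently showing the zeros of~\cite{buys20} lie on arcs anchored to $\partial\Lambda_\Delta$ rather than on separating curves --- is precisely the topological content of the conjecture, and controlling it appears to require a considerably finer description of $\overline{\mathcal{Z}_\Delta}\cap\Lambda_\Delta$ than is currently available.
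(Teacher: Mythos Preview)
The statement you are attempting is Conjecture~\ref{conj:connected}, which the paper explicitly leaves open; there is no proof in the paper to compare your attempt against. Your proposal is not a proof either, and you say as much: the ``Plan'' and ``Main approach'' sections set up a strategy, and the ``Where it is hard'' section correctly identifies the decisive step --- ruling out a zero-free island encircled by $\overline{\mathcal{Z}_\Delta}$ strictly inside the cardioid --- as unresolved.

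Your reduction is sound and matches what the paper establishes. The inclusion $\mathbb{C}\setminus\Lambda_\Delta\subseteq\overline{\mathcal{Z}_\Delta}$ follows from the remark after Theorem~\ref{thm:dense->nonnormal} (zeros are dense outside the cardioid) together with~\cite{PR19}, so the conjecture is indeed equivalent to the assertion that $\overline{\mathcal{Z}_\Delta}$ does not separate the interior of $\Lambda_\Delta$. One small imprecision: in this paper $\Lambda_\Delta$ is defined as a \emph{closed} set (Definition~\ref{def:cardioiddef}), so your sentence ``$\Lambda_\Delta$ is a bounded simply connected domain'' should read ``the interior of $\Lambda_\Delta$''. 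The paper does supply one piece of partial information you do not mention, Proposition~\ref{prop:simply connected}: every connected component of $\mathbb{C}\setminus\overline{\mathcal{Z}_\Delta}$ is simply connected. This is consistent with your picture but does not by itself rule out multiple components.

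Your assessment of the obstacles is accurate. The Douady--Hubbard argument for connectedness of the Mandelbrot set relies on a Green's function on the unbounded component, which is unavailable here since $\mathbb{C}\setminus\overline{\mathcal{Z}_\Delta}$ is bounded; and the ratios arise from a semigroup of maps rather than iteration of a single rational map, so the standard machinery of parameter rays and holomorphic motions of Julia sets would need to be rebuilt in this setting. The Hurwitz-type alternative you sketch --- passing from a sequence of graphs with zeros accumulating on a loop to a single graph with a zero inside --- fails as stated because the family $\mathcal{G}_\Delta$ is not compact in any useful sense (degrees of $Z_G$ are unbounded), so a limiting graph need not exist. In short, your outline is a reasonable survey of possible attack lines and their difficulties, but it does not close the gap, and neither does the paper.
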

Should this conjecture be true, then by Proposition~\ref{prop:simply connected} below, we know that the maximal `zero-free' set containing $0$, $\mathcal{U}_\Delta$, equals the complement of the zero-locus.
Since there exists a polynomial time algorithm~\cite{Barbook,PaR17} for approximating the independence polynomial on $\mathcal{U}_\Delta$, this would imply with our main theorem a complete understanding of the computational complexity of approximating the independence polynomial in terms of the location of the zeros as well as in terms of chaotic behaviour of the ratios.

\begin{remark}
We note that~\cite{PR20} and~\cite{buys2020lee} combined implicitly contain similar equivalent characterizations for the Lee-Yang zeros of the partition function of the ferromagnetic Ising model on bounded degree graphs. In that setting the complement of the zero-locus is in fact connected when the edge interaction parameter is sub-critical.
\end{remark}





\subsection{Computational complexity}\label{subsec: Plocus}
We formally state here the computational problems we are interested in.
We denote by $\CQ$ the collection of complex numbers with rational real and imaginary part. 
Let $\lambda\in \CQ$, $\Delta\in \mathbb{N}$ and consider the following computational problems.
\begin{itemize}
\item[\emph{Name}] \#Hard-CoreNorm($\lambda,\Delta)$
\item[\emph{Input}] A graph $G$ of maximum degree at most $\Delta$.
\item[\emph{Output}] If $Z_G(\lambda)\neq0$ the algorithm must output a rational number $N$ such that $N/1.001\leq |Z_G(\lambda)|\leq 1.001N$. If $Z_G(\lambda)=0$ the algorithm may output any rational number.
\end{itemize}

\begin{itemize}
\item[\emph{Name}] \#Hard-CoreArg($\lambda,\Delta)$
\item[\emph{Input}] A graph $G$ of maximum degree at most $\Delta$.
\item[\emph{Output}] If $Z_G(\lambda)\neq0$ the algorithm must output a rational number $A$ such that $ |A-a|\leq\pi/3$ for some $a\in \arg(Z_G(\lambda)$. If $Z_G(\lambda)=0$ the algorithm may output any rational number.
\end{itemize}

We can now formally define the $\#\mathcal{P}$-locus, as the closure of the set, 
\[
\#\mathcal{P}_{\Delta} := \{\lambda \in \CQ: \text{ the problem \#Hard-CoreNorm($\lambda,\Delta)$ is \#P-hard}\}.
\]
We remark that in the definition of $\#\mathcal{P}_\Delta$ we could also replace \#Hard-CoreNorm$(\lambda,\Delta)$ by \#Hard-CoreArg$(\lambda,\Delta)$ without altering the validity of Theorem~\ref{thm: main theorem}.

We moreover note that the constant $1.001$ is rather arbitrary. It originates from~\cite{Galanisetal20}. As remarked there the constant can be replaced by any other constant.
Let us quickly explain the idea. If say \#Hard-CoreNorm$(\lambda,\Delta)$ is \#P-hard, but there would be a polynomial time algorithm for the problem with $1.001$ replaced by $1.001^2$, then we could run this algorithm on the disjoint union of two copies of the same graph $G$ obtaining an a $1.001^2$ approximation to the norm of $Z_{G\cup G}(\lambda)=Z_{G}(\lambda)^2$. This would immediately gives us a $1.001$-approximation to the norm of $Z_G(\lambda)$. Since the number of vertices of $G\cup G$ is polynomial in the number of vertices of $G$, we would thus also get a polynomial time algorithm for the problem with constant $1.001$.

\medskip

\noindent {\bf Organization.} 
After introducing preliminary definitions and results in section 2, we treat the degree $\Delta = 2$ case in section 3. While the equalities between different loci are different when $\Delta = 2$, the explicit descriptions of the zero- and activity-locus will be used in the higher degree cases.

In section 4 we prove the equality of the zero-locus and the activity-locus. The inclusion of the latter in the former is actually an immediate consequence of Montel's Theorem, and proved earlier in Corollary~\ref{cor:zerofree->normal}.
We end that section by showing that connected components of the complement of the zero-locus are simply connected.

In section 5 we prove the equality of the activity- and the density-locus, and in section 6 we prove that the density-locus is contained in the $\#\mathcal{P}$-locus. 

We end our paper by discussing a special subclass of graphs: the finite Cayley trees of fixed down-degree $\Delta-1$. In this setting classical results from complex dynamical systems can be used to obtain a precise description of the zero- and activity-locus. While there zeros do not lie in the activity-locus, the activity-locus equals the accumulation set of the zeros.

\section{Preliminaries}

In this section we collect some preliminary results and conventions that will be used in the remainder of the paper.
The results in this section are not necessarily new, but often cannot be found in the literature in the exact way they are stated here. For convenience of the reader we include proofs, especially when the methods are similar to those used later in the paper.

\subsection{Ratios of graphs and trees.}

Recall that for a rooted graph $(G,v)$ the occupation ratio is defined as the following rational function in $\lambda$
$$
R_{G,v}(\lambda) = \frac{Z_G^\mathrm{in}(\lambda)}{Z_G^\mathrm{out}(\lambda)}.
$$
We note that $Z_G(\lambda) = Z_G^\mathrm{in}(\lambda) + Z_G^\mathrm{out}(\lambda)$, which implies that $Z_G(\lambda) = 0$ if and only if $R_{G,v}(\lambda) = -1$, unless $Z_{G,v}^\mathrm{in}(\lambda)$ and $Z_{G,v}^\mathrm{out}(\lambda)$ both vanish, in which case the value of the rational function $R_{G,v}(\lambda)$ may not equal $-1$. The next lemma will show that we can often ignore this difficulty.

We will write $G-v$ for the graph $G$ with vertex $v$ removed, and $G-N[v]$ for the graph with $N[v]$ removed, where $N[v] = \{u \in V(G): \{u,v\} \in E(G)\} \cup \{v\}$ is the closed neighborhood of $v$. We observe that
$Z_{G,v}^{out} (\lambda) = Z_{G-v}(\lambda)$, and similarly $Z_{G,v}^{in} (\lambda) = \lambda \cdot Z_{G-N[v]}(\lambda)$.

\begin{lemma}\label{lemma:equivalence}
Let $\lambda \in \mathbb{C}^*$. The following three statements are equivalent.
\begin{enumerate}
    \item There exists a graph $G$ of maximum degree at most $\Delta$ for which $Z_G(\lambda) = 0$.
    \item There exists a rooted graph $(G,v) \in \mathcal{G}_\Delta$ for which $R_{G,v}(\lambda) = -1$.
    \item There exists a rooted graph $(G,v) \in \mathcal{G}_\Delta$ for which $R_{G,v}(\lambda) \in \{-1,0,\infty\}$.
\end{enumerate}
\end{lemma}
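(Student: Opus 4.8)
The plan is to establish the cycle of implications $(1)\Rightarrow(2)\Rightarrow(3)\Rightarrow(1)$, of which the middle one is trivial. For $(2)\Rightarrow(3)$ there is nothing to do: a rooted graph witnessing $R_{G,v}(\lambda)=-1$ already witnesses $R_{G,v}(\lambda)\in\{-1,0,\infty\}$.

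For $(1)\Rightarrow(2)$: suppose $Z_G(\lambda)=0$ for some $G$ of maximum degree at most $\Delta$. Pick any vertex $v$ of $G$ and consider $(G,v)$. Using $Z_G=Z_G^{\mathrm{in}}+Z_G^{\mathrm{out}}$, we see that $R_{G,v}(\lambda)=-1$ \emph{unless} $Z_{G-v}(\lambda)=Z_G^{\mathrm{out}}(\lambda)=0$ (note $Z_G^{\mathrm{in}}(\lambda)=\lambda Z_{G-N[v]}(\lambda)$ vanishes only if $Z_{G-v}$ does, since $\lambda\neq 0$ and every independent set of $G-N[v]$ extends to one of $G-v$; more carefully, one uses $Z_{G-v}=Z_{G-v}^{\mathrm{in}}+Z_{G-v}^{\mathrm{out}}$ style recursions). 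So the obstruction is exactly that $Z_{G-v}(\lambda)=0$ as well. The key observation is that $G-v$ has strictly fewer vertices than $G$, and still has maximum degree at most $\Delta$. Hence I would induct on $|V(G)|$: among all graphs of maximum degree at most $\Delta$ with $Z(\lambda)=0$, take one, call it $G$, with the minimum number of vertices. For \emph{every} choice of root $v$ we cannot have $Z_{G-v}(\lambda)=0$ (that would contradict minimality), so $R_{G,v}(\lambda)=-1$ for every $v$, giving (2). The base case $|V(G)|=1$ is vacuous since $Z_G(\lambda)=1+\lambda\neq 0$ would need $\lambda=-1$, which is handled uniformly (if $\lambda=-1$ the single-edge graph $K_2$ has $Z=1+2\lambda+?$—actually $Z_{K_2}(\lambda)=1+2\lambda$, nonzero at $-1$; the single vertex gives $1+\lambda=0$, so $K_1$ itself works and $R_{K_1,v}=\lambda=-1$).

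For $(3)\Rightarrow(1)$: suppose $(G,v)\in\mathcal{G}_\Delta$ has $R_{G,v}(\lambda)\in\{-1,0,\infty\}$. If the value is $-1$, then generically $Z_G(\lambda)=0$ and we are in case (1); the only caveat is again the simultaneous-vanishing issue, but one can perturb/reduce as above, or simply note that $R_{G,v}(\lambda)=-1$ as a value of the reduced rational function together with $R_{G,v}=Z_G^{\mathrm{in}}/Z_G^{\mathrm{out}}$ forces $Z_G(\lambda)=0$ after clearing the common factor — and clearing a common factor corresponds to deleting some vertices, staying inside $\mathcal{G}_\Delta$. If $R_{G,v}(\lambda)=\infty$, i.e.\ $Z_G^{\mathrm{out}}(\lambda)=Z_{G-v}(\lambda)=0$ while $Z_G^{\mathrm{in}}(\lambda)\neq 0$, then $G-v\in\mathcal{G}_\Delta$ is a graph whose independence polynomial vanishes, giving (1). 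If $R_{G,v}(\lambda)=0$, i.e.\ $Z_G^{\mathrm{in}}(\lambda)=\lambda Z_{G-N[v]}(\lambda)=0$: since $\lambda\neq0$ this means $Z_{G-N[v]}(\lambda)=0$ and $G-N[v]\in\mathcal{G}_\Delta$, again giving (1).

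The main obstacle is the bookkeeping around the degenerate case where $Z_G^{\mathrm{in}}(\lambda)$ and $Z_G^{\mathrm{out}}(\lambda)$ vanish simultaneously, so that the rational function $R_{G,v}$ does not literally take the value $-1$ at $\lambda$. I expect the cleanest way to handle this throughout is the minimal-counterexample argument in $(1)\Rightarrow(2)$ (which sidesteps degeneracy entirely), together with the remark that passing from $G$ to $G-v$ or $G-N[v]$ never increases the maximum degree and strictly decreases the number of vertices, so all reductions stay within $\mathcal{G}_\Delta$. Everything else is the elementary identity $Z_G=Z_G^{\mathrm{in}}+Z_G^{\mathrm{out}}$ combined with $Z_G^{\mathrm{out}}=Z_{G-v}$ and $Z_G^{\mathrm{in}}=\lambda Z_{G-N[v]}$ already recorded before the lemma, plus the hypothesis $\lambda\neq 0$.
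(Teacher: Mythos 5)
Your proposal is correct and follows essentially the same route as the paper: a minimal-counterexample argument for $(1)\Rightarrow(2)$ (minimality forces $Z_{G-v}(\lambda)\neq 0$, hence $R_{G,v}(\lambda)=-1$), the trivial $(2)\Rightarrow(3)$, and for $(3)\Rightarrow(1)$ the same case analysis via $Z^{\mathrm{out}}_{G,v}=Z_{G-v}$ and $Z^{\mathrm{in}}_{G,v}=\lambda\, Z_{G-N[v]}$. One small remark: your aside that ``clearing a common factor corresponds to deleting some vertices'' is unjustified but also unnecessary --- in the degenerate $-1$ case the paper simply passes to $G-v$ when $Z^{\mathrm{out}}_{G,v}(\lambda)=0$, and alternatively writing $Z^{\mathrm{in}}+Z^{\mathrm{out}}=g\cdot(p+q)$ with $p(\lambda)+q(\lambda)=0$ already gives $Z_G(\lambda)=0$ for $G$ itself.
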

\begin{proof}
Assume that (1) holds, then there is a graph $G$ of maximum degree at most $\Delta$ for which $Z_G(\lambda) = 0$. Without loss of generality we can assume $G \in \mathcal{G}_\Delta$ satisfies $Z_G(\lambda) = 0$ and has a minimal number of vertices, i.e. for any graph $H \in \mathcal{G}_\Delta$ with $Z_H(\lambda)=0$ we have $|V(G)| \leq |V(H)|$.
For any vertex $v \in V(G)$ we have 
\[
0 = Z_G(\lambda) = Z_{G,v}^{in}(\lambda) + Z_{G,v}^{out}(\lambda).
\]
As $|V(G-v)| < V(G)$ we have $Z_{G,v}^{out}(\lambda)= Z_{G - v}(\lambda) \neq 0$, which implies $R_{G,v}(\lambda) = -1$.
Thus (2) holds. Trivially, if (2) holds then also (3) holds. To complete the proof we will assume (3) holds and show that (1) follows.

Assume there is a rooted graph $(G,v) \in \mathcal{G}_\Delta$ for which $R_{G,v}(\lambda) \in \{-1,0,\infty\}$. 
If $R_{G,v} (\lambda) = -1$, we either have $Z_{G,v}^{out} (\lambda) = 0$, in which case (1) follows, or $Z_{G,v}^{out} (\lambda) \neq 0$, in which case $Z_{G,v}^{in}(\lambda) = -Z_{G,v}^{out}(\lambda)$ and (1) follows as well.
If $R_{G,v} (\lambda) = \infty$ we have $Z_{G, v}^{out} (\lambda) = 0$. As $Z_{G, v}^{out} (\lambda) = Z_{G - v}(\lambda)$ we see (1) holds. The final case is $R_{G,v} (\lambda) = 0$, in which case we have $0 = Z_{G,v}^{in}(\lambda) = \lambda \cdot Z_{G - N[v]}(\lambda)$. 
Now as $\lambda \neq 0$, we must have $Z_{G - N[v]}(\lambda) = 0$, which concludes the proof.
\end{proof}

Note that for $\lambda = 0$ we have $R_{G,v} (\lambda) = 0$ and $Z_G(\lambda) = 1$ for any graph $G$ and any vertex $v \in V(G)$. Hence for $\lambda = 0$, statements (1) and (2) in Lemma~\ref{lemma:equivalence} are still equivalent, while statement (3) is not equivalent to (1) or (2).

The following result due to Bencs~\cite{Bencs18} will play an important role in this paper.

\begin{theorem}
    \label{thm: stable paths}
    Let $(G,v)\in \mathcal{G}_\Delta^i$ be a rooted connected graph. Then there is a rooted tree $(T,u)\in \mathcal{G}_\Delta^i$ and induced graphs $G_1,\ldots,G_k$ of $G$ such that
    \begin{itemize}
        \item [(i)] $Z_T=Z_G\prod_{i=1}^k Z_{G_i}$,
        \item[(ii)] $R_{G,v} = R_{T,u}$.
    \end{itemize}
\end{theorem}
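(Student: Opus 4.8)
The goal is to reduce an arbitrary rooted connected graph $(G,v)$ of maximum degree at most $\Delta$ (with $\deg(v)\le i$) to a tree, while keeping the occupation ratio unchanged and controlling how the independence polynomial changes. The natural tool is the standard tree-recursion for the independence polynomial combined with an unravelling (universal cover / path-tree) argument à la Weitz, but one has to be careful that the usual self-avoiding-walk tree is infinite; Bencs's refinement keeps things finite at the cost of introducing the extra factors $Z_{G_i}$.

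I would proceed by induction on $|E(G)|$. If $G$ is already a tree we are done (take $T=G$, $k=0$). Otherwise pick an edge; more precisely, since $G$ is connected and not a tree, there is a vertex $w$ lying on a cycle, and by connectedness we may choose a shortest path from $v$ to such a $w$. The key local move is the following: if $w$ has neighbours $u_1,\dots,u_r$, replace $G$ near $w$ by attaching to $w$ a disjoint copy of $G-w$ rooted appropriately — i.e. "unfold" $w$ by one step. Concretely, using the recursions $Z^{\mathrm{out}}_{G,v}=Z_{G-v}$ and $Z^{\mathrm{in}}_{G,v}=\lambda Z_{G-N[v]}$, one has the ratio recursion $R_{G,v}(\lambda)=f_{\lambda}(\text{ratios of the branches at }v)$, where $f_\lambda(z_1,\dots,z_r)=\lambda\prod_j (1+z_j)^{-1}$. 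The branches at $v$ are themselves rooted graphs $(G_j,u_j)$ of degree $\le\Delta$ with $\deg_{G_j}(u_j)\le\Delta-1$, obtained by deleting $v$ and keeping one neighbour as the new root. Crucially these branches have fewer edges, so induction applies to each of them: each $(G_j,u_j)$ has a tree $(T_j,u_j)$ with $R_{T_j,u_j}=R_{G_j,u_j}$ and $Z_{T_j}=Z_{G_j}\prod Z_{(\cdot)}$. Gluing the $T_j$ back to $v$ produces a tree $(T,u)$ with the same ratio, since the ratio depends only on the branch ratios.

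The bookkeeping for (i) is then the main thing to get right. When we pass from $(G,v)$ to $\bigsqcup_j (G_j,u_j)$ we are implicitly using the identity $Z_{G-v}=\prod_j Z_{(\text{components of }G-v)}$ only when $G-v$ is disconnected; in general $G-v$ is connected and the $G_j$'s are overlapping induced subgraphs, not a disjoint decomposition, so the naive product identity fails and this is exactly where the correction factors $Z_{G_i}$ enter. The honest way is to use Bencs's lemma as a black box on the branches — but since we are \emph{proving} that lemma, we instead need the explicit identity for splitting off one neighbour of a vertex on a cycle: if $w$ lies on a cycle with two cycle-neighbours $a,b$, delete the edge $wb$ and duplicate the component, giving $Z_G\cdot Z_{G-N[w]} = Z_{G'}$ type relations (this is the "telescoping" in Weitz's/Bencs's construction). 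I would set up the single-step identity carefully: removing one edge $e=\{w,b\}$ incident to $w$ and replacing it by a pendant copy of $G-b$ attached at $w$, one gets a graph $G^*$ with strictly fewer cycles through $w$ and $Z_{G^*}=Z_G\cdot Z_{G-b}$ (or a similar product), decreasing a suitable complexity measure. Iterating over all "excess" edges yields the tree $T$ and accumulates the factors $G_1,\dots,G_k$, each an induced subgraph of $G$.

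The main obstacle is twofold: (a) choosing the right complexity measure so that both the branch recursion and the edge-removal step strictly decrease it and the induction terminates — first Betti number $|E|-|V|+1$ plus distance-to-root of the nearest cycle vertex is a reasonable candidate; and (b) verifying that at each step the new root still has degree $\le i$ (for the branches) or $\le\Delta$ (internally), so that the output tree genuinely lies in $\mathcal{G}_\Delta^i$ — the degree of $v$ never increases under these moves, and internal vertices stay $\le\Delta$ because duplicating $G-b$ and attaching it at $w$ replaces one edge at $w$ by one edge at $w$. Once the single-step identity $Z_{G^*}=Z_G\cdot Z_{H}$ (with $H$ induced in $G$) and the invariance $R_{G^*,v}=R_{G,v}$ are established, the theorem follows by assembling the telescoped product, which is exactly statement (i), and transitivity of the ratio equalities, which is statement (ii).
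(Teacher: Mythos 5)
A preliminary remark: the paper does not actually prove this statement --- it is quoted from Bencs~\cite{Bencs18} --- so I am comparing your plan with the known proof (Bencs's ``stable path tree'', a finite refinement of Weitz's self-avoiding-walk tree), which is indeed the circle of ideas you correctly identify.

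There is a genuine gap: the one step that carries all the content, the local surgery and its accompanying identity, is never established, and the version you do write down is false. Deleting the edge $\{w,b\}$ and attaching a pendant copy of $G-b$ (or of the branch on $b$'s side) to $w$ preserves neither the ratio at $v$ nor the partition function up to a factor $Z_H$ with $H$ induced in $G$. Test it on the triangle $G=C_3$ with vertices $v,w,b$: here $R_{G,v}=\lambda/(1+2\lambda)$ and $Z_G=1+3\lambda$, while your surgery produces $P_5$ rooted at its second vertex, whose ratio is $\lambda(1+2\lambda)/\bigl((1+\lambda)(1+3\lambda+\lambda^2)\bigr)\neq \lambda/(1+2\lambda)$ and whose partition function $1+5\lambda+6\lambda^2+\lambda^3$ is not divisible by $1+3\lambda$. (The correct tree for $(C_3,v)$ is $P_4$ rooted at its second vertex, with $k=1$ and $G_1$ a single vertex.) The failure is precisely the difficulty you flagged and did not resolve: $b$ is adjacent to both $v$ and $w$, and a pendant copy double-counts it. Your termination argument is also shaky, since the pendant copy of $G-b$ may itself contain cycles, so neither $|E|$ nor the first Betti number need decrease.

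The route that works, and that your first paragraph gestures at before abandoning it, is induction on $|V(G)|$ via Weitz's telescoping at the root. Writing $u_1,\dots,u_d$ for the neighbours of $v$ and $H_j := G - v - u_1 - \cdots - u_{j-1}$ rooted at $u_j$, one has
\[
R_{G,v}(\lambda) = \lambda\,\frac{Z_{G-N[v]}(\lambda)}{Z_{G-v}(\lambda)} = \lambda \prod_{j=1}^{d} \frac{Z_{G-v-u_1-\cdots-u_j}(\lambda)}{Z_{G-v-u_1-\cdots-u_{j-1}}(\lambda)} = \lambda \prod_{j=1}^{d} \frac{1}{1+R_{H_j,u_j}(\lambda)} .
\]
Each $H_j$ is an induced subgraph (the $H_j$ are \emph{nested}, not overlapping, which is exactly how the naive branch decomposition gets repaired) with strictly fewer vertices and root degree at most $\Delta-1$, so the inductive hypothesis applies; gluing the resulting trees $T_j$ to a fresh root $u$ of degree $d\le i$ gives (ii) by Lemma~\ref{lem: tree recursion}, and (i) follows by pushing the same telescoping through $Z_T = Z_{T,u}^{\mathrm{in}}+Z_{T,u}^{\mathrm{out}}$, which is where the correction factors $Z_{G_i}$ accumulate. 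Without this identity, or a correct substitute for your surgery step, the proposal does not constitute a proof.
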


The following result 
 is a consequence.

\begin{lemma}
    \label{lem: zero implies -1 at leaf}
    Let $\lambda \in \mathbb{C}$ and $(G,v) \in \mathcal{G}_\Delta$ with $Z_G(\lambda) = 0$.
    Then there is a rooted tree $(T,u) \in \mathcal{G}_\Delta ^1$ such that $Z_T(\lambda)=0$ and $R_{T,u}(\lambda)=-1$.
\end{lemma}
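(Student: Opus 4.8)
The plan is to \emph{decouple} two things: the existence of \emph{some} tree in $\mathcal{G}_\Delta$ whose independence polynomial vanishes at $\lambda$, and the subsequent choice of a good root. For the first, Theorem~\ref{thm: stable paths} is exactly what is needed; for the second, rooting a minimal zero-tree at a leaf will automatically do the job. Concretely, I would first note that the hypothesis forces $\lambda\neq 0$, since $Z_G(0)=1$ for every graph $G$. Writing $Z_G=\prod_C Z_C$ over the connected components $C$ of $G$, some component $C$ satisfies $Z_C(\lambda)=0$, and $C$ (with any choice of root) lies in $\mathcal{G}_\Delta=\mathcal{G}_\Delta^\Delta$. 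Applying Theorem~\ref{thm: stable paths} to this rooted connected graph yields a rooted tree $(T_0,u_0)\in\mathcal{G}_\Delta$ together with induced subgraphs $G_1,\dots,G_k$ of $C$ with $Z_{T_0}=Z_C\prod_{j}Z_{G_j}$, so in particular $Z_{T_0}(\lambda)=0$. We now discard the root $u_0$: all we retain is that the set of trees $T\in\mathcal{G}_\Delta$ with $Z_T(\lambda)=0$ is nonempty.

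Next I would pick, among all such trees, one — call it $T$ — with the minimum number of vertices, and root it at a leaf. If $T$ is a single vertex $u$, then $Z_T(\lambda)=1+\lambda=0$ gives $\lambda=-1$ and $R_{T,u}(\lambda)=\lambda=-1$, with $\deg_T(u)=0$, so $(T,u)\in\mathcal{G}_\Delta^1$ and we are done. Otherwise $T$ has a leaf $u$ with $\deg_T(u)=1$, hence $(T,u)\in\mathcal{G}_\Delta^1$; the graph $T-u$ is again a tree in $\mathcal{G}_\Delta$ with strictly fewer vertices, so minimality gives $Z_{T-u}(\lambda)\neq 0$. Using the identities recorded in the preliminaries, $Z_{T,u}^{\mathrm{out}}(\lambda)=Z_{T-u}(\lambda)\neq 0$ and $0=Z_T(\lambda)=Z_{T,u}^{\mathrm{in}}(\lambda)+Z_{T,u}^{\mathrm{out}}(\lambda)$, whence $R_{T,u}(\lambda)=Z_{T,u}^{\mathrm{in}}(\lambda)/Z_{T,u}^{\mathrm{out}}(\lambda)=-1$. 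Thus $(T,u)$ has all the required properties.

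The one genuinely delicate point is arranging the root of the final tree to have degree at most $1$. A naive route that tracks the root of $G$ through Theorem~\ref{thm: stable paths} produces a tree whose root may have degree up to $\Delta$ (think of $G$ being $\Delta$-regular, e.g.\ $K_{\Delta+1}$), and simply appending a pendant vertex at that root turns the occupation ratio from $-1$ into $\infty$ rather than into $-1$. The proposal avoids this obstacle entirely: by forgetting the root coming out of Theorem~\ref{thm: stable paths} and instead choosing a \emph{minimal} zero-tree and rooting it at a leaf, the degree condition $\deg_T(u)=1$ and the nonvanishing $Z_{T-u}(\lambda)\neq 0$ (which is precisely what upgrades "$Z_T(\lambda)=0$" to "$R_{T,u}(\lambda)=-1$") are both free. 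The remaining steps are routine bookkeeping.
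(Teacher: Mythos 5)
Your proposal is correct and follows essentially the same route as the paper: transfer the zero to a tree via Theorem~\ref{thm: stable paths}(i), then take a vertex-minimal zero-tree and root it at a leaf, using minimality to see $Z^{\mathrm{out}}_{T,u}(\lambda)=Z_{T-u}(\lambda)\neq 0$ and hence $R_{T,u}(\lambda)=-1$. The only (harmless) divergence is how you arrange connectedness before invoking Theorem~\ref{thm: stable paths} — you factor over components, while the paper first passes through Lemma~\ref{lemma:equivalence} to a minimal (hence connected) graph with ratio $-1$ — and your explicit treatment of the single-vertex case is a small extra care the paper leaves implicit.
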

\begin{proof}
Note that for any graph $G$ we have $Z_G(0) = 1$, so we can assume $\lambda \neq 0$. By Lemma~\ref{lemma:equivalence} there exists a rooted graph $(G,v) \in \mathcal{G}_\Delta$ such that $R_{G,v}(\lambda) = -1$.
By Theorem~\ref{thm: stable paths}(i) we see there is a rooted tree $(T,u) \in \mathcal{G}_\Delta$ with $Z_T(\lambda)=0$.
It follows there is a tree $\tilde{T}$ of maximum degree $\Delta$ with a minimal number of vertices such that $Z_{\tilde{T}}(\lambda) = 0$. 
For $\tilde{T}$ and any vertex $v \in V(\tilde{T})$ we have $R_{\tilde{T}, v}(\lambda) = -1$. The lemma follows by choosing $v$ a leaf of $\tilde{T}$.
\end{proof}

At a later stage we will need to worry about the degree of the root vertex in our definition of the activity- and density-locus.
We therefore introduce some definitions to facilitate their discussion.

Fix an integer $\Delta\geq 2$ throughout.
For $i=1,\ldots,\Delta$ we denote the family of ratios with root degree at most $i$ by
\[
\mathcal{R}^i_\Delta:=\{R_{G,v)}\mid (G,v)\in \mathcal{G}^i_\Delta\}.
\]
We just write $\mathcal{R}_\Delta$ instead of $\mathcal{R}^\Delta_\Delta$.
For a given $\lambda\in \mathbb{C}$, we denote the set of values of these ratios by
\[
\mathcal{R}^i_\Delta(\lambda):=\{R_{G,v)}(\lambda)\mid (G,v)\in \mathcal{G}^i_\Delta\}.
\]
Then we define $\mathcal{A}^i_\Delta$ to be the collection of $\lambda_0$ at which the family $\mathcal{R}^i_\Delta$ is not normal.
We just write $\mathcal{A}_\Delta$ instead of $\mathcal{A}^\Delta_\Delta$.
Finally, we introduce $\mathcal{D}_\Delta^i$ to be the collection of $\lambda$ for which the set $\mathcal{R}^i_\Delta(\lambda)$ is dense in $\mathbb{C}$.
Note that we denote $\mathcal{D}^1_\Delta$ by $\mathcal{D}_\Delta$ (as opposed to the above convention).

\subsection{Graph manipulations and definition of the cardioid}

The recursion formula given in the following lemma is well known.

\begin{lemma}
    \label{lem: tree recursion}
Let $T = (V,E)$ be a tree and $v$ a vertex of $T$. Suppose $v$ is connected to $d\geq 1$ other vertices $u_1, \dots, u_d$. Denote $T_s$ for the tree that is the connected component of $T - v$ containing $u_s$. Then we have
\begin{equation}\label{eq:treeformula1independence}
    R_{T, v}(\lambda) =  \frac{\lambda}{\prod_{s=1}^d( 1 + R_{T_s, u_s}(\lambda))}.
\end{equation}
\end{lemma}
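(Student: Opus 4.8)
The plan is to argue directly from the combinatorial definition of the independence polynomial, splitting the sum over independent sets of $T$ according to whether or not the root $v$ belongs to the set. First I would use the basic identities recorded just above the lemma statement: $Z_{T,v}^{\mathrm{out}}(\lambda) = Z_{T-v}(\lambda)$ and $Z_{T,v}^{\mathrm{in}}(\lambda) = \lambda\cdot Z_{T-N[v]}(\lambda)$, so that
\[
R_{T,v}(\lambda) = \frac{Z_{T,v}^{\mathrm{in}}(\lambda)}{Z_{T,v}^{\mathrm{out}}(\lambda)} = \frac{\lambda\, Z_{T-N[v]}(\lambda)}{Z_{T-v}(\lambda)}.
\]
The key structural observation is that removing $v$ from a tree disconnects it into exactly the components $T_1,\dots,T_d$ (rooted at $u_1,\dots,u_d$), while removing the closed neighborhood $N[v]$ disconnects it into the components $T_s - u_s$ for $s=1,\dots,d$. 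Since the independence polynomial is multiplicative over disjoint unions (connected components of a graph contribute independently to the count of independent sets), this gives
\[
Z_{T-v}(\lambda) = \prod_{s=1}^d Z_{T_s}(\lambda), \qquad Z_{T-N[v]}(\lambda) = \prod_{s=1}^d Z_{T_s - u_s}(\lambda).
\]

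Next I would substitute these product formulas into the expression for $R_{T,v}$ and rewrite the ratio termwise. Writing $Z_{T_s - u_s}(\lambda) = Z_{T_s,u_s}^{\mathrm{out}}(\lambda)$ and noting $Z_{T_s}(\lambda) = Z_{T_s,u_s}^{\mathrm{in}}(\lambda) + Z_{T_s,u_s}^{\mathrm{out}}(\lambda)$, I would pull one factor of $\lambda$ outside and distribute the remaining quotient over the product:
\[
R_{T,v}(\lambda) = \lambda \prod_{s=1}^d \frac{Z_{T_s,u_s}^{\mathrm{out}}(\lambda)}{Z_{T_s,u_s}^{\mathrm{in}}(\lambda) + Z_{T_s,u_s}^{\mathrm{out}}(\lambda)} = \lambda \prod_{s=1}^d \frac{1}{1 + R_{T_s,u_s}(\lambda)},
\]
which is exactly \eqref{eq:treeformula1independence}. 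This identity holds as an identity of rational functions in $\lambda$; one checks it is valid wherever the denominators are nonzero, and the extension to all of $\mathbb{C}$ is the usual identity-of-rational-functions argument.

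The only genuine subtlety — and the step I would flag as the main point to handle carefully — is the bookkeeping when some denominator $1 + R_{T_s,u_s}(\lambda)$ vanishes, i.e. when $Z_{T_s}(\lambda) = 0$ for some $s$ but $Z_{T_s,u_s}^{\mathrm{out}}(\lambda)\neq 0$ (equivalently, a pole appears in an individual factor). In that situation the right-hand side should be interpreted as $0$ (or, in $\hat{\mathbb C}$, as the appropriate limit), and one must check this matches the left-hand side: indeed $Z_{T-v}(\lambda) = \prod_s Z_{T_s}(\lambda) = 0$ forces $R_{T,v}(\lambda)$ to be $\infty$ or $0$ depending on whether $Z_{T-N[v]}(\lambda)$ also vanishes. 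Since both sides are rational functions that agree on a nonempty Zariski-open set, they agree everywhere as elements of the field of rational functions, so the formula is valid as stated; I would phrase the conclusion accordingly, remarking that the evaluation at exceptional $\lambda$ is to be read in $\hat{\mathbb C}$ when necessary. The $d\geq 1$ hypothesis is used only to ensure the product is nonempty (for a leaf, $v$ with $d=0$, one has directly $R_{T,v}(\lambda)=\lambda$).
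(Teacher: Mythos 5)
Your argument is correct and is essentially the same as the paper's: both split $R_{T,v}$ via $Z^{\mathrm{in}}_{T,v}=\lambda Z_{T-N[v]}$ and $Z^{\mathrm{out}}_{T,v}=Z_{T-v}$, factor over the connected components $T_s$ and $T_s-u_s$, and divide each factor by $Z^{\mathrm{out}}_{T_s,u_s}$. Your extra remark about interpreting the identity as one of rational functions (handling exceptional $\lambda$ in $\widehat{\mathbb{C}}$) is a sensible clarification that the paper leaves implicit.
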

\begin{proof}
We have
\begin{equation}\label{eq:stepHJKLFGHJKL}
 \frac{Z_{T, v}^{in}(\lambda) }{Z_{T, v}^{out}(\lambda) } =  \lambda \frac{Z_{T- N[v]}(\lambda) }{Z_{T - v}(\lambda) } = \lambda \prod_{s=1}^d \frac{Z_{T_s,u_s}^{out}(\lambda) }{Z_{T_s}(\lambda) } = \lambda \prod_{s=1}^d \frac{Z_{T_s,u_s}^{out}(\lambda) }{Z_{T_s,u_s}^{out}(\lambda)  + Z_{T_s,u_s}^{in}(\lambda) },
\end{equation}
where in the second equality we use that the partition function of a graph factors into the partition functions of its connected components.
By dividing for each $s \in \{1, \ldots, d\}$ the denominator and enumerator of the right hand side of equation \eqref{eq:stepHJKLFGHJKL} by $Z_{T_s, u_s}^{out}(\lambda) $ we obtain the desired formula.
\end{proof}

This lemma implies the claim from the introduction that the ratios of Cayley trees are given by iterating $f_{\lambda,d}(z) = \frac{\lambda}{(1+z)^d}$. We refer to Section \ref{sec: Cayley trees} for an in-depth discussion of Cayley trees and their associated dynamics.

\begin{defn}\label{def:cardioiddef}
Define the cardioid $\Lambda_\Delta$ as the closure of the set of parameters $\lambda$ for which $f_{\lambda,d}$ has an attracting fixed point.
\end{defn}
 
Note that $0 \in \Lambda_\Delta$. One can show, see Section 2.1 in ~\cite{PR19}, that
\[
\Lambda_\Delta =\bigg \{\frac{z}{(1-z)^\Delta}\ |\  |z| \leq \frac{1}{\Delta-1} \bigg \}.
\]
Taking $z=\tfrac{-1}{\Delta-1}$, we observe that 
\[
\lambda^*(\Delta)=\frac{-(\Delta-1)^{\Delta-1}}{\Delta^\Delta}
\]
is the intersection point of $\Lambda_\Delta$ with the negative real line.
 
Let $G=(V,E)$ be a graph and let $(G_i, v_i)$ be rooted graphs for $i\in V$. 
We refer to the graph obtained from $G$ and the $G_i$ by identifying each vertex $i\in V$ with $v_i$ as \emph{implementing} the $G_i$ in $G$, see Figure \ref{graph}.
The next lemmas describe the effect on the ratios for various choices of $G$ and $G_i.$

\begin{lemma}\label{lemma: paths}
Let $P_n$ denote the path on $n$ vertices. Let $(G_i, v_i)$ be rooted graphs for $i \in \{1, \ldots, n\}$ and denote $\mu_i(\lambda) = R_{G_i, v_i}(\lambda)$. Let $\tilde{P}_n$ be the graph obtained by implementing the $G_i$ in $P_n$.
Then 
\[
    R_{\tilde{P}_n, v_n}(\lambda)  =  (f_{\mu_n(\lambda) } \circ \cdots \circ f_{\mu_1(\lambda) })(0),
\]
where $f_{\mu}(z) = \frac{\mu}{1+z}$.
\end{lemma}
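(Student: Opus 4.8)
The plan is to argue by induction on $n$, peeling off the last path vertex $v_n$ and combining the additive identity $Z_G = Z^{\mathrm{in}}_{G,v}+Z^{\mathrm{out}}_{G,v}$ with the multiplicativity of the independence polynomial over connected components. Throughout, all displayed identities should be read as identities between rational functions of $\lambda$ on $\hat{\mathbb{C}}$, which handles the degenerate values of $\lambda$ (where some partition function vanishes, equivalently some ratio equals $0$ or $\infty$) automatically.

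For the base case $n=1$ the graph $\tilde P_1$ is just $(G_1,v_1)$, so $R_{\tilde P_1,v_1}(\lambda)=\mu_1(\lambda)=f_{\mu_1(\lambda)}(0)$ since $f_{\mu}(0)=\mu$. For the inductive step, note that $\tilde P_n$ is obtained from $\tilde P_{n-1}$ by adjoining the vertex $v_n$, joining it by an edge to $v_{n-1}$, and gluing $(G_n,v_n)$ onto it at $v_n$; in particular the only neighbour of $v_n$ outside $G_n$ is $v_{n-1}$. Hence deleting $v_n$ from $\tilde P_n$ produces the disjoint union of $\tilde P_{n-1}$ and $G_n-v_n$, and deleting $N[v_n]$ produces the disjoint union of $\tilde P_{n-1}-v_{n-1}$ and $G_n-N[v_n]$. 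Using $Z^{\mathrm{out}}_{\tilde P_n,v_n}=Z_{\tilde P_n-v_n}$ and $Z^{\mathrm{in}}_{\tilde P_n,v_n}=\lambda\, Z_{\tilde P_n-N[v_n]}$ together with multiplicativity over components, together with $\lambda\, Z_{G_n-N[v_n]}=Z^{\mathrm{in}}_{G_n,v_n}$ and $Z_{G_n-v_n}=Z^{\mathrm{out}}_{G_n,v_n}$, I get
\[
R_{\tilde P_n,v_n}(\lambda)=\frac{\lambda\,Z_{G_n-N[v_n]}(\lambda)\,Z_{\tilde P_{n-1}-v_{n-1}}(\lambda)}{Z_{G_n-v_n}(\lambda)\,Z_{\tilde P_{n-1}}(\lambda)}= R_{G_n,v_n}(\lambda)\cdot\frac{Z^{\mathrm{out}}_{\tilde P_{n-1},v_{n-1}}(\lambda)}{Z_{\tilde P_{n-1}}(\lambda)}.
\]
Since $Z_{\tilde P_{n-1}}=Z^{\mathrm{in}}_{\tilde P_{n-1},v_{n-1}}+Z^{\mathrm{out}}_{\tilde P_{n-1},v_{n-1}}$, the remaining fraction equals $1/\bigl(1+R_{\tilde P_{n-1},v_{n-1}}(\lambda)\bigr)$, so
\[
R_{\tilde P_n,v_n}(\lambda)=\frac{\mu_n(\lambda)}{1+R_{\tilde P_{n-1},v_{n-1}}(\lambda)}=f_{\mu_n(\lambda)}\bigl(R_{\tilde P_{n-1},v_{n-1}}(\lambda)\bigr),
\]
and applying the induction hypothesis $R_{\tilde P_{n-1},v_{n-1}}=(f_{\mu_{n-1}}\circ\cdots\circ f_{\mu_1})(0)$ closes the induction.

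I do not expect a genuine obstacle here; the only points needing a little care are (a) checking that the two deletions split $\tilde P_n$ exactly as claimed — immediate once one notes that $v_{n-1}$ is the unique neighbour of $v_n$ not lying in $G_n$, so no component of $G_n-v_n$ or $G_n-N[v_n]$ is merged with the $\tilde P_{n-1}$ part — and (b) the bookkeeping with $\hat{\mathbb{C}}$-valued rational functions, so that cases such as $Z_{\tilde P_{n-1}}(\lambda)=0$ need not be treated separately. One could alternatively phrase the inductive step as the special case of Lemma~\ref{lem: tree recursion} for a single degree-one attachment, but since $\tilde P_{n-1}$ need not be a tree it is cleanest to carry out the short computation above directly.
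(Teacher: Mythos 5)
Your proof is correct and follows essentially the same route as the paper's: induction on $n$, with the inductive step resting on multiplicativity of the partition function over the connected components created by deleting $v_n$ (resp.\ $N[v_n]$) and on the identity $Z_{\tilde P_{n-1}} = Z^{\mathrm{in}}_{\tilde P_{n-1},v_{n-1}} + Z^{\mathrm{out}}_{\tilde P_{n-1},v_{n-1}}$, yielding $R_{\tilde P_n,v_n} = f_{\mu_n}(R_{\tilde P_{n-1},v_{n-1}})$. The only cosmetic difference is that you make the component decomposition after deletion explicit, whereas the paper factors the in/out partition functions directly; the content is identical.
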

\begin{proof}
We use induction on $n$. For $n=1$, by definition we have $R_{G_1, v_1}(\lambda) =\mu_1(\lambda)  =f_{\mu_1(\lambda) }(0)$. As $\tilde{P}_1 = G_1$, we have $R_{\tilde{P}_1, v_1}(\lambda)  = R_{G_1, v_1}(\lambda)$. The base case follows.

Suppose the statement holds for some $n \geq 1$. The vertex $v_{n+1}$ has 1 neighbor that is part of the path $P_n$. Let us denote that neighbor as $v_n$. It follows that
\begin{align*}
    R_{\tilde{P}_{n+1},v_{n+1}}(\lambda)&= \frac{Z^{in}_{\tilde{P}_{n+1},v_{n+1}}(\lambda)}{Z^{out}_{\tilde{P}_{n+1},v_{n+1}}(\lambda)}=\frac{Z^{in}_{G_{n+1},v_{n+1}}(\lambda)}{Z^{out}_{G_{n+1},v_{n+1}}(\lambda)}\cdot \frac{Z^{out}_{\tilde{P}_n, v_n}(\lambda)}{Z_{\tilde{P}_n, v_n}(\lambda)}\\
 &=R_{G_{n+1},v_{n+1}}(\lambda)\cdot \frac{Z^{out}_{\tilde{P}_n, v_n}(\lambda)}{Z^{out}_{\tilde{P}_n, v_n}(\lambda)+ Z^{in}_{\tilde{P}_n, v_n}(\lambda)}= \frac{R_{G_{n+1},v_{n+1}}(\lambda)}{1+R_{\tilde{P}_n, v_n}(\lambda)}\\
 &= f_{\mu_{n+1}(\lambda)}(R_{\tilde{P}_n, v_n}(\lambda)),
\end{align*}
where in the second equality we use that the partition function of a graph factors into the partition functions of its connected components.

By the induction hypothesis we have 
$$
R_{\tilde{P}_n, v_n}(\lambda) = (f_{\mu_n(\lambda)} \circ \cdots \circ f_{\mu_1(\lambda)})(0),
$$
from which it follows that 
$$
R_{\tilde{P}_{n+1}, v_{n+1}}(\lambda) = (f_{\mu_{n+1}(\lambda)} \circ f_{\mu_n(\lambda)} \circ \cdots \circ f_{\mu_1(\lambda)})(0),
$$
completing the proof.
\end{proof}

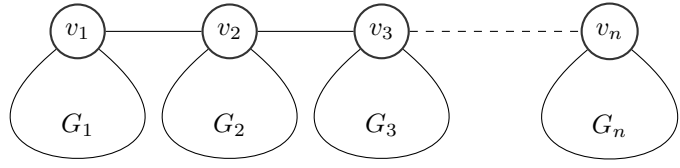
\begin{figure}[ht]
    \centering
\begin{tikzpicture}[label distance=6mm, every loop/.style={min distance=30mm,in=220,out=320}]
\node[mycircle,label=below:{$G_1$}](a) {$v_1$};
\node[mycircle,right of=a,node distance=2cm,label=below:{$G_2$}](b) {$v_2$};
\node[mycircle,right of=b,node distance=2cm,label=below:{$G_3$}](c) {$v_3$};
\node[mycircle,right of=c,node distance=3cm,label=below:{$G_n$}](d) {$v_n$};

\draw[-] (a) to node {{}} (b);

\draw[-] (b) to node {{}} (c);

\draw[dashed] (c) to node {{}} (d);

\path[every node/.style={font=\sffamily\small}]
    (a) edge [loop below] node {} (a)
    (b) edge [loop below] node {} (b)
    (c) edge [loop below] node {} (c)
    (d) edge [loop below] node {} (d);
   
\end{tikzpicture}
    \caption{The graph $\tilde{P}_n$ in Lemma~\ref{lemma: paths}}
    \label{graph}
\end{figure}

\begin{remark}
Note that if the graphs $G_i$ in Lemma~\ref{lemma: paths} are all of maximum degree $\Delta$ and the roots $v_i$ have degree at most $\Delta-2$ for $ i \in \{ 2, \ldots, n-1\}$ and at most degree $\Delta-1$ for $i \in \{1,n\}$, then the graph $\tilde{P}_n$ is also of maximum degree $\Delta$. 
\end{remark}

\begin{lemma}\label{cor:implementing}
Let $G = (V, E)$ be a graph and denote $n = |V|$. Let $(H,v)$ be a rooted graph. Let $\tilde{G} = (\tilde{V}, \tilde{E})$ be obtained from $G$ by implementing $n$ copies of $(H,v)$ in $G$. Then for any $w \in V$ we have 
\begin{equation}\label{eq:implementingpartitionfunctions}
    \frac{Z_{\tilde{G}, w} (\lambda)}{(Z_{H,v}^{out} (\lambda))^n} = Z_{G, w} (R_{H,u}(\lambda))
\end{equation}
and
\begin{equation}\label{eq:implementratios}
    R_{\tilde{G}, w} (\lambda) = R_{G, w} (R_{H,v}(\lambda)).
\end{equation}
\end{lemma}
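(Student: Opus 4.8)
The plan is to prove both identities simultaneously by induction on the number of vertices $n=|V|$ of $G$, exploiting the fact that implementing $n$ copies of $(H,v)$ into $G$ means attaching one copy of $(H,v)$ to each vertex $w\in V$ (merging the root of that copy with $w$). First I would fix notation: write $\mu=R_{H,v}(\lambda)$ and $q=Z_{H,v}^{\mathrm{out}}(\lambda)$, and note that $Z_{H,v}^{\mathrm{in}}(\lambda)=\mu q$ while $Z_H(\lambda)=(1+\mu)q$. The main computation is to track how the ``in'' and ``out'' partition functions of $\tilde G$ relative to a chosen vertex $w$ decompose. Since each vertex $x\in V$ carries its own pendant copy of $H$, an independent set of $\tilde G$ restricts to an independent set $J$ of $G$ together with, for each $x\in V$, an independent set of the attached copy of $H$ that avoids $x$ if $x\in J$ and is arbitrary otherwise. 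This gives the clean factorization: for a fixed independent set $J\subseteq V$, the number of extensions (weighted) is $\prod_{x\in J}Z_{H,v}^{\mathrm{in}}(\lambda)/Z_{H,v}^{\mathrm{out}}(\lambda)\cdot q = q^{n}\prod_{x\in J}\mu$ — wait, more carefully: if $x\in J$ the copy at $x$ contributes $Z_{H,v}^{\mathrm{out}}(\lambda)=q$ (the root $x$ is occupied in $\tilde G$, so its neighbors in the copy are forbidden, i.e. the copy behaves as $H$ with the root removed... ) — I would set this up precisely using the observation recorded just before Lemma~\ref{lemma:equivalence}, namely $Z_{G,v}^{\mathrm{out}}(\lambda)=Z_{G-v}(\lambda)$ and $Z_{G,v}^{\mathrm{in}}(\lambda)=\lambda Z_{G-N[v]}(\lambda)$, to get the per-vertex weights right.

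Concretely, the key identity I would establish is
\[
Z_{\tilde G,w}^{\mathrm{out}}(\lambda) = q^{n}\, Z_{G,w}^{\mathrm{out}}(\mu),
\qquad
Z_{\tilde G,w}^{\mathrm{in}}(\lambda) = q^{n}\, Z_{G,w}^{\mathrm{in}}(\mu),
\]
summed over the two cases $w\notin J$ and $w\in J$ respectively; adding these gives \eqref{eq:implementingpartitionfunctions} and dividing gives \eqref{eq:implementratios} immediately. Rather than a bare induction one can actually argue directly: enumerate independent sets $I$ of $\tilde G$ by their trace $J=I\cap V$; for each $x\in V$ the remaining freedom lies entirely within the pendant copy $H_x$ and is independent across vertices; the copy $H_x$ contributes a weighted count of $Z_{H,v}^{\mathrm{in}}(\lambda)=\mu q$ when $x\in J$ (because then $x$ is selected, which in $H_x$ means exactly the ``in'' configurations) and $Z_{H,v}^{\mathrm{out}}(\lambda)=q$ when $x\notin J$. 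Hence the total weight of all $I$ with $I\cap V=J$ equals $\lambda^{|J|}\cdot q^{\,n-|J|}\cdot(\mu q)^{|J|}=q^{n}\cdot(\lambda\mu)^{|J|}$, and summing over independent $J$ of $G$ gives $q^{n}Z_G(\lambda\mu)$... here I must be careful about whether the exponent matches $\lambda^{|J|}$ in $Z_G$ or $\mu^{|J|}$ in $Z_G$: since $v_i=w$ is merged, the vertex $x$'s own weight $\lambda$ should not be double counted — it is already inside $Z_{H,v}^{\mathrm{in}}$. So in fact the weight of $I$ with trace $J$ is $q^{\,n-|J|}(\mu q)^{|J|}=q^{n}\mu^{|J|}$, giving $q^{n}Z_{G,w}(\mu)$ as claimed. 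The same bookkeeping with $w$ forced in or out yields the pointed versions.

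The routine-but-delicate part, and the one I expect to be the main obstacle, is getting these per-vertex weights exactly right — in particular correctly accounting for the vertex $x$ that is identified with the root $v$ of its pendant copy, so that its weight $\lambda$ is counted once and the edges between $x$ and the rest of $H_x$ are correctly treated in both the ``in'' and ``out'' cases. Once the single-vertex weights $\big(Z_{H,v}^{\mathrm{out}}(\lambda),\,Z_{H,v}^{\mathrm{in}}(\lambda)\big)=(q,\mu q)$ are verified, the factorization over $V$ is immediate because the pendant copies are vertex-disjoint and attach at distinct vertices of $G$, and \eqref{eq:implementratios} follows from \eqref{eq:implementingpartitionfunctions} by taking the ratio of the $w\in I$ and $w\notin I$ parts (the common factor $q^{n}$ cancels, which also explains why one does not need $q\neq 0$ for the ratio statement as an identity of rational functions, interpreting it in $\hat{\mathbb C}$ where necessary). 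I would present it as a direct combinatorial proof rather than an induction, since that makes the structure clearest; alternatively Lemma~\ref{lemma: paths} with $n=1$ already handles the ``implement one copy and look at that vertex'' base case and an induction on $|V|$ peeling off one vertex would also work.
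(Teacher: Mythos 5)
Your proposal is correct and follows essentially the same route as the paper: decompose independent sets of $\tilde G$ by their trace $J=I\cap V$, observe that each pendant copy contributes $Z_{H,v}^{\mathrm{in}}(\lambda)$ or $Z_{H,v}^{\mathrm{out}}(\lambda)$ according to whether its root lies in $J$, and divide by $(Z_{H,v}^{\mathrm{out}}(\lambda))^n$ to recover $Z_{G,w}^{\mathrm{in/out}}$ evaluated at $R_{H,v}(\lambda)$. Your self-correction discarding the extra $\lambda^{|J|}$ factor lands on exactly the paper's identity, and the ratio statement follows by division just as in the paper.
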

\begin{proof}
We have
\begin{equation}\label{eq:helpin}
 \frac{Z_{\tilde{G}, w}^{in} (\lambda)}{(Z_{H,v}^{out} (\lambda))^n} =  \frac{\sum_{\substack{ I \in \mathcal{I}(G) \\w \in I }} Z_{H,v}^{in}(\lambda)^{|I|} Z_{H,v}^{out}(\lambda)^{n-|I|}  }{(Z_{H,v}^{out} (\lambda))^n} = Z_{G, w}^{in} (R_{H,v}(\lambda))
\end{equation}
and
\begin{equation}\label{eq:helpout}
 \frac{Z_{\tilde{G}, w}^{out} (\lambda)}{(Z_{H,v}^{out} (\lambda))^n} =  \frac{\sum_{\substack{ I \in \mathcal{I}(G) \\w \not \in I }} Z_{H,v}^{in}(\lambda)^{|I|} Z_{H,v}^{out}(\lambda)^{n-|I|}  }{(Z_{H,v}^{out} (\lambda))^n} = Z_{G, w}^{out} (R_{H,v}(\lambda)).
\end{equation}
Equality (\ref{eq:implementingpartitionfunctions}) follows from equalities \eqref{eq:helpin} and \eqref{eq:helpout} noting that for any graph $W$ and any vertex $u$ of $W$ we have $Z_{W} (\lambda) = Z_{W,u}^{in} (\lambda) + Z_{W,u}^{out} (\lambda)$. Equality (\ref{eq:implementratios}) follows from equalities \eqref{eq:helpin} and \eqref{eq:helpout} and the definition of the ratio.
\end{proof}

We will also need the following slight variation on Lemma \ref{lem: tree recursion}.

\begin{lemma}\label{lem:mergingtrees}
Let $(G_1,v_1)$ and $(G_2, v_2)$ be rooted graphs, and define the rooted graph $(\tilde{G},\tilde{v})$ by identifying the roots $v_1$ and $v_2$.
Then 
\[
R_{\tilde{G},\tilde{v}}(\lambda) = \lambda^{-1} \cdot R_{G_1,v_1}(\lambda)\cdot R_{G_2,v_2}(\lambda).
\]
\end{lemma}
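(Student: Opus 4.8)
The plan is to compute the ``in'' and ``out'' parts of $Z_{\tilde G,\tilde v}$ separately and then take the quotient, exactly in the spirit of the proofs of Lemma~\ref{lem: tree recursion} and Lemma~\ref{cor:implementing}. The starting point is the observation recorded just before Lemma~\ref{lemma:equivalence}: for any rooted graph $(H,u)$ we have $Z_{H,u}^{out}(\lambda) = Z_{H-u}(\lambda)$ and $Z_{H,u}^{in}(\lambda) = \lambda\cdot Z_{H-N[u]}(\lambda)$, and that the partition function factors over connected components.

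First I would handle the ``out'' term. An independent set $I$ of $\tilde G$ with $\tilde v\notin I$ is nothing but the disjoint union of an independent set of $G_1-v_1$ and an independent set of $G_2-v_2$, since identifying $v_1$ and $v_2$ creates no edges between $G_1-v_1$ and $G_2-v_2$. Hence $Z_{\tilde G,\tilde v}^{out}(\lambda) = Z_{G_1-v_1}(\lambda)\cdot Z_{G_2-v_2}(\lambda) = Z_{G_1,v_1}^{out}(\lambda)\cdot Z_{G_2,v_2}^{out}(\lambda)$. Next the ``in'' term: an independent set $I$ with $\tilde v\in I$ excludes all neighbours of $\tilde v$, i.e.\ all of $N[v_1]$ in the $G_1$-part and all of $N[v_2]$ in the $G_2$-part, so $Z_{\tilde G,\tilde v}^{in}(\lambda) = \lambda\cdot Z_{G_1-N[v_1]}(\lambda)\cdot Z_{G_2-N[v_2]}(\lambda)$. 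Now use $Z_{G_i-N[v_i]}(\lambda) = \lambda^{-1}Z_{G_i,v_i}^{in}(\lambda)$ for $i=1,2$ to rewrite this as $\lambda\cdot \lambda^{-1}Z_{G_1,v_1}^{in}(\lambda)\cdot \lambda^{-1}Z_{G_2,v_2}^{in}(\lambda) = \lambda^{-1}Z_{G_1,v_1}^{in}(\lambda)\cdot Z_{G_2,v_2}^{in}(\lambda)$.

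Dividing the two expressions gives
\[
R_{\tilde G,\tilde v}(\lambda) = \frac{Z_{\tilde G,\tilde v}^{in}(\lambda)}{Z_{\tilde G,\tilde v}^{out}(\lambda)} = \frac{\lambda^{-1}Z_{G_1,v_1}^{in}(\lambda)Z_{G_2,v_2}^{in}(\lambda)}{Z_{G_1,v_1}^{out}(\lambda)Z_{G_2,v_2}^{out}(\lambda)} = \lambda^{-1}R_{G_1,v_1}(\lambda)R_{G_2,v_2}(\lambda),
\]
as desired. I do not anticipate a genuine obstacle here: the only point requiring a word of care is that all identities should be read as identities of rational functions in $\lambda$ (so that the manipulation $Z_{G_i-N[v_i]} = \lambda^{-1}Z_{G_i,v_i}^{in}$ and the final cancellation of common factors in numerator and denominator are legitimate even at the finitely many $\lambda$ where some partition function vanishes); since both sides are rational functions agreeing off a finite set, they agree everywhere.
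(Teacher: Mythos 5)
Your proof is correct and is essentially the paper's own argument: the paper's one-line computation is exactly your identities $Z_{\tilde G,\tilde v}^{in}=\lambda^{-1}Z_{G_1,v_1}^{in}Z_{G_2,v_2}^{in}$ and $Z_{\tilde G,\tilde v}^{out}=Z_{G_1,v_1}^{out}Z_{G_2,v_2}^{out}$, written without the intermediate justification. Your added remark about reading everything as identities of rational functions is a reasonable (if unstated in the paper) point of care.
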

\begin{proof}
We compute
\[
R_{\tilde{G},\tilde{v}}(\lambda) = \frac{Z_{\tilde{G},\tilde{v}}^{in}(\lambda)}{Z_{\tilde{G},\tilde{v}}^{out}(\lambda)} = \frac{Z_{G_1,v_1}^{in}(\lambda)\cdot Z_{G_2,v_2}^{in}(\lambda) \cdot \lambda ^{-1}}{Z_{G_1,v_1}^{out}(\lambda)\cdot Z_{G_2,v_2}^{out}(\lambda)} = \lambda^{-1} \cdot R_{G_1,v_1}(\lambda)\cdot R_{G_2,v_2}(\lambda).
\]
\end{proof}

\subsection{The Shearer region}
Denote the open disk around $0$ with radius $\frac{(\Delta-1)^{\Delta-1}}{\Delta^\Delta}$ by $B_\Delta$. 
This region, also known as the Shearer region, is the maximal open disk centered around $0$ that is zero free for the independence polynomial of graphs of maximum degree $\Delta$~\cite{Scott_2005,Shearer1985}.
We will show the Shearer region is also disjoint from the activity-locus and the density-locus, which will later be used to deal with the $\lambda =0$ case in the proof of our main theorem.

\begin{lemma}\label{lem: RegionAroundlambda=0}
Let $\Delta \geq 2$ be an integer. 
Then $B_\Delta$ is disjoint from the activity-locus, the zero-locus and the density-locus, i.e., we have
 $B_\Delta \cap  \overline{\mathcal{D}_\Delta}= B_\Delta \cap  \mathcal{A}_\Delta= B_\Delta \cap  \overline{\mathcal{Z}_\Delta}= \emptyset$.
\end{lemma}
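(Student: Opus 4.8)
The plan is to show that on $B_\Delta$ all occupation ratios $R_{G,v}(\lambda)$ for $(G,v)\in\mathcal{G}_\Delta$ take values in a fixed compact subset of $\mathbb{C}$ that is bounded away from $-1$; this single uniform bound simultaneously kills all three loci. For the zero-locus this is immediate from Lemma \ref{lemma:equivalence}: if the ratios never equal $-1$ on $B_\Delta$, then $Z_G(\lambda)\neq 0$ there for all $G\in\mathcal{G}_\Delta$, so $B_\Delta\cap\overline{\mathcal{Z}_\Delta}=\emptyset$ (using that $B_\Delta$ is open). For the density-locus, a uniform bound showing $\mathcal{R}^1_\Delta(\lambda)$ omits a neighborhood of $\infty$ (or of $-1$) prevents density in $\hat{\mathbb{C}}$, so $B_\Delta\cap\mathcal{D}_\Delta=\emptyset$, and since the complement of $\overline{\mathcal{D}_\Delta}$ is open we also get $B_\Delta\cap\overline{\mathcal{D}_\Delta}=\emptyset$ once we know $B_\Delta$ is open and disjoint from $\mathcal{D}_\Delta$ — actually to be careful I would instead directly show that a slightly larger open disk (still inside the Shearer radius) is disjoint from $\mathcal{D}_\Delta$, or simply note $\overline{\mathcal{D}_\Delta}\subseteq\mathcal{A}_\Delta$ is what the paper ultimately proves; to stay self-contained I will argue $\mathcal{A}_\Delta$ and $\mathcal{D}_\Delta$ directly. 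For the activity-locus, the family $\{\lambda\mapsto R_{G,v}(\lambda)\}$ is uniformly bounded on $B_\Delta$, hence normal there by Montel's theorem, so $B_\Delta\cap\mathcal{A}_\Delta=\emptyset$.

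So everything reduces to the uniform estimate: there is a compact $K\subset\mathbb{C}\setminus\{-1\}$ with $R_{G,v}(\lambda)\in K$ for all $(G,v)\in\mathcal{G}_\Delta$ and all $\lambda\in B_\Delta$. By Theorem \ref{thm: stable paths} (or Lemma \ref{lem: zero implies -1 at leaf} and the reductions around it) it suffices to prove this for rooted \emph{trees}, where Lemma \ref{lem: tree recursion} gives the recursion $R_{T,v}(\lambda)=\lambda/\prod_{s=1}^{d}(1+R_{T_s,u_s}(\lambda))$ with $d\le\Delta$ (and $d\le\Delta-1$ below the root if one tracks degrees, though $d\le\Delta$ suffices). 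I would set this up as an invariant-region argument: find $r\in(0,1)$ and radius $\rho=\tfrac{(\Delta-1)^{\Delta-1}}{\Delta^\Delta}$ such that the closed disk $\overline{D(0,r)}$ satisfies: for every $\lambda$ with $|\lambda|<\rho$ and every choice of $z_1,\dots,z_d\in\overline{D(0,r)}$ with $d\le\Delta$, the value $\lambda/\prod_{s=1}^d(1+z_s)$ again lies in $\overline{D(0,r)}$, \emph{and} $\overline{D(0,r)}$ does not contain $-1$, i.e.\ $r<1$. The worst case is $|\lambda|$ near $\rho$ and all $z_s=-r$ (real, pushing $|1+z_s|$ down to $1-r$), giving the constraint $\rho/(1-r)^\Delta \le r$. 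Substituting $\rho=(\Delta-1)^{\Delta-1}/\Delta^\Delta$, the choice $r=\tfrac{1}{\Delta-1}$ (equivalently $1-r=\tfrac{\Delta-2}{\Delta-1}$ for $\Delta\ge3$; for $\Delta=2$ one checks separately) should work — this is exactly the extremal configuration underlying the Shearer bound. A clean base case is needed: the single-vertex rooted graph gives $R=\lambda\in\overline{D(0,r)}$ since $|\lambda|<\rho<r$. An induction on $|V(T)|$ then yields $R_{T,v}(\lambda)\in\overline{D(0,r)}$ for all rooted trees of max degree $\le\Delta$, and Theorem \ref{thm: stable paths}(ii) transfers this to all of $\mathcal{G}_\Delta$.

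The main obstacle I anticipate is pinning down the exact radius $r$ of the invariant disk so that the inequality $\rho/(1-r)^\Delta\le r$ holds with $\rho$ equal to the Shearer radius; this is a one-variable calculus check (and it is tight, which is why the lemma is stated with precisely $B_\Delta$), and the endpoint/degenerate cases — $\lambda=0$, the root degree being $0$ or $1$, and $\Delta=2$ — need to be handled by direct substitution rather than the generic bound. A secondary point to be careful about is that Montel gives normality from a uniform bound only on an open set, so I would apply the estimate on the open disk $B_\Delta$ itself (the estimate holds with strict inequality $|R_{G,v}(\lambda)|\le r<1$ there), which is exactly what is needed; and for the density-locus I note that $R_{G,v}(\lambda)$ staying in $\overline{D(0,r)}$ with $r<1$ means the set $\mathcal{R}^1_\Delta(\lambda)$ misses the nonempty open set $\hat{\mathbb{C}}\setminus\overline{D(0,r)}$, hence is not dense, and this persists on the open set $B_\Delta$ so $B_\Delta\cap\overline{\mathcal{D}_\Delta}=\emptyset$.
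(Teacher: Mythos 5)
Your overall strategy is the same as the paper's: reduce to trees via Theorem~\ref{thm: stable paths}, run an induction with the recursion of Lemma~\ref{lem: tree recursion} to trap all ratios in a disk of radius less than $1$, and then read off the three conclusions from Montel, from Lemma~\ref{lemma:equivalence}, and from non-density (with the $\lambda=0$ and openness caveats handled as you describe). That part is fine.

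The gap is in the one step you defer as "a one-variable calculus check": as you have set it up, the check \emph{fails}, and your parenthetical claim that "$d\le\Delta$ suffices" without tracking root degrees is false. If you demand a single closed disk $\overline{D(0,r)}$ invariant under $(z_1,\dots,z_d)\mapsto \lambda/\prod_{s=1}^d(1+z_s)$ for all $d\le\Delta$ and all $|\lambda|<\rho=\tfrac{(\Delta-1)^{\Delta-1}}{\Delta^\Delta}$, the constraint is $\rho\le r(1-r)^{\Delta}$; but $\max_{r\in(0,1)} r(1-r)^{\Delta}=\tfrac{\Delta^{\Delta}}{(\Delta+1)^{\Delta+1}}$, which is strictly smaller than $\rho$ (the sequence $\Delta\mapsto\tfrac{(\Delta-1)^{\Delta-1}}{\Delta^\Delta}$ is strictly decreasing), so \emph{no} radius $r$ works. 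Your specific choice $r=\tfrac{1}{\Delta-1}$ also fails numerically even for $\Delta=3$: there $\rho/(1-r)^{\Delta}=\tfrac{4}{27}\cdot 8=\tfrac{32}{27}$, which is much larger than $r=\tfrac12$. The fix is exactly the degree-tracking you dismissed as optional: every child $u_s$ has degree at most $\Delta-1$ in its subtree, so the relevant constraint is $\rho\le r(1-r)^{\Delta-1}$, which holds (with equality --- this is where the tightness lives) at $r=\tfrac1\Delta$; one then gets the two-tier bound $|R_{T,v}(\lambda)|<\tfrac1\Delta$ when $\deg(v)\le\Delta-1$ and $|R_{T,v}(\lambda)|<\tfrac{1}{\Delta-1}$ when $\deg(v)=\Delta$, which is the statement the paper actually proves by induction and which still keeps all ratios inside the open unit disk. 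With that correction the rest of your argument goes through verbatim.
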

\begin{proof}
We claim that for any rooted graph $(G,v) \in \mathcal{G}_\Delta$ and any $\lambda \in B_\Delta$ we have
\[
|R_{G,v}(\lambda)| <
\begin{cases}
 \frac{1}{\Delta} \ \ \ \ \ \text{ if } \deg(v) \leq \Delta - 1,\\
\frac{1}{\Delta-1} \ \ \text{ otherwise.}
\end{cases}
\]
By Theorem~\ref{thm: stable paths} we can equivalently work with rooted trees $(T,v) \in \mathcal{G}_\Delta$ instead of rooted graphs.

We will proof the claim by induction on the number of vertices of $T$. If $|V(T)| = 1$, we have $\deg(v) =0$ and therefore $R_{T,v}(\lambda) = \lambda$. The claim then follows as $\frac{(\Delta-1)^{(\Delta-1)}}{\Delta^{\Delta}} < \frac{1}{\Delta}$ for all $\Delta \geq 2$. Suppose the claim holds for all rooted trees $(T,v) \in \mathcal{G}_\Delta$ with $|V(T)| \leq n$ for some $n\geq 1$. Let $(\tilde{T},\tilde{v}) \in \mathcal{G}_\Delta$ be a rooted tree with $n+1$ vertices. Denote the $d$ children of $\tilde{v}$ as $u_1, \ldots, u_d$ and denote $(T_i, u_i)$ for the rooted subtree of $\tilde{T}$ with root $u_i$. By Lemma~\ref{lem: tree recursion} we have
\[
R_{\tilde{T},\tilde{v}}(\lambda) = \frac{\lambda}{\prod_{i=1}^d (1+R_{T_i, u_i}(\lambda))}.
\]
We note that each $(T_i, u_i)$ has at most $n$ vertices, hence the induction hypotheses applies. Furthermore in $T_i$ we have $\deg(u_i) \leq \Delta -1$ as $\tilde{T}$ has maximum degree at most $\Delta$. Thus we see
\begin{align*}
    |R_{\tilde{T},\tilde{v}}(\lambda)| &= \frac{|\lambda|}{\prod_{i=1}^d |1+R_{T_i, u_i}(\lambda)|} \leq \frac{|\lambda|}{\prod_{i=1}^d (1-|R_{T_i, u_i}(\lambda)|)} \\
    &< \frac{|\lambda|}{(1-\frac{1}{\Delta})^d} = \frac{\Delta^d |\lambda|}{(1-\Delta)^d} < \frac{(\Delta-1)^{\Delta-1-d}}{\Delta^{\Delta-d}}.
\end{align*}
Now if $d\leq \Delta - 1$, we see $\frac{(\Delta-1)^{\Delta-1-d}}{\Delta^{\Delta-d}} < \frac{1}{\Delta}$ hence the claim follows for that case. If $d=\Delta$ we have $\frac{(\Delta-1)^{\Delta-1-d}}{\Delta^{\Delta-d}} = \frac{1}{\Delta-1}$,
which proves the claim. 

It follows from the claim above that the family of ratios $\mathcal{R}_\Delta$ maps $B_\Delta$ into the open unit disk, for all $\Delta \geq 2$. 
So clearly $B_\Delta \cap \mathcal{D}_\Delta = \emptyset$. As $B_\Delta$ is open, we have $B_\Delta \cap \overline{\mathcal{D}_\Delta} = \emptyset$.

By Montel's Theorem the family  $\mathcal{R}_\Delta$ is normal on $B_\Delta$, so $B_\Delta \cap \mathcal{A}_\Delta = \emptyset$. 
We showed for all rooted graphs $(G,v) \in \mathcal{G}_\Delta$ that $|R_{G,v} (\lambda)| < \frac{1}{\Delta-1} \leq 1 $, hence the ratio will never equal $-1$. For $\lambda \neq 0$, we see by 
Lemma~\ref{lemma:equivalence} that $\lambda \not \in \mathcal{Z}_\Delta$. For $\lambda = 0$ we note that $Z_G(0) = 1$ for any graph $G \in \mathcal{G}_\Delta$. 
It follows that $B_\Delta \cap \mathcal{Z}_\Delta = \emptyset$. 
Again, as $B_\Delta$ is open, we have $B_\Delta \cap  \overline{\mathcal{Z}_\Delta}= \emptyset$. This completes the proof.
\end{proof}



\begin{remark}\label{rmk:shearerandcardioid}
We note that on the negative real line the Shearer region agrees with the interior of the cardioid, i.e we have $\mathbb{R}_{\leq 0} \cap B_\Delta = \mathbb{R}_{\leq 0} \cap \Int{\Lambda_\Delta}$ for all integers $\Delta \geq 3$.
\end{remark}

Lemmas~\ref{lem: RegionAroundlambda=0} and \ref{lemma:equivalence} together imply one of the inclusions in our main result.

\begin{corollary}
    \label{cor:zerofree->normal}
    For all $\Delta \geq 2$ the activity-locus is contained in the zero-locus, i.e. $\mathcal{A}_\Delta \subseteq \overline{\mathcal{Z}_\Delta}$.
\end{corollary}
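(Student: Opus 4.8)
The plan is to prove the contrapositive: if $\lambda_0\notin\overline{\mathcal{Z}_\Delta}$, then $\lambda_0\notin\mathcal{A}_\Delta$, i.e.\ the family $\mathcal{R}_\Delta=\{R_{G,v}\mid (G,v)\in\mathcal{G}_\Delta\}$ is normal at $\lambda_0$. Since normality is a local property, it suffices to exhibit a single open disk around $\lambda_0$ on which $\mathcal{R}_\Delta$ is normal, regarding each $R_{G,v}$ as a holomorphic map into $\hat{\mathbb{C}}$ so that poles cause no trouble.

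First I would dispose of the origin: by Lemma~\ref{lem: RegionAroundlambda=0} we have $0\in B_\Delta$ and $B_\Delta\cap\mathcal{A}_\Delta=\emptyset$, so $0\notin\mathcal{A}_\Delta$ and there is nothing to check when $\lambda_0=0$. Hence assume $\lambda_0\neq 0$. Because $\overline{\mathcal{Z}_\Delta}$ is closed and does not contain $\lambda_0$, we may choose an open disk $D\ni\lambda_0$ with $D\cap\overline{\mathcal{Z}_\Delta}=\emptyset$ and $0\notin D$. Then every $\lambda\in D$ satisfies $\lambda\neq 0$ and $Z_G(\lambda)\neq 0$ for all $G\in\mathcal{G}_\Delta$.

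Now I would invoke Lemma~\ref{lemma:equivalence}: for $\lambda\in D$ (so $\lambda\in\mathbb{C}^*$) the failure of statement (1) forces the failure of statement (3), which means $R_{G,v}(\lambda)\notin\{-1,0,\infty\}$ for every $(G,v)\in\mathcal{G}_\Delta$. Consequently each member of $\mathcal{R}_\Delta$, viewed as a map $D\to\hat{\mathbb{C}}$, omits the three fixed values $-1,0,\infty$, and Montel's theorem gives that $\mathcal{R}_\Delta$ is normal on $D$. Therefore $\lambda_0\notin\mathcal{A}_\Delta$, which is the desired inclusion $\mathcal{A}_\Delta\subseteq\overline{\mathcal{Z}_\Delta}$.

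There is no real obstacle here — this is exactly the ``immediate consequence of Montel'' alluded to in the organization. The only points requiring care are the exclusion of $\lambda=0$ (where statement (3) of Lemma~\ref{lemma:equivalence} genuinely fails, since $R_{G,v}(0)=0$ always), handled by Lemma~\ref{lem: RegionAroundlambda=0}, and the convention that the $R_{G,v}$ are treated as $\hat{\mathbb{C}}$-valued so that the omitted-value count of three is legitimate.
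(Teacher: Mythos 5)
Your proposal is correct and follows exactly the paper's own argument: dispose of $\lambda_0=0$ via Lemma~\ref{lem: RegionAroundlambda=0}, choose a neighborhood avoiding $0$ and $\overline{\mathcal{Z}_\Delta}$, use Lemma~\ref{lemma:equivalence} to see that the ratios omit $\{-1,0,\infty\}$ there, and conclude by Montel's theorem. No differences worth noting.
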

\begin{proof}
Equivalently, we want to show that for any $ \lambda \in \mathbb{C} \setminus \overline{\mathcal{Z}_\Delta}$ the family $\mathcal{R}_\Delta$ is normal at $\lambda$. 
By Lemma~\ref{lem: RegionAroundlambda=0} this is the case for $\lambda = 0$ and thus we
assume that $\lambda \neq 0$. Take a sufficiently small neighborhood $U$ 
around $\lambda$ such that $0 \not \in U$ and
$U \cap \overline{\mathcal{Z}_\Delta} = \emptyset$. It follows from 
Lemma~\ref{lemma:equivalence} that the family $\mathcal{R}_\Delta$ 
avoids $\{-1, 0, \infty\}$ for all $\lambda' \in U$. Hence by Montel's Theorem the 
family is normal on $U$.
\end{proof}

\section{Graphs with maximum degree at most two}\label{sec: Degree 2}
In this section we will deal with graphs of maximum degree at most two, in other words graphs for which each component is a path or a cycle. 
We will show that
\[
    \overline{\mathcal{Z}_{2}} = \mathcal{A}_2 = (-\infty,-1/4]
    \quad
    \text{ and }
    \quad
    \mathcal{D}_{2}^1 = \mathcal{D}_{2}^2= \emptyset.
\]
An explicit description of $\mathcal{Z}_{2}$ was already known~\cite{HL72,Scott_2005}; we provide a new proof for the sake of completeness.

Note that this is in contrast to the situation for $\Delta \geq 3$ as stated in Theorem~\ref{thm: main theorem}.
It follows from Lemma~\ref{lem: zero implies -1 at leaf} that $\mathcal{Z}_2$
is equal to the set of $\lambda$ for which there is a $(T,v) \in \mathcal{G}_2^1$,
with $T$ a tree, such that $R_{T,v}(\lambda) = -1$. The collection $\mathcal{G}_2^1$
consists of rooted graphs where the component containing the root is a path rooted at an endpoint. 
Let $(P_n,v_n)$ denote a path
on $n$ vertices rooted at an endpoint $v_n$. If we let $f_{\lambda}(z) = \lambda/(1+z)$
then it follows from Lemma~\ref{lemma: paths} that
$R_{P_n,v_n}(\lambda) = f_{\lambda}^n(0)$. For fixed $\lambda$ the map 
$f_\lambda$ is a M\"obius transformation and therefore we first review some properties of M\"obius transformation.

\subsection{M\"obius transformations}
\label{sec: Mobius transformations}
Everything that is done in this section can for example be found in
\cite[Section 4.3]{Beardon1995}.
Let $\mathcal{M}$ denote the group of M\"obius transformations with composition
as group operation and let $\textrm{GL}_2(\mathbb{C})$ denote the group 
of $2\times2$ invertible matrices with complex entries. The following map is a surjective 
group homomorphism.
\[
	\Phi:\textrm{GL}_2(\mathbb{C}) \to \mathcal{M},\quad 
	\Big(
	\begin{array}{cc}
	 a & b \\
	 c & d \\
	\end{array}
	\Big) \mapsto \Big(z \mapsto \frac{a z + b}{c z + d}\Big).
\]
For any $g \in \mathcal{M}$ take an element $A \in \Phi^{-1}(\{g\})$
and define $\tr^2(g) = \tr(A)^2/\det(A)$. This quantity does not depend
on the choice of $A$ and thus $\tr^2$ is a well defined function on $\mathcal{M}$.
We say that elements $f,g \in \mathcal{M}$ are conjugate if there exists 
an $h \in \mathcal{M}$ such that $f = h \circ g \circ h^{-1}$. 
\begin{lemma}[{\cite[Theorem 4.3.4]{Beardon1995}}]
    \label{lem: Mobius classification}
    Let $f,g \in \mathcal{M}$ not equal to the identity. 
    The maps $f,g$ are conjugate if and only if $\tr^2(f) = \tr^2(g)$.
    It follows that $g$ is conjugate to
    \begin{itemize}
    \item
    a rotation $z \mapsto e^{i \theta} \cdot z$ for some $\theta \in (0,\pi]$
    if and only if $\tr^2(g) \in [0,4)$;
    \item
    the translation $z \mapsto z+1$ if and only if $\tr^2(g) = 4$;
    \item 
    a multiplication $z \mapsto \xi \cdot z$ for some $\xi \in \mathbb{C}^*$
    with $|\xi| < 1$ if and only if $\tr^2(g) \in \mathbb{C}\setminus [0,4]$.
    \end{itemize}
    The map $g$ is said to be elliptic, parabolic or loxodromic in these 
    three cases respectively.
\end{lemma}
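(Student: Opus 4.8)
The plan is to reduce every non-identity Möbius transformation to one of three normal forms and to read off $\tr^2$ from each of them. Since $\Phi$ is a group homomorphism and the scalar $\tr(A)^2/\det(A)$ is unchanged under $A \mapsto BAB^{-1}$, the function $\tr^2$ is a conjugacy invariant on $\mathcal{M}$; this already gives the ``only if'' direction of the first assertion. For the converse, and for the explicit trichotomy, I would first sort non-identity maps by fixed-point count. Writing $g$ as $z \mapsto (az+b)/(cz+d)$, the fixed-point equation $cz^2 + (d-a)z - b = 0$ (together with the behaviour at $\infty$ when $c = 0$) is a possibly degenerate quadratic whose discriminant is $(a+d)^2 - 4(ad - bc) = \det(A)\bigl(\tr^2(g) - 4\bigr)$. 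Hence a non-identity $g$ has exactly one fixed point on $\hat{\mathbb{C}}$ precisely when $\tr^2(g) = 4$, and exactly two fixed points otherwise.

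Next I would produce the normal forms by conjugating the fixed points into standard position. If $g$ has a single fixed point, conjugating by any Möbius map sending that point to $\infty$ yields a map fixing only $\infty$, i.e. $z \mapsto z + \beta$ with $\beta \neq 0$; conjugating further by $z \mapsto z/\beta$ gives $z \mapsto z + 1$, whose standard lift has $\tr = 2$, $\det = 1$, so $\tr^2(g) = 4$. If $g$ has two fixed points, conjugating them to $0$ and $\infty$ yields $z \mapsto \xi z$ for some $\xi \in \mathbb{C}^* \setminus \{1\}$, with lift $\mathrm{diag}(\xi,1)$, giving $\tr^2(g) = (\xi+1)^2/\xi = \xi + 2 + \xi^{-1}$. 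The only remaining freedom here is interchanging the two fixed points, which replaces $\xi$ by $\xi^{-1}$ (conjugation by $z \mapsto 1/z$), and indeed $\xi + 2 + \xi^{-1}$ is symmetric under $\xi \leftrightarrow \xi^{-1}$, so $\tr^2$ is well defined on this conjugacy class.

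The analytic core is then to understand the map $\xi \mapsto \xi + 2 + \xi^{-1}$, which is the Joukowski map $J(w) = w + w^{-1}$ shifted by $2$. Using that $J$ restricts to a biholomorphism of the punctured unit disk $\{0 < |\xi| < 1\}$ onto $\mathbb{C} \setminus [-2,2]$ and sends the punctured circle $\{|\xi| = 1,\ \xi \neq 1\}$ onto $(-2,2]$ with antipodal points identified, one sees that $\xi + 2 + \xi^{-1}$ maps $\{0 < |\xi| < 1\}$ bijectively onto $\mathbb{C} \setminus [0,4]$ and maps $\{|\xi| = 1,\ \xi \neq 1\}$ onto $[0,4)$ with the value $2 + 2\cos\theta$ having fibre $\{e^{i\theta}, e^{-i\theta}\}$ — in particular the restriction to representatives $\xi = e^{i\theta}$ with $\theta \in (0,\pi]$ is a bijection onto $[0,4)$. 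Combining this with the fixed-point count: $\tr^2(g) \in [0,4)$ forces $g$ to be conjugate to a unique rotation $z \mapsto e^{i\theta}z$, $\theta \in (0,\pi]$; $\tr^2(g) = 4$ forces $g$ to be conjugate to $z \mapsto z+1$; and $\tr^2(g) \in \mathbb{C}\setminus[0,4]$ forces $g$ to be conjugate to $z \mapsto \xi z$ with the unique $\xi$ in the punctured disk, so $|\xi| < 1$. Conjugacy of two maps sharing a value of $\tr^2$ is then immediate by transitivity, and the stated trichotomy is read off the normal form. I expect the main obstacle to be the bookkeeping at the boundary $|\xi| = 1$: one must verify that the two preimages $e^{\pm i\theta}$ genuinely give conjugate rotations (via $z \mapsto 1/z$), so that $\theta \in (0,\pi]$ is a legitimate unique representative, and that no rotation is accidentally conjugate to a parabolic or loxodromic element — both points being secured by the fixed-point dichotomy established first.
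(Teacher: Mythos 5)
The paper does not actually prove this lemma: it is quoted verbatim from Beardon (Theorem 4.3.4), and Section~3.1 explicitly defers to \cite[Section 4.3]{Beardon1995}. So there is no in-paper argument to compare against; what you have written is a self-contained proof, and it is the standard one (essentially Beardon's): conjugation-invariance of $\tr^2$ from the homomorphism $\Phi$, the fixed-point dichotomy via the discriminant identity $(a+d)^2-4(ad-bc)=\det(A)(\tr^2(g)-4)$, reduction to the normal forms $z+1$ and $z\mapsto\xi z$, and the analysis of $\xi\mapsto \xi+2+\xi^{-1}$ to see that $\tr^2$ separates the normal forms up to the allowed symmetry $\xi\leftrightarrow\xi^{-1}$. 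The logic is complete: you correctly note that the multiplier is well defined only up to $\xi\leftrightarrow\xi^{-1}$, that this symmetry is exactly what the fibres of the Joukowski map record, and that the fixed-point count prevents any accidental conjugacy across the three classes. Two cosmetic slips, both immediately overridden by your subsequent correct statements: the image of the punctured unit circle under $J(w)=w+w^{-1}$ is $[-2,2)$ (the value $-2$ is attained at $\xi=-1$, the value $2$ only at the excluded point $\xi=1$), not $(-2,2]$, which is consistent with your later (correct) claim that the image under $\xi+2+\xi^{-1}$ is $[0,4)$; and the identified points $e^{\pm i\theta}$ are complex conjugates (equivalently reciprocals on the unit circle), not antipodes. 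Neither affects the argument.
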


Observe that if $f = h \circ g \circ h^{-1}$ then $f^n = h \circ g^n \circ h^{-1}$.
It follows that, to understand the dynamical behaviour of a M\"obius transformation
$g$, it is enough to understand the dynamical behaviour of any element in 
the conjugacy class of $g$. If $g$ is loxodromic then it has two distinct fixed points
in $\Chat$, one of which is attracting and the other is repelling. Under iteration of 
$g$ the orbit of every initial value except for the repelling fixed point converges to the
attracting fixed point. If $g$ is parabolic then $g$ has a unique fixed point, and
under iteration of $g$ all orbits converge to this fixed point. If
$g$ is elliptic then $g$ is conjugate to a rotation $z \mapsto e^{i \theta} \cdot z$.
We say that $g$ is conjugate to a \emph{rational rotation} if $\theta$ is a rational multiple
of $\pi$ and otherwise we say that $g$ is conjugate to an \emph{irrational rotation}. If $g$ 
is conjugate to a rational rotation there is a positive integer $n$ such that
$g^n$ is equal to the identity. If $g$ is conjugate to an irrational rotation 
it has two fixed points, say $p,q$, and $\Chat\setminus\{p,q\}$ is foliated by generalized 
circles on which $g$ acts conjugately to an irrational rotation.

We end this subsection by classifying $f_\lambda$ in terms of its parameter.
\begin{lemma}
    \label{lem: f_lambda classification}
    The M\"obius transformation $f_\lambda$ is 
    \begin{itemize}
        \item 
        elliptic if $\lambda \in (-\infty,-1/4)$;
        \item 
        parabolic if $\lambda = -1/4$;
        \item
        loxodromic if $\lambda \in \mathbb{C}^*\setminus (-\infty,-1/4]$.
    \end{itemize}
\end{lemma}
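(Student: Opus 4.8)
The plan is to compute $\tr^2(f_\lambda)$ explicitly and then invoke Lemma~\ref{lem: Mobius classification}. First I would pick the matrix representative
\[
A = \begin{pmatrix} 0 & \lambda \\ 1 & 1 \end{pmatrix} \in \Phi^{-1}(\{f_\lambda\}),
\]
since $f_\lambda(z) = \lambda/(1+z) = (0\cdot z + \lambda)/(1\cdot z + 1)$. Then $\tr(A) = 1$ and $\det(A) = -\lambda$, so
\[
\tr^2(f_\lambda) = \frac{\tr(A)^2}{\det(A)} = \frac{1}{-\lambda} = -\frac{1}{\lambda}.
\]
(One should note $\lambda \in \mathbb{C}^*$ is needed for this to make sense, which matches the statement; also $f_\lambda$ is never the identity for $\lambda \neq 0$, so Lemma~\ref{lem: Mobius classification} applies.)

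Next I would translate the three regimes of Lemma~\ref{lem: Mobius classification} through the map $\lambda \mapsto -1/\lambda$. The transformation $w = -1/\lambda$ sends the real interval $(-\infty,-1/4)$ to $(0,4)$: indeed for $\lambda < 0$ we have $-1/\lambda > 0$, and $\lambda < -1/4 \iff -1/\lambda < 4$ (dividing the inequality $\lambda < -1/4$ by the negative number $\lambda$ reverses it, giving $1 > -1/(4\lambda)$, i.e. $-1/\lambda < 4$). Hence $\tr^2(f_\lambda) \in [0,4)$ precisely when $\lambda \in (-\infty,-1/4)$ — note $\tr^2(f_\lambda) = 0$ is impossible since $-1/\lambda \neq 0$ — so $f_\lambda$ is elliptic exactly there. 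Similarly $\tr^2(f_\lambda) = 4 \iff -1/\lambda = 4 \iff \lambda = -1/4$, giving the parabolic case. Finally $\tr^2(f_\lambda) \in \mathbb{C} \setminus [0,4]$ for all remaining $\lambda \in \mathbb{C}^* \setminus (-\infty,-1/4]$, which is the loxodromic case.

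This argument is essentially a one-line computation followed by bookkeeping, so there is no real obstacle; the only mild care needed is checking that $\lambda \mapsto -1/\lambda$ maps $(-\infty,-1/4]$ onto $(0,4]$ bijectively and that the complement of this interval in $\mathbb{C}^*$ maps to the complement of $[0,4]$ in $\mathbb{C}$ (the point $0 \in [0,4]$ is never attained, consistent with $\lambda$ ranging over $\mathbb{C}^*$). I would present the proof as: choose the matrix representative, compute the trace-squared, and then case-split on where $-1/\lambda$ lands relative to the interval $[0,4]$, citing Lemma~\ref{lem: Mobius classification} in each case.
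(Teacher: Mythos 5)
Your proof is correct and follows exactly the paper's route: the paper's proof is the one-liner ``this follows from Lemma~\ref{lem: Mobius classification} and the fact that $\tr^2(f_\lambda) = -1/\lambda$,'' and you have simply supplied the (correct) matrix computation and interval bookkeeping behind that fact.
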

\begin{proof}
This follows from Lemma~\ref{lem: Mobius classification} and the fact
that $\tr^2(f_\lambda) = -1/\lambda$.
\end{proof}

\subsection{Determining the zero and activity-locus}
In this subsection we we will show that both $\overline{\mathcal{Z}_2}$
and $\mathcal{A}_2^1$ are equal to $(-\infty, -1/4]$. By definition 
we have $\mathcal{A}_2^1 \subseteq \mathcal{A}_2$ and by 
Corollary~\ref{cor:zerofree->normal} we have
$\mathcal{A}_2 \subseteq \overline{\mathcal{Z}_2}$, hence it will follow that $\mathcal{A}_2$
is equal to $(-\infty, -1/4]$ as well.
\begin{lemma}\label{lem:zerosDelta2}
    Zeros of $Z_G$ for graphs $G \in \mathcal{G}_2$ form a dense subset of the interval $(-\infty, -1/4)$, hence $\overline{\mathcal{Z}_2} = (-\infty, -1/4]$.
\end{lemma}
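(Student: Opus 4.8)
The plan is to use the classification of $f_\lambda$ from Lemma~\ref{lem: f_lambda classification} together with the identification $R_{P_n,v_n}(\lambda) = f_\lambda^n(0)$ coming from Lemma~\ref{lemma: paths}. First I would observe that $\overline{\mathcal{Z}_2} = (-\infty,-1/4]$ will follow from the density statement: since $\mathcal{Z}_2$ is always a subset of $(-\infty,-1/4]$ (this follows from Corollary~\ref{cor:zerofree->normal} applied with $\Delta=2$, giving $\mathcal{Z}_2\subseteq\overline{\mathcal{Z}_2}\supseteq\mathcal{A}_2$, or more directly from the absence of zeros outside this ray, which is classical and also recoverable from Lemma~\ref{lem: RegionAroundlambda=0} combined with the loxodromic/contracting analysis below), density of $\mathcal{Z}_2$ in $(-\infty,-1/4)$ immediately gives that its closure is the closed ray $(-\infty,-1/4]$.

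For the density itself, fix $\lambda \in (-\infty,-1/4)$. By Lemma~\ref{lem: f_lambda classification} the M\"obius transformation $f_\lambda$ is elliptic, so by Lemma~\ref{lem: Mobius classification} it is conjugate to a rotation $z \mapsto e^{i\theta}z$ with $\tr^2(f_\lambda) = -1/\lambda \in [0,4)$ determining $\theta$ via $e^{i\theta} + e^{-i\theta} + 2 = -1/\lambda$, i.e. $\theta$ depends continuously and non-trivially on $\lambda$. The key point is then: if $\theta$ is a rational multiple of $\pi$, say $\theta = 2\pi p/q$ in lowest terms, then $f_\lambda^q = \mathrm{id}$, so $R_{P_q, v_q}(\lambda) = f_\lambda^q(0) = 0$; but we want $R = -1$, not $R = 0$. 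To fix this I would instead track the orbit $0, f_\lambda(0), f_\lambda^2(0), \dots, f_\lambda^{q-1}(0)$, which is a periodic orbit of period $q$ (the conjugating M\"obius map sends $0$ to a point on an invariant circle, and rotation by $2\pi p/q$ has every orbit of exact period $q$), and note that these $q$ values are distinct points of $\Chat$. Because $f_\lambda$ has real coefficients and $0$ is real, the whole orbit lies in $\Chat \cap \mathbb{R} = \mathbb{R}\cup\{\infty\}$ — wait, that is false since $f_\lambda$ elliptic forces complex behaviour — so more carefully: the orbit is a finite $f_\lambda$-invariant set, and I want to show that as $\lambda$ ranges over $(-\infty,-1/4)$ and $q$ over the integers, the value $f_\lambda^n(0)$ for suitable $n$ can be made to equal $-1$ exactly, or at least that the set of such $\lambda$ is dense.

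The cleanest route, and the one I would actually carry out: the map $\lambda \mapsto f_\lambda^n(0)$ is, for each fixed $n$, a rational (in fact M\"obius-in-disguise, a ratio of polynomials) function of $\lambda$, and solving $f_\lambda^n(0) = -1$ is equivalent to $Z_{P_{n+1}}(\lambda) = 0$ where $P_{n+1}$ is the path on $n+1$ vertices. The zeros of $Z_{P_{n+1}}$ are classical: using the recursion $Z_{P_{n+1}} = Z_{P_n} + \lambda Z_{P_{n-1}}$ one finds (by the standard substitution diagonalizing this linear recursion, whose characteristic roots are $\tfrac12(1\pm\sqrt{1+4\lambda})$) that the zeros of $Z_{P_n}$ are exactly $\lambda = -\tfrac{1}{4\cos^2(k\pi/(n+1))}$ for $k = 1, \dots, \lfloor n/2 \rfloor$, all lying in $(-\infty,-1/4]$, and in fact filling up $(-\infty,-1/4)$ densely as $n \to \infty$ since $\cos^2(k\pi/(n+1))$ ranges densely over $[0,1)$. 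This gives density directly, and the main obstacle is simply establishing (or citing, via \cite{HL72,Scott_2005}) this explicit formula for the path zeros and checking the density of the values $-1/(4\cos^2(k\pi/(n+1)))$; everything else is bookkeeping. I would present the recursion-and-characteristic-roots computation as the core step, then conclude $\overline{\mathcal{Z}_2} = (-\infty,-1/4]$ as above, noting separately that $-1/4 \in \overline{\mathcal{Z}_2}$ as the limit of these path zeros.
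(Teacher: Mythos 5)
Your second route --- diagonalizing the linear recursion $Z_{P_{n+1}} = Z_{P_n} + \lambda Z_{P_{n-1}}$ and reading off the path zeros explicitly --- is correct in substance and genuinely different from the paper's argument. The paper never computes the path zeros in closed form inside the proof; it instead characterizes $\mathcal{Z}_2$ as the set of $\lambda$ for which $f_\lambda$ is conjugate to a \emph{rational} rotation, the key trick being $f_\lambda^{-2}(0)=-1$: if $f_\lambda^n=\mathrm{id}$ with $n\geq 3$ minimal, then $f_\lambda^{n-2}(0)=f_\lambda^{-2}(0)=-1$, so $Z_{P_{n-2}}(\lambda)=0$. That is exactly the step you were missing in your first (abandoned) attempt when you observed ``we want $R=-1$, not $R=0$''; with it, your first route becomes the paper's proof. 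The conjugacy-class viewpoint has the advantage of being reused later (Lemma~\ref{lem: N_2^1} and Theorem~\ref{thm:nonnormalimpliesdense_realcase} both exploit rotations of $f_\mu$ for $\mu<-1/4$), while your explicit computation delivers the closed formula for $\mathcal{Z}_2$ that the paper only records as a remark after its proof.

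Two corrections. First, your trigonometric formula is off by one: from $Z_{P_n}=\frac{x_+^{n+2}-x_-^{n+2}}{x_+-x_-}$ with $x_\pm=\frac{1\pm\sqrt{1+4\lambda}}{2}$ one finds the zeros of $Z_{P_n}$ at $\lambda=-\frac{1}{4\cos^2(k\pi/(n+2))}$, not $k\pi/(n+1)$; your version predicts no zero for $Z_{P_1}=1+\lambda$ and gives $-1$ instead of $-1/2$ for $Z_{P_2}=1+2\lambda$. This does not affect the density conclusion. Second, your justification that $\mathcal{Z}_2\subseteq(-\infty,-1/4]$ via Corollary~\ref{cor:zerofree->normal} points the wrong way: that corollary gives $\mathcal{A}_2\subseteq\overline{\mathcal{Z}_2}$ and says nothing about where the zeros lie. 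The clean argument is via Lemma~\ref{lem: zero implies -1 at leaf}: any zero of any $G\in\mathcal{G}_2$ forces $f_\lambda^n(0)=-1$ for some path, hence $f_\lambda^{n+2}(0)=0$, so $0$ is a periodic, non-fixed point of $f_\lambda$; this is impossible when $f_\lambda$ is loxodromic or parabolic, so only elliptic parameters, i.e.\ $\lambda\in(-\infty,-1/4)$, can occur.
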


\begin{proof}
We claim that $\lambda \in \mathcal{Z}_2$ if and only if $f_\lambda$ is conjugate
to a rational rotation. 

First suppose that $\lambda \in \mathcal{Z}_2$. Then, 
by Lemma~\ref{lem: zero implies -1 at leaf}, there is an $n \geq 1$ such 
that for the path on $n$ vertices $P_n$ rooted at the endpoint $v_n$ we have 
$R_{P_n,v_n}(\lambda) = -1$ and thus $f_\lambda^n(0) = -1$. Because 
$f_\lambda^2(-1) =0$ regardless of the value of $\lambda$ we obtain that
$f_\lambda^{n+2}(0) = 0$. This means that $0$ is a periodic point of $f_\lambda$
of period strictly larger than $1$. This can only occur if $f_\lambda$ is conjugate
to a rational rotation, as is explained in Section~\ref{sec: Mobius transformations}.

Suppose that $f_\lambda$ is conjugate to a rational rotation.
Note that this implies that $\lambda$ is not equal to zero. 
Take the smallest positive integer $n$ such that $f_\lambda^n$ is equal
to the identity and thus specifically $f_\lambda^n(0) = 0$. Note that
$f_\lambda(0) = \lambda$ and $f_{\lambda}^2(0) = \lambda/(1+\lambda)$ 
and thus $n \geq 3$. Since $f_\lambda^{-2}(0) = -1$ 
we obtain that $R_{P_{n-2,v_{n-2}}}(\lambda)=f_\lambda^{n-2}(0) = -1$. 
It follows from the proof of Lemma~\ref{lemma:equivalence} that $\lambda$ is a root of $Z_{P_{n-2}}$.

Parameters $\lambda$ for which $f_\lambda$ is conjugate to a rational rotation
lie dense in the set of parameters for which $f_\lambda$ is conjugate to any 
rotation. It follows from Lemma~\ref{lem: f_lambda classification} that
$\overline{\mathcal{Z}_2} = (-\infty, -1/4]$.
\end{proof}

We remark that $\tr^2$ of the map that sends $z$ to $e^{i\theta}\cdot z$
is equal to $2(1+\cos(\theta))$. By comparing this to the value of 
$\tr^2(f_\lambda)$ it follows from the previous proof that 
\[
    \mathcal{Z}_2 = \left\{\frac{-1}{2(1+\cos(t\pi))}: t \in (0,1)\cap\mathbb{Q}\right\}.
\]

We will now prove the final lemma needed to determine $\mathcal{A}_2$.

\begin{lemma}
    \label{lem: N_2^1}
    The family $\mathcal{R}_2^1$ is not normal around any $\lambda \in (-\infty, -1/4]$, 
    i.e. $(-\infty, -1/4] \subseteq \mathcal{A}_2^1$.
\end{lemma}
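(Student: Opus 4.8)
The plan is to argue with the explicit subfamily $\{g_n\}_{n\ge 1}\subseteq\mathcal R_2^1$ consisting of the ratios of paths rooted at an endpoint, so that $g_n(\lambda)=R_{P_n,v_n}(\lambda)=f_\lambda^n(0)$ with $f_\lambda(z)=\lambda/(1+z)$ by Lemma~\ref{lemma: paths}. Since normality of $\mathcal R_2^1$ at a parameter forces normality of the subfamily $\{g_n\}$ there, it suffices to prove that $\{g_n\}$ is not normal at any $\lambda_0\in(-\infty,-1/4]$. I would do this by contradiction, using that normality of $\{g_n\}$ on a neighbourhood $U\ni\lambda_0$ implies equicontinuity of $\{g_n\}$ on compact subsets of $U$ for the spherical metric $d$ on $\widehat{\mathbb C}$.

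The core observation is that arbitrarily close to any such $\lambda_0$ the marked orbit $n\mapsto g_n$ displays two incompatible behaviours. First, by Lemma~\ref{lem:zerosDelta2} the set $\mathcal Z_2$ is dense in $(-\infty,-1/4)$, and at every $\lambda^*\in\mathcal Z_2$ the map $f_{\lambda^*}$ is conjugate to a rational rotation of some period $q\ge 3$ (as in the proof of that lemma), so that $f_{\lambda^*}^{q-2}(0)=-1$ and $f_{\lambda^*}^q(0)=0$; by periodicity $g_n(\lambda^*)$ therefore equals $-1$ for $n=jq-2$ and equals $0$ for $n=jq$, i.e.\ it returns to both $-1$ and $0$ along arbitrarily large $n$. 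Second, for any $\lambda_2$ with $\Im\lambda_2\ne 0$ the map $f_{\lambda_2}$ is loxodromic by Lemma~\ref{lem: f_lambda classification}, and $0$ is not one of its fixed points since $f_{\lambda_2}(0)=\lambda_2\ne 0$, so $g_n(\lambda_2)=f_{\lambda_2}^n(0)$ converges to the attracting fixed point $\alpha=\alpha(\lambda_2)\in\widehat{\mathbb C}$. Now fix $\varepsilon>0$ with $4\varepsilon<d(0,-1)$, let $\delta>0$ be a corresponding equicontinuity radius at $\lambda_0$, and choose $\lambda^*\in\mathcal Z_2\cap B(\lambda_0,\delta)$ and $\lambda_2\in B(\lambda_0,\delta)\setminus\mathbb R$; both choices are possible because $\lambda_0$ is real and $\mathcal Z_2$ is dense in $(-\infty,-1/4)$. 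Equicontinuity gives $d(g_n(\lambda^*),g_n(\lambda_2))\le\varepsilon$ for all $n$, and since $g_n(\lambda_2)\to\alpha$ we get $d(g_n(\lambda^*),\alpha)\le 2\varepsilon$ for all large $n$; taking $n$ through the indices with $g_n(\lambda^*)=0$ and then through those with $g_n(\lambda^*)=-1$ yields $d(0,\alpha)\le 2\varepsilon$ and $d(-1,\alpha)\le 2\varepsilon$, hence $d(0,-1)\le 4\varepsilon$, a contradiction. This argument already covers $\lambda_0=-1/4$, since $\lambda^*$ and $\lambda_2$ can still be chosen near $-1/4$; alternatively one may note that the set of parameters of normality is open, so $\mathcal A_2^1$ is closed and $-1/4\in\overline{(-\infty,-1/4)}\subseteq\mathcal A_2^1$.

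All the inputs are routine: normal families are equicontinuous on compacta, a loxodromic Möbius transformation pulls every non-fixed orbit into its attracting fixed point, and the periodicity bookkeeping for $g_n(\lambda^*)$ follows immediately from $f_{\lambda^*}^q=\mathrm{id}$ together with $f_{\lambda^*}^{q-2}(0)=-1$. The one conceptual point, and the only place requiring mild care, is arranging a rational-rotation parameter $\lambda^*$ (periodic, hence oscillating, marked orbit) and a loxodromic parameter $\lambda_2$ (convergent marked orbit) simultaneously arbitrarily close to $\lambda_0$; this is exactly where the density of $\mathcal Z_2$ in $(-\infty,-1/4)$ from Lemma~\ref{lem:zerosDelta2} is used, and it is this clustering of qualitatively different dynamics around every $\lambda_0\le -1/4$ that destroys equicontinuity. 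I expect no serious obstacle beyond making this selection precise.
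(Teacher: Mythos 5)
Your argument is correct, and it takes a genuinely different route from the paper's. The paper also reduces to the path family $\{\lambda\mapsto f_\lambda^n(0)\}$, but then assumes normality, extracts a locally uniform limit $F$, uses the loxodromic behaviour on $U\setminus(-\infty,-1/4]$ together with the identity theorem to conclude that $F(\lambda)$ is a fixed point of $f_\lambda$ throughout $U$, and derives the contradiction at a single elliptic parameter $\lambda_1\in U$, where the orbit of $0$ stays uniformly away from both fixed points. You never extract a limit function: instead you use the equivalence of normality with spherical equicontinuity on compacta and play off two nearby parameters with incompatible orbits --- a rational-rotation parameter $\lambda^*\in\mathcal Z_2$, whose marked orbit returns to both $0$ and $-1$ along arbitrarily large $n$, against a non-real loxodromic $\lambda_2$, whose marked orbit converges --- forcing $d(0,-1)\le 4\varepsilon$. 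The bookkeeping is right: $q\ge 3$, $f_{\lambda^*}^{q}=\mathrm{id}$ and $f_{\lambda^*}^{-2}(0)=-1$ give the two return subsequences, and $0$ is not a fixed point of $f_{\lambda_2}$ since $\lambda_2\neq 0$, so the loxodromic orbit does converge to the attracting fixed point. The trade-off is that your proof imports the density of $\mathcal Z_2$ in $(-\infty,-1/4)$ from Lemma~\ref{lem:zerosDelta2} (no circularity, as that lemma is proved independently and earlier), whereas the paper's proof is self-contained modulo the M\"obius classification; in exchange you avoid the identity-theorem step and the discussion of the limit function entirely, which makes the contradiction more quantitative and elementary. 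Your closing remark that $\mathcal A_2^1$ is closed (so $-1/4$ is covered automatically) is also a valid alternative to handling the endpoint directly.
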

\begin{proof}
Recall that 
\[
    \mathcal{R}_2^1 = \{R_{P_n,v_n}: n \geq 1\} = \{\lambda \mapsto f_\lambda^n(0): n \geq 1\}.
\]
Take a $\lambda_0 \in (-\infty, -1/4]$ and suppose for the sake of contradiction 
that there exists a neighborhood $U$ of $\lambda_0$ on which $\mathcal{R}_2^1$ is normal.
We take $U$ connected and sufficiently small so that it does not contain $0$, and
$0$ is not a fixed point of $f_\lambda$ for any $\lambda \in U$. Because $\mathcal{R}_2^1$
is normal on $U$ there exists a subsequence of $\{R_{P_n,v_n}\}_{n \geq 1}$ that converges
locally uniformly to a holomorphic function $F: U \to \Chat$. 
For $\lambda \in U \setminus (-\infty, -1/4]$ the map $f_\lambda$ is loxodromic, hence
$f_\lambda^n(0)$ converges to the attracting fixed point of $f_\lambda$ as $n$ goes to
infinity. This means that for $\lambda \in U \setminus (-\infty, -1/4]$ we have 
$f_\lambda(F(\lambda)) = F(\lambda)$. Because $U \setminus (-\infty, -1/4]$ is non-empty and
open in $U$ it follows from the identity theorem for holomorphic functions that 
the equality $f_\lambda(F(\lambda)) = F(\lambda)$ must hold for all $\lambda \in U$.
The set $U$ contains a parameter $\lambda_1$ for which $f_{\lambda_1}$ is elliptic. The value
$0$ is not a fixed point of $f_{\lambda_1}$ and thus the distance of $f_{\lambda_1}^n(0)$ to
both of the fixed points of $f_{\lambda_1}$ is uniformly bounded below for all $n$ by
a positive constant. This means that no subsequence of $\{R_{P_n,v_n}(\lambda_1)\}_{n \geq 1}$
can converge to the fixed point $F(\lambda_1)$. We conclude that $\mathcal{R}_2^1$ is not
normal at $\lambda_0$.
\end{proof}

It follows from the previous two lemmas and Corollary~\ref{cor:zerofree->normal}
that 
\[
    (-\infty, -1/4] \subseteq \mathcal{A}_2^1\subseteq \mathcal{A}_2^2 \subseteq \overline{\mathcal{Z}_2} = (-\infty, -1/4].
\]
Therefore we can conclude that both $\mathcal{A}_2$
and $\overline{\mathcal{Z}_2}$ are equal to $(-\infty, -1/4]$.

\subsection{Determining the density-locus.}

Recall that for $\lambda \in \mathbb{C}$ we defined $\mathcal{R}_\Delta^i(\lambda) 
= \{R_{G,v}(\lambda): (G,v) \in \mathcal{G}_\Delta^i\}$. Subsequently we defined
$\mathcal{D}_\Delta^i$ as the set consisting of those $\lambda$ for which 
$\mathcal{R}_{\Delta}^i(\lambda)$ is dense in $\Chat$. It is thus clear that
$\mathcal{D}_2^1 \subseteq \mathcal{D}_2^2$. 
To conclude the section we show the following.
\begin{lemma}
    There is no $\lambda_0 \in \mathbb{C}$ for which $\mathcal{R}_2^2(\lambda_0)$ 
    is dense in $\Chat$, i.e. $\mathcal{D}_2^2 = \emptyset$.
\end{lemma}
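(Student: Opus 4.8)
The plan is to understand $\mathcal{R}_2^2(\lambda_0)$ concretely. A rooted graph $(G,v)\in\mathcal{G}_2^2$ has every component a path or cycle, and the component containing $v$ is either a path (with $v$ anywhere on it) or a cycle through $v$. Since the partition function—and hence the ratio—factors over connected components, and only the component of $v$ matters for $R_{G,v}$, we may assume $G$ is connected: it is either a path $P_n$ with $v$ an interior or end vertex, or a cycle $C_n$. For a cycle $C_n$ rooted at $v$, deleting $v$ leaves a path $P_{n-1}$ and one checks directly (using $Z^{out}_{C_n,v}=Z_{P_{n-1}}$, $Z^{in}_{C_n,v}=\lambda Z_{P_{n-3}}$) that $R_{C_n,v}(\lambda)$ is a fixed rational function of $\lambda$ for each $n$, so these contribute only countably many values and, more importantly, are governed by the same M\"obius dynamics. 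For a path rooted at an interior vertex $v$ with $a$ vertices on one side and $b$ on the other, Lemma~\ref{lem:mergingtrees} (merging two rooted paths at $v$) gives $R(\lambda)=\lambda^{-1}f_\lambda^{a+1}(0)\cdot f_\lambda^{b+1}(0)$, and for $v$ an endpoint $R(\lambda)=f_\lambda^n(0)$. So every element of $\mathcal{R}_2^2(\lambda_0)$ is built from the orbit $\{f_{\lambda_0}^n(0):n\geq 1\}$ via at most one application of $\lambda_0^{-1}(\cdot)(\cdot)$.

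Now I case-split on the M\"obius type of $f_{\lambda_0}$, using Lemma~\ref{lem: f_lambda classification}. If $\lambda_0=0$ then all values are $0$; done. If $f_{\lambda_0}$ is loxodromic (i.e. $\lambda_0\in\mathbb{C}^*\setminus(-\infty,-1/4]$), then $f_{\lambda_0}^n(0)$ converges to the attracting fixed point, so the orbit $\{f_{\lambda_0}^n(0)\}$ is a convergent sequence together with its limit—a compact set with empty interior, in fact nowhere dense. Products of pairs of elements of such a set, scaled by $\lambda_0^{-1}$, together with the countably many cycle-values, still form a set whose closure is nowhere dense in $\Chat$ (a countable union of points plus a single convergent sequence and its pairwise products cannot be dense). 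If $f_{\lambda_0}$ is parabolic ($\lambda_0=-1/4$), again $f_{\lambda_0}^n(0)\to$ the unique fixed point, and the same nowhere-density argument applies. If $f_{\lambda_0}$ is elliptic ($\lambda_0\in(-\infty,-1/4)$), then $f_{\lambda_0}$ is conjugate to a rotation $z\mapsto e^{i\theta}z$ via some M\"obius $h$; the orbit of $0$ lies on the single generalized circle $\gamma=h(\{|w|=|h^{-1}(0)|\})$ through $0$ (using that $0$ is not a fixed point). Thus $\{f_{\lambda_0}^n(0)\}\subseteq\gamma$, a set with empty interior in $\Chat$. The endpoint-rooted paths contribute only values on $\gamma$; the interior-rooted and cyclic cases contribute, at worst, a countable union of one-real-parameter analytic curves (images of $\gamma\times\gamma$ under a fixed map, etc.), which is a meager set and in particular not dense in $\Chat$.

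The main obstacle is the elliptic case with $\theta/\pi$ irrational: there $\{f_{\lambda_0}^n(0)\}$ is dense in the circle $\gamma$, so I must rule out that taking scaled pairwise products $\lambda_0^{-1}R_{G_1}R_{G_2}$ over such pairs fills up an open set. The clean way is a dimension/measure observation: $\mathcal{R}_2^2(\lambda_0)$ is contained in a countable union of sets each of which is the image of $\mathbb{Z}^2$ (or $\mathbb{Z}$) under a fixed real-analytic map into $\Chat$ whose image lies in a one- or two-real-dimensional subvariety of the real-two-dimensional $\Chat$; more simply, each such image set has no interior—the pairwise-product map $(s,t)\mapsto \lambda_0^{-1}h(e^{is}w_0)h(e^{it}w_0)$ restricted to the countable set of relevant $(s,t)$ has countable image, hence nowhere dense. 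A countable union of nowhere dense sets is meager, so by Baire it is not all of $\Chat$, and in fact one argues it is not dense: its closure is the image of the compact set $\overline{\{f^n(0)\}}\times\overline{\{f^n(0)\}}$ (plus the cycle accumulation points) under continuous maps, hence a compact set of empty interior (again since each factor set has empty interior in $\Chat$ and the relevant maps are, for fixed $\lambda_0$, non-surjective M\"obius-type maps in each variable). Therefore $\mathcal{R}_2^2(\lambda_0)$ is never dense in $\Chat$, so $\mathcal{D}_2^2=\emptyset$, as claimed.
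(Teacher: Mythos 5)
Your reduction of $\mathcal{R}_2^2(\lambda_0)$ to values of the form $\lambda_0^{-1}f_{\lambda_0}^{a+1}(0)f_{\lambda_0}^{b+1}(0)$ matches the paper's (the paper gets there via Theorem~\ref{thm: stable paths} rather than by treating cycles directly, but your direct computation for $C_n$ is fine), and your loxodromic/parabolic cases are sound: the orbit closure is a countable compact set, so the set of scaled pairwise products is contained in a countable compact set and is nowhere dense. However, your elliptic case contains a genuine gap. First, ``a countable union of one-real-parameter analytic curves \dots is a meager set and in particular not dense'' is a non sequitur: meager sets can be dense (countable dense sets are meager). You notice this and attempt a patch, but the patch is also invalid: the claim that the image of $\gamma\times\gamma$ under $(z,w)\mapsto\lambda_0^{-1}zw$ has empty interior ``since each factor set has empty interior and the relevant maps are non-surjective M\"obius-type maps in each variable'' is false for a general generalized circle $\gamma$. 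For a circle not centered at the origin (e.g.\ $\gamma=\{1+\tfrac12 e^{i\theta}\}$), the product set $\gamma\cdot\gamma$ is a genuinely two-real-dimensional region with nonempty interior; Möbius-in-each-variable does not prevent the circles $C_w=\lambda_0^{-1}w\gamma$ from sweeping out an open set as $w$ runs over $\gamma$.

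The fix is immediate and is what the paper does: the elliptic and parabolic cases occur only for $\lambda_0\in(-\infty,-1/4]$ (Lemma~\ref{lem: f_lambda classification}), i.e.\ for \emph{real} $\lambda_0$, and every $R_{G,v}$ is a rational function with integer coefficients, so $\mathcal{R}_2^2(\lambda_0)\subseteq\mathbb{R}\cup\{\infty\}$ and cannot be dense in $\Chat$. (Equivalently, in your notation the invariant circle through $0$ is $\gamma=\hat{\mathbb{R}}$, which is closed under the scaled product — but you never identify $\gamma$, and your argument as written would not survive for a general $\gamma$.) With that replacement your proof is correct; structurally it is the same as the paper's, which splits into ``$\lambda_0$ real, hence all ratios real'' and ``$\lambda_0$ non-real, hence $f_{\lambda_0}$ loxodromic, hence the orbit of $0$ is bounded and so is the set of scaled products.''
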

\begin{proof}
It follows from Theorem~\ref{thm: stable paths} that $$
\mathcal{R}_2^2(\lambda)
=\{R_{T,v}(\lambda): (T,v) \in \mathcal{G}_2^2 \text{ with $T$ a tree}\}.
$$
A rooted tree $(T,v) \in \mathcal{G}_2^2$ can be viewed as a vertex $v$ onto which
two rooted paths $(P_n,v_n)$ and $(P_m, v_m)$ are attached for $n,m \geq 0$. It 
follows from Lemma~\ref{lem: tree recursion} and Lemma~\ref{lemma: paths} that
\[
    R_{T,v}(\lambda) =  \lambda \cdot \frac{1}{1+f_\lambda^n(0)} 
    \cdot \frac{1}{1+f_\lambda^m(0)} = \frac{1}{\lambda} \cdot f_\lambda^{n+1}(0) \cdot f_\lambda^{m+1}(0).
\]
For a specific $\lambda_0$ the right-hand side of this equality is not defined if 
$f_{\lambda_0}^{n+1}(0)$ and $f_{\lambda_0}^{m+1}(0)$ take on the values $0$ and 
$\infty$. Recall that if $f_{\lambda_0}^{n+1}(0)= \infty$, then $f_{\lambda_0}^{n}(0)= -1$,
which implies that $\lambda_0 \in \mathcal{Z}_2$. If this is the case then Lemma \ref{lem:zerosDelta2} implies that $\lambda_0$ is real,
and thus $\mathcal{R}_2^2(\lambda_0)$ is contained in $\mathbb{R}\cup \{\infty\}$ and is not
dense in $\Chat$. 

Assume that $\lambda_0$ is not real. In this case we have the equality 
\[
\mathcal{R}_2^2(\lambda_0) = \left\{\frac{1}{\lambda_0} \cdot f_{\lambda_0}^{n+1}(0) \cdot f_{\lambda_0}^{m+1}(0): n,m \geq 0\right\}.
\]
The map $f_{\lambda_0}$ is loxodromic, hence the orbit of $0$ converges to an attracting
fixed point without passing through $\infty$. Note that $f_{\lambda_0}(\infty) = 0$, therefore
$\infty$ is not the attracting fixed point, and thus there is a positive bound 
$B \in \mathbb{R}_{>0}$ such that $|f_{\lambda_0}^{n}(0)| < B$ for all $n$. It follows that 
\[
    \left|\frac{1}{\lambda_0} \cdot f_{\lambda_0}^{n+1}(0) \cdot f_{\lambda_0}^{m+1}(0)\right|
    < \frac{B^2}{|\lambda_0|}
\]
for all $n,m$, and thus $\mathcal{R}_2^2(\lambda_0)$ is bounded and in particular not dense in $\Chat$.
\end{proof}

\section{Equality of the zero-locus and the activity-locus for $\Delta\geq 3$}
\label{sec: Ndelta = zdelta}
In this section we prove the equalities $\mathcal{A}_\Delta^1 = \mathcal{A}_\Delta^2 = \cdots = \mathcal{A}_\Delta^\Delta = \overline{\mathcal{Z}_\Delta}$ for $\Delta \geq 3$, 
thereby proving that the activity-locus is equal to
the zero-locus. 
Our strategy is similar to the $\Delta = 2$ case. 
By definition we have $\mathcal{A}_\Delta^1 \subseteq \mathcal{A}_\Delta^2 \subseteq \cdots \subseteq
\mathcal{A}_\Delta^{\Delta}$.
We will first show that $\mathcal{A}_\Delta^1 = \mathcal{A}_\Delta^2 = \cdots =
\mathcal{A}_\Delta^{\Delta-1}$ and subsequently we will show that
$\overline{\mathcal{Z}_\Delta} \subseteq \mathcal{A}_\Delta^{\Delta-1}$. 
Then Corollary~\ref{cor:zerofree->normal}, which states that 
$\mathcal{A}_\Delta^\Delta \subseteq \overline{\mathcal{Z}_\Delta}$, is enough to
arrive at our desired conclusion.

\begin{lemma}\label{lem:Ndeltawelldefined}
    The family $\mathcal{R}_\Delta^{1}$ is normal at $\lambda_0 \in \mathbb{C}$
    if and only if $\mathcal{R}_\Delta^{\Delta-1}$ is normal at $\lambda_0$, and hence $\mathcal{A}_\Delta^{1} = \mathcal{A}_\Delta^{\Delta-1}$.
\end{lemma}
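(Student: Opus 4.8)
Since $\mathcal{R}_\Delta^1 \subseteq \mathcal{R}_\Delta^{\Delta-1}$, normality of the larger family at $\lambda_0$ immediately implies normality of the smaller one, so the content is the reverse implication: if $\mathcal{R}_\Delta^1$ is normal at $\lambda_0$, then so is $\mathcal{R}_\Delta^{\Delta-1}$. The plan is to show that any ratio $R_{G,v}$ with $(G,v) \in \mathcal{G}_\Delta^{\Delta-1}$ can be built, as a holomorphic function of $\lambda$, from finitely many ratios of root-degree-$1$ graphs by composing with a fixed finite collection of nice operations (Möbius-type maps and products) whose coefficients depend holomorphically — in fact rationally — on $\lambda$, and that these operations preserve local normality on the region where $\lambda \neq 0$. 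The $\lambda = 0$ case is handled separately by Lemma~\ref{lem: RegionAroundlambda=0}, which tells us $B_\Delta$ is disjoint from all the loci, so we may assume $\lambda_0 \neq 0$ and work on a small punctured-away-from-$0$ neighborhood $U$ of $\lambda_0$.

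First I would reduce to trees via Theorem~\ref{thm: stable paths}: a root-degree-$i$ ratio is realized by a tree in $\mathcal{G}_\Delta^i$, so it suffices to bound normality for $R_{T,v}$ with $T$ a tree and $\deg(v) \leq \Delta-1$. Next, the key structural observation: given such a tree rooted at $v$ with children $u_1,\dots,u_d$ ($d \leq \Delta-1$), Lemma~\ref{lem: tree recursion} writes $R_{T,v}(\lambda) = \lambda / \prod_{s=1}^d (1 + R_{T_s,u_s}(\lambda))$, where each subtree $T_s$ is rooted at $u_s$ with $\deg_{T_s}(u_s) \leq \Delta - 1$. This expresses a root-degree-$\leq(\Delta-1)$ ratio through root-degree-$\leq(\Delta-1)$ ratios of strictly smaller trees, but I want to get all the way down to root-degree $1$. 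The cleaner route is to use the implementation/path lemmas: by Lemma~\ref{lemma: paths} and Lemma~\ref{lem:mergingtrees}, attaching rooted graphs along a path or identifying roots transforms ratios by the maps $z \mapsto \mu/(1+z)$ and $(z,w) \mapsto \lambda^{-1} z w$. So the plan is to argue by induction on $|V(T)|$ that $R_{T,v}$ lies in the smallest family $\mathcal{F}$ of holomorphic functions on $U$ containing $\mathcal{R}_\Delta^1$ (restricted to $U$) and closed under (a) post-composition with $z \mapsto \lambda/(1+z)$, (b) pointwise products followed by multiplication by $\lambda^{-1}$, and then to show $\mathcal{F}$ is locally normal on $U$ whenever $\mathcal{R}_\Delta^1$ is. For (a): if $\{g_n\}$ is normal on $U$ and $g_{n_k} \to G$ locally uniformly in $\Chat$, then $\lambda/(1+g_{n_k}(\lambda))$ converges locally uniformly in $\Chat$ unless the limit hits the pole locus $\{G = -1\}$ or indeterminacy — but one handles this by Montel: the transformed family omits $0$ (when $\lambda \neq 0$), and combined with omitting $\infty$ on suitable subsets, or by a direct argument that composition with a fixed $\lambda$-family of Möbius maps preserves normality since $f_\lambda$ depends holomorphically on $\lambda \in U$ and $U$ is compactly contained away from the bad parameters. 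For (b): normality is preserved under products as long as we avoid the $0 \cdot \infty$ indeterminacy, which again can be controlled on $U$.

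The main obstacle I anticipate is precisely the bookkeeping around poles and the $\{0, \infty\}$ indeterminacies: normality of $\mathcal{R}_\Delta^1$ on $U$ gives, for each subsequence, a locally uniform limit $F : U \to \Chat$, but $F$ could be identically $\infty$ on part of $U$ or could pass through $-1$, and then the composed family $\lambda/(1 + g_{n_k})$ need not converge nicely at those points. The resolution is to invoke Lemma~\ref{lemma:equivalence}: if some $(G,v) \in \mathcal{G}_\Delta^1$ had $R_{G,v}(\lambda') \in \{-1, 0, \infty\}$ for $\lambda' \in U$, then $\lambda' \in \overline{\mathcal{Z}_\Delta}$, and one should first observe that we only need to prove the lemma on $\mathbb{C} \setminus \overline{\mathcal{Z}_\Delta}$ — indeed on $\overline{\mathcal{Z}_\Delta}$ both families are non-normal anyway by Corollary~\ref{cor:zerofree->normal} together with the main theorem's chain (or, to avoid circularity, one restricts attention to $\lambda_0 \notin \overline{\mathcal{Z}_\Delta}$, where by Lemma~\ref{lemma:equivalence} every ratio in $\mathcal{R}_\Delta$ omits $\{-1,0,\infty\}$ on a neighborhood, so all the families are automatically normal by Montel and there is nothing to prove). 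Thus the real statement has content only at $\lambda_0 \in \overline{\mathcal{Z}_\Delta}$, where $\mathcal{R}_\Delta^1$ is in fact expected to be non-normal; so the honest plan is the contrapositive: assume $\lambda_0 \notin \mathcal{A}_\Delta^{\Delta-1}$ and derive $\lambda_0 \notin \mathcal{A}_\Delta^1$, which is the trivial inclusion — meaning the lemma's true force is the statement $\mathcal{A}_\Delta^1 \supseteq \mathcal{A}_\Delta^{\Delta-1}$, proved by showing that non-normality of $\mathcal{R}_\Delta^{\Delta-1}$ at $\lambda_0$ propagates down. I would prove that last implication by taking a blow-up sequence in $\mathcal{R}_\Delta^{\Delta-1}$ witnessing non-normality, decomposing each such ratio via the recursion above into root-degree-$1$ pieces, and showing that the blow-up must already be visible in one of those pieces — because the operations (a), (b) are, away from indeterminacy, uniformly continuous (they are holomorphic in finitely many variables with $\lambda$-parameters), so a family built from normal families by finitely many such operations is normal, contradiction. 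Isolating the branch where the indeterminacy genuinely interferes, and showing it still forces non-normality of an $\mathcal{R}_\Delta^1$ member (e.g. via $R_{T_s,u_s}(\lambda) \to -1$ giving a pole that itself signals non-normality), is the delicate point.
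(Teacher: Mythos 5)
There is a genuine gap: your proposal never lands on a working argument for the nontrivial direction, and the routes you sketch each break down. The paper's proof goes \emph{up}, not down: given a sequence $(G_n,v_n)\in\mathcal{G}_\Delta^{\Delta-1}$, attach a single pendant root $\hat v_n$ to $v_n$ to get $(\hat G_n,\hat v_n)\in\mathcal{G}_\Delta^1$ with $R_{\hat G_n,\hat v_n}(\lambda)=f_\lambda(R_{G_n,v_n}(\lambda))$; on a neighborhood $U\not\ni 0$ the Möbius map $f_\lambda$ is invertible and $(\lambda,z)\mapsto f_\lambda^{-1}(z)$ is continuous on $U\times\hat{\mathbb{C}}$, so a locally uniformly convergent subsequence of $\{R_{\hat G_n,\hat v_n}\}$ (which exists by normality of $\mathcal{R}_\Delta^1$) immediately yields one for $\{R_{G_n,v_n}\}$. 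This is one uniform operation for the whole family and there are no indeterminacies to handle. Your downward decomposition cannot be repaired into this: first, ``a family built from normal families by finitely many nice operations is normal'' is only true when the number of operations is bounded over the family, whereas the recursive depth of your tree decomposition is unbounded — indeed the smallest family containing $\{0\}$ and closed under your operation (a) is exactly $\{\lambda\mapsto f_\lambda^n(0)\}$, which is \emph{not} normal on $(-\infty,-1/4]$ by Lemma~\ref{lem: N_2^1}, so your claim that the closure $\mathcal{F}$ is normal is false as stated.

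Second, your fallback — that the lemma ``has content only on $\overline{\mathcal{Z}_\Delta}$, where both families are non-normal anyway'' — is circular: the chain establishing $\overline{\mathcal{Z}_\Delta}\subseteq\mathcal{A}_\Delta^1$ uses precisely this lemma, and Proposition~\ref{normal->zerofree} only gives non-normality of $\mathcal{R}_\Delta^{\Delta-1}$ (the construction there produces roots of degree $2$), not of $\mathcal{R}_\Delta^1$. Third, the pole/indeterminacy issue you defer (``the delicate point'') is the actual mathematical content of the direction you need, and your proposal offers no mechanism to resolve it; the pendant-vertex trick is what makes it evaporate. A salvageable variant of your product idea does exist — by Lemma~\ref{lem: tree recursion} one can write $R_{T,v}=\lambda^{-(d-1)}\prod_{s=1}^{d}f_\lambda(R_{T_s,u_s})$ with each factor a root-degree-$1$ ratio and $d\leq\Delta-1$ bounded — but even then one must rule out $0\cdot\infty$ in the limit, which the paper's inversion argument avoids entirely.
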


\begin{proof}
    Recall that 
    \[
        \mathcal{R}_\Delta^i := \{R_{G,v}: (G,v) \in \mathcal{G}_\Delta^i\}
    \]
    and thus $\mathcal{R}_\Delta^1 \subseteq \mathcal{R}_\Delta^{\Delta-1}$. It follows that 
    if $\mathcal{R}_\Delta^{\Delta-1}$ is normal at $\lambda_0$ then the same holds for $\mathcal{R}_\Delta^{1}$.
    
    To show the other direction, assume that $\mathcal{R}_\Delta^{1}$ is normal at $\lambda_0$. 
    Note that the family $\mathcal{R}_\Delta^\Delta$ is normal at $0$
    by Lemma~\ref{lem: RegionAroundlambda=0}, hence we can assume $\lambda_0 \neq 0$. As $\mathcal{R}_\Delta^{1}$ is normal at $\lambda_0$, there is a neighborhood $U$ of $\lambda_0$ on which 
    $\mathcal{R}_\Delta^{1}$ is a normal family. We can take $U$ such that $0 \not \in U$.
    We will show that $\mathcal{R}_\Delta^{\Delta-1}$ is 
    also a normal family on $U$. To that effect take a sequence of rooted graphs
    $\{(G_n,v_n)\}_{n \geq 1} \subseteq \mathcal{G}_\Delta^{\Delta-1}$.
    Construct the rooted graphs $(\hat{G}_n, \hat{v}_n) \in \mathcal{G}_\Delta^1$
    by attaching a root $\hat{v}_n$ to the root $v_n$ of $G_n$ by a single edge.
    By assumption the sequence $\{R_{\hat{G}_n,\hat{v}_n}\}_{n \geq 1}$ has 
    a subsequence that converges locally uniformly to a function $H: U \to \Chat$. 
    Let $I \subseteq \mathbb{N}$ be the indices belonging to this subsequence.
    By Lemma~\ref{lemma: paths} we have $R_{\hat{G}_n,\hat{v}_n}(\lambda) 
    = f_\lambda(R_{G_n,v_n}(\lambda))$ for every $\lambda \in U$.
    Because $U$ does not contain $0$, the M\"obius transformation $f_\lambda$ is 
    invertible for every $\lambda\in U$. Therefore for these $\lambda$ we have
    \[
        \lim_{\substack{n \to \infty\\n \in I}} R_{G_n,v_n}(\lambda)
        =
        \lim_{\substack{n \to \infty\\n \in I}} 
        f_\lambda^{-1}(f_\lambda(R_{G_n,v_n}(\lambda)))
        =
        \lim_{\substack{n \to \infty\\n \in I}} 
        f_\lambda^{-1}(R_{\hat{G}_n,\hat{v}_n}(\lambda))
        =
        f_\lambda^{-1}(H(\lambda)).
    \]
    Because the map $U \times \Chat \to \Chat$ that sends $(\lambda,z)$ to $f_\lambda^{-1}(z)$
    is continuous, we can conclude that this limit converges locally uniformly on
    $U$. Therefore we have shown that the sequence $\{R_{G_n,v_n}\}_{n \geq 1}$ has a
    subsequence that converges locally uniformly to the holomorphic function 
    $\lambda \mapsto f_\lambda^{-1}(H(\lambda))$, and thus $\mathcal{R}_\Delta^{\Delta-1}$ is normal
    at $\lambda_0$.
\end{proof}

\begin{prop}\label{normal->zerofree}
Let $\Delta \geq 3$. Then $\overline{\mathcal{Z}_\Delta} \subseteq \mathcal{A}_\Delta^{\Delta-1}$, and hence the zero-locus is contained in the activity-locus.
\end{prop}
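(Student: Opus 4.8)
The plan is to show the contrapositive: if $\lambda_0 \notin \mathcal{A}_\Delta^{\Delta-1}$, then $\lambda_0 \notin \overline{\mathcal{Z}_\Delta}$. By Lemma~\ref{lem:Ndeltawelldefined} we may work with $\mathcal{R}_\Delta^1$ instead; so assume $\mathcal{R}_\Delta^1$ is normal on some connected neighborhood $U$ of $\lambda_0$. By Lemma~\ref{lem: RegionAroundlambda=0} we already know $\overline{\mathcal{Z}_\Delta}$ is disjoint from $B_\Delta$, and in particular $0 \notin \overline{\mathcal{Z}_\Delta}$, so we may take $U$ small enough that $0 \notin U$. Suppose for contradiction that $U$ meets $\mathcal{Z}_\Delta$; shrinking $U$ if necessary and using that $\mathcal{Z}_\Delta$ is dense in $\overline{\mathcal{Z}_\Delta}$, we may in fact assume there is $\lambda_1 \in U$ with $Z_G(\lambda_1) = 0$ for some $G \in \mathcal{G}_\Delta$. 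By Lemma~\ref{lem: zero implies -1 at leaf} there is a rooted tree $(T,u) \in \mathcal{G}_\Delta^1$ with $R_{T,u}(\lambda_1) = -1$ (and $Z_T(\lambda_1) = 0$).

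The key step is to amplify this single zero into a blow-up of the family. The idea: starting from $(T,u)$ with $R_{T,u}(\lambda_1) = -1$, build a sequence of rooted graphs in $\mathcal{G}_\Delta^1$ whose ratios, evaluated near $\lambda_1$, iterate a map with a repelling fixed point over the value $-1$, forcing non-normality. Concretely, I would look at how $R_{T,u}$ behaves as a function of $\lambda$ near $\lambda_1$: since $Z_T(\lambda_1) = 0$ with $Z_T^{\mathrm{out}}(\lambda_1) \neq 0$ (the latter because $T-u$ has fewer vertices and, by minimality as in Lemma~\ref{lemma:equivalence}, does not vanish), the ratio $R_{T,u}$ is holomorphic near $\lambda_1$ with value $-1$. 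One then chains copies of $T$ via the path construction of Lemma~\ref{lemma: paths}: implementing $(T,u)$ repeatedly in a path yields ratios of the form $\mu \mapsto f_\mu^n(0)$ composed with $R_{T,u}$, and the relevant Möbius map $f_{\mu}(z) = \mu/(1+z)$ at the parameter value corresponding to $\mu = R_{T,u}(\lambda_1) = -1$ is degenerate — $f_{-1}$ sends everything toward its fixed point but in a way that at $z = 0$ lands on $-1$ and then blows up, so a tiny perturbation of $\lambda$ around $\lambda_1$ spreads the orbit over all of $\hat{\mathbb{C}}$. This spreading contradicts local uniform convergence of any subsequence of $\{R_{P_n,\cdot}\}$ and hence contradicts normality of $\mathcal{R}_\Delta^1$ on $U$.

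I would make the non-normality rigorous using Montel: if $\mathcal{R}_\Delta^1$ were normal near $\lambda_1$, every subsequence would have a locally uniform limit $F$, and as in the proof of Lemma~\ref{lem: N_2^1}, one derives that $F$ satisfies a functional equation (being a fixed point of the limiting Möbius dynamics) on an open subset, hence on all of $U$ by the identity theorem. The contradiction comes from comparing this to the explicit behaviour at $\lambda_1$, where the orbit of $0$ under the relevant family hits $-1$ and then $\infty$, so it cannot stay near a holomorphic fixed-point branch. Alternatively — and this may be cleaner — I would directly produce, for each small $\varepsilon$, parameters $\lambda$ with $|\lambda - \lambda_1| < \varepsilon$ at which $R_{G,v}(\lambda) \in \{-1, 0, \infty\}$ for suitable $(G,v) \in \mathcal{G}_\Delta^1$, showing the family omits these three values nowhere densely near $\lambda_1$; then Montel's three-value criterion fails in a controlled way. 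The crucial input making degree $\Delta \geq 3$ work (and $\Delta = 2$ fail) is that with down-degree $d = \Delta - 1 \geq 2$ one has genuine room to implement branching gadgets that push the perturbed ratios around, which is exactly where Buys' and the earlier authors' zero-accumulation machinery enters.

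The main obstacle I anticipate is the amplification step: turning "there is one parameter $\lambda_1 \in U$ with a zero" into "the family $\mathcal{R}_\Delta^1$ genuinely fails to be normal on $U$." A single zero of a single $Z_G$ is not obviously an obstruction to normality of the whole family — one must engineer a sequence of rooted graphs whose ratios have increasingly wild behaviour on every neighborhood of $\lambda_1$. I expect this requires either a careful implementation/chaining argument exploiting that $R_{T,u}$ is non-constant near $\lambda_1$ (so its image covers a neighborhood of $-1$), or a perturbation argument showing that the preimages under the tree recursion of the "bad set" $\{-1,0,\infty\}$ accumulate at $\lambda_1$. Handling the degenerate cases — where $Z^{\mathrm{in}}$ and $Z^{\mathrm{out}}$ vanish simultaneously, or where $f_\lambda$ is non-invertible — will need the same minimality bookkeeping already used in Lemmas~\ref{lemma:equivalence} and~\ref{lem: zero implies -1 at leaf}.
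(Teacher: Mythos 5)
Your proposal is correct and follows essentially the same route as the paper: obtain a rooted tree $(T,u)\in\mathcal{G}_\Delta^1$ with $R_{T,u}(\lambda_0)=-1$ via Lemma~\ref{lem: zero implies -1 at leaf}, implement copies of it along the paths $P_n$ so that $R_{T_n,v}=R_{P_n,v}\circ R_{T,u}$, and combine the non-normality of $\{R_{P_n,v}\}$ at $-1$ (Lemma~\ref{lem: N_2^1}) with the openness of the non-constant map $R_{T,u}$ to push non-normality back to $\lambda_0$. Two small caveats: $f_{-1}$ is elliptic of order three, not a map ``sending everything toward its fixed point'' (the correct input is simply Lemma~\ref{lem: N_2^1}, which covers $-1\in(-\infty,-1/4]$), and your suggested alternative via ``failure of Montel's three-value criterion'' is not a valid argument, since omitting three values is sufficient but not necessary for normality.
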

\begin{proof}
Let us assume $\lambda \in \overline{\mathcal{Z}_\Delta}$. Then for any open neighborhood $V$ of $\lambda$ there is a $\lambda_0 \in V$ for which $Z_G(\lambda_0) = 0$ for some $G \in \mathcal{G}_\Delta$. We will prove that the family $\mathcal{R}_\Delta^{\Delta-1}$ cannot be normal on $V$.

By Lemma~\ref{lem: zero implies -1 at leaf} there is a a rooted tree $(T,u) \in \mathcal{G}_\Delta^1$ for which $R_{T,u}(\lambda_0) = -1$. 
Consider the rooted trees $(T_n,v)$ obtained by implementing a copy of $(T,u)$ in every vertex of the rooted paths $(P_n, v)$. It follows from Lemma~\ref{cor:implementing} that
$$
R_{T_n, v} = R_{P_n,v} \circ R_{T,u}.
$$
We note that in $T_n$ the root $v$ has degree $2 \leq \Delta - 1$. Furthermore $R_{T,u}$ maps a neighborhood of $\lambda_0$ holomorphically to a neighborhood of $-1$, since $R_{T,u}$ is not constantly equal to $-1$. Lemma~\ref{lem: N_2^1} states that the family $\{R_{P_n, v}\}_{n > 0}$ is not normal at $-1$ and thus it follows that $\{R_{T_n,v}\}_{n > 0}$ is not normal at $\lambda_0$.
\end{proof}

Summarising we have the following relations between sets
\[
\mathcal{A}_\Delta^1 \labelrel={eq:Ndeltawelldefined} \mathcal{A}_\Delta^{\Delta-1} \subseteq \mathcal{A}_\Delta^\Delta \labelrel\subseteq{eq:corzerofree->normal} \overline{\mathcal{Z}_\Delta} \labelrel\subseteq{eq:normal->zerofree} \mathcal{A}_\Delta^{\Delta-1},
\]
where equality~\eqref{eq:Ndeltawelldefined} is due to Lemma~\ref{lem:Ndeltawelldefined}, inclusion \eqref{eq:corzerofree->normal} is due to Corollary~\ref{cor:zerofree->normal} and inclusion \eqref{eq:normal->zerofree} is due to Proposition~\ref{normal->zerofree}.
It follows that $\mathcal{A}_\Delta^1 = \ldots = \mathcal{A}_\Delta^\Delta$, and $\mathcal{A}_\Delta = \overline{\mathcal{Z}_\Delta}$ for all $\Delta \geq 2$.

\subsection{The complement of the zero-locus}\label{sec: opposite}
As an application of the equality of the zero-locus and the activity-locus, we show here that each component of the complement of the zero-locus is simply connected.
We recall from the introduction that this implies that if the complement of the zero-locus is connected (as we conjecture in Conjecture~\ref{conj:connected}), then our main result gives a complete understanding of the complexity of approximately computing the independence polynomial.

\begin{prop}\label{prop:simply connected}
Let $\Delta\geq 2$ be an integer. Any connected component of the complement of the zero-locus,  $\mathbb{C}\setminus \overline{\mathcal{Z}_\Delta}$, is simply connected.
\end{prop}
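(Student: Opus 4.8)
The plan is to use a maximum-principle argument on the occupation ratios, exploiting the equality $\mathcal{A}_\Delta = \overline{\mathcal{Z}_\Delta}$ that was just established. Let $\Omega$ be a connected component of $\mathbb{C}\setminus\overline{\mathcal{Z}_\Delta}$. If $\Omega$ were not simply connected, there would be a Jordan curve $\gamma\subseteq\Omega$ whose interior contains a point $\lambda_1\in\overline{\mathcal{Z}_\Delta} = \mathcal{A}_\Delta$; I would like to derive a contradiction with the fact that $\mathcal{R}_\Delta$ is a normal family on all of $\Omega$ (which holds since $\Omega\cap\mathcal{A}_\Delta=\emptyset$, by the equality of loci).

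First I would observe that on $\Omega$ no ratio $R_{G,v}$ takes the value $-1$: indeed, away from $\lambda=0$ this follows from Lemma~\ref{lemma:equivalence} (a value $-1$, $0$, or $\infty$ of some ratio forces a zero of some $Z_G$), and the point $0$ lies in $B_\Delta$, which by Lemma~\ref{lem: RegionAroundlambda=0} is disjoint from $\overline{\mathcal{Z}_\Delta}$, so $0$ is an interior point of whatever component contains it and the local behaviour there is controlled. Hence each $R_{G,v}$, being holomorphic and omitting $-1$ on a neighbourhood of $\overline{\gamma}\cup\mathrm{int}(\gamma)$ — which we may assume is contained in $\Omega$ together with its interior if $\Omega$ is not simply connected is exactly what we must rule out — actually the point is: the bounded Jordan domain enclosed by $\gamma$ need not lie in $\Omega$, but I can instead argue via $1+R_{G,v}$ and its winding. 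The cleanest route: since $R_{G,v}$ omits the values $-1$ and $\infty$ on a neighbourhood $U$ of $\gamma$ inside $\Omega$, the function $1/(1+R_{G,v})$ is holomorphic and non-vanishing on $U$, so by the maximum modulus principle applied to both $1/(1+R_{G,v})$ and to $1+R_{G,v}$ on the Jordan domain $D$ bounded by $\gamma$, the quantity $|1+R_{G,v}|$ on $D$ is bounded above and below by its extremal values on $\gamma$. The key is that these bounds are \emph{uniform over the whole family} $\mathcal{R}_\Delta$: normality of $\mathcal{R}_\Delta$ on $\Omega$ gives, on the compact set $\gamma$, a uniform spherical-metric control, and combined with omission of $-1$ and $\infty$ near $\gamma$ this yields constants $0<c<C$ with $c\le |1+R_{G,v}(\lambda)|\le C$ for all $\lambda\in\gamma$ and all $(G,v)\in\mathcal{G}_\Delta$; by the maximum principle the same bounds hold on $D$.

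Then I would push this into the interior: for $\lambda\in D$ the bound $c\le|1+R_{G,v}(\lambda)|\le C$ uniform over the family forces $\mathcal{R}_\Delta$ to omit a fixed spherical neighbourhood of $-1$ (and of $\infty$) on $D$, hence by Montel's theorem $\mathcal{R}_\Delta$ is normal on $D$, so $D\cap\mathcal{A}_\Delta=\emptyset$, i.e. $D\cap\overline{\mathcal{Z}_\Delta}=\emptyset$. Since $\gamma\subseteq\Omega$ and $D$ is connected and disjoint from $\overline{\mathcal{Z}_\Delta}$, the set $D$ lies in the same component $\Omega$; as $\gamma$ was an arbitrary Jordan curve in $\Omega$, this shows $\Omega$ is simply connected. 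I expect the main obstacle to be making the "uniform lower bound $c$ on $|1+R_{G,v}|$ along $\gamma$" fully rigorous: normality alone gives subsequential local-uniform limits, and one must argue that no sequence $R_{G_n,v_n}(\lambda_n)$ with $\lambda_n\in\gamma$ can approach $-1$, using that any locally uniform limit $F$ on $\Omega$ either avoids $-1$ or (by the identity/open mapping theorem together with Lemma~\ref{lemma:equivalence}) would force $-1$ to be attained, contradicting $\gamma\subseteq\Omega\subseteq\mathbb{C}\setminus\overline{\mathcal{Z}_\Delta}$; a routine compactness/diagonal argument then upgrades this to the uniform bound.
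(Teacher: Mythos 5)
Your reduction to a Jordan curve $\gamma\subseteq\Omega$ and the observation that the ratios omit $-1$ on $\Omega$ are fine, but the central step --- transporting the bounds $c\le|1+R_{G,v}|\le C$ from $\gamma$ into the enclosed Jordan domain $D$ via the maximum modulus principle --- is circular and, as stated, false. Note that $1+R_{G,v}=Z_G/Z_{G-v}$, so on $D$ (which is exactly the region that may meet $\overline{\mathcal{Z}_\Delta}$) this function may have zeros (where $Z_G=0$) and poles (where $Z_{G-v}=0$). The maximum principle for $1+R_{G,v}$ requires it to be pole-free on $D$, and the minimum modulus bound via $1/(1+R_{G,v})$ requires it to be zero-free on $D$ --- which is essentially the conclusion you are after. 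No boundary estimate on $|1+R_{G,v}|$ can rule out interior zeros by itself: compare $f(z)=z$ on the unit disc, where $|f|\equiv 1$ on the boundary yet $f(0)=0$; by the argument principle the zero/pole count in $D$ is governed by the winding of $(1+R_{G,v})\circ\gamma$ about $0$, which your bounds do not control. A secondary worry: even the uniform lower bound $c$ on $\gamma$ is not clearly available, since normality on $\Omega$ does not by itself exclude a sequence of ratios converging locally uniformly to the constant $-1$ without ever attaining it (Lemma~\ref{contradictionlemma1} only concludes such locally uniform convergence; it derives no contradiction from it).

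The paper's proof avoids any inward propagation of estimates. Assuming a zero $\lambda_0$ in the interior $V$ of $\gamma$, it takes a \emph{minimal} tree $T$ with $Z_T(\lambda_0)=0$ and a leaf $v$ with neighbour $w$; minimality forces $R_{T,v}(\lambda_0)=-1$ while $R_{T-v,w}$ avoids $-1$ on all of $\overline{V}$, so the single function $R_{T,v}=\lambda/(1+R_{T-v,w})$ is holomorphic and bounded on $\overline{V}$. The open mapping theorem then produces a point $\lambda_1\in\partial V=\gamma$ with $R_{T,v}(\lambda_1)\in(-\infty,-1)$: the interior zero is pulled out to the boundary curve as a real ratio value less than $-1$. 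Implementing $(T,v)$ along paths (Lemma~\ref{lemma: paths}) and invoking the non-normality of $\{\lambda\mapsto f_\lambda^n(0)\}$ at parameters in $(-\infty,-1/4]$ (Lemma~\ref{lem: N_2^1}) then shows $\mathcal{R}_\Delta$ is not normal at $\lambda_1\in\gamma\subseteq\Omega$, contradicting $\mathcal{A}_\Delta=\overline{\mathcal{Z}_\Delta}$. This composition-with-paths construction at a boundary parameter is the key idea missing from your proposal, and I do not see how to repair the maximum-modulus route without it.
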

\begin{proof}
For $\Delta = 2$ the statement follows directly by the exact characterization of the closure of the zero-locus. We will therefore assume that $\Delta \ge 3$.

Let $\gamma$ be a simple closed curve contained in the complement of the zero-locus, $\mathbb{C}\setminus \overline{\mathcal{Z}_\Delta}$. It is sufficient to prove that the interior of $\gamma$, which we will denote by $V$, is zero free. Let us suppose for the sake of a contradiction that this is not the case.

Let $T$ be a minimal tree for which $Z_T(\lambda_0) = 0$ for some $\lambda_0 \in V$. Let $v$ be a leaf of $T$. 
Since $|T|$ is chosen minimal it follows that $R_{T,v}(\lambda_0) = -1$. Denote the neighbor of $v$ in $T$ by $w$. 
By minimality of $T$ it also follows that $R_{T-v,w}(\lambda) \neq -1$ for any $\lambda \in \overline{V}$.

Note that $V$ is necessarily bounded, as it is a subset of the cardioid, $\Lambda_\Delta$. 
Hence by compactness of $\overline{V}$ it follows that $R_{T-v,w}$ is bounded away from $-1$ on $V$. 
Since
$$
R_{T,v}(\lambda) = \frac{\lambda}{1+R_{T-v,w}(\lambda)}
$$
it follows that $R_{T,v}$ is bounded on $\overline{V}$. By the Open Mapping Theorem for holomorphic functions it follows that there must be a $\lambda_1 \in \partial V = \gamma$ for which $$
R_{T,v}(\lambda_1) \in (-\infty, -1).
$$

Use Lemma~\ref{lemma: paths} to implement the rooted tree $(T,v)$ in the paths $P_n$ to obtain a sequence of rooted graphs $\{(G_n,u_n)\}_{n\geq 1}$ with
$$
R_{G_n,u_n} = R_{P_n,u_n}\circ R_{T,v}.
$$
Since $R_{T,v}(\lambda_1) \in (-\infty, -1)$, which is contained in the half-line where the family $\{\lambda \mapsto R_{P_n,u}(\lambda)\}_{n \in \mathbb N}$ is not normal, it follows that the family of ratios $\{R_{G_n,u_n}\}$ is not normal at $\lambda_1$. This contradicts the assumption that $\gamma$ is contained in $\mathbb{C}\setminus \overline{\mathcal{Z}_\Delta}$, by the equivalence of the activity-locus and the zero-locus.
\end{proof}

\section{Equality of the density-locus and the activity-locus for $\Delta \geq 3$}

We first show the inclusion $\overline{\mathcal{D}_\Delta} \subseteq \mathcal{A}_\Delta$ holds. Note that as $\mathcal{A}_\Delta$ is closed, it suffices to show $\mathcal{D}_\Delta \subseteq \mathcal{A}_\Delta$.

\begin{theorem}\label{thm:dense->nonnormal}
The density-locus is contained in the activity-locus. More precisely, we have $\mathcal{D}_\Delta \subseteq \mathcal{A}_\Delta$ for all $\Delta \geq 3$.
\end{theorem}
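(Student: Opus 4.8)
The plan is to show that if $\lambda_0 \in \mathcal{D}_\Delta$, i.e.\ $\mathcal{R}_\Delta^1(\lambda_0)$ is dense in $\Chat$, then the family $\mathcal{R}_\Delta$ cannot be normal at $\lambda_0$. Since $\mathcal{R}_\Delta^1 \subseteq \mathcal{R}_\Delta$, it is in fact enough to show $\mathcal{R}_\Delta^1$ is not normal at $\lambda_0$, but it will be cleaner to use the full family. First I would dispose of the trivial case $\lambda_0 = 0$: by Lemma~\ref{lem: RegionAroundlambda=0} the Shearer disk $B_\Delta$ is disjoint from $\overline{\mathcal{D}_\Delta}$, so $0 \notin \mathcal{D}_\Delta$ and we may assume $\lambda_0 \neq 0$.

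The core of the argument is a contradiction via Montel's theorem. Suppose $\mathcal{R}_\Delta$ is normal on some connected neighborhood $U$ of $\lambda_0$ with $0 \notin U$ and $U$ disjoint from $\overline{\mathcal{Z}_\Delta}$ near enough that we can control ratios (here we can use the already-established equality $\mathcal{A}_\Delta = \overline{\mathcal{Z}_\Delta}$ from the previous section, so normality of $\mathcal{R}_\Delta$ on $U$ is equivalent to $U$ being zero-free). The strategy is to locate three rooted graphs $(G_1,v_1),(G_2,v_2),(G_3,v_3) \in \mathcal{G}_\Delta^1$ whose ratio values at $\lambda_0$ are three distinct points $a_1, a_2, a_3 \in \Chat$ — possible precisely because $\mathcal{R}_\Delta^1(\lambda_0)$ is dense. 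Normality of $\{R_{G_j,v_j}\}$ on $U$ forces each $R_{G_j,v_j}$ to be holomorphic (as a map to $\Chat$) on $U$; shrinking $U$ we may assume the three images $R_{G_j,v_j}(U)$ are pairwise disjoint neighborhoods of $a_1,a_2,a_3$. Now the idea is to take \emph{any} rooted graph $(H,w) \in \mathcal{G}_\Delta^{?}$ and implement copies of the $(G_j,v_j)$ into it via Lemma~\ref{cor:implementing}, so that $R_{\tilde H, w} = R_{H,w}\circ R_{G_j,v_j}$; because the $R_{G_j,v_j}(U)$ omit each other's target points, the composed family — over all choices of $H$ and of which $G_j$ to plug in — omits three values on $U$ and hence, by Montel, would be normal. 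The punchline must be that this normality is impossible: we build a specific sequence whose non-normality we already know. Concretely, pick a tree $(T,u)\in\mathcal{G}_\Delta^1$ with $Z_T(\lambda_1)=0$, $R_{T,u}(\lambda_1)=-1$ for some $\lambda_1$ obtained from density (density of $\mathcal{R}_\Delta^1(\lambda_0)$ lets us get a ratio value close to, and by an open-mapping / Lemma~\ref{lemma:equivalence} argument exactly equal to, $-1$ at a nearby parameter), and then implement $(T,u)$ into the paths $(P_n,v)$ as in Proposition~\ref{normal->zerofree} to get a sequence that is non-normal at that parameter, contradicting that the parameter lies in $U$.

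Let me restructure this more directly, since the cleanest route avoids the "omit three values" packaging. Assume for contradiction $\mathcal{R}_\Delta$ is normal on a connected neighborhood $U \ni \lambda_0$, $0\notin U$. By density choose $(G,v)\in\mathcal{G}_\Delta^1$ with $R_{G,v}(\lambda_0)$ very close to $-1$; since $U$ is a normality neighborhood, $R_{G,v}$ is holomorphic on $U$ and, as it is not identically $-1$ (indeed $\lambda_0\in\mathcal{D}_\Delta$ forces the family to take values everywhere, so no single member is the constant $-1$ — or one argues via $Z_{G-v}\neq 0$), the open mapping theorem gives a $\lambda_1 \in U$ with $R_{G,v}(\lambda_1) = -1$, hence $Z_G(\lambda_1) = 0$ by Lemma~\ref{lemma:equivalence} (after possibly passing to a minimal subtree via Theorem~\ref{thm: stable paths} to guarantee the denominator is nonzero), so $\lambda_1 \in \mathcal{Z}_\Delta \subseteq \overline{\mathcal{Z}_\Delta} = \mathcal{A}_\Delta$. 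But then $\mathcal{R}_\Delta$ is not normal at $\lambda_1 \in U$, contradicting normality on $U$. Therefore $\mathcal{R}_\Delta$ is not normal at $\lambda_0$, i.e.\ $\lambda_0 \in \mathcal{A}_\Delta$.

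The main obstacle I anticipate is the step asserting that density of $\mathcal{R}_\Delta^1(\lambda_0)$ actually produces a genuine \emph{zero} of some $Z_G$ at a nearby parameter, rather than merely a ratio value near $-1$ at $\lambda_0$ itself. One has to be careful that the relevant $R_{G,v}$ is a well-defined nonconstant holomorphic map $U \to \Chat$ (normality handles the holomorphy and the $\Chat$-valued extension across poles), that it is not the constant $-1$ (so the open mapping theorem applies and yields an actual $\lambda_1$ with value exactly $-1$), and that at $\lambda_1$ the out-partition function does not also vanish, so that $R_{G,v}(\lambda_1)=-1$ genuinely forces $Z_G(\lambda_1)=0$ — this last point is exactly the subtlety Lemma~\ref{lemma:equivalence} and the minimal-tree reduction of Theorem~\ref{thm: stable paths} are designed to handle, so I would invoke them explicitly there. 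The remaining steps — the $\lambda_0=0$ reduction and the final appeal to $\mathcal{A}_\Delta=\overline{\mathcal{Z}_\Delta}$ — are routine given the results already proved in the excerpt.
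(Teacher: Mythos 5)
There is a genuine gap, and it sits exactly at the step you yourself flag as the main obstacle. Your ``cleanest route'' argues: pick $(G,v)$ with $R_{G,v}(\lambda_0)$ very close to $-1$, note $R_{G,v}$ is non-constant, and conclude by the open mapping theorem that some $\lambda_1\in U$ has $R_{G,v}(\lambda_1)=-1$. The open mapping theorem only tells you that $R_{G,v}(U)$ is an open set containing the point $R_{G,v}(\lambda_0)$; it gives no lower bound on the size of that image, so there is no reason $-1$ should lie in it. Worse, the hypothesis you are trying to contradict works against you here: normality of $\mathcal{R}_\Delta$ on $U$ means the family is spherically equicontinuous, so the images $R_{G,v}(U')$ of a slightly smaller neighbourhood are \emph{uniformly small} in the relevant sense. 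It is entirely consistent with your hypotheses that every sequence of ratios with $R_{G_n,v_n}(\lambda_0)\to -1$ converges locally uniformly to the \emph{constant} function $-1$ without any member ever attaining the value $-1$ on $U$; in that scenario no zero of any $Z_G$ is produced and your contradiction evaporates. This is precisely what the paper's Lemma~\ref{contradictionlemma1} establishes: under normality and zero-freeness, such sequences are \emph{forced} to converge to the constant $-1$ (the Rouch\'e argument there only rules out a non-constant limit; a constant limit is not a contradiction).

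The missing content is therefore the entire mechanism for contradicting this forced constancy, which is where the paper does its real work. Lemmas~\ref{contradictionlemma2} and~\ref{contradictionlemma3} upgrade the constancy statement to: any sequence of ratios in $\mathcal G_\Delta^{\Delta-1}$ whose values at $\lambda_0$ converge to a real number must converge locally uniformly to that constant. The proof of Theorem~\ref{thm:dense->nonnormal} then uses density to pick two sequences with limits $1$ and $c=-|\lambda_0|^2/(2\,\mathrm{Re}\,\lambda_0)$, and merges them via Lemma~\ref{lem:mergingtrees} and Lemma~\ref{lemma: paths} into a sequence of ratios converging to the explicitly \emph{non-constant} function $\lambda\mapsto \lambda^2/(\lambda+c)$, which takes the real value $2\,\mathrm{Re}\,\lambda_0$ at $\lambda_0$ --- contradicting Lemma~\ref{contradictionlemma3} (with a separate construction for purely imaginary $\lambda_0$). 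Your proposal contains no analogue of this construction, and without it the argument does not close. Your first paragraph's ``omit three values'' idea also does not lead anywhere as written: the composed family already omits nothing useful, and normality of the full family is the hypothesis, not something you need to re-derive.
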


\begin{remark} Recall the remarkable Proposition 6 in \cite{Galanisetal20}, in which it is shown that non-real $\lambda \in \mathbb Q[i]$ outside the cardioid $\Lambda_\Delta$ are contained in the density-locus. As a consequence Theorem \ref{thm:dense->nonnormal} implies that $\mathcal{Z}_\Delta$ is dense in the complement of the cardioid.
\end{remark}

The proof of Theorem~\ref{thm:dense->nonnormal} is by contradiction. So we will assume that there is a $\lambda_0 \in \mathcal{D}_\Delta$ with $\lambda_0 \not \in \mathcal{A}_\Delta$ and arrive at a contradiction. In order to do this, we state and prove three helpful lemmas. 

\begin{lemma}\label{contradictionlemma1}
Let $\lambda_0 \in \mathbb C \setminus \mathcal{A}_\Delta$. Assume the family $\mathcal{R}_\Delta$ is normal on some open neighborhood $U$ of $\lambda_0$ and that $\{R_{G_n,v_n}(\lambda_0)\}_{n \geq 1}$ converges to $-1$ for a sequence $\{(G_n,v_n)\}_{n\geq 1}$ of rooted graphs from $\mathcal G_\Delta$.
Then $\{R_{G_n,v_n}\}_{n\geq 1}$ converges to $-1$ locally uniformly on $U$.
\end{lemma}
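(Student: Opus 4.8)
The plan is to exploit normality of the family $\mathcal{R}_\Delta$ on $U$ together with the fact that the limit value $-1$ is achieved at $\lambda_0$, in order to rule out the ``wrong'' behaviour of the limiting subsequences. First I would pass to a subsequence: since $\mathcal{R}_\Delta$ is normal on $U$, any subsequence of $\{R_{G_n,v_n}\}_{n\ge 1}$ has a further subsequence converging locally uniformly on $U$ to some holomorphic $F\colon U\to\widehat{\mathbb{C}}$, and by the hypothesis $R_{G_n,v_n}(\lambda_0)\to -1$ we have $F(\lambda_0)=-1$. The goal is to show that \emph{every} such subsequential limit $F$ is the constant function $-1$; a standard Cantor-diagonal / ``every subsequence has a sub-subsequence converging to the same limit'' argument then yields that the full sequence converges locally uniformly to $-1$ on $U$, as desired.

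So fix such a subsequential limit $F$ with $F(\lambda_0)=-1$, and suppose for contradiction that $F\not\equiv -1$. Since $F$ is holomorphic and nonconstant (if $F$ were a different constant it would contradict $F(\lambda_0)=-1$), it is an open map, so $F$ maps a neighborhood of $\lambda_0$ in $U$ onto a neighborhood of $-1$ in $\widehat{\mathbb{C}}$; in particular there is a $\lambda_1\in U$, arbitrarily close to $\lambda_0$, with $F(\lambda_1)\in(-\infty,-1)$. Now implement each rooted graph $(G_n,v_n)$ in the paths $P_m$ as in Lemma~\ref{lemma: paths}/Lemma~\ref{cor:implementing}, forming rooted graphs $(H_{n,m},u)$ with $R_{H_{n,m},u}=R_{P_m,u}\circ R_{G_n,v_n}$. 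Locally uniform convergence $R_{G_n,v_n}\to F$ near $\lambda_1$ means that for $n$ large the values $R_{G_n,v_n}(\lambda)$ lie, for $\lambda$ in a small neighborhood of $\lambda_1$, near the half-line $(-\infty,-1)$ — precisely the region where, by Lemma~\ref{lem: N_2^1}, the family $\{R_{P_m,u}\}_{m\ge 1}$ fails to be normal. I would use this to conclude that the combined family $\{R_{H_{n,m},u}\}_{n,m}\subseteq\mathcal{R}_\Delta$ is not normal at $\lambda_1$ (note the root degrees stay $\le\Delta$ by the remark following Lemma~\ref{lemma: paths}, so these graphs genuinely lie in $\mathcal{G}_\Delta$). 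Since $\lambda_1\in U$, this contradicts normality of $\mathcal{R}_\Delta$ on $U$.

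The main obstacle is making the non-normality transfer rigorous: $F(\lambda_1)\in(-\infty,-1)$ gives non-normality of $\{R_{P_m,u}\circ F\}_m$ at $\lambda_1$, but we need non-normality of $\{R_{P_m,u}\circ R_{G_n,v_n}\}_{n,m}$, where $R_{G_n,v_n}$ only converges to $F$. The clean way is to argue directly: pick a sequence $\lambda^{(k)}\to\lambda_1$ and, using local uniform convergence $R_{G_n,v_n}\to F$ and continuity, choose $n_k\to\infty$ so that $R_{G_{n_k},v_{n_k}}$ is very close to $F$ on a shrinking disk around $\lambda_1$; then on that disk $R_{G_{n_k},v_{n_k}}$ is an open holomorphic map onto a neighborhood of $F(\lambda_1)\in(-\infty,-1)$, and composing with suitable $R_{P_{m},u}$ — which on any neighborhood of a point of $(-\infty,-1)$ has wild (e.g. rotation-like, by Lemma~\ref{lem: f_lambda classification}) dynamics — produces a subsequence of $\mathcal{R}_\Delta$ with no locally uniformly convergent sub-subsequence near $\lambda_1$. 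Care is needed to ensure the composed maps avoid the degenerate cases (e.g. $R_{G_{n_k},v_{n_k}}$ identically $-1$, handled since then we could take $F\equiv-1$ directly, or hitting $0,\infty$), but these are exactly the situations controlled by Lemma~\ref{lemma:equivalence} and the discussion of M\"obius dynamics in Section~\ref{sec: Mobius transformations}. Once the contradiction is reached, $F\equiv-1$ for every subsequential limit, and local uniform convergence of the whole sequence to $-1$ follows.
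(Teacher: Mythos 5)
Your overall frame (pass to subsequential limits $F$ with $F(\lambda_0)=-1$, show each such $F$ is constant, conclude by a diagonal argument) matches the paper's, but your mechanism for ruling out a non-constant $F$ is different and contains a genuine gap in the degree bookkeeping. The hypothesis of the lemma only puts $(G_n,v_n)$ in $\mathcal{G}_\Delta$, so the root $v_n$ may have degree as large as $\Delta$. Implementing $(G_n,v_n)$ at the vertices of a path $P_m$ then raises the degree of $v_n$ to as much as $\Delta+2$ (or $\Delta+1$ if you only attach at an endpoint), so the graphs $H_{n,m}$ need not lie in $\mathcal{G}_\Delta$ and their ratios need not belong to $\mathcal{R}_\Delta$. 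Your parenthetical appeal to the remark after Lemma~\ref{lemma: paths} misreads it: that remark requires the implemented roots to have degree at most $\Delta-2$ at internal path vertices and $\Delta-1$ at the endpoints. This is exactly why the paper imposes root degree $1$ in Lemma~\ref{contradictionlemma2} (where path implantation \emph{is} used) but not here. Without membership in $\mathcal{R}_\Delta$, non-normality of $\{R_{P_m,u}\circ R_{G_n,v_n}\}$ contradicts nothing. The analytic transfer of non-normality that you flag as the ``main obstacle'' can in fact be made rigorous (Hurwitz gives $\lambda_1^{(n)}\to\lambda_1$ with $R_{G_n,v_n}(\lambda_1^{(n)})=F(\lambda_1)$, and each $R_{G_n,v_n}$ is open there), so that is not the problem; the problem is purely combinatorial.

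The paper's proof avoids graph surgery entirely and is worth comparing. It first invokes the already-established equality $\mathcal{A}_\Delta=\overline{\mathcal{Z}_\Delta}$ from Section~\ref{sec: Ndelta = zdelta} to conclude that $U$ is zero-free for all of $\mathcal{G}_\Delta$. Then, given a non-constant subsequential limit $f$ with $f(\lambda_0)=-1$, it applies Rouch\'e's theorem to $R_{G_n,v_n}+1$ versus $f+1$ on a small circle around $\lambda_0$ to produce an \emph{actual} parameter $\lambda_1\in U$ and an $n$ with $R_{G_n,v_n}(\lambda_1)=-1$; Lemma~\ref{lemma:equivalence} (which needs no degree manipulation) converts this into a zero of some $Z_G$ with $G\in\mathcal{G}_\Delta$ at $\lambda_1\in U$, a contradiction. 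If you want to salvage your write-up, the shortest repair is to replace your target value in $(-\infty,-1)$ by the value $-1$ itself, attained by an honest member of the family via Hurwitz/Rouch\'e, and then contradict zero-freeness of $U$ rather than normality.
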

\begin{proof}
It follows from the conclusion of Section \ref{sec: Ndelta = zdelta}, i.e. $\mathcal{A}_\Delta = \overline{\mathcal{Z}_\Delta}$, that $Z_G(\lambda) \neq 0$ for all $\lambda \in U$ and $G \in \mathcal{G}_\Delta$. 
Suppose $\{R_{G_n,v_n}\}_{n\geq 1}$ does not converge to $-1$ locally uniformly on $U$. 
Then, after taking a subsequence if necessary, we may assume that $\{R_{G_n,v_n}\}_{n\geq 1}$ converges locally uniformly on $U$ to a non-constant holomorphic function $f$.
Clearly $f(\lambda_0)=-1$. Since zeros of holomorphic functions are isolated there exists $\varepsilon>0$ so that $\overline{B(\lambda_0,\varepsilon)}\subset U$ and such that 
$$
\delta := \inf_{\lambda\in\partial B(\lambda_0,\varepsilon)}|f(\lambda)+1|>0.
$$
Let $n$ be sufficiently large so that $|R_{G_n,v_n}-f|<\delta$ uniformly on $\overline{B(\lambda_0,\varepsilon)}$.
Then 
\[|(R_{G_n,v_n}(\lambda)+1)-(f(\lambda)+1)|<\delta< |f(\lambda)+1| + |R_{G_n,v_n}(\lambda)+1|
\]
for all $\lambda \in \partial B(\lambda_0,\varepsilon)$. 
By Rouch\'e's theorem there exists $\lambda_1\in B(\lambda_0,\varepsilon)$ for which $R_{G_n,v_n}(\lambda_1)=-1$.
By Lemma \ref{lemma:equivalence} it follows $\lambda_1$ is a zero of the independence polynomial $Z_{G}$ for some graph $G$ of maximum degree at most $\Delta$, which is a contradiction as we assumed $\lambda_0 \in \mathbb C \setminus \mathcal{A}_\Delta = \mathbb C \setminus \overline{\mathcal{Z}_\Delta}$. 
\end{proof}

\begin{lemma}\label{contradictionlemma2}
Let $\lambda_0 \in \mathbb C \setminus \mathcal{A}_\Delta$. Assume the family $\mathcal{R}_\Delta$ is normal on some open neighborhood $U$ of $\lambda_0$ and that $\{R_{G_n,v_n}(\lambda_0)\}_{n\geq 1}$ converges to $\mu\le -\frac{1}{4}$ for a sequence of rooted graphs $\{(G_n,v_n)\}_{n\geq 1}$ in $\mathcal G_\Delta^1$.
Then $\{R_{G_n,v_n}\}_{n\geq 1}$ converges to $\mu$ locally uniformly on $U$.
\end{lemma}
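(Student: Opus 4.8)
The structure should mirror the proof of Lemma~\ref{contradictionlemma1}, but the obstruction at $\mu = -1$ is now replaced by the half-line $(-\infty, -1/4]$ where the path-ratios fail to be normal. Here is the plan. Since $\mathcal{R}_\Delta$ is normal on $U$ and $\mathcal{R}_\Delta^1 \subseteq \mathcal{R}_\Delta$, after passing to a subsequence I may assume $\{R_{G_n,v_n}\}_{n\geq 1}$ converges locally uniformly on $U$ to a holomorphic function $f\colon U \to \Chat$ with $f(\lambda_0) = \mu$. The goal is to show $f$ is the constant function $\mu$, and then (if the subsequence argument is to give the full statement) to run the usual subsequence-uniqueness trick to conclude the whole original sequence converges to $\mu$.

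The heart of the argument is ruling out that $f$ is non-constant. Suppose it is. Then $f(U)$ is an open subset of $\Chat$ containing $\mu \in (-\infty, -1/4]$, so there is a $\lambda_1 \in U$ with $f(\lambda_1) \in (-\infty, -1/4)$, in fact we may pick $\lambda_1$ so that $f(\lambda_1)$ lies in the open half-line $(-\infty,-1/4)$ and $f$ is a local homeomorphism near $\lambda_1$ (avoiding the finitely many critical points of $f$ in a small disc). Now I implement the rooted graphs $(G_n, v_n) \in \mathcal{G}_\Delta^1$ into paths exactly as in Proposition~\ref{normal->zerofree} and Proposition~\ref{prop:simply connected}: form $(H_n, u_n)$ by attaching a copy of $(G_n, v_n)$ to each vertex of the path $P_n$, so that by Lemma~\ref{cor:implementing}
\[
R_{H_n, u_n} = R_{P_n, u_n} \circ R_{G_n, v_n}.
\]
One subtlety: here the inner graph and the path-length are both indexed by $n$, so this is a single diagonal sequence rather than a doubly-indexed family; I should instead fix the convergence first and then, for each fixed path length $k$, consider $R_{P_k, u} \circ R_{G_n, v_n}$, which converges locally uniformly to $R_{P_k,u}\circ f = f_\lambda^k \circ f$ on a neighborhood of $\lambda_1$. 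Because $f$ maps a neighborhood of $\lambda_1$ onto a neighborhood of the point $f(\lambda_1) \in (-\infty,-1/4)$, the family $\{f_\lambda^k \circ f\}_{k\geq 1}$ fails to be normal at $\lambda_1$: this is precisely Lemma~\ref{lem: N_2^1} transported through the open map $f$, since non-normality of $\{\lambda\mapsto f_\lambda^k(0)\}$ at a point of $(-\infty,-1/4]$ together with Lemma~\ref{lemma: paths} and the observation $f_\lambda^k(0) = (f_\lambda^{k-2}\circ f_\lambda^2)(0)$ and $f_\lambda^2(0) = \lambda/(1+\lambda)$ lets us pull non-normality back along any holomorphic map hitting the half-line. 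Hence the diagonal family $\{R_{H_n,u_n}\}$ (or an appropriate subfamily of $\{R_{P_k,u}\circ R_{G_n,v_n}\}$) is not normal at $\lambda_1$, so $\lambda_1 \in \mathcal{A}_\Delta^{\Delta-1} = \mathcal{A}_\Delta$, contradicting $\lambda_0 \in \mathbb{C}\setminus\mathcal{A}_\Delta$ once $U$ is taken small enough that $U \cap \mathcal{A}_\Delta = \emptyset$.

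So $f \equiv \mu$, i.e.\ the chosen subsequence converges to $\mu$ locally uniformly. To upgrade this to the full sequence: if $\{R_{G_n,v_n}\}$ did not converge to $\mu$ locally uniformly on $U$, some subsequence stays bounded away from the constant $\mu$ on some compact subset; by normality that subsequence has a further subsequence converging locally uniformly to some holomorphic $g$ with $g(\lambda_0)=\mu$, and the argument above forces $g \equiv \mu$, a contradiction. Hence $\{R_{G_n,v_n}\}_{n\geq1}$ converges to $\mu$ locally uniformly on $U$, as claimed. The main obstacle I anticipate is the diagonal-indexing issue in the path-implementation step — making sure that non-normality survives when the inner graph varies with $n$ — which is handled by first fixing the path length, using the composition $R_{P_k,u}\circ f$, and invoking that $f$ is an open map near $\lambda_1$; the rest is a routine Rouché/identity-theorem and normal-families bookkeeping.
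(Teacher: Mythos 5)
Your proposal is correct and follows the same overall strategy as the paper: assume a subsequence converges to a non-constant limit $f$ with $f(\lambda_0)=\mu$, implement the $(G_n,v_n)$ into paths, and derive a contradiction from the non-normality of $\{\lambda\mapsto f_\lambda^k(0)\}$ on $(-\infty,-1/4]$ (Lemma~\ref{lem: N_2^1}). The one substantive divergence is how you pass from ``$f$ hits the half-line'' to non-normality of an actual family of graph ratios. The paper applies Rouch\'e's theorem (as in Lemma~\ref{contradictionlemma1}) to produce a \emph{fixed} $n$ and a point $\lambda_1\in U$ with $R_{G_n,v_n}(\lambda_1)=\mu$ exactly; then the single-graph family $\{R_{\widetilde G_k,w_k}\}_k=\{f^k_{R_{G_n,v_n}(\cdot)}(0)\}_k$ consists of genuine ratios and is non-normal at $\lambda_1$ because the non-constant ratio $R_{G_n,v_n}$ maps a neighbourhood of $\lambda_1$ onto a neighbourhood of $\mu$. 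You instead transport non-normality through the limit function $f$ itself, so the non-normal family you produce, $\{f^k_{f(\cdot)}(0)\}_k$, consists of \emph{limits} of ratios rather than ratios; to contradict normality of $\mathcal{R}_\Delta$ you then need the (standard, but unstated) fact that the local-uniform closure of a normal family is normal, since merely ``an appropriate subfamily of $\{R_{P_k,u}\circ R_{G_n,v_n}\}$'' being non-normal does not follow from closeness to $f$ alone --- $R_{G_n,v_n}$ need not hit the one-dimensional set $(-\infty,-1/4]$ just because it is uniformly close to a function that does, which is precisely what the Rouch\'e step buys. Either add that closure argument explicitly or replace this step by the paper's Rouch\'e argument; with that, your proof is complete (your handling of the diagonal-indexing issue and the subsequence-to-full-sequence upgrade are both fine).
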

\begin{proof}
If this is not the case then, as in the previous lemma, we may assume that $\{R_{G_n,v_n}\}_{n\geq 1}$ converges locally uniformly to a non-constant holomorphic function $f$ with $f(\lambda_0)=\mu$. 
By Rouch\'e's theorem we can find $\lambda_1\in U$ and $n$ sufficiently large so that $R_{G_n,v_n}(\lambda_1)=\mu$, by the same argument as in the previous lemma.

Consider the family of rooted graphs $\{(\widetilde{G}_k,w_k)\}$ obtained by implementing $(G_n,v_n)$ in every vertex of the rooted paths $(P_{k},w_k)$, where $P_k$ is the path with $k$ vertices and $w_k$ is one of its extreme vertices. 
Since $v_n$ has degree $1$, the graph $\widetilde{G}_k$ has maximum degree at most $\Delta$. 
Hence by Lemma~\ref{lemma: paths} we have
$$
R_{\widetilde{G}_k,w_k}(\lambda)=f^{k}_{R_{G_n,v_n}(\lambda)}(0).
$$
By Lemma~\ref{lem: N_2^1} the family $\{R_{P_k,w_k}\}=\{\lambda\mapsto f^k_{\lambda}(0)\}$ is non-normal at $\lambda=\mu$, and therefore the family  $\{R_{\widetilde{G}_k,w_k}\}$ is non-normal at $\lambda_1$, contradicting the fact that the family $\mathcal{R}_\Delta$ is normal on $U$.
\end{proof}


\begin{lemma}\label{contradictionlemma3}
Assume there is a $\lambda_0 \in \mathcal{D}_\Delta$ with $\lambda_0 \not \in \mathcal{A}_\Delta$. Denote $U$ for an open neighborhood of $\lambda_0$ on which the family $\mathcal{R}_\Delta$ is normal.
Assume furthermore that $\{R_{G_n,v_n}(\lambda_0)\}_{n\geq 1}$ converges to $\mu\in \mathbb R$ for a sequence of rooted graphs  $\{(G_n,v_n)\}_{n\geq 1}$ in $\mathcal G_\Delta^{\Delta-1}$.
Then $\{R_{G_n,v_n}\}_{n\geq 1}$ converges to $\mu$ locally uniformly on $U$.
\end{lemma}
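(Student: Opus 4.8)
The plan is to argue by contradiction in the spirit of Lemmas~\ref{contradictionlemma1} and~\ref{contradictionlemma2}, now exploiting both the density hypothesis $\lambda_0\in\mathcal D_\Delta$ and the one free edge at the root afforded by the bound $\Delta-1$. Assume $\{R_{G_n,v_n}\}$ does not tend to $\mu$ locally uniformly on $U$. By normality of $\mathcal R_\Delta$ on $U$ we may pass to a subsequence with $R_{G_n,v_n}\to f$ locally uniformly, where $f\colon U\to\Chat$ is holomorphic, non-constant, and $f(\lambda_0)=\mu$. Shrinking $U$ to a bounded ball around $\lambda_0$ that omits $0$ and is zero-free (possible since $\lambda_0\notin\mathcal A_\Delta=\overline{\mathcal Z_\Delta}$; the conclusion then propagates back to the original $U$ by the identity theorem together with normality), every ratio of a graph in $\mathcal G_\Delta$, and hence $f$, omits $\{-1,0,\infty\}$ on $U$ as in Corollary~\ref{cor:zerofree->normal}, so by Hurwitz's theorem $\mu\in\mathbb R\setminus\{0,-1\}$.

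The heart of the argument is an amplification step. Put $\nu:=-1-\mu$, so $\nu\in\mathbb R\setminus\{0,-1\}$. Using $\lambda_0\in\mathcal D_\Delta$, pick $(H_k,u_k)\in\mathcal G^1_\Delta$ with $R_{H_k,u_k}(\lambda_0)\to\nu$. Joining $u_k$ to $v_n$ by one new edge gives, by Lemma~\ref{lemma: paths} (with $n=2$) and the remark following it, a graph $(\widehat G_{n,k},v_n)\in\mathcal G_\Delta$ — here $\deg v_n\le\Delta-1$ becomes $\le\Delta$ and $\deg u_k$ becomes $\le2$ — with
\[
R_{\widehat G_{n,k},v_n}(\lambda)=\frac{R_{G_n,v_n}(\lambda)}{1+R_{H_k,u_k}(\lambda)}.
\]
At $\lambda_0$ this converges to $\mu/(1+\nu)=\mu/(-\mu)=-1$, so for a diagonal choice $k=k(n)$ we get $R_{\widehat G_{n,k(n)},v_n}(\lambda_0)\to-1$, and Lemma~\ref{contradictionlemma1} gives $R_{\widehat G_{n,k(n)},v_n}\to-1$ locally uniformly on $U$.

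If $\mu\ge-3/4$, then $\nu\le-1/4$, so Lemma~\ref{contradictionlemma2} applies to $\{(H_k,u_k)\}$ and yields $R_{H_{k(n)},u_{k(n)}}\to\nu$ locally uniformly; hence the displayed identity forces $R_{\widehat G_{n,k(n)},v_n}\to f/(1+\nu)=f/(-\mu)$ locally uniformly, and comparing the two limits gives $f\equiv\mu$, contradicting non-constancy. If $\mu<-3/4$, we instead read the identity backwards: $R_{H_{k(n)},u_{k(n)}}=R_{G_n,v_n}/R_{\widehat G_{n,k(n)},v_n}-1\to-1-f$ locally uniformly, so $-1-f$ is a non-constant locally uniform limit of ratios from $\mathcal G^1_\Delta\subseteq\mathcal G^{\Delta-1}_\Delta$ with value $\nu=-1-\mu>-1/4$ at $\lambda_0$. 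Re-running the amplification step with $-1-f$ in place of $f$, the required gadget value is now $-1-\nu=\mu$, which is $\le-1/4$ since $\mu<-3/4$; so Lemma~\ref{contradictionlemma2} is available for the new gadget and, exactly as in the first case, we obtain $-1-f\equiv-1-\mu$, i.e.\ $f\equiv\mu$ — a contradiction. The dichotomy closes because $\mu+(-1-\mu)=-1$, so at least one of the two gadget values lies in $(-\infty,-1/4]$, the region covered by Lemma~\ref{contradictionlemma2}.

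I expect the main obstacle to be keeping every auxiliary graph inside $\mathcal G_\Delta$ while performing the amplification: this is precisely why the hypothesis requires root degree at most $\Delta-1$ rather than $\Delta$, since a single spare half-edge at the root is exactly what is needed to attach the density gadget and, on the second pass, to attach it again. The secondary difficulty is that the gadget value $-1-\mu$ that drags the merged ratio to $-1$ need not lie in the range $(-\infty,-1/4]$ where Lemma~\ref{contradictionlemma2} applies; the two-step bootstrap above is the device that resolves this.
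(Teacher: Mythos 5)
Your proof is correct, and in the range $\mu<-3/4$ it takes a genuinely different route from the paper's. The paper first shows that every real $\mu$ can be sent to $-1$ by a composition $f_{\mu_m}\circ\cdots\circ f_{\mu_1}$ of at most three maps with \emph{all} $\mu_i\le -1/4$ (pre-composing with $f_{-1/4}$ once or twice when $-1-\mu>-1/4$), implements gadgets realizing each $\mu_i$ along a path via Lemma~\ref{lemma: paths}, applies Lemma~\ref{contradictionlemma2} to every gadget and Lemma~\ref{contradictionlemma1} to the composite, and concludes that $f$ is constant because the M\"obius map $f_{\mu_m}\circ\cdots\circ f_{\mu_1}$ is injective. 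You instead always use the single gadget value $-1-\mu$, and when that value falls outside $(-\infty,-1/4]$ — so Lemma~\ref{contradictionlemma2} is unavailable for the gadget — you solve the path identity backwards to show that the gadget ratios themselves converge locally uniformly to the non-constant function $-1-f$, then rerun the amplification with the roles exchanged; the observation that $\mu$ and $-1-\mu$ sum to $-1$, so at least one lies in $(-\infty,-1/4]$, closes the dichotomy. Both arguments rest on the same pillars (normality, Lemmas~\ref{contradictionlemma1}, \ref{contradictionlemma2} and \ref{lemma: paths}, and the spare half-edge at the root guaranteed by the $\mathcal G_\Delta^{\Delta-1}$ hypothesis), and your degree bookkeeping is sound. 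Two small points: your Hurwitz step ruling out $\mu\in\{0,-1\}$ for non-constant $f$ replaces the paper's separate dispatch of $\mu=-1$ via Lemma~\ref{contradictionlemma1}, and it is legitimate precisely because the shrunk neighbourhood is zero-free and omits $0$, so all ratios omit $\{-1,0,\infty\}$ there; and in the backward step one should note (as you implicitly do) that the denominator sequence tends to $-1\neq 0$ and $f$ omits $\infty$, so the quotient indeed converges locally uniformly. What your approach buys is that the gadget construction never needs more than one attachment per pass and avoids the paper's three-case computation of explicit multipliers; what it costs is the extra bootstrap pass.
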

\begin{proof}
If $\mu=-1$ the result follows by Lemma~\ref{contradictionlemma1}. 
We may therefore assume that $\mu\neq -1$. Recall that we denote $f_\lambda(z)=\lambda/(1+z)$. We will show for each $\mu \in \mathbb{R}$ there exists $\mu_1, \mu_2, \mu_3 \le -\frac{1}{4}$ so that
$$
f_{\mu_m}\circ\cdots\circ f_{\mu_1}(\mu)=-1,
$$
for some $m \leq 3$.
We distinguish between different cases
\begin{enumerate}
    \item $\mu\ge -3/4$. Take $\mu_1=-1-\mu\le -1/4$, one can check $f_{\mu_1}(\mu)=-1$.
    \item $\mu<-1$. Take $\mu_1=-1/4$ and $\mu_2 = 1 - f_{\mu_1}(\mu)$, then $f_{\mu_1}(\mu)>0>-3/4$ and so $\mu_2 \leq -1/4$. One can check $f_{\mu_2} \circ f_{\mu_1} (\mu) = -1$.
    \item $-1<\mu<-3/4$. Take $\mu_1= \mu_2 = -1/4$ and $\mu_3 = 1 - f_{\mu_2}(f_{\mu_1}(\mu))$, then $f_{\mu_1}(\mu)<-1$ so we see $\mu_3 \leq -1/4$. One can check $f_{\mu_3} \circ f_{\mu_2} \circ f_{\mu_1} (\mu) = -1$.
\end{enumerate}

We may assume that $\{R_{G_n,v_n}\}_{n\geq 1}$ converges locally uniformly on $U$ to a holomorphic function $f$ with $f(\lambda_0)=\mu$. We want to show $f$ is constant on $U$.
Since the set $\{R_{G,v}(\lambda_0): (G,v) \in \mathcal{G}_\Delta^1\}$ is dense in $\hat{\mathbb{C}}$ by assumption, we can choose sequences of rooted graphs $\{(G_n^i,v_n^i)\}_{n \geq 1}$ in $\mathcal{G}_\Delta^1$ so that $\{R_{G_n^i,v_n^i}(\lambda_0)\}_{n\geq 1}$ converges to $\mu_i$ for each $i=1,\ldots,m$.
By Lemma~\ref{contradictionlemma2} every sequence $\{R_{G_n^i,v_n^i}\}_{n\geq 1}$ converges locally uniformly on $U$ to the constant function $\mu_i$ for each $i$.

Consider for each $n\geq 1$, the rooted graph $(\widetilde G_n,v_n^m)$ obtained by implementing the rooted graphs $(G_n,v_n)$, $(G_n^1,v_n^1),\ldots, (G_n^m,v_n^m)$ on the vertices of the path $P_{m+1}$ of length $m$. 
Note that $\widetilde G_n$ has maximum degree at most $\Delta$.

It follows from Lemma~\ref{lemma: paths} that 
\[
R_{\widetilde G_n,v^m_n}(\lambda) = f_{R_{G_n^m,v^m_n}(\lambda)}\circ \cdots \circ f_{R_{G_n^1,v^1_n}(\lambda)}\circ R_{G_n,v_n}(\lambda).
\]
By our choice of the $\mu_i$ the sequence of ratios
$\{R_{\widetilde G_n,v^m_n}(\lambda_0)\}_{n\geq 1}$ converges to $f_{\mu_m}\circ\cdots\circ f_{\mu_1}(\mu)=-1$. Hence by Lemma \ref{contradictionlemma1} the sequence of ratios $\{R_{\widetilde G_n,v^m_n}\}_{n\geq 1}$ converges locally uniformly to the constant function $-1$.
Furthermore the sequence of ratios $\{R_{\widetilde G_n,v^m_n}\}_{n\geq 1}$ converges to the function $F:=f_{\mu_m}\circ \cdots \circ f_{\mu_1}\circ f$. As $f_{\mu_m}\circ \cdots \circ f_{\mu_1} (z) $ is a non-constant holomorphic function and $F = -1$ on $U$, it follows that $f$ is constant on $U$, as desired.
\end{proof}

We are now ready to prove Theorem \ref{thm:dense->nonnormal}.
\begin{proof}[Proof of Theorem~\ref{thm:dense->nonnormal}]
Assume for the purpose of a contradiction that there exists $\lambda_0 \in \mathcal{D}_\Delta^1$ with $\lambda_0 \not \in \mathcal{A}_\Delta$. We note that by Lemma~\ref{lem: RegionAroundlambda=0} we know $\lambda_0 \neq 0$. Throughout the proof denote $U$ for an open neighborhood of $\lambda_0$ on which the family $\mathcal{R}_\Delta$ is normal; we may assume $0 \not \in U$ by taking $U$ small enough. 
Assume first that $\lambda_0$ is not purely imaginary.  Consider the real number $c=\frac{- |\lambda_0|^2}{2\textrm{Re}\,\lambda_0}$ and notice that
$$
\frac{\lambda_0^2}{\lambda_0+c} = 2\textrm{Re}\,\lambda_0 \in\mathbb R.
$$
Choose two sequences of rooted graphs $\{G_n,v_n\}_{n\geq 1}$, $\{(H_n,w_n)\}_{n\geq 1}$ in  $\mathcal{G}_\Delta^1$ so that $\{R_{G_n,v_n}(\lambda_0)\}_{n\geq 1}$ and $\{R_{H_n,w_n}(\lambda_0)\}_{n\geq 1}$ converge to respectively $1$ and $c$.
By Lemma~\ref{contradictionlemma3} we must have that these sequence of ratios converge locally uniformly on $U$ to the respective constants $1$ and $c$.

Consider the sequence of graphs $\widetilde{G}_{n\geq 1}$ constructed by merging $v_n$ and $w_n$ and by then connecting this vertex to a vertex $\widetilde{v}_n$.

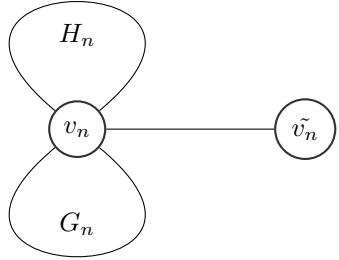
\begin{figure}[ht]

\centering

\begin{tikzpicture}[label distance=6mm]
\node[mycircle,label=below:{$G_n$}, label=above:{$H_n$}](a) {$v_n$};
\node[mycircle,right of=a,node distance=3cm](b) {$\tilde{v_n}$};

\draw[-] (a) to node {{}} (b);

\path[every node/.style={font=\sffamily\small}]
    (a) edge [loop,in=220,out=320,min distance=30mm] node {} (a)
    (a) edge [loop,in=40,out=140, min distance=30mm] node {} (a);

\path[every node/.style={font=\sffamily\small}]
    (b) edge [white,loop,in=220,out=320,min distance=30mm] node {} (b)
    (b) edge [white,loop,in=40,out=140, min distance=30mm] node {} (b);
\end{tikzpicture}

\caption{The rooted graph $\widetilde G_n$ in the proof of Theorem~\ref{thm:dense->nonnormal}}
    \label{graph2}
\end{figure}

It follows from Lemma~\ref{lemma: paths} and Lemma \ref{lem:mergingtrees} for all $\lambda \in U$ that
$$
R_{\widetilde G_n,\widetilde v_n}(\lambda)=\frac{\lambda}{1+\lambda^{-1}R_{G_n,v_n}(\lambda)\,R_{H_n,w_n}(\lambda)},
$$
where we use $0 \not \in U$.
Therefore the sequence of holomorphic functions $\{R_{\widetilde G_n,\widetilde v_n}\}_{n\geq 1}$ converges locally uniformly on $U$ to the function $f(\lambda)=\frac{\lambda^2}{\lambda+c}$ as $n\to \infty$.
Note that $f$ is not a constant function, and that $f(\lambda_0)\in\mathbb R$, contradicting Lemma~\ref{contradictionlemma3}.
This contradiction completes the proof for $\lambda_0$ not purely imaginary.

Assume instead that $\lambda_0$ is purely imaginary and let $(G,v)\in\mathcal{G}_\Delta^1$ so that $R_{G,v}(\lambda_0)$ is not purely imaginary. 
For $c\in \mathbb{R}$ to be determined later choose again two sequences of rooted graphs $\{G_n,v_n\}_{n\geq 1}$, $\{(H_n,w_n)\}_{n\geq 1}$ in $\mathcal{G}_\Delta^1$ such that sequences $\{R_{G_n,v_n}(\lambda_0)\}_{n\geq 1}$ and $\{R_{H_n,w_n}(\lambda_0)\}_{n\geq 1}$ converge to $1$ and $c$ respectively.
Define for each $n\geq 1$, $(\tilde{G_n},\widetilde{v_n})$ as above and let $(K_n,v_n)$ be the rooted graph obtained from the disjoint union of $(\tilde{G_n},\widetilde{v_n})$ and $(G,v)$ by identifying the vertex $\widetilde v_n$ with $v$. 
It follows from Lemma~\ref{lemma: paths} and Lemma \ref{lem:mergingtrees} for $\lambda \in U$ that
$$
R_{K_n,v_n}(\lambda)=\frac{R_{G,v}(\lambda)}{1+\lambda^{-1}R_{G_n,v_n}(\lambda)\,R_{H_n,w_n}(\lambda)},
$$
where we use $0 \not \in U$.
Thus in order to follow the same argument as before we require $c \in \mathbb R$ for which
$$
\frac{\lambda_0 \cdot R_{G,v}(\lambda_0)}{\lambda_0 + c} \in \mathbb R.
$$
It is clear that such real number $c$ exists, hence the identical argument leads to the desired contradiction. 
\end{proof}

We will now show the other inclusion $\mathcal{A}_\Delta \subseteq \overline{D_\Delta}$ also holds for all $\Delta \geq 3$.
We first show the inclusion holds for non-real parameters $\lambda \in \mathcal{A}_\Delta$.

\begin{theorem}\label{thm:nonnormalimpliesdense_realcase}
Let $\Delta \geq 3$ and suppose that the family $\mathcal{R}_\Delta$ is not normal in any neighborhood of $\lambda_0 \in \mathbb C \setminus \mathbb{R}_{\leq 0}$. Then there exists $\lambda_1$ arbitrarily close to $\lambda_0$ for which the set $\{R_{G,v}(\lambda_1) : (G,v) \in \mathcal{G}_\Delta^1\}$ is dense in $\hat{\mathbb C}$.
\end{theorem}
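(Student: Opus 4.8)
The plan is to produce $\lambda_1$ by first locating a nearby zero of an independence polynomial, and then to certify density by building graphs in $\mathcal{G}_\Delta^1$ whose ratios at $\lambda_1$ realise the orbit of $0$ under a semigroup of M\"obius transformations that has dense orbits on $\hat{\mathbb C}$.

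\emph{Choosing $\lambda_1$.} By Section~\ref{sec: Ndelta = zdelta} we have $\mathcal{A}_\Delta=\overline{\mathcal{Z}_\Delta}$, so there are parameters arbitrarily close to $\lambda_0$ at which some $Z_G$, $G\in\mathcal{G}_\Delta$, vanishes. Since $\lambda_0\notin\mathbb R_{\le0}$ and $Z_G$ has nonnegative coefficients (hence never vanishes on $\mathbb R_{>0}$), and a sufficiently small disc about a non-real point contains no reals, we may take such a zero $\lambda'$ non-real and arbitrarily close to $\lambda_0$, and by Lemma~\ref{lem: zero implies -1 at leaf} a rooted tree $(T,u)\in\mathcal{G}_\Delta^1$ with $R_{T,u}(\lambda')=-1$. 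The rational function $R_{T,u}$ is non-constant (it vanishes at $\lambda=0$ but not identically), hence open near $\lambda'$ and real along a real-analytic curve through $\lambda'$ that $R_{T,u}$ maps onto a neighbourhood of $-1$ in $\mathbb R$; as $\lambda'$ is non-real, this curve is non-real near $\lambda'$. By Lemma~\ref{lem: f_lambda classification} together with $\tr^2(f_\mu)=-1/\mu$, parameters $\mu\in(-\infty,-1/4)$ for which $f_\mu$ is conjugate to an \emph{irrational} rotation are dense, so we may pick $\lambda_1$ on that curve, as close to $\lambda'$ (hence to $\lambda_0$) as we like, non-real, with $\mu:=R_{T,u}(\lambda_1)\in(-\infty,-1/4)$ and $f_\mu$ an irrational rotation. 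Then $\lambda_1\ne0$, and by Lemma~\ref{lem: f_lambda classification} $f_{\lambda_1}$ is loxodromic and not hyperbolic (hyperbolicity would need $\lambda_1\in(-\tfrac14,0)$), so $f_{\lambda_1}$ preserves no generalised circle.

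\emph{Realising a semigroup orbit.} Since $\Delta\ge3$, the root of $(T,u)$ has degree $1\le\Delta-2$ and the single-vertex graph has root degree $0$. Implementing copies of $(T,u)$ and of the single-vertex graph along the vertices of a path and attaching one pendant edge at the root, Lemma~\ref{lemma: paths} and Lemma~\ref{lem: tree recursion} (with $d=1$) produce, for every word $a_1,\dots,a_n\in\{\mu,\lambda_1\}$ (the ratio values at $\lambda_1$ of the two block types), a rooted tree in $\mathcal{G}_\Delta^1$ whose ratio at $\lambda_1$ equals $f_{\lambda_1}\bigl(f_{a_n}\circ\cdots\circ f_{a_1}(0)\bigr)$, where $f_a(z)=a/(1+z)$. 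As $f_{\lambda_1}$ is a homeomorphism of $\hat{\mathbb C}$, it suffices to show that the orbit of $0$ under the semigroup $\mathcal{S}$ generated by $f_\mu$ and $f_{\lambda_1}$ is dense in $\hat{\mathbb C}$.

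\emph{Density of the orbit, and the main obstacle.} The closure $\bar O$ of the $\mathcal{S}$-orbit of $0$ is closed and $\mathcal{S}$-invariant. Since $f_\mu$ preserves $\hat{\mathbb R}$ and (Lemma~\ref{lem: Mobius classification}) acts there as an irrational circle rotation with no fixed point on $\hat{\mathbb R}$, the set $\{f_\mu^n(0):n\ge1\}$ is dense in $\hat{\mathbb R}$, so $\hat{\mathbb R}\subseteq\bar O$; and since a subsequence of $f_\mu^n$ converges uniformly to $f_\mu^{-1}$, $\bar O$ is invariant under all of $\langle f_\mu\rangle$. Now $f_\mu$ permutes the foliation of $\hat{\mathbb C}$ by the circles on which $|z-p|/|z-\bar p|$ is constant ($p,\bar p$ the fixed points of $f_\mu$), acting on each leaf by an irrational rotation, so the $f_\mu$-saturation of any circle is the closed annulus bounded by its extremal leaves; since $f_{\lambda_1}$, being loxodromic and not hyperbolic, sends no leaf to a leaf, applying $f_{\lambda_1}$ to the current annulus and $f_\mu$-saturating yields a strictly larger annulus, and one should be able to iterate this to exhaust $\hat{\mathbb C}$. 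Equivalently, Lemma~\ref{lem: Mobius classification} gives that the \emph{group} $\langle f_\mu,f_{\lambda_1}\rangle$ is non-discrete (as $f_\mu$ is an irrational rotation), non-elementary (as $f_\mu$ and $f_{\lambda_1}$ have disjoint fixed-point sets, since $\mu\ne\lambda_1$, and $f_{\lambda_1}$ is loxodromic), and not contained in a conjugate of $\mathrm{PSL}_2(\mathbb R)$ (as $f_{\lambda_1}$ preserves no circle), hence is dense in $\mathrm{PSL}_2(\mathbb C)$; one then transfers this to the semigroup using that the powers of the elliptic $f_\mu$ lie in a compact subgroup $K\subseteq\overline{\mathcal{S}}$, so that $\overline{\mathcal{S}}$ has non-empty interior, contains an elliptic (hence invertible-in-$\overline{\mathcal{S}}$) element, and therefore contains $f_{\lambda_1}^{-1}$ and all of $\overline{\langle f_\mu,f_{\lambda_1}\rangle}=\mathrm{PSL}_2(\mathbb C)$. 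I expect this closing step -- making the enlargement of the invariant annuli exhaust $\hat{\mathbb C}$, equivalently passing from density of the generated group to density of the semigroup orbit of $0$ -- to be the main difficulty; it should hinge on the elliptic generator $f_\mu$ (whose inverse already lies in $\overline{\mathcal{S}}$) and, if necessary, on choosing $\lambda_1$ generically within the dense family of admissible parameters so that $f_{\lambda_1}$ is in general position relative to the rotation axis of $f_\mu$.
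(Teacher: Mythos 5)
Your choice of $\lambda_1$ and the first half of the density argument match the paper's proof: find a non-real zero $\lambda'$ near $\lambda_0$, take $(T,u)\in\mathcal{G}_\Delta^1$ with $R_{T,u}(\lambda')=-1$, move $\lambda_1$ along the preimage of the real line so that $\mu=R_{T,u}(\lambda_1)$ makes $f_\mu$ an irrational rotation, and use path implementations (Lemma~\ref{lemma: paths}) to realise words in $f_\mu$ and $f_{\lambda_1}$. Getting $\hat{\mathbb R}$, then $\lambda_1\cdot\hat{\mathbb R}$, then its $f_\mu$-saturation into the closure of the ratio set is exactly what the paper does. But the step you yourself flag as the main difficulty is a genuine gap, and neither of your two routes closes it. The iteration ``apply $f_{\lambda_1}$, saturate, repeat'' produces a strictly increasing sequence of closed $f_\mu$-saturated annuli, but such a sequence can converge to a \emph{proper} annulus: a closed $f_\mu$-saturated annulus $A$ with $f_{\lambda_1}(A)\subseteq A$ is perfectly possible for a strictly loxodromic $f_{\lambda_1}$ whose attracting fixed point lies in $A$ (think of a rotation-saturated round annulus and a loxodromic map contracting strongly to an interior point), so ``strictly larger at each step'' does not give exhaustion without a quantitative expansion estimate you have not supplied. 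The group-theoretic fallback also breaks at the transfer to the semigroup: the inference ``the powers of $f_\mu$ lie in a compact subgroup $K\subseteq\overline{\mathcal S}$, so $\overline{\mathcal S}$ has non-empty interior'' is false — $K$ is a one-dimensional circle group in the six-real-dimensional $\mathrm{PSL}_2(\mathbb C)$ — and even granting density of the generated \emph{group}, density of the \emph{semigroup orbit of $0$} does not follow without further argument (closed sub-semigroups with non-empty interior can be proper, and obtaining $f_{\lambda_1}^{-1}\in\overline{\mathcal S}$ is precisely what is unproven).

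The paper avoids this entirely by leaving the semigroup-orbit framework at the last step. Having shown that $\mathcal{R}^1_\Delta(\lambda_1)$ is dense in a neighbourhood $U_\infty=f_{\lambda_1}(U)$ of $\infty$ and that $\mathcal{R}^2_\Delta(\lambda_1)$ is dense in $\hat{\mathbb R}$, it uses a \emph{different graph operation}: joining the roots of two trees with ratios $r_1\in U_\infty$ and $r_2\in\hat{\mathbb R}$ by an edge, which produces the ratio $F(r_1,r_2)=r_1/(1+r_2)$. For fixed $r_1$ the image of $\hat{\mathbb R}$ under $r_2\mapsto r_1/(1+r_2)$ is the full generalised circle $r_1\cdot\hat{\mathbb R}$ through $0$ and $\infty$, and as $r_1$ ranges over the open set $U_\infty$ these circles sweep out all of $\hat{\mathbb C}$; one application of $f_{\lambda_1}$ then returns the root degree to $1$. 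This two-variable combination is not of the form $g(0)$ for $g$ in your semigroup, which is why restricting to the semigroup orbit of $0$ forces you into the hard dynamical question. If you want to salvage your write-up, replace the exhaustion/group-density step by this edge-joining construction.
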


\begin{proof} 
Because $\overline{\mathcal{Z}_\Delta} = \mathcal{A}_\Delta$ there exists $\lambda_2$ arbitrarily close to $\lambda_0$ for which there is a graph $G$ of maximum degree at most $\Delta $ such that $Z_G(\lambda_2) = 0$. 
We claim that we can assume $\lambda_2 \not \in \mathbb{R}$. This is clear if $\lambda_0 \not \in \mathbb{R}$. Moreover, if 
$\lambda_0 \in \mathbb{R}$ then $\lambda_0$ is a strictly positive real number. Because $Z_G(x) > 0$ for 
any positive real number $x$, it follows that $\lambda_2$ is necessarily not real as long as it is sufficiently close to $\lambda_0$.

By Lemma \ref{lem: zero implies -1 at leaf} there is a rooted tree $(T,v) \in \mathcal{G}_\Delta^1$ such that $R_{T,v}(\lambda_2) = -1$. Since the rational function $\lambda \mapsto R_{T,v}(\lambda)$ is non-constant, it is an open map. The image of a neighborhood of $\lambda_2$ therefore contains a small open real interval around $-1$. Recall that Lemma~\ref{lem: f_lambda classification} states that for $\mu \in (-\infty, -1/4)$ the map $f_\mu: z \mapsto \mu/(1+z)$ is conjugate to a rotation $w \mapsto e^{i \theta} \cdot w$. Furthermore, by comparing $\tr^2$ of both maps, it is not hard to see that those parameters $\mu$ for which $f_\mu$ is conjugate to an irrational rotation lie dense in $(-\infty, -1/4)$. Therefore 
we can choose a $\lambda_1 \in \mathbb{C} \setminus \mathbb{R}$ arbitrarily close to $\lambda_2$ such that for $\mu:=R_{T,v}(\lambda_1)$
the map $f_\mu$ is conjugate to an irrational rotation. From now on $\mu$ is fixed to be this value.

Let ${p,q}$ be the two fixed points of the transformation $f_\mu$. In Section~\ref{sec: Mobius transformations} we explained that $\hat{\mathbb C} \setminus \{p,q\}$ is foliated by generalized circles invariant under $f_\mu$, and on which $f_\mu$ acts conjugate to an irrational rotation. We denote the generalized circle through $z$ by $C_z$, and write $C_q$ and $C_p$ for $\{q\}$ and $\{p\}$ respectively.
The map $z \mapsto C_z$ is continuous as a map from $\hat{\mathbb{C}}$ to the space $\{K \subseteq \hat{\mathbb C}: K \text{ compact}\}$ equipped with the Hausdorff metric.

Our goal is to show that $\mathcal{R}^1_\Delta(\lambda_1)$ is dense in $\hat{\mathbb{C}}$. 
We first claim that if $w \in \mathcal{R}^1_\Delta(\lambda_1)$, then $\mathcal{R}^2_\Delta(\lambda_1) \cap C_w$ is dense in $C_w$.

To prove the claim, let $(H,u) \in \mathcal{G}_\Delta^1$ be a rooted graph such that $R_{H,u}(\lambda_1) = w$. Let $\tilde{G}_n$ as follows be obtained from the path $P_{n+1}$ on $n+1$ vertices, labeled $v_0$ up to $v_{n}$, by implementing $(H,u)$ at $v_0$ and the rooted tree $(T,v)$ at the remaining $n$ vertices of $P_{n+1}$, see Figure \ref{graphclaim}.
Now by Lemma~\ref{lemma: paths} we have
\[
R_{\tilde{G}_n, v_{n}}(\lambda_1) = f_\mu^n(R_{H,u}(\lambda_1)) = f_\mu^n(w).
\]
Observe that for each $n\geq 1$ we have $(\tilde{G}_n, v_n)\in \mathcal{G}_\Delta^2$. 
Because $f_\mu$ acts conjugately to an irrational rotation on $C_w$ it follows that  $\mathcal{R}^2_\Delta(\lambda_1) \cap C_w$ is dense in $C_w$.

\begin{figure}[ht]
    \centering
\begin{tikzpicture}[label distance=6mm, every loop/.style={min distance=30mm,in=220,out=320}]
\node[mycircle,label=below:{$H$}](a) {$v_0$};
\node[mycircle,right of=a,node distance=2cm,label=below:{$T$}](b) {$v_1$};
\node[mycircle,right of=b,node distance=2cm,label=below:{$T$}](c) {$v_2$};
\node[mycircle,right of=c,node distance=3cm,label=below:{$T$}](d) {$v_n$};

\draw[-] (a) to node {{}} (b);

\draw[-] (b) to node {{}} (c);

\draw[dashed] (c) to node {{}} (d);

\path[every node/.style={font=\sffamily\small}]
    (a) edge [loop below] node {} (a)
    (b) edge [loop below] node {} (b)
    (c) edge [loop below] node {} (c)
    (d) edge [loop below] node {} (d);
   
\end{tikzpicture}
    \caption{The graph $(\tilde{G}_n, v_n)$ in the proof of the claim}
    \label{graphclaim}
\end{figure}
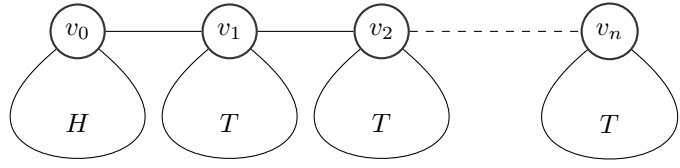

Because $\mu \in \mathcal{R}^1_\Delta(\lambda_1)$ and $C_\mu = \hat{\mathbb{R}} := \mathbb{R} \cup \{\infty\}$ it follows from the claim that $\mathcal{R}^2_\Delta(\lambda_1) \cap \hat{\mathbb{R}}$ is dense in $\hat{\mathbb{R}}$.
Observe that $f_{\lambda_1}(\hat{\mathbb{R}}) = \lambda_1 \cdot \hat{\mathbb{R}}$. So by attaching a vertex at the root with an edge, we obtain that $\mathcal{R}^1_\Delta(\lambda_1) \cap \lambda_1 \cdot \hat{\mathbb{R}}$ is dense in $\lambda_1 \cdot \hat{\mathbb{R}}$.

The set 
\[
    U = \{z \in \hat{\mathbb{C}}:\text{$C_z$ intersects $\lambda_1 \cdot \hat{\mathbb{R}}$ transversely}\}
\]
is an open set in $\hat{\mathbb{C}}$, see Figure \ref{fig:TransversalCircles}. Because $\lambda_1 \not \in \mathbb{R}$ we see that $C_{-1} = \hat{\mathbb{R}}$ intersects $\lambda_1 \cdot \hat{\mathbb{R}}$ transversely, and thus $-1 \in U$. 
The set U is contained in $\cup_{w \in \lambda_1 \cdot \hat{\mathbb{R}}} C_w$. 
Because $\mathcal{R}^1_\Delta(\lambda_1) \cap \lambda_1 \cdot \hat{\mathbb{R}}$ is dense in $\lambda_1 \cdot \hat{\mathbb{R}}$, it follows that $\cup_{w \in \mathcal{R}^1_\Delta(\lambda_1)} C_w$ is dense in $U$.
From the claim we proved earlier, it follows $\mathcal{R}^2_\Delta(\lambda_1)$ is dense in $U$. 
Attaching a vertex to the root of a tree in $\mathcal{G}_\Delta^2$ with ratio $r$ yields a rooted tree in $\mathcal{G}_\Delta^1$ with ratio $f_{\lambda_1}(r)$, and thus $\mathcal{R}^1_\Delta(\lambda_1)$ is dense in the neighborhood $U_\infty := f_{\lambda_1}(U)$ of $\infty$.

For two rooted trees $(T_1, v_1)\in\mathcal{G}_\Delta^1$ and $(T_2, v_2)\in\mathcal{G}_\Delta^2$ with ratios $r_1$ and $r_2$ respectively we can define the rooted tree $(T_3,v_1) \in \mathcal{G}_\Delta^2$ by adding an edge between the roots of $T_1$ and $T_2$ and considering $v_1$ the root of the obtained tree. By Lemma \ref{lemma: paths} the ratio of $(T_3,v_1)$ is given by
\[
    F(r_1,r_2):=f_{r_1}(r_2)=\frac{r_1}{1+r_2}
\]
under the assumption that this fraction is well defined, i.e., $(r_1,r_2) \not \in \{(0,-1),(\infty,\infty)\}$. It is not hard to see that 
\[
    F(U_\infty \times \hat{\mathbb{R}}\setminus \{(0,-1),(\infty,\infty)\}) = \hat{\mathbb{C}}.
\]
Because $\mathcal{R}^1_\Delta(\lambda_1)$ is dense in $U_\infty$ and $\mathcal{R}^2_\Delta(\lambda_1)$ is dense in $\hat{\mathbb{R}}$ it follows that $\mathcal{R}^2_\Delta(\lambda_1)$ is dense in $\hat{\mathbb{C}}$. We finally conclude that $\mathcal{R}^1_\Delta(\lambda_1)$ is dense in $f_{\lambda_1}(\hat{\mathbb{C}})=\hat{\mathbb{C}}$.
\end{proof}

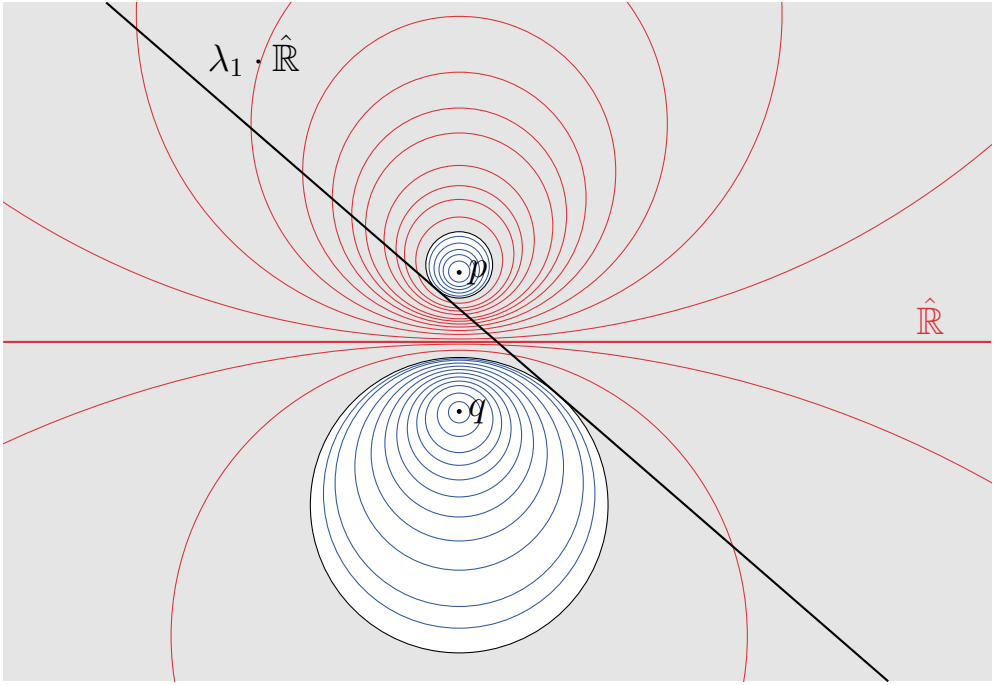
\begin{figure}[h!]
\centering
\begin{tikzpicture}
    
    \clip (-6.5,-4.5) rectangle (6.5,4.5);
    \fill[gray,opacity=0.2] (-6.5,-4.5) rectangle (6.5,4.5);
    
    \filldraw[black,fill=white] (-0.5, -2.16463) circle (1.95847);
    \filldraw[black,fill=white] (-0.5, 1.02139) circle (0.439597);
    
    \draw[gR] (-0.5, -3.90169) circle (3.7912);
    \draw[gR] (-0.5, -14.2842) circle (14.2544);
    \draw[gR] (-0.5, 10.8559) circle (10.8167);
    \draw[gR] (-0.5, 4.34695) circle (4.24805);
    \draw[gR] (-0.5, 2.88985) circle (2.73884);
    \draw[gR] (-0.5, 2.25668) circle (2.05975);
    \draw[gR] (-0.5, 1.90702) circle (1.66935);
    \draw[gR] (-0.5, 1.68766) circle (1.41358);
    \draw[gR] (-0.5, 1.5386) circle (1.23178);
    \draw[gR] (-0.5, 1.35156) circle (0.988289);
    \draw[gR] (-0.5, 1.24111) circle (0.830876);
    \draw[gR] (-0.5, 1.16958) circle (0.719667);
    \draw[gR] (-0.5, 1.08464) circle (0.571344);
    
    \draw[gB] (-0.5, -0.932707) circle (0.141217);
    \draw[gB] (-0.5, -0.96573) circle (0.287462);
    \draw[gB] (-0.5, -1.02346) circle (0.44438);
    \draw[gB] (-0.5, -1.07783) circle (0.558309);
    \draw[gB] (-0.5, -1.14721) circle (0.682714);
    \draw[gB] (-0.5, -1.23451) circle (0.82098);
    \draw[gB] (-0.5, -1.34375) circle (0.977578);
    \draw[gB] (-0.5, -1.48067) circle (1.15862);
    \draw[gB] (-0.5, -1.65362) circle (1.37275);
    \draw[gB] (-0.5, -1.87509) circle (1.63278);
    \draw[gB] (-0.5, -2.00973) circle (1.78578);
    
    \draw[gB] (-0.5, 1.00337) circle (0.395928);
    \draw[gB] (-0.5, 0.979527) circle (0.330867);
    \draw[gB] (-0.5, 0.959562) circle (0.266006);
    \draw[gB] (-0.5, 0.945848) circle (0.211254);
    \draw[gB] (-0.5, 0.932498) circle (0.139829);
    
    \draw[gR, thick] (-6.5, 0) -- (6.5, 0);
    \draw[black,thick] (5.14456, -4.5) -- (-5.14456, 4.5);
    
    \filldraw[black] (-0.5, -0.921954) circle (0.025) node[anchor=west] {\LARGE$q$};
    \filldraw[black] (-0.5, 0.921954) circle (0.025) node[anchor=west] {\LARGE$p$};
    \node[gR] at (5.7,0.35) {\LARGE$\hat{\mathbb{R}}$};
    \node[black] at (-3.2,3.8) {\LARGE$\lambda_1 \cdot \hat{\mathbb{R}}$};
\end{tikzpicture}
\caption{The generalized circles $\lambda_1 \cdot \hat{\mathbb{R}}$ and $\hat{\mathbb{R}}$ intersect in the points $0$ and $\infty$. A region around $0$ is drawn. The open set $U$ is shaded in gray. Examples of generalized circles $C_w$ that intersect $\lambda_1 \cdot \hat{\mathbb{R}}$ transversely are drawn in red, while examples of circles that do not intersect $\lambda_1 \cdot \hat{\mathbb{R}}$ are drawn in blue.}
\label{fig:TransversalCircles}
\end{figure}

We can now finally prove the inclusion $\mathcal{A}_\Delta \subseteq \overline{\mathcal{D}_\Delta}$ building on Proposition 6 of \cite{Galanisetal20} to deal with the real parameters $\lambda\in \mathcal{A}_\Delta$.

\begin{theorem}
Let $\Delta \geq 3$. Then the activity locus is contained in the density locus, i.e. $\mathcal{A}_\Delta \subseteq \overline{\mathcal{D}_\Delta}$.
\end{theorem}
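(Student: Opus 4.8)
The plan is to reduce the statement to two results already available, namely Theorem~\ref{thm:nonnormalimpliesdense_realcase} and Proposition~6 of \cite{Galanisetal20}, by splitting $\mathcal{A}_\Delta$ according to whether the parameter lies on $\mathbb{R}_{\le 0}$. First I would treat $\lambda_0\in\mathcal{A}_\Delta\setminus\mathbb{R}_{\le 0}$. By definition of the activity-locus the family $\mathcal{R}_\Delta$ fails to be normal on every neighbourhood of $\lambda_0$, so Theorem~\ref{thm:nonnormalimpliesdense_realcase} applies directly and yields, for every $\varepsilon>0$, a parameter $\lambda_1$ with $|\lambda_1-\lambda_0|<\varepsilon$ and $\mathcal{R}^1_\Delta(\lambda_1)$ dense in $\hat{\mathbb{C}}$, that is, $\lambda_1\in\mathcal{D}_\Delta$. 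Hence $\lambda_0\in\overline{\mathcal{D}_\Delta}$. Since this case includes every $\lambda_0\in\mathcal{A}_\Delta$ with positive real part, the only remaining case is $\lambda_0\in\mathcal{A}_\Delta\cap\mathbb{R}_{\le 0}$.

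For such $\lambda_0$ the idea is to show that it is approximated by non-real parameters lying outside the cardioid. I would locate $\lambda_0$ as follows: by Lemma~\ref{lem: RegionAroundlambda=0} the Shearer disk $B_\Delta$ is disjoint from $\mathcal{A}_\Delta$, so $\lambda_0\notin B_\Delta$, and Remark~\ref{rmk:shearerandcardioid} identifies $\mathbb{R}_{\le 0}\cap B_\Delta$ with $\mathbb{R}_{\le 0}\cap\Int\Lambda_\Delta$; hence $\lambda_0\notin\Int\Lambda_\Delta$. Because $\Lambda_\Delta$ is closed, the elementary identity $\overline{\mathbb{C}\setminus\Lambda_\Delta}=\mathbb{C}\setminus\Int\Lambda_\Delta$ shows $\lambda_0\in\overline{\mathbb{C}\setminus\Lambda_\Delta}$, so every neighbourhood of $\lambda_0$ contains a non-empty open subset of $\mathbb{C}\setminus\Lambda_\Delta$, and therefore contains non-real points of $\mathbb{Q}[i]$ lying outside $\Lambda_\Delta$. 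In particular there is a sequence of such points converging to $\lambda_0$.

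Finally I would invoke Proposition~6 of \cite{Galanisetal20} (recalled in the discussion after Theorem~\ref{thm:dense->nonnormal}): every non-real $\lambda\in\mathbb{Q}[i]$ outside $\Lambda_\Delta$ belongs to the density-locus $\overline{\mathcal{D}_\Delta}$. Applying this to the sequence of non-real exterior parameters built above, and using that $\overline{\mathcal{D}_\Delta}$ is closed, gives $\lambda_0\in\overline{\mathcal{D}_\Delta}$; together with the first case this proves $\mathcal{A}_\Delta\subseteq\overline{\mathcal{D}_\Delta}$. I do not expect a serious obstacle here once Theorem~\ref{thm:nonnormalimpliesdense_realcase} is available, since it already contains the substantive dynamical argument; the only point needing a little care is the cardioid cusp $\lambda^*(\Delta)$ on the negative axis, where Theorem~\ref{thm:nonnormalimpliesdense_realcase} cannot be used, but the second-case argument handles it uniformly because it relies solely on $\lambda^*(\Delta)\notin\Int\Lambda_\Delta$.
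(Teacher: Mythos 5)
Your proposal is correct and follows essentially the same route as the paper: split according to whether $\lambda_0$ lies on $\mathbb{R}_{\le 0}$, use Theorem~\ref{thm:nonnormalimpliesdense_realcase} off the negative real axis, and handle $\mathcal{A}_\Delta\cap\mathbb{R}_{\le 0}$ by locating it in $\mathbb{R}_{\le 0}\setminus\Int(\Lambda_\Delta)$ and approximating by non-real parameters outside $\Lambda_\Delta$, which lie in $\overline{\mathcal{D}_\Delta}$ by Proposition~6 of~\cite{Galanisetal20}. The only (harmless) difference is that you derive the inclusion $\mathcal{A}_\Delta\cap\mathbb{R}_{\le 0}\subseteq\mathbb{R}_{\le 0}\setminus\Int(\Lambda_\Delta)$ directly from Lemma~\ref{lem: RegionAroundlambda=0} and Remark~\ref{rmk:shearerandcardioid}, whereas the paper passes through the equality $\mathcal{A}_\Delta=\overline{\mathcal{Z}_\Delta}$; both are fine.
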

\begin{proof}
Let $\lambda_0 \in \mathcal{A}_\Delta$. If $\lambda_0 \in \mathbb C \setminus \mathbb{R}_{\leq 0}$, then $\lambda_0 \in \overline{\mathcal{D}_\Delta}$ follows from Theorem \ref{thm:nonnormalimpliesdense_realcase}.

We know $\overline{\mathcal{Z}_\Delta} = \mathcal{A}_\Delta$. 
By Remark \ref{rmk:shearerandcardioid} we know that 
$$
\overline{\mathcal{Z}_\Delta} \cap \mathbb{R}_{\leq 0}= \mathbb{R}_{\leq 0} \setminus \Int(\Lambda_\Delta)
$$
for all $\Delta \geq 3$. 
Proposition 6 of \cite{Galanisetal20} implies that $\mathbb{C} \setminus (\mathbb{R}\cup \Lambda_\Delta) \subseteq \overline{\mathcal{D}_\Delta}$ for $\Delta \geq 3$.
Hence it follows that $\mathbb{R}_{\leq 0} \setminus \Int(\Lambda_\Delta) \subseteq \overline{\mathcal{D}_\Delta}$, which completes the proof.
\end{proof}

\section{Density implies \#P-hardness}

In this section we will show that the density-locus is contained in the $\#\mathcal{P}$-locus. To prove our result we will need to to show `exponential' density for ratios of a specific family of trees: we need to get $\varepsilon$-close to a given point $P\in \CQ$ with ratios of trees of size at most $O(\log(1/\varepsilon)+\size P)$. Here $\size P$ denotes the sum of the bit sizes of the real and imaginary part of $P$. Moreover, we denote for rational $\varepsilon>0$ by $\size{\varepsilon,P}$ the sum of the the bit size of $\varepsilon$ and $\size{P}$. 

Let $\lambda_0 \in \mathbb{C}\setminus \mathbb{R}$. 
Then the M\"obius transformation $f_{\lambda_0}$ is loxodromic (cf. Section~\ref{sec: Mobius transformations}) and hence has a repelling fixed point, which we denote by $z_0$.

Let \begin{equation}\label{eq:A}
    A := \{z \in \mathbb{C}: 2\pi/3-0.01 < \arg{z} < 2\pi/3 , 1/17 < |z| < 1/16\}.
\end{equation}
Let $\mathcal{T}=\{(G_1,v_1),\ldots, (G_m,v_m),(\overline{G}_1,\overline{v}_1),\ldots,(\overline{G}_M,\overline{v}_M\}$ be a family of rooted trees and $U$ an open disk containing $z_0$. 
The pair $(\mathcal{T},U)$ is called a \emph{fast implementer for $\lambda_0$} if the ratios $\mu_i:=R_{G_i,v_i}$ and $\chi_i:=R_{\overline{G}_i,\overline{v}_i}$ are such that the maps $g_i:=f_{\mu_i} \circ f_{\chi_i}$ are loxodromic and satisfy
\begin{enumerate}
    \item the attracting fixed point $z_i$ of $g_i$ lies in $U$ for all $i$,
    \item $\overline{U} \subseteq \cup_{i=1}^M g_i(U)$,
    \item $g_i'(z) \in A$ for all $i$ and all $z \in \overline{U}$,
\end{enumerate}
and the disk $U$ is such that
\begin{enumerate}
    \item $\overline{U}\subset f_{\lambda_0}(U)$,
    \item $\overline{U}$ does not contain the attracting fixed point of $f_{\lambda_0}$,
    \item $U$ has three rational points on its boundary.
\end{enumerate}

We have the following results concerning fast implementers.

\begin{lemma}
\label{lem: fast implementers exist}
Let $\Delta\geq 3$ be an integer.
Let $\lambda_0\in \mathcal{D}_\Delta$.
Then there exists a fast implementer $(\mathcal{T},U)$ for $\lambda_0$.
\end{lemma}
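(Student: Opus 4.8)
The plan is to build a fast implementer first from \emph{arbitrary} complex numbers $\mu_i,\chi_i$ (an ``exact model''), and only at the end invoke the density hypothesis together with the observation that every condition in the definition of a fast implementer is open in the $\mu_i,\chi_i$. First I would note that $\lambda_0$ must be non‑real: for real $\lambda$ each of $Z_G(\lambda)$, $Z_G^{\mathrm{in}}(\lambda)$, $Z_G^{\mathrm{out}}(\lambda)$ is real, so every ratio $R_{G,v}(\lambda)$ lies in $\mathbb R\cup\{\infty\}$ and $\mathcal R_\Delta^1(\lambda)$ cannot be dense in $\Chat$. Hence $f_{\lambda_0}$ is loxodromic by Lemma~\ref{lem: f_lambda classification}; let $z_0$ be its repelling and $w_0\neq z_0$ its attracting fixed point, and note $z_0\notin\{-1,0,\infty\}$.

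Next I would introduce a model family of compositions with prescribed first–order behaviour at $z_0$. Fix $c_0$ in the interior of the region $A$ of~\eqref{eq:A}, say $c_0=\tfrac1{16.5}e^{i(2\pi/3-1/200)}$, so that a fixed Euclidean ball about $c_0$ lies in $A$, and write $a:=z_0+1\neq0$. For $p$ near $z_0$ set
\[
  \chi(p):=\frac{c_0a^2}{\,p-c_0a\,},\qquad \mu(p):=p\Bigl(1+\tfrac{\chi(p)}{a}\Bigr),\qquad g_p:=f_{\mu(p)}\circ f_{\chi(p)}.
\]
Multiplying the two M\"obius matrices as in the proof of Lemma~\ref{lemma: paths} gives $g_p(z)=\mu(p)(1+z)/(z+1+\chi(p))$ and $g_p'(z)=\mu(p)\chi(p)/(z+1+\chi(p))^2$, and a short check yields $g_p(z_0)=p$ and $g_p'(z_0)=c_0$. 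After possibly replacing $c_0$ by a nearby value in $A$ (to avoid the finitely many degeneracies $p=c_0a$, $z_0=0$, $z_0=c_0/(1-c_0)$), these are genuine M\"obius transformations depending continuously on $p$ and holomorphic on a fixed disk about $z_0$ for $p$ in a fixed compact neighbourhood $K$ of $z_0$, with Taylor coefficients at $z_0$ bounded uniformly in $p\in K$.

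The heart of the argument — and the step I expect to be the real obstacle — is to make conditions (1) and (2) of the definition compatible: (2) forces the points $g_i(z_0)$ to reach out to $\partial U$, while (1) asks the attracting fixed points to stay inside $U$. Here the choice $\arg c_0\approx 2\pi/3$ is essential: since $\cos(2\pi/3)=-\tfrac12$ we have $|1-c_0|^2=1+|c_0|+|c_0|^2>1$, and an implicit–function computation gives the attracting fixed point of $g_p$ as
\[
  \zeta_p=z_0+\frac{p-z_0}{1-c_0}+O\bigl(|p-z_0|^2\bigr),
\]
so $|\zeta_p-z_0|<|p-z_0|$ for $p$ close to $z_0$: the fixed point always lands strictly inside the circle through $p$. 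With this in hand I would fix $r>0$ small, cover $\overline{B(z_0,1.01\,r)}$ by finitely many balls $B(p_i,\tfrac9{10}|c_0|r)$ with centres $p_i\in\overline{B(z_0,1.01\,r)}\subseteq K$, and check — for $r$ small enough, uniformly over $K$ by the Taylor bound — that each $g_{p_i}$ is loxodromic, that $g_{p_i}'$ maps $\overline{B(z_0,1.01\,r)}$ into $A$, that the attracting fixed point $\zeta_{p_i}$ lies in $B(z_0,0.99\,r)$ (by the displayed estimate and $|1-c_0|>1$), and that $g_{p_i}(B(z_0,0.99\,r))\supseteq B(p_i,\tfrac9{10}|c_0|r)$, so the images cover $\overline{B(z_0,1.01\,r)}$. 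Since $z_0$ is a repelling fixed point, shrinking $r$ further also gives $\overline{B(z_0,1.01\,r)}\subseteq f_{\lambda_0}(B(z_0,0.99\,r))$ and $w_0\notin\overline{B(z_0,1.01\,r)}$. Finally I would take three almost equally spaced points on $\partial B(z_0,r)$, perturb each slightly to a point of $\CQ$, and let $U$ be the open disk bounded by the circle through the three perturbed points; for a small enough perturbation $B(z_0,0.99\,r)\subseteq U\subseteq B(z_0,1.01\,r)$ and $z_0\in U$, so $U$ has three rational boundary points and, together with the $g_{p_i}$, satisfies every requirement in the definition of a fast implementer.

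It remains to replace the complex numbers $\mu(p_i),\chi(p_i)$ by genuine occupation ratios. Every condition imposed above is open in $(\mu_i,\chi_i)$ and was verified with uniform slack — the derivative condition holds with values in a fixed ball inside $A$, loxodromy and the location of the attracting fixed point are stable, and the covering in (2) was achieved with the strictly smaller balls $B(p_i,\tfrac9{10}|c_0|r)$. Since $\lambda_0\in\mathcal D_\Delta=\mathcal D_\Delta^1$, the set $\mathcal R^1_\Delta(\lambda_0)$ is dense in $\Chat$, and by Theorem~\ref{thm: stable paths} its values are attained by rooted trees in $\mathcal G_\Delta^1$. Choosing such trees $(G_i,v_i),(\overline G_i,\overline v_i)\in\mathcal G_\Delta^1$ with $R_{G_i,v_i}(\lambda_0)$ near $\mu(p_i)$ and $R_{\overline G_i,\overline v_i}(\lambda_0)$ near $\chi(p_i)$, and setting $\mu_i:=R_{G_i,v_i}(\lambda_0)$, $\chi_i:=R_{\overline G_i,\overline v_i}(\lambda_0)$, $g_i:=f_{\mu_i}\circ f_{\chi_i}$, all conditions persist, and $(\mathcal T,U)$ with $\mathcal T=\{(G_i,v_i)\}_{i=1}^M\cup\{(\overline G_i,\overline v_i)\}_{i=1}^M$ is the desired fast implementer.
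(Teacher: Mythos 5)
Your argument is correct in substance and, at bottom, runs on the same engine as the paper's: build compositions $g=f_\mu\circ f_\chi$ whose behaviour near the repelling fixed point $z_0$ of $f_{\lambda_0}$ is prescribed to first order with derivative in $A$, then use openness of every condition in the definition together with density of $\mathcal{R}^1_\Delta(\lambda_0)$ (and Theorem~\ref{thm: stable paths} to realize the chosen values by trees in $\mathcal{G}_\Delta^1$). The execution differs. The paper proves the more general Lemma~\ref{lem: Fast implementation requirements}: it fixes one pair with $g(z_0)=z_0$ and $g'(z_0)=\alpha\in A$, applies the implicit function theorem to express the attracting fixed point as an open holomorphic function $h(\mu,\chi)$, chooses $U$ inside the image of $h$ (so condition (1) is automatic), and obtains condition (2) from density of the fixed points in $U$ plus a uniform lower bound on the diameters of the images $g_{\mu,\chi}(U)$, finishing by compactness. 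You instead parametrize by the image of $z_0$: your formulas give $g_p(z_0)=p$ and $g_p'(z_0)\equiv c_0$ exactly (the algebra checks out), and the expansion $\zeta_p=z_0+(p-z_0)/(1-c_0)+O(|p-z_0|^2)$ combined with $|1-c_0|>1$ reconciles conditions (1) and (2) quantitatively. Note that $|1-c_0|>1$ uses $\mathrm{Re}\,c_0<0$, i.e.\ the specific shape of the sector $A$, whereas the paper's lemma works for an arbitrary nonempty open $A\subseteq\mathbb{D}$. Your route buys explicit constants at the price of Taylor estimates that must be uniform in $p$; the paper's is shorter and more general.

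One slip to repair: the claim that shrinking $r$ gives $\overline{B(z_0,1.01r)}\subseteq f_{\lambda_0}(B(z_0,0.99r))$ requires the repelling multiplier $|f_{\lambda_0}'(z_0)|$ to exceed roughly $102/99$, which can fail: for non-real $\lambda_0\in\mathcal{D}_\Delta$ close to $-1/4$ the map $f_{\lambda_0}$ is nearly parabolic and both multipliers tend to $1$. This is harmless: replace $0.99$ and $1.01$ throughout by $1\mp\delta$ with $\delta>0$ chosen small relative to $|f_{\lambda_0}'(z_0)|-1$ (any such $\delta<1/10$ keeps your covering and fixed-point estimates intact), and perturb the three rational boundary points within that smaller tolerance.
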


\begin{lemma}\label{alg: fast implementation}
Let $\lambda_0\in \mathbb{C}\setminus \mathbb{R}$ and assume that there exists a fast implementer
$(\{(G_1,v_1),\ldots, (G_m,v_m),(\overline{G}_1,\overline{v_1}),\ldots,(\overline{G}_M,\overline{v}_M)\},U)$ for $\lambda_0$.
Then, given $P \in \mathbb{C}$ and $\epsilon > 0$ there exists an algorithm that
yields a sequence of ratios
\[
    w_1, \dots, w_K \in \{\lambda_0\} \cup \bigcup_{i=1}^M \{\mu_i:=R_{G_i,v_i},\chi_i:=R_{\overline{G}_i,\overline{v_i}}\}
\]
such that $|(f_{w_K} \circ \cdots \circ f_{w_1})(0) - P| < \epsilon$, $w_K = \lambda_0$ 
and $K = \mathcal{O}(\max{(\log(1/\epsilon),\log(|P|/\epsilon))})$. 
If $\lambda_0 \in \CQ$ and the input parameters $P,\epsilon$ are also 
in $\CQ$ then the algorithm runs in $poly(\size{P,\epsilon})$ time.
\end{lemma}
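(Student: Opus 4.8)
The plan is to produce the word $w_1,\dots,w_K$ — read left to right as the orbit $0\mapsto f_{w_1}(0)\mapsto f_{w_2}(f_{w_1}(0))\mapsto\cdots$ — as a concatenation of three blocks: a block of bounded length driving $0$ into the disk $U$; a block assembled from the contractions $g_i=f_{\mu_i}\circ f_{\chi_i}$ that steers the orbit to within a minuscule distance of $P_N:=f_{\lambda_0}^{-N}(P)$ for a suitable integer $N$; and a final block of $N$ copies of $\lambda_0$ that ``zooms out'' onto $P$. Two dynamical facts underlie this. First, since $\lambda_0\notin\mathbb R$ the map $f_{\lambda_0}$ is loxodromic (Lemma~\ref{lem: f_lambda classification}, see also Section~\ref{sec: Mobius transformations}); write $a_0$ for its attracting and $z_0$ for its (given) repelling fixed point. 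The conditions on $U$ ensure that $f_{\lambda_0}^{-1}$ contracts $\overline U$ compactly into $U$ towards $z_0\in U$, while $f_{\lambda_0}$ drives $\overline U$ outward towards $a_0\notin\overline U$. Hence for every $P\neq a_0$ the backward orbit $f_{\lambda_0}^{-n}(P)$ eventually enters $U$; let $N\geq 1$ be minimal with $P_N\in U$ (taking $N=1$, $P_1=f_{\lambda_0}^{-1}(P)$, if $P$ is already in $U$). In the coordinate linearising $f_{\lambda_0}$ to $w\mapsto\rho w$, $|\rho|<1$, one checks that $N=\mathcal O(\log(|P|/\epsilon))$ unless $|P-a_0|<\epsilon$, that minimality of $N$ keeps $P_N$ a definite distance from $z_0$, and hence that the Möbius distortion of $f_{\lambda_0}^{N}$ on a small neighbourhood of $P_N$ is at most $L:=\mathcal O(|P|^{2}+1)$. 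Second, the conditions $|g_i'|<1/16$ on $\overline U$ and $z_i\in U$ — the latter, in fact, well inside $U$ by the construction of a fast implementer — give $g_i(\overline U)\subseteq U$ for every $i$, so $\{g_i\}$ is a uniformly contracting iterated function system on $\overline U$, while $\overline U\subseteq\bigcup_{i=1}^{M}g_i(U)$ means every point of $U$ has some $g_i$-preimage in $U$.

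For the generic case $|P-a_0|\geq\epsilon$ the construction is as follows. \textbf{Block 1:} apply a fixed word in the $g_i$ (preceded by one $f_{\lambda_0}$ if necessary) carrying $0$ into $U$; since each $g_i$ is loxodromic with attracting fixed point in $U$, such a word of length $\mathcal O(1)$ exists and depends only on the fast implementer. \textbf{Block 2:} put $t_0:=P_N\in U$ and iterate backwards, $t_j:=g_{i_j}^{-1}(t_{j-1})$, choosing at each step $i_j$ so that $t_j\in U$ (possible by the covering condition), for $\ell:=\mathcal O(\log(L/\epsilon))=\mathcal O(\log(|P|/\epsilon))$ steps, chosen so that $16^{-\ell}\cdot\mathrm{diam}(\overline U)<\epsilon/L$. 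Because $g_i(\overline U)\subseteq U$, the map $g_{i_1}\circ\cdots\circ g_{i_\ell}$ sends every point of $\overline U$ — in particular the output of Block 1 — to within $16^{-\ell}\cdot\mathrm{diam}(\overline U)<\epsilon/L$ of $g_{i_1}\circ\cdots\circ g_{i_\ell}(t_\ell)=t_0=P_N$. \textbf{Block 3:} append $N$ copies of $\lambda_0$; since $f_{\lambda_0}^{N}$ maps the $(\epsilon/L)$-ball about $P_N$ into the $\epsilon$-ball about $f_{\lambda_0}^{N}(P_N)=P$, the full word satisfies $|(f_{w_K}\circ\cdots\circ f_{w_1})(0)-P|<\epsilon$ and ends in $w_K=\lambda_0$. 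Writing each $g_{i_j}$ as $f_{\mu_{i_j}}\circ f_{\chi_{i_j}}$ doubles the length of Block 2, so $K=\mathcal O(1)+2\ell+N=\mathcal O(\max(\log(1/\epsilon),\log(|P|/\epsilon)))$. In the exceptional case $|P-a_0|<\epsilon$ — detected when the backward search for $N$ runs past its guaranteed bound $\mathcal O(\log(|P|/\epsilon))$ — one uses Block 1 to reach a point of $U$ bounded away from $z_0$ and appends $n=\mathcal O(\log(1/\epsilon))$ copies of $\lambda_0$: the $f_{\lambda_0}$-orbit of such a point converges to $a_0$ at a rate bounded by an explicit rational $r<1$ read off from $\rho+\rho^{-1}=\tr^2(f_{\lambda_0})-2=-\lambda_0^{-1}-2$, so $n$ iterations bring it within $\epsilon/2$ of $a_0$, hence within $\epsilon$ of $P$.

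For the complexity statement, assume $\lambda_0,P\in\CQ$ and $\epsilon\in\mathbb Q_{>0}$. Every Möbius map used — $f_{\lambda_0}$, $f_{\lambda_0}^{-1}$, the $g_i$ and their inverses, and each $f_{\mu_i},f_{\chi_i}$ — has $\CQ$-coefficients of bit-size $\mathcal O(1)$, since the fast implementer is fixed; and because $U$ has three rational points on its boundary its centre and squared radius are rational, so ``$z\in U$'' for $z\in\CQ$ is a rational comparison. Applying a fixed Möbius map with $\CQ$-coefficients to a point of $\CQ$ increases its bit-size by only $\mathcal O(1)$, so determining $N$ ($\mathcal O(\log(|P|/\epsilon))$ applications of $f_{\lambda_0}^{-1}$ starting from $P$) and running the backward search of Block 2 ($\mathcal O(\log(|P|/\epsilon))$ steps, each trying the $M=\mathcal O(1)$ fixed maps $g_i^{-1}$ and one comparison) produce only numbers of bit-size $\mathcal O(\size{P}+\size{\epsilon})$. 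As there are $\mathcal O(\size{P}+\size{\epsilon})$ of them and each arithmetic operation is polynomial in the bit-sizes involved, the whole algorithm runs in $\mathrm{poly}(\size{P,\epsilon})$ time.

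The step needing the most care is the error accounting across the zoom-out block: one must keep, simultaneously and all in terms of $\log(|P|/\epsilon)$, the number $N$ of trailing $\lambda_0$'s, the expansion factor $L=\mathcal O(|P|^{2}+1)$ of $f_{\lambda_0}^{N}$ near $P_N$, and hence the iterated-function-system length $\ell=\mathcal O(\log(L/\epsilon))$. Here the minimality of $N$ is exactly what keeps $P_N$ — and with it the whole backward trajectory and the distortion of $f_{\lambda_0}^{N}$ — under control, and the only genuinely delicate estimates are the bounds on $N$ and $L$ via the loxodromic normal form of $f_{\lambda_0}$; everything else is routine manipulation of Möbius transformations and of bit-sizes of Gaussian rationals.
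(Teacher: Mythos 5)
Your overall architecture---pull $P$ back into $U$ by $f_{\lambda_0}^{-N}$, steer the orbit of $0$ towards the pulled-back target using the contractions $g_i$, then push forward by $f_{\lambda_0}^{N}$---is the same as the paper's, and your treatment of the zoom-out block and of the exceptional case $|P-a|<\epsilon$ essentially matches Lemma~\ref{lem: Close to P} and the first part of the paper's proof. The middle block, however, contains a genuine gap. You replace the paper's scheme by pulling back the single point $P_N$ under suitable $g_{i_j}^{-1}$ and then claiming that the forward composition $g_{i_1}\circ\cdots\circ g_{i_\ell}$ contracts \emph{all of} $\overline{U}$ by a factor $16^{-\ell}$, so that it does not matter where in $U$ your Block~1 lands. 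For the chain-rule estimate to give $16^{-\ell}$ you need every intermediate forward image of your starting point to remain in $\overline{U}$, i.e.\ you need $g_i(\overline{U})\subseteq \overline{U}$ for every $i$; outside $\overline{U}$ the hypothesis $|g_i'|<1/16$ is unavailable and a M\"obius map can expand arbitrarily. You assert this forward invariance via ``$z_i$ \dots well inside $U$'', but neither the definition of a fast implementer nor its construction in Lemma~\ref{lem: Fast implementation requirements} provides it: they only give $z_i\in U$, and since the $z_i$ are drawn from a dense subset of $U$ they may lie arbitrarily close to $\partial U$. All one gets from $|g_i'|<1/16$ on the convex set $\overline{U}$ is $g_i(\overline{U})\subseteq \overline{B(z_i,r_U/8)}$ (with $r_U$ the radius of $U$), which can poke outside $U$ when $\mathrm{dist}(z_i,\partial U)\le r_U/8$.

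This is exactly the difficulty the paper's proof is organised to avoid. It pulls back \emph{disks} rather than points, using Lemma~\ref{lem: Generate Disk} to intersect each disk with some $g_{j}(U)$ before applying $g_j^{-1}$, so that every pullback provably stays in $U$ (Lemma~\ref{lem: Fast into D_1}); the resulting forward composition is then only known to contract the specific final disk $D_K\ni z_i$ into the target, which is why the paper also needs the sector/rotation argument of Lemma~\ref{lem: Quickly to $z_i$}---relying on the condition $\arg g_i'\approx 2\pi/3$, which your proof never invokes---to force the orbit of $0$ under $g_i$ into $D_K$ within $\mathcal{O}(\log(1/\epsilon))$ steps. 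That your argument makes no use of the argument-condition on $g_i'$ is a symptom of the missing step. To repair your route you would either have to strengthen the definition of a fast implementer (e.g.\ require $g_i(\overline{U})\subseteq U$, or $\mathrm{dist}(z_i,\partial U)>r_U/8$) and re-prove existence, or fall back on the paper's disk-pullback plus sector argument.
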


We provide proofs for these lemmas in the next subsection, but first we collect some consequences.

\begin{corollary}\label{cor:density-locus is open}
Let $\Delta\geq 3$ be an integer. The set $\mathcal{D}_\Delta$ is an open set.    
\end{corollary}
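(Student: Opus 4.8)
The plan is to deduce openness of $\mathcal{D}_\Delta$ directly from the two lemmas just stated, combined with the dynamical characterization of ratios via the maps $f_\mu$. First I would fix $\lambda_0 \in \mathcal{D}_\Delta$. Since $\mathcal{D}_\Delta \subseteq \mathcal{A}_\Delta = \overline{\mathcal{Z}_\Delta}$ is contained in the cardioid $\Lambda_\Delta$, and since $\mathcal{D}_\Delta$ is disjoint from the Shearer region $B_\Delta$ (Lemma~\ref{lem: RegionAroundlambda=0}) and from $\mathbb{R}_{>0}$ (as $Z_G(x)>0$ for $x>0$), we may assume $\lambda_0 \in \mathbb{C}\setminus\mathbb{R}_{\geq 0}$; in fact, since $\mathcal{D}_\Delta \cap \mathbb{R}_{\leq 0}$ lies in $\mathbb{R}_{\leq 0}\setminus\Int(\Lambda_\Delta)$ and the ratios there are real, one checks $\mathcal{D}_\Delta$ contains no real points at all, so $\lambda_0 \in \mathbb{C}\setminus\mathbb{R}$. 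By Lemma~\ref{lem: fast implementers exist} there is a fast implementer $(\mathcal{T},U)$ for $\lambda_0$, where $\mathcal{T}=\{(G_i,v_i)\}_{i=1}^m \cup \{(\overline G_i,\overline v_i)\}_{i=1}^M$ with ratios $\mu_i,\chi_i$ and $g_i = f_{\mu_i}\circ f_{\chi_i}$.

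The key observation is that every condition defining a fast implementer is an \emph{open condition in the parameter $\lambda$}. The ratios $R_{G_i,v_i}$, $R_{\overline G_i,\overline v_i}$ and the transformation $f_\lambda$ all depend holomorphically (hence continuously) on $\lambda$; the fixed points of a loxodromic M\"obius transformation, its multiplier $g'$, and the properties ``loxodromic'', ``$z_i \in U$'', ``$\overline U \subseteq \bigcup g_i(U)$'' (using compactness of $\overline U$ and openness of each $g_i(U)$), ``$g_i'(z)\in A$ for $z\in\overline U$'', ``$\overline U \subset f_\lambda(U)$'', and ``$\overline U$ avoids the attracting fixed point of $f_\lambda$'' are all stable under small perturbations of $\lambda$. (The requirement that $U$ has three rational boundary points is a property of $U$ alone, not of $\lambda$, so it persists trivially.) Therefore there is an open neighborhood $W$ of $\lambda_0$ such that for every $\lambda_1 \in W$ the same pair $(\mathcal{T},U)$ — now with ratios evaluated at $\lambda_1$ — is a fast implementer for $\lambda_1$. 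Shrinking $W$ we may also assume $W \cap \mathbb{R} = \emptyset$.

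Finally, for any such $\lambda_1 \in W$, Lemma~\ref{alg: fast implementation} applies (its hypothesis is exactly the existence of a fast implementer for $\lambda_1 \in \mathbb{C}\setminus\mathbb{R}$): given any $P \in \mathbb{C}$ and $\epsilon>0$ it produces ratios $w_1,\dots,w_K$ drawn from $\{\lambda_1\}\cup\bigcup_i\{\mu_i,\chi_i\}$ with $|(f_{w_K}\circ\cdots\circ f_{w_1})(0) - P| < \epsilon$. By Lemma~\ref{lemma: paths} the value $(f_{w_K}\circ\cdots\circ f_{w_1})(0)$ is itself the ratio $R_{H,u}(\lambda_1)$ of a rooted graph $(H,u)$ obtained by implementing the corresponding rooted graphs along a path; since each $w_j$ is either $\lambda_1$ (root degree contribution $1$) or a ratio of a rooted tree whose root has degree $1$, the resulting $(H,u)$ lies in $\mathcal{G}_\Delta^1$ (the degree bound is respected, exactly as in the proofs of Theorems~\ref{thm:dense->nonnormal} and~\ref{thm:nonnormalimpliesdense_realcase}). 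Hence $\mathcal{R}^1_\Delta(\lambda_1)$ is dense in $\mathbb{C}$ — indeed in $\hat{\mathbb C}$, since attaching one more edge sends this dense set to another dense set accumulating also at $\infty$ — so $\lambda_1 \in \mathcal{D}_\Delta$. Thus $W \subseteq \mathcal{D}_\Delta$, and $\mathcal{D}_\Delta$ is open.

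I expect the only real subtlety to be verifying the claim that \emph{all} the defining conditions of a fast implementer are open in $\lambda$; the conditions involving unions $\bigcup g_i(U)$ covering the compact set $\overline U$ require a standard compactness argument (a finite subcover persists under perturbation), and one must be slightly careful that the ratios $\mu_i,\chi_i$ remain finite and the relevant M\"obius maps remain well-defined near $\lambda_0$, which holds because $0\notin W$ can be arranged and none of the ratios equals $-1$ on a zero-free neighborhood.
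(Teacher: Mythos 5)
Your proposal is correct and follows essentially the same route as the paper: obtain a fast implementer $(\mathcal{T},U)$ for $\lambda_0$ from Lemma~\ref{lem: fast implementers exist}, observe that its defining conditions are stable under small perturbations of $\lambda$ (since the ratios $\mu_i,\chi_i$ and the map $f_\lambda$ depend continuously on $\lambda$), and then invoke Lemma~\ref{alg: fast implementation} to conclude density of $\mathcal{R}^1_\Delta(\lambda_1)$ for all nearby $\lambda_1$. Your write-up is in fact somewhat more careful than the paper's, which only explicitly verifies the persistence of the three conditions on $U$ involving $f_\lambda$; your additional observations (that $\mathcal{D}_\Delta$ avoids $\mathbb{R}$, and that the path construction stays in $\mathcal{G}_\Delta^1$) are correct and harmless.
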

\begin{proof}
Let $\lambda_0\in \mathcal{D}_\Delta.$ Let $(\mathcal{T},U)$ be fast implementer, as guaranteed to exist by Lemma~\ref{lem: fast implementers exist}.
For $\lambda$ nearby $\lambda_0$ we still have that the repelling fixed point of $f_\lambda$ is contained in $U$, its attracting fixed point does not lie in $\overline{U}$ and $\overline{U}\subset f_{\lambda}(U)$. In other words $(\mathcal{T},U)$ is a fast implementer for $\lambda$.
Therefore applying the algorithm of Lemma~\ref{alg: fast implementation} to $\lambda$ we obtain that the collection of values  $\{R_{G,v}(\lambda)\mid (G,v)\in \mathcal{G}_\Delta^1\}$ is dense in $\hat{\mathbb{C}}$ and hence $\lambda\in \mathcal{D}_\Delta$.
\end{proof}

For our next corollary we first need a result about the set
\[
\mathcal{E}_\Delta:=\{\lambda\in \CQ\mid Z_G(\lambda)=0\text{ for some } G\in \mathcal{G}_\Delta\}.
\]
\begin{lemma}\label{lem:exceptional set}
Let $\Delta\geq 3$ be an integer. Then the collection $\mathcal{E}_\Delta$ is contained in the set 
\[\Big\{(a+ib)^{-1}\mid a,b\in \mathbb{Z},\text{ } 0 < \sqrt{a^2+b^2}\leq \frac{\Delta^{\Delta}}{(\Delta-1)^{\Delta-1}}\Big\}.\]
\end{lemma}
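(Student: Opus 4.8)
The plan is to combine a standard algebraic-integrality observation with the quantitative zero-freeness of the Shearer region recorded in Lemma~\ref{lem: RegionAroundlambda=0}. Fix $\lambda\in\mathcal{E}_\Delta$; by definition $\lambda\in\CQ$ and $Z_G(\lambda)=0$ for some graph $G$ of maximum degree at most $\Delta$. Since $Z_G(0)=1$ for every graph $G$, we have $\lambda\neq 0$, and so $w:=1/\lambda$ is a well-defined element of the field $\CQ=\mathbb{Q}(i)$.

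Next I would show that $w$ is an algebraic integer. Write $Z_G(x)=\sum_{j=0}^{k}c_jx^j$ with $c_j\in\mathbb{Z}_{\geq 0}$ and $c_0=1$ (the empty independent set contributes $\lambda^0$). The reversed polynomial $P(x):=x^{k}Z_G(1/x)=x^{k}+c_1x^{k-1}+\cdots+c_{k-1}x+c_k$ then lies in $\mathbb{Z}[x]$ and is \emph{monic}, and $P(w)=\lambda^{-k}Z_G(\lambda)=0$, so $w$ is integral over $\mathbb{Z}$. Because $\mathbb{Z}[i]$ is the ring of integers of $\mathbb{Q}(i)$, and in particular integrally closed, an element of $\CQ$ that is integral over $\mathbb{Z}$ must lie in $\mathbb{Z}[i]$; hence $w=a+ib$ for some $a,b\in\mathbb{Z}$, and $\lambda=(a+ib)^{-1}$.

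Finally I would bound $|w|$. By Lemma~\ref{lem: RegionAroundlambda=0} the open disk $B_\Delta$ of radius $(\Delta-1)^{\Delta-1}/\Delta^{\Delta}$ around the origin contains no zero of $Z_G$ for any $G\in\mathcal{G}_\Delta$, so $|\lambda|\geq (\Delta-1)^{\Delta-1}/\Delta^{\Delta}$ and therefore $0<\sqrt{a^2+b^2}=|w|=1/|\lambda|\leq \Delta^{\Delta}/(\Delta-1)^{\Delta-1}$, which is exactly the claimed inclusion. The argument is essentially routine; the only point that requires a moment's care is that the reciprocal polynomial $P$ is monic, which is precisely where the normalization $c_0=1$ of the independence polynomial is used, together with the elementary fact that algebraic integers inside $\mathbb{Q}(i)$ are Gaussian integers. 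I do not anticipate a genuine obstacle.
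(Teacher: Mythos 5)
Your argument is correct and is essentially the paper's own proof: pass to the reciprocal $w=1/\lambda$, observe that the reversed polynomial $x^{k}Z_G(1/x)$ is monic with integer coefficients because the constant term of $Z_G$ is $1$, conclude $w\in\mathbb{Z}[i]$ since the Gaussian integers are integrally closed in $\mathbb{Q}(i)$, and bound $|w|$ via the zero-freeness of the Shearer disk from Lemma~\ref{lem: RegionAroundlambda=0}. No gaps; your write-up just spells out the monicity and the $\lambda\neq 0$ point slightly more explicitly than the paper does.
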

\begin{proof}
Let $\lambda\in \mathcal{E}_\Delta$. Then there exists a graph $G=(V,E)$ such that $1/\lambda$ is a root of $P(z):=z^{|V|}Z_G(1/z)$.
Now $P$ is a monic polynomial and therefore $1/\lambda\in \mathbb{Z}[i]$ (since $\mathbb{Z}[i]$ is integrally closed by Gauss's lemma). 
We also know that $|1/\lambda|\leq \frac{\Delta^{\Delta}}{(\Delta-1)^{\Delta-1}}$ by Lemma~\ref{lem: RegionAroundlambda=0}.
This proves the lemma.
\end{proof}

\begin{corollary}\label{cor:density to tree}
Let $\Delta\geq 3$ be an integer.
Let $\lambda_0\in (\mathcal{D}_\Delta\cap \CQ)\setminus \mathcal{E}_\Delta$. 
Then given $P\in \CQ$ and rational $\varepsilon>0$ there exists an algorithm that generates a rooted tree $(T,v)$ such that $|R_{T,v}(\lambda_0)-P|\leq \varepsilon$ and $Z^{\text{out}}_{T,v}(\lambda_0)\neq 0$, and outputs $Z^{\text{in}}_{T,v}(\lambda_0)$ and $Z^{\text{out}}_{T,v}(\lambda_0)$ in time bounded by $poly(\size{\varepsilon,P})$. 
\end{corollary}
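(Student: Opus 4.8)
The plan is to run the algorithm of Lemma~\ref{alg: fast implementation} on $P$ and $\varepsilon$, read its output sequence of ratios as a single rooted tree via Lemma~\ref{lemma: paths}, and then evaluate the two partition functions of that tree by a linear recursion engineered so that bit-sizes grow only linearly in the length of the sequence. Concretely, using Lemma~\ref{lem: fast implementers exist} I would fix a fast implementer $(\mathcal{T},U)$ for $\lambda_0$, with $\mathcal{T}=\{(G_1,v_1),\ldots,(G_m,v_m),(\overline G_1,\overline v_1),\ldots,(\overline G_M,\overline v_M)\}$ a fixed finite list of rooted trees that (by the construction behind Lemma~\ref{lem: fast implementers exist}) may be taken in $\mathcal{G}_\Delta^1$, so their roots have degree at most $1\le\Delta-2$. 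Because $\lambda_0\notin\mathcal{E}_\Delta$, every rooted graph $(H,u)\in\mathcal{G}_\Delta$ has $Z_H(\lambda_0)\neq0$ and $Z^{\mathrm{out}}_{H,u}(\lambda_0)=Z_{H-u}(\lambda_0)\neq0$; in particular $\lambda_0\neq0$, and the ratios $\mu_i=R_{G_i,v_i}(\lambda_0)$ and $\chi_i=R_{\overline G_i,\overline v_i}(\lambda_0)$, together with $\lambda_0$ itself (the ratio of the one-vertex graph), are finite nonzero elements of $\CQ$. Feeding $P,\varepsilon\in\CQ$ to the algorithm of Lemma~\ref{alg: fast implementation} then produces, in $poly(\size{P,\varepsilon})$ time, a sequence $w_1,\dots,w_K$ with each $w_j\in\{\lambda_0\}\cup\{\mu_i,\chi_i\}$, with $w_K=\lambda_0$, with $K=\mathcal{O}(\max(\log(1/\varepsilon),\log(|P|/\varepsilon)))=\mathcal{O}(\size{\varepsilon,P})$, and with $|(f_{w_K}\circ\cdots\circ f_{w_1})(0)-P|<\varepsilon$.

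Next, let $(H_j,u_j)$ be the rooted tree realizing $w_j$ — the single vertex if $w_j=\lambda_0$, and the corresponding member of $\mathcal{T}$ otherwise — and let $(T,v)$ be the tree obtained by implementing $(H_1,u_1),\dots,(H_K,u_K)$ along the path $P_K$ in the sense of Lemma~\ref{lemma: paths}, with $v$ the final path vertex. Since each $u_j$ has degree at most $\Delta-2$ and a path vertex gains at most $2$ to its degree, $T$ has maximum degree at most $\Delta$, and $|V(T)|=\mathcal{O}(K)$ because each building block has constant size. As $\lambda_0\notin\mathcal{E}_\Delta$, all the quantities $Z^{\mathrm{out}}_{\tilde P_n,v_n}(\lambda_0)=Z_{\tilde P_n-v_n}(\lambda_0)$ and $Z_{\tilde P_n}(\lambda_0)$ that appear when one unwinds the recursion in the proof of Lemma~\ref{lemma: paths} are nonzero, so that recursion is valid at $\lambda=\lambda_0$ and gives $R_{T,v}(\lambda_0)=(f_{w_K}\circ\cdots\circ f_{w_1})(0)$, hence $|R_{T,v}(\lambda_0)-P|<\varepsilon$; moreover $Z^{\mathrm{out}}_{T,v}(\lambda_0)=Z_{T-v}(\lambda_0)\neq0$ as $T-v$ is a forest of maximum degree at most $\Delta$. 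Thus $(T,v)$ already has every property required in the statement, and only the running time of producing the two partition function values is left.

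That last point is the main obstacle: iterating the ratio recursion directly would roughly double bit-sizes at each of the $K$ steps, giving an exponential-size output. To avoid this I would track the unnormalised pair: writing $a_n=Z^{\mathrm{in}}_{\tilde P_n,v_n}(\lambda_0)$ and $b_n=Z^{\mathrm{out}}_{\tilde P_n,v_n}(\lambda_0)$, the computation in the proof of Lemma~\ref{lemma: paths} yields the linear recursion
\[
a_n=\alpha_n\, b_{n-1},\qquad b_n=\beta_n\,(a_{n-1}+b_{n-1}),\qquad a_0=0,\quad b_0=1,
\]
where $\alpha_n=Z^{\mathrm{in}}_{H_n,u_n}(\lambda_0)$ and $\beta_n=Z^{\mathrm{out}}_{H_n,u_n}(\lambda_0)$ take values in a fixed finite subset of $\CQ$ depending only on $\lambda_0$ and $\Delta$. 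Writing $\lambda_0=P_0/Q_0$ with $P_0,Q_0\in\mathbb{Z}[i]$, clearing all of these $\alpha_n,\beta_n$ to a common denominator $Q_0^{c_0}$ for a fixed $c_0$, and maintaining $a_n,b_n$ over the single denominator $Q_0^{c_0 n}$ as $\hat a_n/Q_0^{c_0 n}$ and $\hat b_n/Q_0^{c_0 n}$ with $\hat a_n,\hat b_n\in\mathbb{Z}[i]$, each step of the recursion becomes a bounded number of additions and multiplications in $\mathbb{Z}[i]$ with fixed constant multipliers. Hence $\max(|\hat a_n|,|\hat b_n|)$ grows by at most a constant factor per step, so $\hat a_K,\hat b_K$ and $c_0K$ have bit-size $\mathcal{O}(K)=poly(\size{\varepsilon,P})$, the whole evaluation runs in $poly(K)$ time, and it returns $Z^{\mathrm{in}}_{T,v}(\lambda_0)=\hat a_K/Q_0^{c_0K}$ and $Z^{\mathrm{out}}_{T,v}(\lambda_0)=\hat b_K/Q_0^{c_0K}$. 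The thing to be careful about is exactly this bookkeeping — keeping one denominator that is a fixed power of $Q_0$, so that multiplying by the fixed rationals $\alpha_n,\beta_n$ enlarges the integer numerators by only $O(1)$ bits instead of doubling them.
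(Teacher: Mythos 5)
Your proposal is correct and follows essentially the same route as the paper: obtain a fast implementer, run the algorithm of Lemma~\ref{alg: fast implementation}, assemble the output sequence into a tree via Lemma~\ref{lemma: paths}, and compute the pair $(Z^{\mathrm{in}},Z^{\mathrm{out}})$ by the same linear recurrence, with $Z^{\mathrm{out}}_{T,v}(\lambda_0)\neq 0$ following from $\lambda_0\notin\mathcal{E}_\Delta$. Your explicit common-denominator bookkeeping just spells out the bit-size control that the paper asserts when it calls \eqref{eq:easy recurrence} a ``simple recurrence'' running in linear time.
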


\begin{proof}
We first perform a brute force, but constant time, computation to obtain a fast implementer
$(\{(G_1,v_1),\ldots, (G_m,v_m),(\overline{G}_1,\overline{v_1}),\ldots,(\overline{G}_M,\overline{v}_M\},U)$ for $\lambda_0$.
Denote for $i=1,\ldots,M$, $\mu_i:=R_{G_i,v_i}$ and $\chi_i:=R_{\overline{G}_i,\overline{v_i}}$ and $g_i:= f_{\mu_i} \circ f_{\chi_i}$.


The algorithm of Lemma~\ref{alg: fast implementation} applied to $P$ now returns in time $poly(\size{\varepsilon,P})$ a sequence of ratios $\omega_1\ldots,\omega_K\in \{\lambda_0\}\cup\bigcup_{i=1}^M\{\mu_i,\chi_i\}$ that, by Lemma~\ref{lemma: paths}, correspond to the implementation of the trees $G_i$ and $\overline{G}_i$ on a path with $K=\mathcal{O}(\max{(\log(1/\epsilon),\log(|P|/\epsilon))}$ vertices.
The resulting rooted tree $(T,v)$ has maximum degree at most $\Delta$ and root degree $1$ and satisfies $|R_{T,v}(\lambda_0)-P|\leq \varepsilon$.
Denote the rooted tree corresponding to the sequence $\omega_1,\ldots\omega_i$ by $(T_i,u_i)$.
Then $(T_{i+1},u_{i+1})$ is obtained from $(T_i,u_i)$ by adding the edge $\{v_{i+1},u_i\}$ to $T_i$ and gluing a rooted tree $(H,v)\in \{K_1,(G_j,v_j),(\overline{G_j},v_j)\mid j=1,\ldots,M\}$ to $u_{i+1}$ (here $K_1$ denotes a single vertex.)
We then have
\begin{equation}\label{eq:easy recurrence}
\left(Z^\text{in}_{T_{i+1},u_{i+1}}(\lambda_0),Z^\text{out}_{T_{i+1},u_{i+1}}(\lambda_0)\right)=\left(Z^{\text{in}}_{H,v}(\lambda_0)Z^{\text{out}}_{T_i,u_i}(\lambda_0),Z^{\text{out}}_{H,v}(\lambda_0)Z_{T_i}(\lambda_0)\right).
\end{equation}
Note that~\eqref{eq:easy recurrence} describes a simple recurrence to compute $Z^{\text{in}}_{T,v}(\lambda_0)$ and $Z^{\text{out}}_{T,v}(\lambda_0)$ in time linear in the number of vertices of $T$.

Finally, we remark that $Z^{\text{out}}_{T,v}(\lambda_0)\neq 0$ since $\lambda_0\notin \mathcal{E}_\Delta$ by assumption.
\end{proof}

We can now prove the desired inclusion of the density-locus in the $\#\mathcal{P}$-locus.
\begin{theorem}
For any integer $\Delta\geq 3$ the density-locus $\overline{ \mathcal{D}_\Delta}$ is contained in the $\#\mathcal{P}$-locus $\overline{\#\mathcal{P}_\Delta}.$
\end{theorem}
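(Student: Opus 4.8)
The plan is to reduce a known \#P-hard evaluation problem for the hard-core model at a \emph{real} parameter to \#Hard-CoreNorm$(\lambda_0,\Delta)$, using Corollary~\ref{cor:density to tree} as the engine that turns (fast) density into a polynomial-size reduction. First I would reduce the statement to a convenient dense set of parameters. Since $\mathcal D_\Delta$ is open by Corollary~\ref{cor:density-locus is open}, the set $\mathcal D_\Delta\setminus\mathbb R$ is open and dense in $\mathcal D_\Delta$; rational points are dense in it, and by Lemma~\ref{lem:exceptional set} the set $\mathcal E_\Delta$ is countable, so $S:=(\mathcal D_\Delta\setminus\mathbb R)\cap\CQ\setminus\mathcal E_\Delta$ is still dense in $\mathcal D_\Delta$ and hence $\overline S=\overline{\mathcal D_\Delta}$. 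It therefore suffices to show that every $\lambda_0\in S$ lies in $\#\mathcal P_\Delta$, i.e. that \#Hard-CoreNorm$(\lambda_0,\Delta)$ is \#P-hard; note that every such $\lambda_0$ satisfies the hypotheses of Corollary~\ref{cor:density to tree}.

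Fix $\lambda_0\in S$. By the known hardness results on the real axis (\cite{Slysun,Galanisetal20} and the references in the introduction) there is, for every integer $\Delta'\ge 3$, a fixed rational real $\widehat\lambda$ lying in the complement of the zero-free interval for which \#Hard-CoreNorm$(\widehat\lambda,\Delta')$ is \#P-hard. Given an instance $H$ of this problem, with $n:=|V(H)|$, the reduction runs as follows. Apply the algorithm of Corollary~\ref{cor:density to tree} with target $P=\widehat\lambda$ and precision $\varepsilon=2^{-cn}$, for a constant $c$ depending only on $\Delta$ and the fixed $\widehat\lambda$; in time $\mathrm{poly}(n)$ this produces a rooted tree $(T,u)$ of maximum degree at most $\Delta$ and root degree $1$ with $|R_{T,u}(\lambda_0)-\widehat\lambda|\le\varepsilon$, together with the exact complex-rational values $a:=Z^{\mathrm{in}}_{T,u}(\lambda_0)$ and $b:=Z^{\mathrm{out}}_{T,u}(\lambda_0)\neq 0$. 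Implementing a copy of $(T,u)$ at every vertex of $H$, Lemma~\ref{cor:implementing} gives a graph $\widetilde H$ with
\[
    Z_{\widetilde H}(\lambda_0)=b^{\,n}\cdot Z_H\bigl(R_{T,u}(\lambda_0)\bigr),
\]
and $b^{\,n}$ is computed exactly in $\mathrm{poly}(n)$ time from $b$. For $\Delta\ge 4$ one takes $\Delta'=\Delta-1\ge 3$, so that $H$ has maximum degree at most $\Delta-1$ and attaching the root-degree-$1$ gadget $(T,u)$ keeps the maximum degree at most $\Delta$, whence $\widetilde H\in\mathcal G_\Delta$; the case $\Delta=3$ is handled with the degree-preserving gadget constructions of \cite{Galanisetal20}.

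It remains to check that a constant-factor approximation of $|Z_{\widetilde H}(\lambda_0)|$ yields one of $|Z_H(\widehat\lambda)|$. If $Z_H(\widehat\lambda)=0$ the algorithm is allowed to output anything, so assume $Z_H(\widehat\lambda)\neq 0$. Writing $\widehat\lambda=p/q$ in lowest terms, $Z_H(\widehat\lambda)$ is a nonzero rational with denominator dividing $q^{\,n}$, hence $|Z_H(\widehat\lambda)|\ge q^{-n}$; combined with the crude estimate $\sup_{|z-\widehat\lambda|\le 1}|Z_H'(z)|\le n(2+|\widehat\lambda|)^{n}$ and the choice $\varepsilon=2^{-cn}$, this forces $|Z_H(R_{T,u}(\lambda_0))-Z_H(\widehat\lambda)|\le\tfrac{1}{100}|Z_H(\widehat\lambda)|$ once $c$ is large enough. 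Therefore any algorithm returning, on input $\widetilde H$, a rational $N$ with $N/1.001\le|Z_{\widetilde H}(\lambda_0)|\le 1.001N$ supplies, via $N/|b|^{n}$, a constant-factor approximation of $|Z_H(\widehat\lambda)|$; since the multiplicative constant in \#Hard-CoreNorm is immaterial (Section~\ref{subsec: Plocus}), this is a polynomial-time Turing reduction from the \#P-hard problem at $\widehat\lambda$ to \#Hard-CoreNorm$(\lambda_0,\Delta)$. Hence $\lambda_0\in\#\mathcal P_\Delta$, and taking closures gives $\overline{\mathcal D_\Delta}\subseteq\overline{\#\mathcal P_\Delta}$.

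I expect the main obstacle to be the degree bookkeeping: implementing a nontrivial field at every vertex of $H$ naively increases the maximum degree by one, which is harmless for $\Delta\ge 4$ but fatal for $\Delta=3$, so that case has to lean on the more delicate gadget constructions of \cite{Galanisetal20}. The only other point needing care is the quantitative estimate above, namely that an inverse-exponentially accurate tree-implementation of $\widehat\lambda$ really does let a constant-factor norm approximation at $\lambda_0$ survive the exact division by $|b|^{n}$; this is precisely what the ``fast'' version of density encapsulated in Corollary~\ref{cor:density to tree} is designed to guarantee.
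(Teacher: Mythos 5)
Your reduction is genuinely different from the paper's. The paper does not reduce from a hard real evaluation point at all: it invokes Section~6 of \cite{Galanisetal20}, where an oracle for \#Hard-CoreNorm$(\lambda_0,\Delta)$ together with the tree-generating algorithm of Corollary~\ref{cor:density to tree} is used to compute $Z_G(1)$ \emph{exactly} (by implementing many different activities and combining the oracle answers), and \#P-hardness follows from the \#P-completeness of counting independent sets. Your route instead targets a single \#P-hard real parameter $\widehat\lambda$ and transports it to $\lambda_0$ by implementing one exponentially accurate gadget at every vertex; the quantitative part (the lower bound $|Z_H(\widehat\lambda)|\ge q^{-n}$, the derivative bound, $\varepsilon=2^{-cn}$, and the exact factor $b^n$ from Lemma~\ref{cor:implementing}) is sound, and the outer reduction to the dense set $(\mathcal D_\Delta\cap\CQ)\setminus\mathcal E_\Delta$ via Corollary~\ref{cor:density-locus is open} and Lemma~\ref{lem:exceptional set} matches the paper (note $\mathcal E_\Delta$ is in fact finite, not merely countable). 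When it applies, your argument is arguably more self-contained than the paper's, which leans on \cite{Galanisetal20} as a black box. One point you must nail down: $\widehat\lambda$ has to be a \emph{negative} rational below $\lambda^*(\Delta-1)$, since for positive real $\lambda>\lambda_c$ only NP-hardness is known (as the paper itself remarks at the end of Section~6), so ``a fixed rational real in the complement of the zero-free interval'' is not enough as stated.

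The genuine gap is $\Delta=3$. There your scheme requires the source instance $H$ to have maximum degree $\Delta-1=2$, for which the problem is polynomial-time solvable, so there is nothing to reduce from; and you cannot implement a nontrivial root-degree-one gadget at every vertex of a cubic graph without exceeding degree $3$. The ``degree-preserving gadget constructions of \cite{Galanisetal20}'' that you invoke are not a local patch to your reduction: for $\Delta=3$ their hardness proof is structurally the interpolation/exact-counting argument described above (gadgets attached only at selected terminals, many oracle calls with different implemented activities, recovery of $Z_G(1)$ exactly), i.e.\ essentially the paper's proof. So as written your proposal proves the theorem for $\Delta\ge 4$ by a new route, but for $\Delta=3$ it silently falls back on the very argument it was meant to replace; you should either restrict the claim of novelty to $\Delta\ge4$ or spell out the $\Delta=3$ reduction in full.
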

\begin{proof}
We will show that for any $\lambda_0\in (\mathcal{D}_\Delta\cap \CQ)\setminus \mathcal{E}_\Delta$ the computational problem \#Hard-CoreNorm($\lambda_0,\Delta$) is \#P-hard.
Since $ \mathcal{D}^1_\Delta$ is an open set and $\mathcal{E}_\Delta$ is finite, this implies the theorem.

This in fact follows directly from the work of~\cite{Galanisetal20}. 
Let us briefly indicate why. 
In~\cite[Section 6]{Galanisetal20} the authors show that a polynomial time algorithm for \#Hard-CoreNorm($\lambda_0,\Delta$) combined with the statement of Corollary~\ref{cor:density to tree} for $\lambda_0$ yields an algorithm that on input of a graph $G$ of maximum degree at most $\Delta$ exactly computes $Z_G(1)$, the number of independent sets of $G$, in polynomial time in the number of vertices of $G$. (The algorithm is obtained by cleverly utilizing Corollary~\ref{cor:density to tree} for suitable choices of $P$ and gluing combinations of the obtained trees to $G$ and applying the assumed algorithm for \#Hard-CoreNorm($\lambda_0,\Delta$) to the resulting graph.)
Since determining $Z_G(1)$ is a known \#P-complete problem, this implies that \#Hard-CoreNorm($\lambda_0,\Delta$) is \#P-hard.
\end{proof}

We note that our result does not allow us to say anything about the complexity of \#Hard-CoreNorm($\lambda_0,\Delta$) for $\lambda_0 \in \partial (\mathcal{D}_\Delta)\cap \CQ$.
For example, for $\lambda_0\in \partial (\mathcal{D}_\Delta)\cap \mathbb{Q}_{\leq 0}$ it follows from~\cite{Galanisetal20} that the problem \#Hard-CoreNorm$(\lambda,\Delta)$ is \#P-hard. For $\lambda\in \mathbb{Q}$ such that $\lambda\geq \lambda_c(\Delta):=\tfrac{(\Delta-1)^{\Delta-1)}}{(\Delta-2)^\Delta}$ we know from~\cite{Galanisetal20} that $\lambda\in \partial( \mathcal{D}_\Delta)$, while the complexity of \#Hard-CoreNorm$(\lambda_c(\Delta),\Delta)$ is unknown. For $\lambda>\lambda_c(\Delta)$ the problem Hard-CoreNorm$(\lambda,\Delta)$ is only known to be NP-hard~\cite{Slysun}, and unlikely to be \#P-hard~cf.\cite{Galanisetal20}.

\subsection{Proofs of Lemma~\ref{lem: fast implementers exist} and Lemma~\ref{alg: fast implementation}}


The next lemma directly implies Lemma~\ref{lem: fast implementers exist}.

\begin{lemma}
\label{lem: Fast implementation requirements}
Given $z_0 \in \mathbb{C} \setminus \{-1,0\}$, a dense subset $D$ of $\mathbb{C}^*$ and a non-empty open subset $A$ of the unit disk $\mathbb{D}$ then there exists a finite set of tuples $\{(\mu_i, \chi_i)\}_{i=1}^M \subset D \times D$ and an arbitrarily small open disk $U\subseteq \mathbb{C}$ containing $z_0$ such that the maps $g_i := f_{\mu_i} \circ f_{\chi_i}$ are loxodromic Möbius transformations and
\begin{enumerate}
    \item the attracting fixed point $z_i$ of $g_i$ lies in $U$ for all $i$,
    \item $\overline{U} \subseteq \cup_{i=1}^M g_i(U)$,
    \item $g_i'(z) \in A$ for all $i$ and all $z \in \overline{U}$.
\end{enumerate}
\end{lemma}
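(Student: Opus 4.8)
The plan is to build the maps $g_i$ by first producing a single well-behaved loxodromic map near $z_0$, and then perturbing it within the dense set $D$ to get a finite family whose images cover a neighborhood. Since $D$ is dense in $\mathbb{C}^*$ and $(\mu,\chi)\mapsto f_\mu\circ f_\chi$ depends continuously (in fact, holomorphically) on $(\mu,\chi)$, I would first argue that the set of achievable compositions $g=f_\mu\circ f_\chi$ with $(\mu,\chi)\in D\times D$ is dense, in an appropriate sense, in the set of \emph{all} compositions $f_\mu\circ f_\chi$ with $(\mu,\chi)\in\mathbb{C}^*\times\mathbb{C}^*$. The key point is that as $(\mu,\chi)$ ranges over $\mathbb{C}^*\times\mathbb{C}^*$, the composition $f_\mu\circ f_\chi$ realizes a large, open family of Möbius transformations --- in particular one can prescribe, up to small error, both the location of the attracting fixed point and the multiplier at that fixed point. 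Concretely, writing out $f_\mu\circ f_\chi(z)=\mu(1+z)/(1+z+\chi)$, this is a Möbius map with a two-complex-parameter family, so its image in $\mathrm{PSL}_2(\mathbb{C})$ is a complex surface; I would check that the map $(\mu,\chi)\mapsto(\text{attracting fixed point},\text{multiplier})$ is a local submersion (or at least has full-dimensional image) near a suitably chosen base point, so that we can steer the attracting fixed point to land anywhere near $z_0$ while keeping the multiplier inside the prescribed open set $A\subset\mathbb{D}$.

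The main construction then goes as follows. Fix a base pair $(\mu_0,\chi_0)\in\mathbb{C}^*\times\mathbb{C}^*$ for which $g_0:=f_{\mu_0}\circ f_{\chi_0}$ is loxodromic, has attracting fixed point exactly $z_0$, and has multiplier $g_0'(z_0)$ lying in the interior of $A$; existence of such a base pair follows from the parameter-count argument above together with the fact that $A$ is a non-empty open subset of $\mathbb{D}$ and $z_0\neq -1,0$ (so the relevant formulas are non-degenerate). Choose a small open disk $U\ni z_0$ such that $g_0'(z)\in A$ for all $z\in\overline{U}$ (possible by continuity) and such that $\overline{U}\subset g_0(U)$ (possible because $z_0$ is an attracting fixed point, so $g_0$ is a contraction near $z_0$ and $g_0(U)\supset\overline{U}$ for $U$ small). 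Now I perturb: for each point $w\in\overline{U}$ I want a loxodromic $g$, arising from a pair in $\mathbb{C}^*\times\mathbb{C}^*$ close to $(\mu_0,\chi_0)$, whose attracting fixed point $z(g)$ lies in $U$, with $g'(z)\in A$ on $\overline{U}$, and with $w\in g(U)$. Since $g_0(U)$ is an open set containing $\overline U$, and these three properties are open conditions in the parameters, there is a whole open neighborhood of pairs near $(\mu_0,\chi_0)$ doing the job for each fixed $w$; density of $D\times D$ then lets us pick the pair inside $D\times D$. Running over $w\in\overline U$ gives an open cover $\{g(U)\}$ of the compact set $\overline U$, from which I extract a finite subcover $g_1(U),\dots,g_M(U)$, with each $g_i=f_{\mu_i}\circ f_{\chi_i}$, $(\mu_i,\chi_i)\in D\times D$, satisfying (1) and (3) by construction and (2) by the covering property.

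The step I expect to be the main obstacle is the parameter-count / submersion claim: showing that by varying $(\mu,\chi)\in\mathbb{C}^*\times\mathbb{C}^*$ one can independently (up to small perturbation) control the attracting fixed point of $f_\mu\circ f_\chi$ \emph{and} keep its multiplier in the prescribed open set $A$. One has to be a little careful because $f_\mu\circ f_\chi$ is a two-parameter family but a general Möbius transformation has three parameters, so not every loxodromic map is of this form, and one must verify that the restricted family still contains maps with the required fixed-point location and multiplier. I would handle this by an explicit computation: parametrize the fixed points and multiplier of $f_\mu\circ f_\chi$ directly in terms of $\mu,\chi$, and check the Jacobian of the resulting map $(\mu,\chi)\mapsto(z_0,g'(z_0))$ is non-singular at a convenient base point (e.g. one can first solve for $(\mu_0,\chi_0)$ making $z_0$ a fixed point, then compute how the multiplier varies, and separately how the fixed point moves, to see the two effects are independent). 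Everything else --- extracting a finite subcover, replacing exact parameters by nearby ones in the dense set $D$, shrinking $U$ --- is routine once this non-degeneracy is in hand.
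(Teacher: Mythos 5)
Your setup (explicitly solving for a base pair $(\mu_0,\chi_0)$ realizing a prescribed fixed point $z_0$ and prescribed multiplier $\alpha\in A$, then using openness/continuity in the parameters together with density of $D\times D$) matches the paper, which indeed writes down $\chi_0=\frac{(z_0+1)^2\alpha}{z_0-(z_0+1)\alpha}$ and $\mu_0=\frac{z_0(z_0+\chi_0+1)}{z_0+1}$ and tracks the fixed point via the implicit function theorem; your worry about a submersion is resolved by exactly this explicit computation. However, there is a genuine error in your covering step. You choose $U$ so that ``$\overline{U}\subset g_0(U)$, possible because $z_0$ is an attracting fixed point, so $g_0$ is a contraction near $z_0$ and $g_0(U)\supset\overline{U}$ for $U$ small.'' This is backwards: since $g_i'(z)\in A\subset\mathbb{D}$ on the convex set $\overline{U}$, each $g_i$ is a strict contraction there, so $g_i(U)$ is an open disk of diameter strictly smaller than that of $U$; in particular $g_0(U)\subsetneq$ a disk too small to contain $\overline{U}$, and no single map (nor any family of small perturbations of a single map, all of whose image disks cluster near $g_0(U)$) can satisfy condition (2). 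Conditions (1)--(3) are in genuine tension: (3) forces every $g_i(U)$ to be small, so (2) can only hold because there are \emph{many} maps $g_i$ whose small image disks are spread throughout $\overline{U}$.

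The missing idea is therefore not perturbation around one base point but \emph{density of the achievable attracting fixed points inside $U$}. The paper's route: the fixed-point map $h(\mu,\chi)$ is open, so $h(W)$ is a neighborhood of $z_0$; one takes $U$ with $\overline{U}\subset h(Y)$, observes that $D\times D$ is dense in the open set $h^{-1}(U)$, hence the attracting fixed points $z_i=h(\mu_i,\chi_i)$ with $(\mu_i,\chi_i)\in D\times D$ are dense in $U$; since $z_i=g_i(z_i)\in g_i(U)$ and the diameters of the disks $g_{\mu,\chi}(U)$ are uniformly bounded below (because $A$ is bounded away from $0$), the collection $\{g_{\mu,\chi}(U)\}$ is an open cover of $\overline{U}$ from which compactness extracts the finite subfamily. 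You would need to replace your single-base-point perturbation by this density-of-fixed-points argument, and add the uniform lower bound on $\operatorname{diam} g_i(U)$ (which requires assuming, as the paper does without loss of generality, that $A$ is bounded away from $0$) to make the covering work.
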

\begin{proof}
We denote $g_{\mu, \chi} =  f_{\mu} \circ f_{\chi}$ throughout this proof. Note $g_{\mu, \chi}$ is a Möbius transformation for $\mu,\chi \neq 0$. Without loss of generality assume that $A$ is bounded away from $0$.
Take $\alpha \in A$ such that $\alpha \neq \frac{z_0}{z_0+1}$. Note that $\chi_0 = \frac{(z_0+1)^2\alpha}{z_0 - (z_0+1)\alpha}$ and $\mu_0 = \frac{z_0(z_0+\chi_0+1)}{z_0+1}$ are nonzero and well defined as $z_0 \neq -1, 0$ and $\alpha \neq \frac{z_0}{z_0+1}, 0$. Furthermore we have $g_{\mu_0,\chi_0}(z_0) = z_0$ and $g_{\mu_0,\chi_0}'(z_0) = \alpha$.

Define $F:(\mathbb{C}^*)^2 \times \mathbb{C} \rightarrow \hat{\mathbb{C}}$ as 
$F(\mu,\chi, z) =  g_{\mu,\chi}(z) - z$.
Now as $\frac{\partial F}{\partial z} (\mu_0, \chi_0, z_0) = \alpha -1 \neq 0$, the implicit function theorem gives an open neighborhood $W$ of $(\mu_0, \chi_0)$ and a holomorphic function $h:W \rightarrow \mathbb{C}$ with $h(\mu_0, \chi_0) = z_0$ and $F(\mu, \chi, h(\mu, \chi)) =0$ for all $(\mu, \chi) \in W$. 
As $h$ is a non-constant holomorphic map, it is an open map and so $h(W)$ is an open neighborhood of $z_0$.

Let $B \subseteq A$ be an open set in $\mathbb{C}$ with $\alpha \in B$ and $\overline{B} \subseteq A$. Denote $H(\mu, \chi, z) = \frac{\partial g_{\mu, \chi} }{\partial z} (z) = \frac{\mu \chi}{(1+z+\chi)^2}$, note that $H$ is continuous as a function on $\mathbb{C}^3 \setminus \{(\mu, \chi, z): \chi + z+1 = 0\}$. It follows there is an open neighborhood $C$ of $z_0$ such that we have $H(\mu_0, \xi_0, z) \in B$ for all $z \in C$.
We have $\{(\mu_0, \chi_0)\} \times \overline{C} \subseteq H^{-1}(\overline{B}) \subseteq H^{-1}(A)$. As $H^{-1}(A)$ is an open subset of $\mathbb{C}^3 \setminus \{(\mu, \chi, z): \chi + z+1 = 0\}$ containing the set $\{(\mu_0, \chi_0)\} \times \overline{C}$, by a compactness argument it follows that $H^{-1}(A)$ contains a set of the form $L \times C$, for some  open neighborhood $L$ of the point $(\mu_0, \chi_0)$. Hence the set $Y := L \cap W \cap h^{-1}(C)$ is an open neighborhood of $(\mu_0, \chi_0)$ and so $h(Y)$ is an open neighborhood of $z_0$.

Take $U \subset h(Y)$ an open disk containing $z_0$, such that $\overline{U} \subset h(Y)$. Note that we can take $U$ arbitrarily small. 
By construction, we have for all $(\mu, \chi) \in Y$ that $g_{\mu, \chi}'(z) \in A$ for all $z \in \overline{U}$. Furthermore, we have $F(\mu, \chi, h(\mu, \chi) ) = 0$, so $h(\mu, \chi)$ is the attracting fixed point of $g_{\mu, \chi}$. 
Note $D \times D$ is dense in $h^{-1}(U)$, hence the fixed points of $g_{\mu, \chi}$ for $(\mu, \chi) \in  h^{-1}(U) \cap (D\times D )$ lie dense in $U$. There is a uniform lower bound on the diameters of the disks $g_{\mu, \chi}(U)$ for $(\mu, \chi) \in h^{-1}(U)$, because $g'_{\mu, \chi}(z) \in A$ for all $z \in U$ and $A$ is bounded away from $0$.
Therefore 
\[
    \left\{g_{\mu, \chi}(U): (\mu, \chi) \in h^{-1} (U) \cap (D \times D) \right\}
\] 
is an open cover of $\overline{U}$.
As $\overline{U}$ is compact, there is a finite set of tuples $\{(\mu_i, \chi_i)\}_{i=1}^M \subseteq h^{-1} (U) \cap (D \times D)$ such that $\overline{U} \subseteq \cup_{i=1}^M g_{\mu_i,\chi_i}(U)$.
We thus found the desired set of tuples in $D\times D$ and the open disk $U$ containing $z_0$.
\end{proof}

We next focus on proving Lemma~\ref{alg: fast implementation}. To this end let $\lambda_0 \in \mathbb{C} \setminus \mathbb{R}$ and let $(\{(G_1,v_1),\ldots, (G_m,v_m),(\overline{G}_1,\overline{v_1}),\ldots,(\overline{G}_M,\overline{v}_M)\},U)$ be a fast implementer for $\lambda_0$. We fix these throughout this section.
We denote the repelling fixed point of $f_{\lambda_0}$ by $z_0$ and we denote for $i=1,\ldots,M$, $\mu_i:=R_{G_i,v_i}$,  $\chi_i:=R_{\overline{G}_i,\overline{v_i}}$ and $g_i:= f_{\mu_i} \circ f_{\chi_i}$. 
We distinguish between the case that $P$ is close to the attracting fixed point of
$f_{\lambda_0}$ and the case that it is not. In the first case the algorithm
is much simpler. 

Let $a$ be the attracting fixed point of $f_{\lambda_0}$. Because $f_{\lambda_0}(\infty) = 0$ we observe that $\infty$ is not a fixed point and thus $a \in \mathbb{C}$. Suppose that 
$|P-a| \leq \epsilon/2$. Choose $\delta >0$ for which there is a constant $\eta < 1$ 
such that $|f_{\lambda_0}'(z)| < \eta$ for all $z \in B(a,\delta)$. 
The point $0$ is not a fixed point of
$f_{\lambda_0}$ because $f_{\lambda_0}(0) = \lambda_0 \neq 0$ and thus 
$f_{\lambda_0}^n(0)$ converges to $a$ as $n \to \infty$. It follows
that there is a constant $N_0$ such that $f_{\lambda_0}^{N_0}(0) \in B(a,\delta)$. Note
that the value of $N_0$ does not depend on the input parameters.
Now let $N_{\epsilon} = \max{\{\ceil{\log_{\eta}(\frac{\epsilon}{2\delta})},0\}}+1$. Then 
for any $w \in B(a,\delta)$ we have 
\[
    |f_{\lambda_0}^{N_\epsilon}(w) - a| < \eta^{N_\epsilon} |w-a| < \epsilon/2
\]
and thus for $K = N_0 + N_\epsilon$ we have 
\[
    |f_{\lambda_0}^K(0) - P| \leq |f_{\lambda_0}^{N_\epsilon}(f_{\lambda_0}^N(0)) - a|
    + |a - P| < \epsilon/2 + \epsilon/2 < \epsilon.
\]
Because $K = \mathcal{O}(\log(1/\epsilon))$ this describes the algorithm when
$|P-a| \leq \epsilon/2$. 

The case that $|P-a| > \epsilon/2$ is more involved and we will describe the algorithm
as a sequence of simpler subroutines. Just as in Lemma~\ref{lem: Fast implementation requirements} 
let $z_i$ denote the attracting fixed point of $g_i$. We will show first show 
that, given 
a parameter $Q$ that is at most distance $\epsilon$ away from some $z_i$, we only 
have to apply $g_i$ to the starting value $0$ an $\mathcal{O}(\log(1/\epsilon))$ number of
times to get $\epsilon$ close to $Q$. Morally, this should be true because after a 
fixed number of steps the orbit of $0$ converges exponentially quickly to $z_i$ 
and because 
$z_i$ is close to $Q$ the orbit should also get close to $Q$. The only 
way that this reasoning could be incorrect is if $z_i$ and $Q$ 
are almost $\epsilon$ apart and
the orbit of $0$ converges to $z_i$ from the wrong direction. An example of this
is given by the red orbit in Figure~\ref{fig:Rotating Orbit}. This is the 
reason that we required $g_i'(z_i)$ to have an argument close to $2\pi / 3$
in which case the above reasoning is correct as the green orbit in 
Figure~\ref{fig:Rotating Orbit} demonstrates. In the following proof most 
time is spent on making this precise.


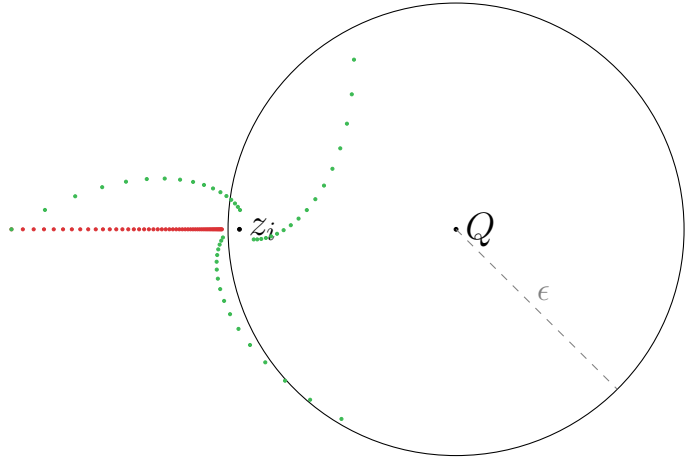
\begin{figure}[h!]
\centering
\begin{tikzpicture}
    \draw[black] (2.85, 0) circle (3);
    
    \filldraw[black] (2.85, 0) circle (0.025) node[anchor=west] {\LARGE$Q$};
    
    \draw[dashed,gray] (2.85,0) -- (4.97132, -2.12132);
    \node[gray] at (4, -0.85){\Large$\epsilon$};
    
    \filldraw[black] (0, 0) circle (0.025) node[anchor=west] {\LARGE$z_i$};
    
    
    \filldraw[gR] (-3, 0) circle (0.02);
    \filldraw[gR] (-2.85, 0) circle (0.02);
    \filldraw[gR] (-2.7075, 0) circle (0.02);
    \filldraw[gR] (-2.57213, 0) circle (0.02);
    \filldraw[gR] (-2.44352, 0) circle (0.02);
    \filldraw[gR] (-2.32134, 0) circle (0.02);
    \filldraw[gR] (-2.20528, 0) circle (0.02);
    \filldraw[gR] (-2.09501, 0) circle (0.02);
    \filldraw[gR] (-1.99026, 0) circle (0.02);
    \filldraw[gR] (-1.89075, 0) circle (0.02);
    \filldraw[gR] (-1.79621, 0) circle (0.02);
    \filldraw[gR] (-1.7064, 0) circle (0.02);
    \filldraw[gR] (-1.62108, 0) circle (0.02);
    \filldraw[gR] (-1.54003, 0) circle (0.02);
    \filldraw[gR] (-1.46302, 0) circle (0.02);
    \filldraw[gR] (-1.38987, 0) circle (0.02);
    \filldraw[gR] (-1.32038, 0) circle (0.02);
    \filldraw[gR] (-1.25436, 0) circle (0.02);
    \filldraw[gR] (-1.19164, 0) circle (0.02);
    \filldraw[gR] (-1.13206, 0) circle (0.02);
    \filldraw[gR] (-1.07546, 0) circle (0.02);
    \filldraw[gR] (-1.02168, 0) circle (0.02);
    \filldraw[gR] (-0.970601, 0) circle (0.02);
    \filldraw[gR] (-0.922071, 0) circle (0.02);
    \filldraw[gR] (-0.875967, 0) circle (0.02);
    \filldraw[gR] (-0.832169, 0) circle (0.02);
    \filldraw[gR] (-0.79056, 0) circle (0.02);
    \filldraw[gR] (-0.751032, 0) circle (0.02);
    \filldraw[gR] (-0.713481, 0) circle (0.02);
    \filldraw[gR] (-0.677807, 0) circle (0.02);
    \filldraw[gR] (-0.643916, 0) circle (0.02);
    \filldraw[gR] (-0.61172, 0) circle (0.02);
    \filldraw[gR] (-0.581134, 0) circle (0.02);
    \filldraw[gR] (-0.552078, 0) circle (0.02);
    \filldraw[gR] (-0.524474, 0) circle (0.02);
    \filldraw[gR] (-0.49825, 0) circle (0.02);
    \filldraw[gR] (-0.473338, 0) circle (0.02);
    \filldraw[gR] (-0.449671, 0) circle (0.02);
    \filldraw[gR] (-0.427187, 0) circle (0.02);
    \filldraw[gR] (-0.405828, 0) circle (0.02);
    \filldraw[gR] (-0.385536, 0) circle (0.02);
    \filldraw[gR] (-0.36626, 0) circle (0.02);
    \filldraw[gR] (-0.347947, 0) circle (0.02);
    \filldraw[gR] (-0.330549, 0) circle (0.02);
    \filldraw[gR] (-0.314022, 0) circle (0.02);
    \filldraw[gR] (-0.298321, 0) circle (0.02);
    \filldraw[gR] (-0.283405, 0) circle (0.02);
    \filldraw[gR] (-0.269234, 0) circle (0.02);
    \filldraw[gR] (-0.255773, 0) circle (0.02);
    \filldraw[gR] (-0.242984, 0) circle (0.02);
    \filldraw[gR] (-0.230835, 0) circle (0.02);
        
    
    \filldraw[gG] (-3., 0) circle (0.02);
    \filldraw[gG] (1.34195, -2.51429) circle (0.02);
    \filldraw[gG] (1.50694, 2.24937) circle (0.02);
    \filldraw[gG] (-2.55928, 0.256784) circle (0.02);
    \filldraw[gG] (0.929597, -2.25979) circle (0.02);
    \filldraw[gG] (1.4781, 1.78993) circle (0.02);
    \filldraw[gG] (-2.16132, 0.438121) circle (0.02);
    \filldraw[gG] (0.599606, -2.00737) circle (0.02);
    \filldraw[gG] (1.41416, 1.40046) circle (0.02);
    \filldraw[gG] (-1.8063, 0.558754) circle (0.02);
    \filldraw[gG] (0.339699, -1.7638) circle (0.02);
    \filldraw[gG] (1.32628, 1.07368) circle (0.02);
    \filldraw[gG] (-1.49311, 0.631278) circle (0.02);
    \filldraw[gG] (0.138822, -1.53376) circle (0.02);
    \filldraw[gG] (1.22334, 0.802422) circle (0.02);
    \filldraw[gG] (-1.21973, 0.666341) circle (0.02);
    \filldraw[gG] (-0.0128531, -1.32032) circle (0.02);
    \filldraw[gG] (1.1123, 0.579828) circle (0.02);
    \filldraw[gG] (-0.983505, 0.672852) circle (0.02);
    \filldraw[gG] (-0.123977, -1.12525) circle (0.02);
    \filldraw[gG] (0.998528, 0.399439) circle (0.02);
    \filldraw[gG] (-0.781428, 0.658187) circle (0.02);
    \filldraw[gG] (-0.202079, -0.949331) circle (0.02);
    \filldraw[gG] (0.886026, 0.25529) circle (0.02);
    \filldraw[gG] (-0.610292, 0.62838) circle (0.02);
    \filldraw[gG] (-0.25365, -0.79257) circle (0.02);
    \filldraw[gG] (0.777713, 0.141947) circle (0.02);
    \filldraw[gG] (-0.466849, 0.588304) circle (0.02);
    \filldraw[gG] (-0.284226, -0.654423) circle (0.02);
    \filldraw[gG] (0.67561, 0.0545254) circle (0.02);
    \filldraw[gG] (-0.347909, 0.541837) circle (0.02);
    \filldraw[gG] (-0.298486, -0.533955) circle (0.02);
    \filldraw[gG] (0.581024, -0.0113135) circle (0.02);
    \filldraw[gG] (-0.25042, 0.492016) circle (0.02);
    \filldraw[gG] (-0.30034, -0.429964) circle (0.02);
    \filldraw[gG] (0.494699, -0.059384) circle (0.02);
    \filldraw[gG] (-0.171518, 0.441169) circle (0.02);
    \filldraw[gG] (-0.29302, -0.341091) circle (0.02);
    \filldraw[gG] (0.41694, -0.0930036) circle (0.02);
    \filldraw[gG] (-0.108558, 0.391039) circle (0.02);
    \filldraw[gG] (-0.279169, -0.265901) circle (0.02);
    \filldraw[gG] (0.347728, -0.115028) circle (0.02);
    \filldraw[gG] (-0.0591395, 0.342884) circle (0.02);
    \filldraw[gG] (-0.260916, -0.202943) circle (0.02);
    \filldraw[gG] (0.286798, -0.127893) circle (0.02);
    \filldraw[gG] (-0.0211024, 0.297573) circle (0.02);
    \filldraw[gG] (-0.239956, -0.150796) circle (0.02);
    \filldraw[gG] (0.233718, -0.133653) circle (0.02);
    \filldraw[gG] (0.00746844, 0.255664) circle (0.02);
    \filldraw[gG] (-0.217612, -0.108103) circle (0.02);
    \filldraw[gG] (0.187943, -0.134023) circle (0.02);

\end{tikzpicture}
\caption{An example of two orbits with the same initial value converging to 
$z_i$ under iteration of two different maps. For the red orbit the derivative
at $z_i$ is real. For the green orbit the derivative at $z_i$ has 
the same magnitude, while its argument is a little less than $2\pi /3$.}
\label{fig:Rotating Orbit}
\end{figure}

\begin{lemma}
\label{lem: Quickly to $z_i$}
There exists an algorithm that,
given $\epsilon > 0$, $Q \in \mathbb{C}$ and $i \in \{1, \dots, M\}$ 
such that $|Q-z_i|<\epsilon$, yields an integer $K$ such that $|g_i^K(0)- Q| < \epsilon$, where $K = \mathcal{O}(\log(1/\epsilon))$. If $\lambda_0 \in \CQ$ and the 
input parameters $Q,\epsilon$ lie in $\CQ$ then the algorithm runs in
$poly(\size{Q,\epsilon})$ time.

\end{lemma}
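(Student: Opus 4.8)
The plan is to run the naive algorithm: iterate $g_i$ starting at $0$ and output the first exponent $K$ with $|g_i^K(0)-Q|<\epsilon$ (a purely rational test once $\lambda_0,Q,\epsilon\in\CQ$). Everything then reduces to bounding this $K$ by $\mathcal{O}(\log(1/\epsilon))$ with an implied constant independent of $Q$. The crucial hypothesis is condition~(3) in the definition of a fast implementer, namely $g_i'(z)\in A$ on $\overline U$: near $z_i$ the map $g_i$ acts as a contraction by a factor $\le 1/16$ composed with a rotation by an angle in $[2\pi/3-0.01,2\pi/3]$. I expect the genuinely delicate point — and the reason $2\pi/3$ appears at all — to be the regime where $\rho:=|Q-z_i|$ is close to $\epsilon$.

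First I would fix the local picture. Since $\mu_i\neq 0$ we have $g_i(0)\neq 0$ (it equals $\mu_i/(1+\chi_i)$, possibly $\infty$), so $0$ is not a fixed point of $g_i$; as $g_i$ is loxodromic with attracting fixed point $z_i$, the orbit satisfies $g_i^n(0)\to z_i$. Choose $\delta_0>0$ with $\overline{B(z_i,\delta_0)}\subset U$ and $N_0$ with $g_i^{N_0}(0)\in V:=B(z_i,\delta_0)$; both depend only on $g_i$, hence can be taken uniform over the finitely many indices $i$. For $z\in\overline V$ one has $g_i(z)-z_i=\beta(z)\,(z-z_i)$ with $\beta(z)=\int_0^1 g_i'(z_i+t(z-z_i))\,dt$, an average of values of $g_i'$ on the segment $[z_i,z]\subseteq\overline U$; thus $\beta(z)$ lies in the closed convex hull of $A$, which is contained in the closed disk of radius $1/16$ and in the convex sector $\{\arg w\in[2\pi/3-0.01,2\pi/3]\}$ and is bounded away from $0$. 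Consequently $g_i(\overline V)\subseteq V$, the tail of the orbit stays in $V$, and writing $g_i^{N_0+m}(0)=z_i+r_m e^{i\theta_m}$ we get $r_{m+1}\le r_m/16$ and $\theta_{m+1}-\theta_m\in[2\pi/3-0.01,2\pi/3]$.

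Now I would split on $\rho<\epsilon$. If $\rho\le\epsilon/2$, let $m$ be minimal with $r_m<\epsilon/2$; then $|g_i^{N_0+m}(0)-Q|\le r_m+\rho<\epsilon$, and $r_m\le 16^{-m}\delta_0$ forces $m=\mathcal{O}(\log(1/\epsilon))$. If $\rho>\epsilon/2$, let $m$ be minimal with $r_m<0.98\rho$; since $0.98\rho>0.49\epsilon$ this is still $m=\mathcal{O}(\log(1/\epsilon))$. The three consecutive angles $\theta_m,\theta_{m+1},\theta_{m+2}$ cut the circle into arcs of lengths $\theta_{m+1}-\theta_m$, $\theta_{m+2}-\theta_{m+1}$ and $2\pi-(\theta_{m+2}-\theta_m)$, each at most $2\pi/3+0.02$, so some $\theta_{m+a}$ with $a\in\{0,1,2\}$ is within $\pi/3+0.01$ of $\phi:=\arg(Q-z_i)$, giving $\cos(\theta_{m+a}-\phi)\ge 0.49$. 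Using $r_{m+a}\le r_m<0.98\rho$,
\[
|g_i^{N_0+m+a}(0)-Q|^2=r_{m+a}^2+\rho^2-2r_{m+a}\rho\cos(\theta_{m+a}-\phi)\le\rho^2+r_{m+a}(r_{m+a}-0.98\rho)<\rho^2<\epsilon^2 .
\]
Hence the iterate-and-check procedure halts at some $K\le N_0+m+2=\mathcal{O}(\log(1/\epsilon))$, with the implied constant depending only on $i$ (and $\lambda_0$), not on $Q$.

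For the complexity claim, when $\lambda_0\in\CQ$ the ratios $\mu_i,\chi_i$ lie in $\CQ$, so each $g_i^n(0)$ is a composition of finitely many fixed Möbius maps over $\CQ$ (a pole along the orbit is harmless, since $g_i(\infty)=\mu_i$), of bit-size $\mathcal{O}(n)$; running this for $\mathcal{O}(\log(1/\epsilon))$ steps and testing the rational inequality $|g_i^n(0)-Q|^2<\epsilon^2$ costs $\mathrm{poly}(\size{Q,\epsilon})$. The only non-routine step is the case $\rho\approx\epsilon$ above: one cannot afford to wait until $r_m<\epsilon-\rho$ (which could require far more than $\log(1/\epsilon)$ iterations when $\epsilon-\rho$ is tiny), and the near-$2\pi/3$ rotation is precisely what guarantees that among any three consecutive iterates one lands on the side of $z_i$ facing $Q$ while its distance to $z_i$ is still below $0.98\rho$, which by the displayed estimate already places it in $B(Q,\epsilon)$.
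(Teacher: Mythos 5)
Your proof is correct and follows essentially the same strategy as the paper's: reach a small neighbourhood of $z_i$ in $\mathcal{O}(\log(1/\epsilon))$ steps, dispose of the case $|Q-z_i|\le\epsilon/2$ by the triangle inequality, and otherwise use the fact that each application of $g_i$ rotates around $z_i$ by almost exactly $2\pi/3$ so that a bounded number of consecutive iterates cannot all miss the part of $B(z_i,\cdot)$ facing $Q$. The only (immaterial) difference is that the paper argues via a sector $S$ of internal angle $2\pi/3$ contained in $B(Q,\epsilon)$ and four consecutive iterates, whereas you run the law of cosines with explicit numerical slack ($0.98\rho$, $\cos\ge 0.49$) and three iterates.
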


\begin{proof}
Let $\delta$ be such that $B(z_i,\delta) \subseteq U$ and let $\epsilon' = \min\{\epsilon/2,\delta\}$. Note that $g_i(0) = \frac{\mu_i}{1+\chi_i} \neq 0$ 
and thus $0$ is not a fixed point of $g_i$. Because $z_i$ is the attracting fixed 
point of $g_i$ we can find (in a similar way as described above) 
a positive integer $\tilde{K}$ that 
is $\mathcal{O}(\log(1/\epsilon')) = \mathcal{O}(\log(1/\epsilon))$ such 
that $|g_i^{\tilde{K}}(0) - z_i| < \epsilon'$. If $|Q - z_i| \leq \epsilon/2$ 
we are done because then
\[
    |g_i^{\tilde{K}}(0)- Q| \leq |g_i^{\tilde{K}}(0)- z_i| + |Q - z_i| < \epsilon' + \epsilon/2 \leq \epsilon.
\]
So from now on we assume that $|Q-z_i| > \epsilon/2$. 
Define the following sector $S$ of $B(z_i,\epsilon')$
\[
    S = \{z_i + \xi\cdot\left(\frac{Q-z_i}{|Q-z_i|}\right): |\xi|<\epsilon', -\pi/3 \leq \arg(\xi) \leq \pi/3\}.
\]
We claim that $S \subseteq B(Q,\epsilon)$. To show this note that
\[
    \left|Q - \left[z_i + \xi\cdot\left(\frac{Q-z_i}{|Q-z_i|}\right)\right]\right| 
    = 
    \left|Q - z_i\right| \cdot \left|1 - \frac{\xi}{|Q-z_i|}\right|
    <
    \epsilon \cdot \left|1 - \frac{\xi}{|Q-z_i|}\right|.
\]
If $\xi$ is as in the definition of $S$ the complex number $\xi/|Q-z_i|$ has its
argument between $-\pi/3$ and $\pi/3$. Furthermore, because $|\xi|< \epsilon/2$ and $|Q-z_i|> \epsilon/2$,
its norm is bounded above by $1$. It follows that the norm of $1- \xi/|Q-z_i|$
is at most 1. Indeed, because $|1-re^{i \phi}|^2 = 1 + r^2 - 2r\cos(\phi)$, the statement $|1-re^{i \phi}|\leq 1$ is equivalent to $r=0$ or $r \leq 2\cos(\phi)$, and the latter is satisfied for all $0\leq r \leq 1$ and $-\pi/3 \leq \phi \leq \pi/3$. The claim follows.

We now claim that for $w \in B(z_i,\epsilon')$ the intersection of $\{w, g_i(w),
g_i^2(w),g_i^3(w)\}$ with $S$ is not empty. 
Note that because $\epsilon' \leq \delta$ we have that
$B(z_i,\epsilon') \subseteq U$ and thus $g'(w) \in A$ 
for every $w \in B(z_i,\epsilon')$. It follows
that applying $g_i$ to $w$ has the effect of rotating 
around $z_i$ with an angle strictly between 
$2\pi/3-0.01 $ and $2\pi/3$ and contracting towards $z_i$. 
Therefore applying $g_i$ to $w$ three
times has the effect of rotating $w$ a little less than a 
full circle around $z_i$, with steps that
are strictly less than $2\pi/3$ radians. Because the internal 
angle of the sector $S$ is $2\pi/3$ 
the orbit $w, g_i(w), g_i^2(w), g_i^3(w)$ cannot miss $S$. 

To summarize the algorithm, 
define $\epsilon'$ and determine $\tilde{K}$
such that $g_i^{\tilde{K}}(0) \in B(z_i,\epsilon')$. 
Then determine a $j\in \{0,1,2,3\}$
such that $|g_i^{\tilde{K}+j}(0)-Q| < \epsilon$. We have 
shown that there exists at least one such $j$.
The output of the algorithm is $K = \tilde{K}+j$.

\end{proof}

We shall now describe an algorithm that does the 
following. Given a disk $D$ of radius $\epsilon$ 
inside $U$, it returns an index $i$, a disk $\tilde{D}$ of radius at 
least $\epsilon$ containing $z_i$ and a sequence of indices 
$j_1, \dots, j_K$ such that $(g_{j_1} \circ \cdots \circ g_{j_{K-1}})(\tilde{D}) 
\subseteq D$. To describe the computational complexity of this algorithm, 
we need a finite way to represent disks in the complex plane. A pleasant
way for our purposes is to represent an open disk $D$ by three distinct
points $P_1, P_2, P_3$ on its boundary. This is an unambiguous way to represent
a disk because three different points on a circle uniquely determine that circle. 
If $P_1, P_2, P_3 \in \CQ$ we say that the disk $D$ is rational and that
$\size{D} = \size{P_1}+\size{P_2}+ \size{P_3}$.

Recall that a M\"obius transformation maps generalized circles (circles and straight 
lines) to
generalized circles. In what follows, we will apply M\"obius transformations
to disks in the complex plane. We shall make sure that 
the image of the disks involved is always again a disk in the complex plane and not
the complement of a disk or a half-plane as it could in general be. Therefore,
if $D$ is a disk represented by $P_1, P_2$ and $P_3$ and $g$ is one of the 
M\"obius transformations, then $g(D)$ will be a disk represented by $g(P_1), g(P_2)$
and $g(P_3)$. Note that if $D$ is rational and $g$ has rational coefficients then
$g(D)$ is a again rational. The M\"obius transformations that we will apply
come from a fixed finite set and thus there is a fixed constant $C$ for which
$\size{g(D)} \leq C \cdot \size{D}$.

Let us denote $P_j = x_{j} + i y_{j}$ for $j \in \{1,2,3\}$. The center $c_D = x+yi$ of the disk $D$ 
is known as the circumcenter of the triangle with vertices $P_1, P_2$ and $P_3$. 
The coordinates of $c_D$ can be calculated using the well known and easy to derive formulas 
\begin{align*}
    x&= \frac{(x_1^2+y_1^2)(y_2-y_3) + (x_2^2+y_2^2)(y_3-y_1) + (x_3^2+y_3^2)(y_1-y_2)}{2(x_1(y_2-y_3)+x_2(y_3-y_1)+x_3(y_1-y_2))},\\
    y&=\frac{(x_1^2+y_1^2)(x_3-x_2) + (x_2^2+y_2^2)(x_1-x_3) + (x_3^2+y_3^2)(x_2-x_1)}{2(x_1(y_2-y_3)+x_2(y_3-y_1)+x_3(y_1-y_2))}.
\end{align*}
We note that if $D$ is rational, then $c_D$ is rational and can be computed in time linear in $\size{D}$.
We can also decide whether a given point $Q\in \CQ$ lies in a given rational disk $D$ in time linear in $\size{Q}$ and $\size{D}$.

We next need a lemma concerning a geometric construction involving disks.

\begin{figure}[h!]
\centering
\begin{tikzpicture}
    \draw[gB] (2.7, 0) circle (2.1);
    \node[gB] at (4,2.4) {\LARGE$A$};
    
    \draw[gR] (0, 0) circle (3);
    \node[gR] at (-2.75,2.4) {\LARGE$B$};
    
    \filldraw[black] (2.7, 0) circle (0.025) node[anchor=west] {\Large$c_A$};
    \filldraw[black] (0, 0) circle (0.025) node[anchor=east] {\Large$c_B$};
    
    \filldraw[black] (2.2, 2.03961) circle (0.025) node[anchor=south] {\Large$S_1$};
    \filldraw[black] (2.2, -2.03961) circle (0.025) node[anchor=north] {\Large$S_2$};
    
    \draw (0, 0) -- (2.2, 2.03961) -- (2.7, 0) -- (2.2, -2.03961) -- (0,0);
    
    \draw[dashed] (0,0) -- (2.7,0);
\end{tikzpicture}
\caption{-}
\label{fig:Sectors}
\end{figure}

\begin{figure}[h!]
\centering
\begin{tikzpicture}

    \draw[fill=yellow!30] (0,0) -- (2.82843, -2.82843) arc[start angle=-45, end angle=45,radius=4] -- (0,0);
    \draw[dashed] (2.82843, -2.82843) -- (2.82843, 2.82843);
    \draw[dashed] (0,0) -- (2.82843, 0);
    \filldraw[black] (2.82843, 2.82843) circle (0.025) node[anchor=west] {\Large$P_{i+1}$};
    \filldraw[black] (2.82843, -2.82843) circle (0.025) node[anchor=west] {\Large$P_{i}$};
    \filldraw[black] (2.82843, 0) circle (0.025) node[anchor=west] {$\frac{P_{i}+P_{i+1}}{2}$};
    
    \draw[black] (2.12132,0) circle (0.707107);
    \filldraw[black] (2.12132,0) circle (0.025) node[anchor=south] {$c_D$};
    \filldraw[black] (0,0) circle (0.025) node[anchor=east] {\Large$c_A$};
    
    \draw[fill=yellow!30] (6,0) -- (9.4641, -2.) arc[start angle=-30, end angle=30,radius=4] -- (6,0);
    \filldraw[black] (6,0) circle (0.025) node[anchor=east] {\Large$c_A$};
    \filldraw[black] (9.4641, -2.) circle (0.025) node[anchor=west] {\Large$Q_{1}$};
    \filldraw[black] (9.4641, 2.) circle (0.025) node[anchor=west] {\Large$Q_{2}$};
    \draw[dashed] (6,0) -- (10,0);
    \filldraw[black] (10,0) circle (0.025) node[anchor=west] {\Large$P_i$};
    \draw[black] (9,0) circle (1);
    \filldraw[black] (9,0) circle (0.025) node[anchor=south] {$c_D$};
    
\end{tikzpicture}
\caption{-}
\label{fig:Cases}
\end{figure}

\begin{lemma}
    \label{lem: Generate Disk}
    There exists an algorithm that, given two disks $A, B$ in the complex plane
    for which the center of $A$ is 
    contained in $B$ and $B$ is not contained in $A$,
    returns a disk $D$ contained in both $A$ and $B$, such that
    the area of $D$ is at least $1/128$ times that of $A$. Furthermore, if $A$ and $B$ 
    are rational then $D$ is rational
    and both the running time of the algorithm and $\size{D}$ are bounded 
    by a fixed constant times $\size{A,B}$.
\end{lemma}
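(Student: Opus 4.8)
The plan is to compute a handful of rational quantities from the boundary data of $A$ and $B$, exhibit an (in general irrational) disk $D^{*}$ inscribed in the lens $A\cap B$ whose radius is a fixed fraction of $r_{A}$, and then round $D^{*}$ to a genuinely rational disk, spending the factor $1/128$ (rather than the $\approx 1/4$ that $D^{*}$ achieves) as slack for the rounding and for the exact containment checks.

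First I would set up notation. Write $c_{A},r_{A}$ and $c_{B},r_{B}$ for the centres and radii. The centres $c_{A},c_{B}$ are rational and computable in linear time from the three boundary points via the circumcentre formula, and the squared quantities $r_{A}^{2}$, $r_{B}^{2}$ and $\ell^{2}:=|c_{A}-c_{B}|^{2}$ are rational of bit size $O(\size{A,B})$. From ``$c_{A}\in B$'' we get $\ell\le r_{B}$, and from ``$B\not\subseteq A$'' we get $\ell+r_{B}>r_{A}$. I would first dispose of the case $A\subseteq B$ — equivalently $\ell+r_{A}\le r_{B}$, which is decidable by rational arithmetic (check $r_{B}^{2}\ge r_{A}^{2}$, then $r_{A}^{2}+r_{B}^{2}-\ell^{2}\ge 0$, then $(r_{A}^{2}+r_{B}^{2}-\ell^{2})^{2}\ge 4r_{A}^{2}r_{B}^{2}$) — by returning $D=A$, which is already rational, lies in $A\cap B=A$, and has area equal to that of $A$.

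In the remaining case (so also $c_{A}\neq c_{B}$) I would take the disk $D^{*}$ with centre $p^{*}:=c_{A}+\tfrac{r_{A}-r_{B}+\ell}{2\ell}(c_{B}-c_{A})$ on the segment $[c_{A},c_{B}]$ and radius $\rho^{*}:=\tfrac{r_{A}+r_{B}-\ell}{2}$. One checks directly that $0\le t^{*}:=\tfrac{r_{A}-r_{B}+\ell}{2}<\ell$ (non-negativity is exactly the excluded inclusion $A\subseteq B$; the strict upper bound is exactly $B\not\subseteq A$), and then $|p^{*}-c_{A}|+\rho^{*}=t^{*}+\rho^{*}=r_{A}$ and $|p^{*}-c_{B}|+\rho^{*}=(\ell-t^{*})+\rho^{*}=r_{B}$, so that the closed disk $\overline{D^{*}}$ is internally tangent to both $\partial A$ and $\partial B$ and in particular lies in $\overline{A}\cap\overline{B}$; moreover $\ell\le r_{B}$ gives $\rho^{*}\ge r_{A}/2$. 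Now set $\delta:=r_{A}/100$. Since $r_{A}^{2},r_{B}^{2},\ell^{2}$ are positive rationals of bit size $O(\size{A,B})$, they — and hence $r_{A},r_{B},\ell$ — are $\ge 2^{-O(\size{A,B})}$, so bisection computes rational approximations of $r_{A},r_{B},\ell$ to $N$ bits of precision in $\mathrm{poly}(N,\size{A,B})$ time; taking $N=O(\size{A,B})$ I would produce a rational point $c_{D}$ with $|c_{D}-p^{*}|\le\delta$ and a rational number $\rho_{D}$ with $\rho^{*}-3\delta\le\rho_{D}\le\rho^{*}-2\delta$. Then $|c_{D}-c_{A}|+\rho_{D}\le t^{*}+\delta+\rho^{*}-2\delta=r_{A}-\delta<r_{A}$ and likewise $|c_{D}-c_{B}|+\rho_{D}<r_{B}$, so the disk $D$ with centre $c_{D}$ and radius $\rho_{D}$ satisfies $\overline{D}\subseteq A\cap B$; and $\rho_{D}\ge\rho^{*}-3\delta\ge r_{A}/2-3r_{A}/100>r_{A}/4$, whence the area of $D$ exceeds $\tfrac{1}{16}$, hence $\tfrac{1}{128}$, of the area of $A$. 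Finally, a disk with rational centre and rational radius $\rho_{D}$ carries three rational boundary points via the Pythagorean parametrisation $c_{D}+\rho_{D}\big(\tfrac{1-s^{2}}{1+s^{2}},\tfrac{2s}{1+s^{2}}\big)$ (take, e.g., $s=0,1,\tfrac12$), all of bit size $O(\size{A,B})$, so $D$ is rational with $\size{D}=O(\size{A,B})$; and the two containments $\overline{D}\subseteq A$, $\overline{D}\subseteq B$ can be re-certified by pure rational arithmetic via the squaring test $4r_{A}^{2}\rho_{D}^{2}\le(r_{A}^{2}-|c_{D}-c_{A}|^{2}+\rho_{D}^{2})^{2}$ and its analogue for $B$, so correctness does not depend on the numerically obtained location of $c_{D}$.

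The main obstacle is reconciling two forces that pull against each other: the extremal disk inside $A\cap B$ has irrational centre and radius (square roots of the rational data $c_{A},c_{B},r_{A}^{2},r_{B}^{2}$), while the output must be a genuinely rational disk of controlled bit size whose containment in $A$ and $B$ we can certify exactly. The resolution is precisely the constant-factor slack in the area bound: allowing $1/128$ instead of the $\approx 1/4$ attained by $D^{*}$ leaves room to perturb the centre to a nearby rational point, round the radius down to a rational number, and still have $\overline{D}\subseteq A\cap B$ verifiable by the squaring tests above. The bit-size bound then comes from the a priori lower bound $2^{-O(\size{A,B})}$ on $r_{A},r_{B},\ell$, which keeps the required precision $N$ — and with it $\size{c_{D}}$, $\size{\rho_{D}}$, and hence $\size{D}$ and the running time — linear in $\size{A,B}$ in the usual model of unit-cost arithmetic on rationals of bounded size.
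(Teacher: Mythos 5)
Your proof is correct in substance, but it takes a genuinely different route from the paper's. The paper never extracts a square root: it shows that a sector of internal angle at least $2\pi/3$ of one of the two disks is contained in the closure of the other (by an angle count in the quadrilateral spanned by the two centres and the two intersection points of the boundary circles), locates such a sector using only the given rational boundary points and multiplication by $i$, and inscribes an explicit rational disk in it; every step is exact rational arithmetic on the input, which is what makes the constant-factor bounds on running time and on $\size{D}$ immediate, at the price of the weak constant $1/128$. You instead exhibit the extremal disk of the lens $A\cap B$ --- internally tangent to both boundary circles, of radius $(r_A+r_B-\ell)/2\ge r_A/2$ precisely because $c_A\in B$ gives $\ell\le r_B$ --- and round it to a rational disk, obtaining the better constant $1/16$. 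Your geometry is right, including the check $0<t^*<\ell$, which is exactly where the two hypotheses enter, and certifying the final containments by exact rational tests is a sensible safeguard against the numerics. What you buy is a cleaner construction and a better constant; what you pay is that $r_A$, $r_B$ and $\ell$ are square roots of rationals, so the algorithm must approximate them, and the precision analysis (how accurately $r_A-r_B$ and $\ell$ must be known so that $|c_D-p^*|\le r_A/100$, using $|r_A-r_B|<\ell$ in the non-nested case) is only sketched; carried out, it needs $N=O(\size{A,B})$ bits and hence $O(\size{A,B})$ arithmetic operations of bisection rather than the paper's $O(1)$, which is still polynomial and still yields $\size{D}=O(\size{A,B})$, and that is all Lemma~\ref{lem: Fast into D_1} actually uses. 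Two details to tighten: the squaring test $4r_A^2\rho_D^2\le(r_A^2-|c_D-c_A|^2+\rho_D^2)^2$ certifies $|c_D-c_A|+\rho_D\le r_A$ only together with the sign conditions $\rho_D\le r_A$ and $r_A^2+\rho_D^2-|c_D-c_A|^2\ge 0$; and splitting off the case $A\subseteq B$ first, as you do, is essential, since otherwise $t^*$ can be negative and the tangency computation breaks down.
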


\begin{proof}
For a disk $D$ we denote its center by $c_D$ and its radius by $r_D$ and 
recall that if $D$ is rational
then $c_D$ is rational and can be computed efficiently. For two distinct points $P,Q$ 
on the boundary of $D$ we denote the closed counterclockwise arc from $P$ to $Q$ 
by $\Arc{D}{P,Q}$ 
and we denote the sector given by the convex hull of 
$\Arc{D}{P,Q}$ and $c_D$ by $\Sec{D}{P,Q}$. We note that the internal angle of 
both $\Arc{D}{P,Q}$ and $\Sec{D}{P,Q}$ is given by the arclength of $\Arc{D}{P,Q}$
divided by $r_D$.
We claim that either a sector of $A$ whose internal angle is greater
than $2\pi/3$ is contained in the closure of $B$ or a sector of $B$ whose internal angle is greater
than $2\pi/3$ is contained in the closure of $A$.

If the boundaries of $A$ and $B$ either do not intersect or intersect in one point 
then $A$ is contained in $B$ and the claim is obvious. Otherwise let $S_1,S_2$ be
the two intersection points such that $\Arc{A}{S_1,S_2}$ is contained in $B$ and thus
$\Arc{B}{S_2, S_1}$ is contained in $A$, see Figure~\ref{fig:Sectors}. Consider the 
quadrilateral $\square c_B S_1 c_A S_2$ and suppose towards
contradiction that the internal angles at both $c_A$ and $c_B$ are at most $2\pi/3$, then 
the sum of the internal angles at $S_1$ and $S_2$ is at least $2\pi/3$ and since they are
equal by symmetry the internal angle at $S_1$ is at least $\pi/3$. By then considering the 
triangle $\triangle c_B S_1 c_A$ it should follow that $|c_B - c_A| \geq |c_B - S_1| = r_B$,
which contradicts the assumption that $c_A$ is contained in $B$. We therefore find that
either the angle $\angle S_2 c_B S_1$ or $\angle S_2 c_A S_1$ is at least $2\pi/3$. If the latter 
is the case then both $\Arc{A}{S_1,S_2}$ and $c_A$ are contained in the closure of $B$ and thus
the same is true for $\Sec{A}{S_1,S_2}$. If the angle $\angle S_2 c_A S_1$ is less than $2\pi/3$, 
then $\angle S_2 c_B S_1$ is at least $2\pi/3$. It follows that $\angle c_A c_B S_1$ is the 
largest internal angle of the triangle $\triangle c_B S_1 c_A$ and thus $|c_A - c_B| < |c_A - S_1| = r_A$,
from which it follows that $c_B$ is contained in $A$. So in this case $\Sec{B}{S_2,S_1}$ is contained
in $A$.

For the algorithm we do not need to know whether a large sector of $B$ is contained in $A$ or 
vice versa. Assume for simplicity that a sector $S$ of $A$ with internal angle at least $2\pi/3$ is 
contained in $B$. Take $S$ to be as large as possible. In the case that 
$A$ is contained in $B$ we let $S=A$.
Let $P_0$ denote one of the given (rational) points on the boundary of $A$. Now for
$i = 0,1,2$ inductively define $P_{i+1}$ as $P_i$ rotated around $c_A$ with an angle of $\pi/2$. 
Calculating these points is computationally easy because $P_{i+1} = c_A + i(P_i - c_A)$. Now one 
of the following is guaranteed to be the case.
\begin{enumerate}
    \item 
    Two consecutive points $P_i$ and $P_{i+1}$ are contained in $S$.
    \item
    There is a unique index $i$ such that $P_i \in S$.
\end{enumerate}
Determining which of the two cases is true is easy since checking membership of $S$ is equivalent
to checking membership of $B$. In the first case we note that $\Sec{A}{P_i,P_{i+1}}$ is contained 
in both the closures of $A$ and $B$. Now let $R = (P_i + P_{i+1})/2$ and $c_D = (c_A+3 R)/4$ and 
let $D$ be the disk with center $c_D$ and the point $R$ on the boundary, see Figure~\ref{fig:Cases}.
It can be checked that $D$ is now contained in $\Sec{A}{P_i,P_{i+1}}$ and its area is $1/32$ that 
of $A$. 

In the second case note that the arc $\Arc{A}{Q_1,Q_2}$ containing $P_i$
such that the internal angle of both $\Arc{A}{Q_1,P_i}$ and $\Arc{A}{P_i,Q_2}$ is $\pi/6$ contained
in $S$, otherwise, since the internal angle of $S$ is at least $2\pi/3$, $S$ has to contain 
two consecutive points $P_i$ and $P_{i+1}$. Now let $D$ be the disk with center $(c_A + 3P_i)/4$
containing $P_i$ on its boundary, see Figure~\ref{fig:Cases}. It can be checked that $D$ is contained
in $\Sec{A}{Q_1,Q_2}$ and its area is $1/16$ that of $A$.

The algorithm above is only guaranteed to successfully return a disk contained in both $A$ and $B$
if a large sector of $A$ is contained in $B$. Therefore we have to run the algorithm described above
(and let it fail if neither of the two described cases is true) and run the same algorithm with 
the roles of $A$ and $B$ reversed. If both instances of the algorithm return a disk, say $D_1$ and 
$D_2$, then at least one of them is contained in both $A$ and $B$ but the other one might not be. So
in this case we have to run one final check to see which one of the two disks is indeed contained 
in both $A$ and $B$ (which is computationally easy). 
If they both are we can return either $D_1$ or $D_2$.

In conclusion we obtain a disk $D$ contained in both $A$ and $B$ that is either at 
least $1/32$ of the area of $A$ or $1/32$ of the area of $B$. Because the area of $B$ is at least 
$1/4$ that of $A$ (otherwise $r_B < r_A/2$ and $c_A \in B$ would imply $B \subseteq A$), we can 
conclude that the area of $D$ is at least $(1/4) \cdot (1/32) = 1/128$ that of $A$.

\end{proof}

\begin{lemma}
    \label{lem: Fast into D_1}
    There exists an algorithm that, given a disk $D_1 \subseteq U$ 
    with radius $\epsilon$, returns an index
    $i\in\{1,\dots, M\}$, a sequence of indices $j_1, \dots, j_K \in \{1, \dots, M\}$ and a disk 
    $D_K$ such that $z_i \in D_K$, the radius of $D_K$ is at least $\epsilon$ 
    and 
    $(g_{j_1} \circ \cdots \circ g_{j_{K-1}})(D_K) \subseteq D_1$. Furthermore,
    $K = \mathcal{O}(\log(1/\epsilon))$. If $\lambda_0 \in \CQ$ and
    $D_1$ and $\epsilon$ are both rational then $D_K$ is rational
    and the algorithm runs in $poly(\size{D_1,\epsilon})$ time. 
\end{lemma}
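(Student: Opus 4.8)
The plan is to build the required index sequence \emph{backwards}: starting from $D^{(1)} := D_1$ we repeatedly pull back by inverse branches $g_j^{-1}$ of the maps $g_j = f_{\mu_j}\circ f_{\chi_j}$, obtaining a chain of disks $D^{(1)}, D^{(2)},\dots, D^{(K)}$ all contained in $U$, with $g_{j_\ell}(D^{(\ell+1)})\subseteq D^{(\ell)}$ for each $\ell$, whose radii grow by a factor bounded away from $1$ at every step. We stop once $D^{(K)}$ contains one of the attracting fixed points $z_i$, and return $i$ together with $j_1,\dots,j_{K-1}$ (padding with $j_K := i$ if one insists on $K$ indices). Since radii only increase, $D^{(K)}$ automatically has radius at least $\mathrm{rad}(D^{(1)}) = \epsilon$. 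The two structural inputs are the covering $\overline U\subseteq\bigcup_i g_i(U)$ and the derivative constraint $g_i'(\overline U)\subseteq A$. The latter is stronger than it appears: writing $g_i'(z) = (ad-bc)/(cz+d)^2$, the bound $|g_i'|\in(1/17,1/16)$ on $\overline U$ forces $\{cz+d : z\in\overline U\}$ into an annulus of radius ratio $\sqrt{17}/4$, hence, being a disk, into a small disk far from the origin, so that $|g_i'|$ and $\arg g_i'$ are both nearly constant on $\overline U$ and $g_i$ is nearly complex-affine there. In particular $g_i(U)$ is a round disk of radius within a fixed small factor of $\mathrm{rad}(U)/16$ and containing $z_i$; hence the $z_i$ form a $\delta$-net of $\overline U$ for a fixed $\delta < \mathrm{rad}(U)$, and any disk $D\subseteq U$ of radius exceeding $\delta$ contains some $z_i$ (its center, lying in some $g_i(U)$, is within $\delta$ of $z_i$).

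For the pullback step, let $D^{(\ell)}\subseteq U$ be the current disk, assumed to contain no $z_i$. If $D^{(\ell)}\subseteq g_j(U)$ for some $j$, put $D^{(\ell+1)} := g_j^{-1}(D^{(\ell)})\subseteq U$; by near-affinity of $g_j$ on $\overline U$ this multiplies the radius by a factor of roughly $16$. Otherwise choose $j$ with the center of $D^{(\ell)}$ in $g_j(U)$, possible since that center lies in $\overline U$. If $g_j(U)\subseteq D^{(\ell)}$ we set $D^{(\ell+1)} := U$, so the next step terminates. If neither inclusion holds we apply Lemma~\ref{lem: Generate Disk} with $A = D^{(\ell)}$, $B = g_j(U)$ — its hypotheses hold since the center of $A$ lies in $B$ and $B\not\subseteq A$ — to get $D\subseteq D^{(\ell)}\cap g_j(U)$ of area at least $\tfrac1{128}$ that of $D^{(\ell)}$, and put $D^{(\ell+1)} := g_j^{-1}(D)\subseteq U$. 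Here the radius is multiplied by at least $\tfrac1{\sqrt{128}}$ (Lemma~\ref{lem: Generate Disk}) times roughly $16$ ($g_j^{-1}$, by near-affinity), which still exceeds $1$. In all cases $g_j(D^{(\ell+1)})\subseteq D^{(\ell)}$, the image is a genuine disk in $\mathbb C$ (the pole $\mu_j$ of $g_j^{-1}$ lies outside $g_j(U)$ because $\infty\notin U$), we set $j_\ell := j$, and $\mathrm{rad}(D^{(\ell+1)})\geq\kappa\,\mathrm{rad}(D^{(\ell)})$ for a fixed $\kappa>1$.

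Because the radii grow geometrically and stay at most $\mathrm{rad}(U)$, after $K = O(\log(1/\epsilon))$ steps the current disk has radius exceeding $\delta$, hence contains some $z_i$, and the algorithm halts. For the effective claims, represent each disk by three rational boundary points, using the circumcenter formulas recalled earlier to compute centers and radii and to decide point membership and disk inclusion. When $\lambda_0\in\CQ$ the ratios $\mu_j,\chi_j$ lie in $\CQ$ — they are values at $\lambda_0$ of rational functions with integer coefficients — so each $g_j^{-1}$ is one of finitely many Möbius maps with $\CQ$-coefficients, and applying it to a rational disk increases the bit size only additively. The one operation with a multiplicative size cost is the call to Lemma~\ref{lem: Generate Disk}; but a disk in $U$ fails to lie inside some $g_i(U)$ only when its radius exceeds the Lebesgue number $\rho^\ast > 0$ of the cover $\{g_i(U)\}$, and since each such step multiplies the radius by $\kappa$ while keeping it below $\delta$, Lemma~\ref{lem: Generate Disk} is used at most $O(\log(\delta/\rho^\ast)) = O(1)$ times. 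Therefore $\size{D_K}$ and the running time are polynomial in $\size{D_1,\epsilon}$.

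The delicate point is exactly the guarantee that the radius does not shrink at a Lemma~\ref{lem: Generate Disk} step: the naive Koebe $\tfrac14$-bound for $g_j^{-1}$ gives only expansion by a factor $4$, which is outweighed by the $\tfrac1{\sqrt{128}}$ area loss. It is here that the near-affinity of $g_j$ on $\overline U$ forced by $g_j'(\overline U)\subseteq A$ is essential, improving the expansion of $g_j^{-1}$ from $4$ to nearly $16$ and tipping the balance toward growth. The rest — making the case analysis and the ``which Möbius images stay disks'' bookkeeping precise, and turning the near-affinity estimate into explicit constants — is elementary planar geometry together with the defining properties of the fast implementer.
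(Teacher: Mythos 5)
Your argument is correct and is essentially the paper's own proof: the same backward pullback by $g_{j}^{-1}$ with $j$ chosen so that the current disk's center lies in $g_j(U)$, the same use of Lemma~\ref{lem: Generate Disk} to stay inside $g_j(U)$, the same net expansion per step coming from the derivative bound $|g_i'|<1/16$ on the convex set $\overline{U}$ beating the $1/128$ area loss, and the same stopping rule once the disk captures some $z_i$. Your variations --- tracking radius instead of area, terminating via a $\delta$-net of the fixed points rather than via the area of $U$, the (vacuous) extra case $g_j(U)\subseteq D^{(\ell)}$, and the sharper observation that Lemma~\ref{lem: Generate Disk} is invoked only $O(1)$ times so the bit-size growth stays additive --- are refinements of the same route, not a different one.
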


\begin{proof}
    For every index $i$ let $U_i = g_i(U)$. Recall that we took $U$ as a 
    rational disk. Because the derivative of $g_i$ is bounded on $U$, the image
    $U_i$ is again a disk in the complex plane. If $\lambda_0$ is rational, the 
    coefficients of $g_i$ are rational and then $U_i$ is also rational. The 
    point $z_i$ is fixed for $g_i$ and contained in $U$, therefore, also contained 
    in $U_i$. We describe a procedure to generate a sequence of disks $\{D_n\}_{n \geq 1}$, starting with the given disk $D_1$. 
    The sequence is defined in such a way such that $D_n \subseteq U$ for all $n$, 
    which is, by assumption, the case for $D_1$. 
    
    Suppose we have arrived at disk $D_n \subseteq U$. Check if there is any index $i\in\{1,\dots, M\}$ such 
    that $z_i \in D_n$, if there is stop the procedure and let $K = n$. Otherwise, 
    let $m_n$ be the center of $D_n$ and determine an index $j_n \in \{1, \dots, M\}$ such that
    $m_n \in U_{j_n}$. Such an index must exist because $m_n \in U$ and the disks
    $U_1, \dots, U_M$ cover $U$. Because the center of $D_n$ lies in $U_{j_n}$ but 
    $U_{j_n}$ is not contained in $D_n$ ($z_{j_n}$ does not lie in $D_n$) we can 
    use Lemma~\ref{lem: Generate Disk} to generate a disk $\tilde{D}_n$ that is
    contained in both $D_n$ and $U_{j_n}$ whose area is at least $1/128$ times that of $D_n$ 
    and which can be assumed to be rational if $D_n$ is. 
    Now we define $D_{n+1} = g_{j_n}^{-1}(\tilde{D}_{n})$. 
    Because $\tilde{D}_{n} \subseteq U_{j_n}$ the disk
    $D_{n+1}$ lies in $U$ and because $\tilde{D}_{n} \subseteq D_n$ the disk
    $g_{j_n}(D_{n+1})$ lies in $D_n$. Furthermore, by the properties of the fast implementer, $g_{j_n}^{-1}$ is 
    expanding the norm on $U_{j_n}$ with a factor at least $16$, the 
    area of $D_{n+1}$ is at least $16^2 \cdot (1/128) = 2$ times that of $D_n$. 
    This means that 
    the area of $D_n$ grows exponentially with $n$ and thus, because 
    the area of $U$ is fixed,
    the procedure will terminate after $K = \mathcal{O}(\log(1/\epsilon))$ steps. 
    Note that indeed 
    $z_i \in D_K$ for some $i$, the radius of $D_K$ is at least that of $D_1$ and 
    $(g_{j_1} \circ \cdots \circ j_{K-1})(D_K) \subseteq D_1$.
\end{proof}

Recall that we had defined $a$ and $z_0$ to be the attracting and repelling fixed point
of $f_{\lambda_0}$ respectively. We have already described the algorithm in 
Lemma~\ref{alg: fast implementation} when $P$ is near $a$. What follows is 
the final lemma needed to describe the algorithm when $P$ is not near $a$.

\begin{lemma}
    \label{lem: Close to P}
    There exists a fixed positive constant $c$ and an algorithm that, 
    given $P \in \mathbb{C}$ and $\epsilon >0$ such that 
    $|P-a| \geq \epsilon/2$, yields a disk $D\subseteq U$ and a positive integer $K$ with
    $f_{\lambda_0}^K(D) \subseteq B(P,\epsilon)$, such that the radius 
    of $D$ is at least $c \cdot \min{(\epsilon, \epsilon/|P|^2)}$ and $K =
    \mathcal{O}(\log(1/\epsilon))$.
    If both $\lambda_0$ and the input parameters are in 
    $\CQ$ then $D$ is also rational and both $\size{D}$ and the running time 
    of the algorithm is polynomial in $\size{P,\epsilon}$.
\end{lemma}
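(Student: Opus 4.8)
The plan is to pull $P$ back along the repelling direction of $f_{\lambda_0}$ until it lands inside the disk $U$ (which contains the repelling fixed point $z_0$), and then track a small disk around it. Concretely, since $f_{\lambda_0}$ is loxodromic with attracting fixed point $a$ and repelling fixed point $z_0$, the inverse map $f_{\lambda_0}^{-1}$ is loxodromic with $z_0$ attracting. Starting from $P$ with $|P-a| \ge \epsilon/2$ (so $P$ is bounded away from $a$), I would iterate $f_{\lambda_0}^{-1}$: the orbit $P, f_{\lambda_0}^{-1}(P), f_{\lambda_0}^{-2}(P), \dots$ converges to $z_0 \in U$ as long as $P \ne a$, and since $\overline{U}$ does not contain $a$ (a property of the fast implementer's disk $U$) and $\overline{U} \subset f_{\lambda_0}(U)$, once the orbit enters $U$ it stays controlled. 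Let $K$ be the first index with $f_{\lambda_0}^{-K}(P) \in U'$ for a slightly shrunk concentric disk $U' \Subset U$; because the contraction factor of $f_{\lambda_0}^{-1}$ near $z_0$ is a fixed constant $<1$, and because $P$ ranges over $|P-a|\ge \epsilon/2$, we get $K = \mathcal{O}(\log(1/\epsilon) + \log(1+|P|))$. But since $U \subseteq \Lambda_\Delta$ and hence $P$ relevant to the algorithm can be taken bounded (or one absorbs the $\log|P|$ term into the stated $\mathcal{O}(\log(1/\epsilon))$ by the convention in Lemma~\ref{alg: fast implementation}), this matches the required bound on $K$.

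Next I would control the radius. Set $Q = f_{\lambda_0}^{-K}(P) \in U'$ and let $D$ be the disk of radius $\rho$ centered at $Q$, with $\rho$ chosen small enough that $D \subseteq U$ and $f_{\lambda_0}^{K}(D) \subseteq B(P,\epsilon)$. The map $f_{\lambda_0}^{K}$ restricted to a neighborhood of $Q$ is a Möbius transformation; its derivative magnitude is comparable to $\eta^{-K}$ for the expansion factor $\eta^{-1}>1$ of $f_{\lambda_0}^{-1}$ near $z_0$, up to bounded multiplicative distortion coming from the bounded geometry on the compact set $\overline{U}$ and from the pole of $f_{\lambda_0}$ (here the factor $1/|P|^2$ enters, since $f_{\lambda_0}(z) = \lambda_0/(1+z)$ has derivative of size $\sim |\lambda_0|/|1+z|^2$, so near a large value of $z$ the image disk shrinks by $\sim |P|^2$). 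Thus taking $\rho = c\cdot \min(\epsilon, \epsilon/|P|^2)$ for a fixed small constant $c$ guarantees the image under $f_{\lambda_0}^K$ has radius at most $\epsilon$, i.e. $f_{\lambda_0}^K(D) \subseteq B(P,\epsilon)$. One should verify $f_{\lambda_0}^j(D) \subseteq$ (a fixed neighborhood) for all $0 \le j \le K$ so that the distortion estimate is valid along the whole orbit; this follows because the backward orbit of $Q$ stays in $U$ by construction of $K$ (once in $U'$, each further backward step stays in $U$ by $\overline{U'}\subset f_{\lambda_0}(U')$-type control), wait --- more carefully: forward iterates of $D$ from $Q$ retrace the backward orbit of $P$, which by minimality of $K$ lies outside $U'$ but one can still keep it in a fixed compact set avoiding the pole by choosing $U'$ and $c$ appropriately, using that the whole backward orbit lies on a bounded arc converging to $z_0$.

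For the computational complexity: if $\lambda_0, P, \epsilon \in \CQ$, then every iterate $f_{\lambda_0}^{-j}(P)$ is rational and computable, and checking membership $f_{\lambda_0}^{-j}(P) \in U'$ is a rational arithmetic test (recall $U$ has three rational boundary points, so $U'$ can be chosen rational too); since $K = \mathcal{O}(\log(1/\epsilon))$ and each step involves arithmetic on numbers whose bit-size grows by at most a fixed constant per step (Möbius transformation with fixed rational coefficients), the total running time is $poly(\size{P,\epsilon})$, and $\size{D}$ — represented by three rational boundary points of the disk of radius $\rho$ about the rational point $Q$ — is likewise polynomial in $\size{P,\epsilon}$. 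The main obstacle I expect is the careful bookkeeping of the derivative distortion along the orbit, in particular isolating precisely where the $1/|P|^2$ factor comes from and confirming that no intermediate iterate comes close enough to the pole $-1$ of $f_{\lambda_0}$ to spoil the uniform bounded-distortion estimate; this requires arguing that the backward orbit of $P$ under $f_{\lambda_0}^{-1}$, for $|P-a| \ge \epsilon/2$, stays within a fixed compact subset of $\mathbb{C}\setminus\{-1\}$ after the very first step, which one handles by treating the first step ($P \mapsto f_{\lambda_0}^{-1}(P)$, where the $|P|^2$ loss is incurred) separately from the remaining $K-1$ steps (on which the distortion is a fixed constant).
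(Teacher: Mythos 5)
Your overall strategy --- pull $P$ back under $f_{\lambda_0}^{-1}$ toward the repelling fixed point $z_0$ until it lands in $U$, then push a small disk forward --- is the same as the paper's, but your central quantitative step is asserted incorrectly. You claim $|(f_{\lambda_0}^{K})'(Q)|$ is comparable to $\eta^{-K}$, i.e.\ that the derivative grows geometrically in $K$. This fails in exactly the regime where $K$ is large: when $|P-a|$ is of order $\epsilon$, all but $O(1)$ of the $K=\Theta(\log(1/\epsilon))$ orbit points $Q, f_{\lambda_0}(Q),\dots,f_{\lambda_0}^{K-1}(Q)$ lie near the \emph{attracting} fixed point $a$, where $|f_{\lambda_0}'|<\xi<1$; only the first $O(1)$ steps see the expansion near $z_0$. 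Conjugating by $\phi(z)=(z-a)/(z-z_0)$ gives the exact identity $(f_{\lambda_0}^{K})'(Q)=\xi_0^{K}(P-z_0)^2/(Q-z_0)^2$ with $\xi_0=f_{\lambda_0}'(a)$, and since $|\xi_0|^K=|\phi(P)|/|\phi(Q)|$ with $Q$ in a fixed compact set away from $z_0$ and $a$, this is of order $|P-a|\,|P-z_0|$ --- small, not large, when $P$ is near $a$. Consequently your choice $\rho=c\min(\epsilon,\epsilon/|P|^2)$ is not justified by your own estimate (if the derivative truly were $\eta^{-K}\approx\epsilon^{-c'}$, that $\rho$ would be far too large); it is consistent with the correct estimate, but you have not derived it. A second gap, which you flag but defer: the pole of $f_{\lambda_0}^{K}$ is $f_{\lambda_0}^{-K}(\infty)$, which accumulates at $z_0\in U$ --- precisely where $Q$ lives --- so ensuring $D$ avoids it (hence that $f_{\lambda_0}^K(D)$ is a bounded disk with controlled distortion) requires an explicit argument, not just ``bounded geometry on $\overline{U}$''.

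The paper avoids both difficulties by splitting the pullback in two stages. First, if $P$ lies in a fixed small disk $D_2$ around $a$, it pulls back the \emph{point} $P$ for $N_0=O(\log(1/\epsilon))$ steps inside a fixed contracting neighborhood $V$ of $a$ until $P_{N_0}$ exits $D_2$; along these steps the $\epsilon$-disk $B(P_{N_0},\epsilon)$ maps forward into $B(P,\epsilon)$ for free because $f_{\lambda_0}$ contracts on $V$, so no derivative product over the $N_0$ steps is ever estimated. Second, a \emph{fixed}, input-independent number $N+3$ of further pullbacks carries that $\epsilon$-disk into $U$, and only there does one pay a bounded factor (or the factor $|P|^{-2}$, in the complementary case where $P$ is outside $D_2$ and possibly large). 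I would recommend adopting this decomposition: it converts your ``careful bookkeeping of the derivative distortion along the orbit'' into two uniform estimates, and it also yields $K=O(\log(1/\epsilon))$ with no residual $\log|P|$ term to wave away.
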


\begin{proof}
Let $V$ be a compact neighborhood of $a$ such that $|f_{\lambda_0}'(z)| < \xi < 1$ for some 
constant $\xi$ for all $z \in V$. 
We first claim that there is an integer $N$ such that the complement
of $f_{\lambda_0}^N(U)$ is contained in $V$. To show this let $U_n = f_{\lambda_0}^n(U)$. 
Recall that we assumed that $\overline{U} = \overline{U_0} \subseteq U_1$ and thus inductively $\overline{U_n}
\subseteq {U_{n+1}}$. Under iteration of $f_{\lambda_0}^{-1}$ every initial point
that is not $a$ converges to $z_0$ and thus eventually lands in $U$. Therefore, 
\[\bigcup_{n=1}^\infty U_n = \Chat \setminus \{a\}.\]
For $n$ large enough the point $\infty$ is contained in $U_n$ and from then on 
the sequence $(U_n)^c$ consists of nested disks, containing $a$, whose radii must 
necessarily converge to $0$,
proving that there is an $N$ such that $(U_N)^c$ is contained in $V$. Note that 
$N$ does not depend on the input parameters. Let $D_0$ be the interior of 
$(U_N)^c$, this is a rational disk whose size also does not
depend on the input, and let $D_i = f_{\lambda_0}^i(D_0)$ for $i \in \{1,2,3\}$.
Let $\delta > 0$ be a constant smaller than the minimum distance between 
points on the boundary of $D_i$ and $D_{i-1}$. From now on we will assume that 
$\epsilon < \delta$. Finally let $h = f_{\lambda_0}^{-(N+3)}$.

If $P$ lies outside $D_2$, then let $\tilde{D}$ be the disk of radius $\epsilon$
represented by $P+\epsilon, P+i\epsilon$ and $P-\epsilon$. Note that $\tilde{D}$ lies outside
$D_3$ and thus $D = h(\tilde{D}) \subset U$. Because the derivative of a M\"obius transformation 
of the form $z \mapsto (az+b)/(cz +d)$ is $z \mapsto \frac{a d-b c}{(c z+d)^2}$ there is
a constant $c_1$ such that the radius of $D$ is at least $c_1 \cdot
\min{(\epsilon, \epsilon/|P|^2)}$.
In this case $D$ and $K=N+3$ are the output of the algorithm.

If $P$ lies inside $D_2$ we determine $N_0$ such that for $P_{N_0}:= f_{\lambda}^{-N_0}(P)$
we have $P_{N_0}\in D_1$ and
$P_{N_0}\not\in D_2$. Because $|P - a| \geq \epsilon/2$ and $D_1\subset V$
we find that $N_0 = \mathcal{O}(\log(1/\epsilon))$. Let $\tilde{D}$ be the disk of radius
$\epsilon$ represented by $P_{N_0}+\epsilon, P_{N_0}+i\epsilon$ and $P_{N_0}-\epsilon$.
Note that again $\tilde{D}$ lies outside $D_3$ and thus $D=h(\tilde{D}) \subseteq U$. 
Furthermore, because $\tilde{D} \subset D_0 \subset V$ and $f_{\lambda_0}$ is attracting 
on $V$ it follows that $f_{\lambda_0}^{N_{0}}(\tilde{D}) \subseteq B(P,\epsilon)$.
Finally, if we let $c_2$ be the minimum of $|h'(z)|$ for $z \in D_0$, we find that 
the radius of $D$ is at least $c_2 \cdot \epsilon$. So in this case
the output is the disk $D$ together with $K = N_0 + N + 3$.

\end{proof}

We are now ready to complete the proof of Lemma~\ref{alg: fast implementation}.

\begin{proof}[Proof of Lemma~\ref{alg: fast implementation}]
Recall we had defined $a$ to be the attracting fixed point of $f_{\lambda_0}$
and that we already described the algorithm in the case that $|P-a|<\epsilon/2$,
therefore we assume that $|P-a|\geq \epsilon/2$. 

It follows from Lemma~\ref{lem: Close to P} that we can generate a disk 
$D_1\subseteq U$ of radius $r= \mathcal{O}(\min{\{\epsilon,\epsilon/|P|^2\}})$
and whose size is polynomial in $\size{P,\epsilon}$
together with a positive integer $K_1$ that 
is $\mathcal{O}(\log(1/\epsilon))$ such that $f_{\lambda_0}^{K_1}(D_1)$ is 
contained in $B(P,\epsilon)$. From Lemma~\ref{lem: Fast into D_1} it follows
that we can find an index $i\in \{1,\dots, M\}$, a sequence of indices
$j_1, \dots, j_{K_2}$ and a disk $D_{2}$ such that $z_i \in D_2$, the 
radius of $D_2$ is at least $r$, its size is polynomial in $\size{r,D_1}$, which is
again polynomial in $\size{\epsilon,P}$, and such that \[(g_{j_1} \circ \cdots \circ g_{j_{K_2}})(D_2) \subseteq D_1.\]
Furthermore $K_2 = \mathcal{O}(\log(1/r)) = \mathcal{O}(\max{(\log(1/\epsilon),\log(|P|/\epsilon))})$. Finally let 
$Q$ be the center of $D_2$ and note that $\size{Q}$ is polynomial in $\size{D_2}$. 
Then, because $|Q-z_i|<r$, it follows from Lemma~\ref{lem: Quickly to $z_i$}
that we can generate a $K_3$ such that $g_i^{K_3}(0) \in D_2$, where 
$K_3 = \mathcal{O}(\log(1/r)) = \mathcal{O}(\max{(\log(1/\epsilon),\log(|P|/\epsilon))})$. Concluding,
we find that 
\[
    (f_{\lambda_0}^{K_1}\circ g_{j_1} \circ \cdots \circ g_{j_{K_2}} \circ \circ g_i^{K_3})(0) \in B(P,\epsilon).
\]
Furthermore, adding the running times of the individual algorithms, we find
that the final algorithm runs in $poly(\size{P,\epsilon})$ time.
\end{proof}

\section{Activity and zeros for Cayley trees}\label{sec: Cayley trees}

For fixed $\Delta \ge 2$ notions such as the activity-locus and the zero sets can be considered for subcollections of $\mathcal{G}_\Delta$. Particularly interesting subcollections from a physical viewpoint are given by subgraphs of regular lattices. However, it is notoriously difficult to rigorously deduce the properties for such collections.

A much simpler collection of rooted graphs in $\mathcal{G}_\Delta$ is given by finite Cayley trees, and we will describe the properties of those in this section.
The trees are uniquely determined by the conditions that every leaf has fixed distance $n$ to the root vertex $v$, and every non-leaf has down-degree $d = \Delta-1$. The root vertex therefore has degree $d$, while every other non-leaf has degree $\Delta$. We denote the Cayley tree of depth $n$ by $T_n$, and its root by $v_n$.

As an immediate consequence of Lemma \ref{lem: tree recursion} we obtain
$$
R_{T_n, v_n}(\lambda) = f_{\lambda,d} (R_{T_{n-1}, v_{n-1}}(\lambda)), 
$$
where $f_{\lambda,d}(z) = \lambda/(1+z)^d$. Since the ratio of a single point is given by $\lambda = f_{\lambda,d}(0)$, it follows inductively that
$$
R_{T_n, v_n}(\lambda) = f_{\lambda,d}^{n+1}(0).
$$
In fact, since
$$
Z^{out}_{T_n, v_n}(\lambda) = \left(Z_{T_{n-1},v_{n-1}}(\lambda)\right)^d
$$
and
$$
Z^{in}_{T_n, v_n}(\lambda) = \lambda \left(Z^{out}_{T_{n-1},v_{n-1}}(\lambda)\right)^d
$$
it follows by induction on $n$ that for $\lambda \in \mathbb C \setminus \{0\}$ the polynomials $Z^{in}_{T_n,v_n}(\lambda)$ and $Z^{out}_{T_n, v_n}(\lambda)$ cannot vanish simultaneously. For $\lambda \in \mathbb C \setminus \{0\}$ it follows that $Z_{T_n}(\lambda) = 0$ if and only if $R_{T_n,v_n}(\lambda) = f_{\lambda,d}^n(0) = -1$. 

In what follows we deduce properties of the zeros of $Z_{T_n}(\lambda)$ and the activity-locus of $f_{\lambda,d}^n(0)$ from well known results in the field of holomorphic dynamical systems, occasionally adapting the proofs to our setting. We refer the reader  to the standard references \cite{Milnor2006, CarlesonGamelin}.  

Observe that $f_{\lambda,d}(-1) = \infty$ and $f_{\lambda,d}(\infty) = 0$, and $f_{\lambda,d}^\prime(-1) = f_{\lambda,d}^\prime(\infty) = 0$. Thus if $f_{\lambda_0,d}^n(0) = -1$ for some $\lambda_0$ and $n$, then $0$ is an attracting periodic cycle of period $n+2$. 
This cycle is stable under perturbations of $\lambda_0$, i.e. the attracting cycle persists and in fact varies holomorphically for nearby parameters $\lambda \sim \lambda_0$ by the implicit function theorem.

Recall that every attracting cycle attracts the orbit of a critical point. 
But $f_{\lambda,d}$ has only one critical orbit: the orbit of $-1$, $\infty$ and $0$. Thus whenever $f_{\lambda,d}$ has an attracting cycle, the orbit $f_{\lambda,d}^n(0)$ converges to the attracting cycle. In fact, the convergence is uniform in a neighborhood of the parameter $\lambda_0$, hence $\lambda_0$ cannot lie in the activity-locus. The situation is therefore fundamentally different from the setting where the whole family of graphs $\mathcal{G}_\Delta$ is considered, as there $\lambda_0$ must lie in the activity-locus. The following however does hold:

\begin{prop}\label{prop:activity}
The activity-locus of the family $\{T_n, v_n\}$ equals the collection of accumulation points of the zeros of the collection $\{Z_{T_n}\}$.
\end{prop}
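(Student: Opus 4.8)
The plan is to prove the two inclusions separately, using the standard machinery of complex dynamics for the one-parameter family $\lambda \mapsto f_{\lambda,d}^n(0)$, together with the fact that (for $\lambda \neq 0$) zeros of $Z_{T_n}$ are exactly the parameters where $f_{\lambda,d}^n(0) = -1$. Write $J$ for the accumulation set of the union of the zero sets $\{Z_{T_n}\}$, and $\mathcal{A}$ for the activity-locus of the family $\{\lambda \mapsto f_{\lambda,d}^{n+1}(0)\}_n$. First I would dispose of $\lambda = 0$: it lies in the Shearer region, hence in neither set (by Lemma~\ref{lem: RegionAroundlambda=0} and since $f_{\lambda,d}^n(0) \to$ the attracting fixed point near $0$), so we may assume $\lambda_0 \neq 0$ throughout.

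For the inclusion $J \subseteq \mathcal{A}$: suppose $\lambda_0$ is an accumulation point of zeros, so there are parameters $\lambda_k \to \lambda_0$ and integers $n_k$ with $f_{\lambda_k,d}^{n_k}(0) = -1$, hence $f_{\lambda_k,d}^{n_k+2}(0) = 0$, so $0$ is periodic of period $n_k + 2$ for the map $f_{\lambda_k,d}$. If $\lambda_0$ were passive, the family $\{f_{\lambda,d}^{n+1}(0)\}$ would be normal on a neighborhood $U$ of $\lambda_0$; I would extract a locally uniform limit $g$ along a subsequence of the $n_k$ and use Hurwitz/Rouché exactly as in Lemma~\ref{contradictionlemma1}: since $f_{\lambda_k,d}^{n_k-1}(0) = -1$ has solutions accumulating at $\lambda_0$, the limit $g$ plus the equation $g(\lambda) = -1$ would force, by Rouché, that $f_{\lambda,d}^{n-1}(0) + 1$ has a zero near any point of $U$ for all large $n$ in the subsequence — but that is impossible on the (nonempty open) subset of $U$ outside the zero-locus, which exists because $\lambda_0 \notin \overline{\mathcal{Z}_\Delta}$ would follow from passivity via Corollary~\ref{cor:zerofree->normal}. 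Actually the cleanest route: passivity at $\lambda_0$ implies $\lambda_0 \notin \overline{\mathcal{Z}_\Delta}$ by Corollary~\ref{cor:zerofree->normal} applied to the subfamily (or directly, since normality of the Cayley-tree family at $\lambda_0$ combined with Montel and the persistence argument shows $Z_{T_n}$ has no zeros near $\lambda_0$), contradicting $\lambda_0 \in J$.

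For the reverse inclusion $\mathcal{A} \subseteq J$: suppose $\lambda_0$ is active. By the general theory of stability (Mañé–Sad–Sullivan, or the single-critical-orbit version spelled out just before the proposition), activity of the critical orbit $\{f_{\lambda,d}^n(0)\}$ at $\lambda_0$ means every neighborhood of $\lambda_0$ contains a parameter $\lambda_1$ at which the critical orbit is \emph{prerepelling} or lands on a (neutral/parabolic) cycle — more usefully, one can find $\lambda_1$ arbitrarily close to $\lambda_0$ for which $f_{\lambda_1,d}^{m}(0)$ equals a repelling periodic point, or even lands exactly on $-1$. The standard argument: on a neighborhood $V$ of $\lambda_0$ the family is not normal, so by Montel the maps $\lambda \mapsto f_{\lambda,d}^{n+1}(0)$ cannot avoid three fixed points; choosing the three values to be $\{-1, \infty, 0\}$ (all "superattracting-preimage" values: $f_{\lambda,d}$ sends $-1 \mapsto \infty \mapsto 0$), non-normality forces some $f_{\lambda_1,d}^{n+1}(0) \in \{-1,\infty,0\}$ for some $\lambda_1 \in V$ and some $n$; tracing the orbit forward or backward a bounded number of steps produces $f_{\lambda_1,d}^{n'}(0) = -1$ for some $n'$, i.e. $Z_{T_{n'}}(\lambda_1) = 0$. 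Hence every neighborhood of $\lambda_0$ meets $\bigcup_n Z_{T_n}$, so $\lambda_0 \in J$.

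The main obstacle I expect is the $\mathcal{A} \subseteq J$ direction, specifically verifying that hitting one of the exceptional values $\{-1, 0, \infty\}$ really yields a genuine zero of some $Z_{T_{n'}}$ and that one cannot get "stuck" (e.g. $f_{\lambda_1,d}^{n+1}(0)$ could a priori have been $-1$ only because an earlier iterate was already at a pole, but the computation before the proposition showing $Z^{in}_{T_n}$ and $Z^{out}_{T_n}$ never vanish simultaneously rules this out). One must also be slightly careful that $-1,0,\infty$ are not themselves fixed points of $f_{\lambda_1,d}$ in a degenerate way, but for $\lambda_1 \neq 0$ this is fine since $f_{\lambda_1,d}(0) = \lambda_1 \neq 0$. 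The Montel-with-three-values step is the crux; once it is set up, the forward/backward orbit bookkeeping is routine.
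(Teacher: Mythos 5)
Your direction ``activity $\subseteq$ accumulation of zeros'' (equivalently: no zeros near $\lambda_0$ implies the family omits $-1,0,\infty$, hence is normal by Montel, with the backward tracing through $-1\mapsto\infty\mapsto 0$ to convert a hit of $0$ or $\infty$ into a genuine zero) is correct and is exactly the paper's argument for that inclusion.

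The converse direction, that an accumulation point $\lambda_0$ of zeros must be active, has a genuine gap. The Rouch\'e/Hurwitz scheme of Lemma~\ref{contradictionlemma1} derives a contradiction from the existence of a parameter near $\lambda_0$ where the ratio equals $-1$; that is contradictory there only because, for the full family $\mathcal{G}_\Delta$, normality at $\lambda_0$ forces $\lambda_0\notin\overline{\mathcal{Z}_\Delta}$ (Proposition~\ref{normal->zerofree}). For the Cayley subfamily this implication is precisely what fails: as the paper stresses immediately before the proposition, every zero of $Z_{T_n}$ is a superattracting parameter of $f_{\lambda,d}$ and is therefore \emph{passive} for the Cayley family. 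So ``there are zeros arbitrarily close to a passive $\lambda_0$'' is not by itself absurd, and your Rouch\'e step only reproduces the hypothesis (the $\lambda_k$ already solve $f_{\lambda}^{n_k}(0)=-1$ near $\lambda_0$); nor do you explain why a constant limit $g\equiv -1$ would be impossible. Your fallback, that ``passivity at $\lambda_0$ implies $\lambda_0\notin\overline{\mathcal{Z}_\Delta}$ by Corollary~\ref{cor:zerofree->normal}'', reads that corollary backwards (it states zero-free $\Rightarrow$ normal), and the forward statement is false for the subfamily. The missing ingredient is the Ma\~n\'e--Sad--Sullivan stability argument the paper uses: each zero $\lambda_i$ lies in a passivity component whose persistent attracting cycle has a well-defined period, namely $n_i+2$ with $n_i$ minimal such that $f_{\lambda_i,d}^{n_i}(0)=-1$; since each $Z_{T_n}$ has only finitely many zeros, one may take the $n_i$ strictly increasing, so the $\lambda_i$ lie in pairwise distinct passivity components and therefore cannot accumulate at an interior point of any such component. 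That, not a zero-counting contradiction, is what forces $\lambda_0$ to be active.
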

\begin{proof}
If there are no zeros in a neighborhood of some $\lambda_0$, then the family $\{R_{T_n, v_n}\}$ avoids the values $0, -1$ and $\infty$, and is normal by Montel's Theorem.

Suppose on the other hand that $\lambda_0$ is an accumulation point of zeros $\lambda_1, \lambda_2, \ldots$. Let $n_1, n_2, \ldots$ be the minimal integers for which $f^{n_i}(\lambda_i) = -1$. Since for fixed $n$ the zeros of $Z_{T_n}$ are isolated, we may assume that $n_i \rightarrow \infty$ and $(n_i)$ is strictly increasing.

When for a parameter $\lambda$ the rational function $f$ has an attracting periodic cycle, the unique critical orbit $\{f^n(0)\}_{n\geq 1}$ must converge to this periodic orbit. Since attracting periodic cycles are stable, i.e. they persist under small changes of the parameter $\lambda$, such parameters lie in a passivity component, i.e., a maximal connected open subset where the family $\{\lambda \mapsto f^n(0)\}$ is normal. The passivity component agrees exactly with the connected component where the attracting periodic cycle persists, since by \cite{MSS1983} the parameter must become active when the periodic cycle becomes neutral.

Thus, $\lambda_i$ lies in a connected component of the open set where the family $\{\lambda \mapsto f^n(0)\}$ is normal, and associated to this component is the unique period $n_i+2$. Since the sequence $\{n_i\}_{i\geq 1}$ is strictly increasing, the parameters $\lambda_i$ must all lie in distinct connected components. It follows that the limit parameter $\lambda_0$ cannot lie in an open component where the family is normal, and therefore $\lambda_0$ must be an active parameter.
\end{proof}

The activity-locus for Cayley trees of down degree $d=2,3$ and $4$ is illustrated in Figure \ref{figure:cayley}. Each of these diagrams represents the spherical derivative of the function $\lambda \mapsto f_{\lambda,d}^{120}(0)$.

\begin{figure}
\begin{subfigure}[b]{0.57\textwidth}
\centering
\includegraphics[width=\textwidth]{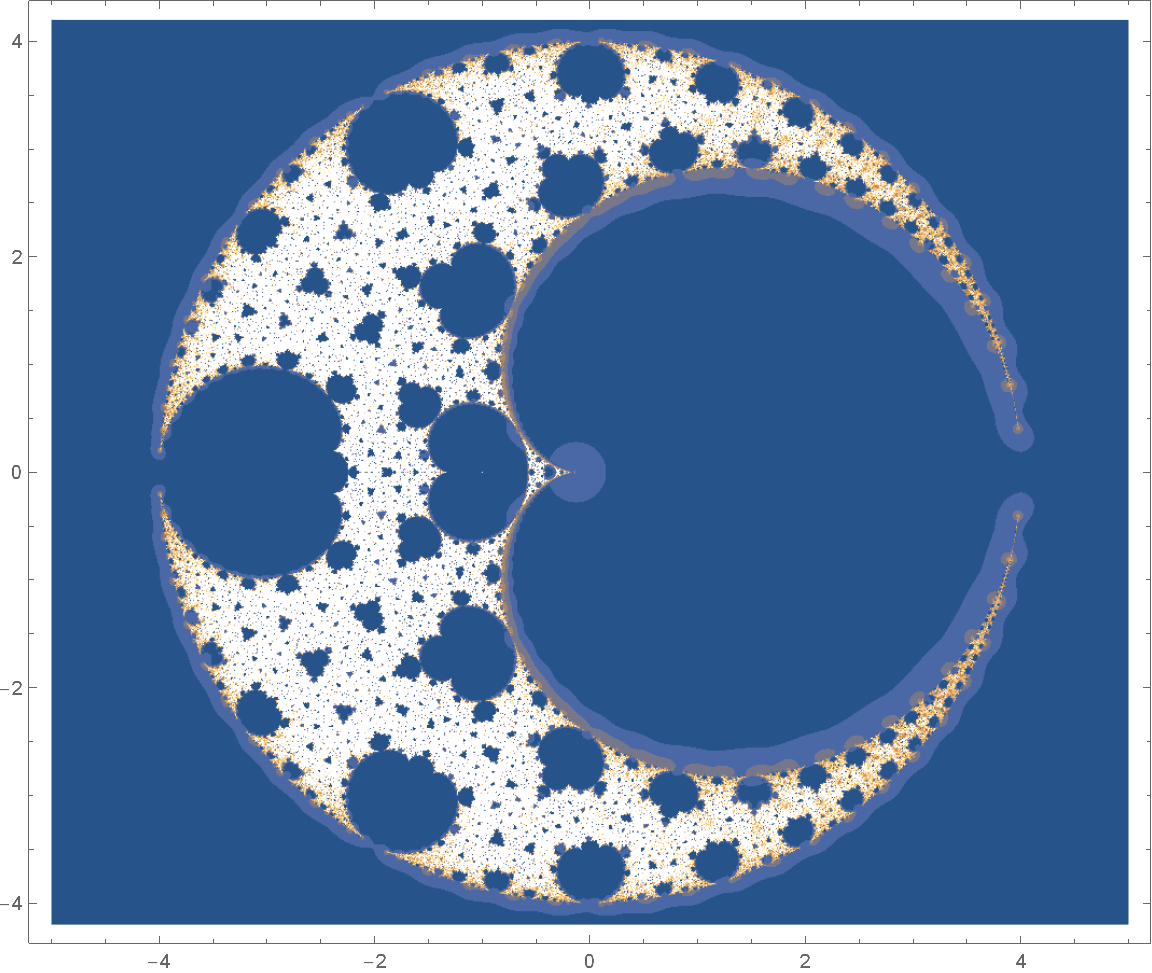}
\subcaption*{down-degree 2}
\end{subfigure}
\hfill
\begin{subfigure}[b]{0.39\textwidth}
\centering
\includegraphics[width=\textwidth]{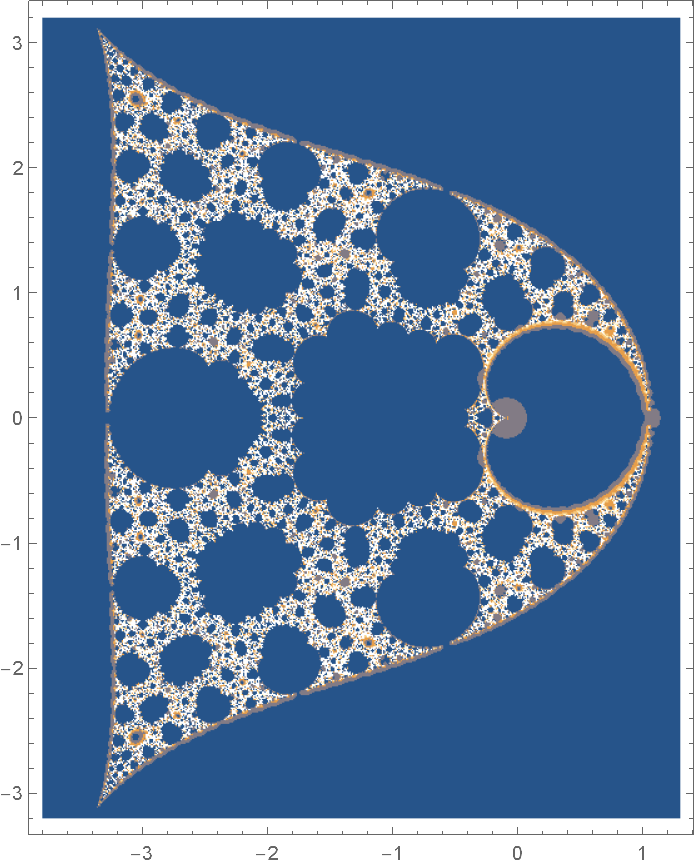} 
\subcaption*{down-degree 4}
\end{subfigure}
\hfill
\begin{subfigure}[b]{0.8\textwidth}
\centering
\includegraphics[width=\textwidth]{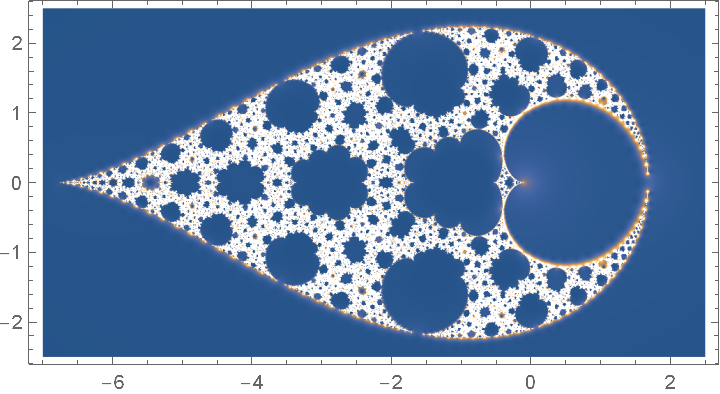}
\subcaption*{down-degree 3}
\end{subfigure}
\caption{The activity-locus of Cayley trees for down-degrees $2, 3$ and $4$. For each pixel the spherical derivative of the occupation ratio is computed for the Cayley tree of depth $120$. Pixels for which this derivative is sufficiently large are depicted in white, suggesting that the corresponding parameter $\lambda$ lies approximately on the activity locus.}
\label{figure:cayley}
\end{figure}

It follows from Proposition \ref{prop:activity} above, plus the observation that zeros do not lie in the activity-locus for the Cayley tree setting, that the Cayley tree activity-locus never has interior. On the other hand, it follows from the universality of the Mandelbrot set, a result due to McMullen~\cite{McMullen2000}, that the activity-locus must contain a quasiconformal image of the Mandelbrot set of some degree. Therefore by Shishikura's result~\cite{Shishikura1998} the Hausdorff dimension of the activity-locus is equal to $2$ for any $d \ge 2$. 

It follows from the proof of Proposition \ref{prop:activity} that the complement of the activity-locus consists of infinitely many connected components. Each $\lambda$ for which $f_{\lambda,d}$ has an attracting periodic cycle lies in such a passive component, a so-called \emph{hyperbolic} component associated to the period $k$. Whether all connected components are hyperbolic is an open question, which is conjectured to hold for quadratic polynomials. 

For any down-degree $d$ there are two special connected components that can easily be identified. The unbounded component is always a hyperbolic component of period $2$. For degree $2$ this is the complement of the closed disk of radius $4$. For down-degrees $3$ and $4$ the boundary has respectively $1$ and $2$ singular points.

For each down-degree $d=\Delta-1$ there is a single hyperbolic component of period $1$, which contains of course the parameter $\lambda = 0$ and equals the cardioid $\Lambda_\Delta$. 

Apart from these two special hyperbolic components, any hyperbolic component contains a unique zero of the partition function, i.e. a unique parameter $\lambda$ for which $f_{\lambda, d} ^n(\lambda) = -1$ for some $n \in \mathbb N$. Since $f_{\lambda,d}^2(-1) = 0$ and $f_{\lambda,d}^{-2}(0) = \{-1\}$, these are exactly the parameters $\lambda \in \mathbb C \setminus \{0\}$ for which the unique critical orbit  $\{f_{\lambda,d}^{i}(0)\}_{i \in \mathbb N}$ is periodic, i.e. for which $f_{\lambda,d}$ is super-attracting.

For the family $p_c(z) = z^2 + c$ the fact that every hyperbolic component of the Mandelbrot set contains a unique super-attracting parameter is a consequence of the Multiplier Theorem, due to Douady-Hubbard and Sullivan, see \cite{Douady1983}.

Let us recall this fundamental result in the field. Let $H$ be a hyperbolic component of the Mandelbrot set, say of period $n$. For every parameter $c\in H$ there exist an attracting periodic cycle $a_0, a_1, \ldots , a_n = a_0$. The multiplier $h(c) = (f_c^{n})^\prime(a_0)$ is independent from the choice of $a_n$, and gives a holomorphic map from $H$ to the unit disk.

\begin{thm}[Multiplier Theorem]
For every hyperbolic component $H$ the map $c \mapsto h(c)$ gives a conformal bijection from $H$ to the unit disk. 
\end{thm}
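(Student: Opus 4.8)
The plan is to prove this classical result using the standard strategy due to Douady--Hubbard and Sullivan, which I will adapt to the statement as given for the family $p_c(z)=z^2+c$. Fix a hyperbolic component $H$ of the Mandelbrot set of period $n$, and let $h:H\to\mathbb{D}$ be the multiplier map $c\mapsto (p_c^n)'(a_0)$ where $a_0,\dots,a_{n-1},a_n=a_0$ is the attracting periodic cycle. The first point to establish is that $h$ is well-defined and holomorphic on $H$: the attracting cycle persists and varies holomorphically by the implicit function theorem (since the cycle is never neutral on $H$, the relevant Jacobian is non-degenerate), and the multiplier is a symmetric function of the cycle, hence a single-valued holomorphic function of $c$. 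That $|h|<1$ on $H$ is immediate from attractingness.

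The heart of the argument is a proper-map/degree computation. First I would show that $h$ is a proper map $H\to\mathbb{D}$: if $c_k\to\partial H$ in $H$, then the attracting cycle of $p_{c_k}$ must degenerate — either two cycle points collide, forcing the multiplier to approach a point where the cycle becomes neutral, or the cycle escapes to collide with the critical orbit structure — and in either case $|h(c_k)|\to 1$ (this uses the characterization from \cite{MSS1983} that the parameter becomes active exactly when the cycle becomes neutral, which is cited in the proof of Proposition~\ref{prop:activity}). A proper holomorphic map between disks is a branched covering of some finite degree $\deg(h)$. To pin down $\deg(h)=1$, I would locate the fiber over $0\in\mathbb{D}$: a parameter $c$ with $h(c)=0$ is one for which the cycle is super-attracting, i.e.\ the critical point $0$ is periodic of period $n$. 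The key claim is that there is exactly one such $c$ in $H$, and that $h$ has local degree $1$ there. For this one uses the theory of the B\"ottcher coordinate / Green's function: the center of $H$ is the unique solution in $H$ of the polynomial equation $p_c^n(0)=0$ lying in that component, and non-degeneracy of the branched cover at the center follows from a transversality statement (the derivative of $c\mapsto p_c^n(0)$ at the center is non-zero, a result of Douady--Hubbard on the simplicity of roots of these "centers" polynomials). Since the fiber over $0$ is a single point with local degree $1$, $\deg(h)=1$, so $h$ is a biholomorphism.

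I expect the main obstacle to be the transversality / simplicity statement at the center of $H$ — equivalently, showing the local degree of $h$ at the super-attracting parameter is $1$. The clean modern route is to avoid this and instead invoke Thurston rigidity (post-critically finite maps are combinatorially rigid), or alternatively to use quasiconformal surgery: one can directly construct, for each $w\in\mathbb{D}$, a unique $c\in H$ with $h(c)=w$ by starting from the Blaschke product model on a neighborhood of the cycle and gluing it in via measurable Riemann mapping, which simultaneously gives injectivity and surjectivity of $h$. Given the expository nature of this section of the paper, I would likely just cite \cite{Douady1983} for the full proof rather than reproduce the surgery, and restrict the written argument to the well-definedness of $h$, its holomorphy, the bound $|h|<1$, and properness, indicating that conformality then follows from the degree-one computation at the center as in \cite{Douady1983}.
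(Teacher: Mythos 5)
Your primary route (well-definedness and holomorphy of $h$, properness, then a degree count at the fiber over $0$) is a legitimate classical strategy, but it is not the one the paper follows, and as written it has a gap at exactly the point you flag. The paper does not reprove the theorem for $p_c$ at all: it cites \cite{CarlesonGamelin} (Theorem 2.1, p.~133) and then devotes its discussion to adapting the \emph{quasiconformal surgery} argument to the family $f_{\lambda,d}(z)=\lambda/(1+z)^d$ --- modify the ellipse field near the attracting cycle to realize a prescribed multiplier $\eta$, extend it invariantly over the basin, apply the Measurable Riemann Mapping Theorem, normalize $\varphi_\eta$ to fix $-1,\infty,0$ so that the conjugated map stays in the family, and conclude that $h$ is locally invertible on $H\setminus Z$; Riemann--Hurwitz then forces $\mathrm{card}(Z)=1$. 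Your route instead tries to read off $\deg(h)=1$ directly from the fiber over $0$. The trouble is that Gleason-type transversality ($\tfrac{d}{dc}p_c^n(0)\neq 0$ at a center) only gives local degree $1$ at \emph{each} center; to conclude $\deg(h)=1$ you also need that $H$ contains exactly \emph{one} center, and that uniqueness is essentially equivalent to the theorem itself (a proper map of degree $2$ can have two simple preimages of $0$). The standard ways to close this --- Thurston rigidity, a global count matching period-$n$ components with roots of the center polynomial, or the surgery construction of a local inverse --- are precisely the fallbacks you list, and the last of these is the paper's choice. So your proposal is acceptable only in the form of its final sentence (cite \cite{Douady1983} / run the surgery); the properness-plus-center-counting sketch on its own does not yet constitute a proof. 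Finally, note that in the context of this paper the substantive content is the adaptation to $f_{\lambda,d}$ (in particular the normalization forcing $g_{\eta,d}$ back into the one-parameter family), which your proposal does not address since you work only with $z^2+c$.
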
 

The proof of the Multiplier Theorem can be found in \cite{CarlesonGamelin}, Theorem 2.1 on page 133, and can be applied almost directly to our setting. We present a high-level discussion to outline how the proof adapts to our setting.

Let $H$ be a hyperbolic component of period at least $3$. One easily sees that $h(\lambda)$, the multiplier of the attracting periodic cycle of $f_{\lambda,d}$ is a holomorphic and surjective map from the hyperbolic component $H$ to the unit disk $\mathbb D$, hence is a branched covering. Let $Z$ be the set of super-attracting parameters in $H$, i.e. $Z = h^{-1}(0)$. If it can be shown that $h : H \setminus Z \rightarrow \mathbb D \setminus \{0\}$ is a covering map, it follows from the Riemann-Hurwitz Theorem that $\mathrm{card}(Z) = 1$.

Thus, it needs to be shown that $h$ is locally invertible near parameters $\lambda_0 \in H \setminus Z$. Write $\eta_0 = h(\lambda_0) \in \mathbb D$, and consider values of $\eta$ near $\eta_0$. Following the proof of the Multiplier Theorem one applies quasiconformal surgery by modifying the ellipse field near the attracting periodic cycle in order to obtain attracting periodic cycles with multipliers $\eta$. Using the dynamics the ellipse field can be extended to the full basin of the attracting cycle, obtaining an invariant ellipse field that is invariant under the map $f_{\lambda_0,d}$. The ellipse field corresponds to a Beltrami coefficient, which can be extended to the entire Riemann sphere by setting it equal to $0$ outside of the basin of attraction. The Measurable Riemann Mapping Theorem gives a holomorphic family of quasiconformal maps $\varphi_\eta$, with $\varphi_{\eta_0}$ the identity. By composing with suitable M\"obius transformations we can guarantee that the points $-1, \infty$ and $0$ are fixed under all $\varphi_\eta$.

Since each ellipse field is invariant under $f_{\lambda_0,d}$, conjugating $f_{\lambda,d}$ by $\varphi_\eta$ yields a holomorphic family of self-maps of the Riemann sphere $g_{\eta,d}$ , which are necessarily rational functions of the same degree $d$. In fact, since each $\varphi_{\eta}$ fixes the points $-1, \infty$ and $0$, each rational function $f_{\eta,d}$ must send $-1$ to $\infty$ and $\infty$ to $0$, each with local degree $d$. It follows that the rational function $g_{\eta,d}$ must be of the form
$$
g_{\eta,d}(z) = \frac{\lambda(\eta)}{(1+z)^d}.
$$
It follows that $\lambda(\eta)$ gives a local inverse of the multiplier function $h$, completing this step of the proof. This step guarantees that there exists a unique zero in each hyperbolic component of period at least $3$, which equals the super-attracting \emph{center} of the hyperbolic component. The proof of the Multiplier Theorem in our setting can be concluded by analyzing the local degree near the center. We have therefore obtained the following description of the zeros of the Cayley trees:

\begin{corollary}
Every $\lambda \in \mathbb C$ for which $Z_{T_n}(\lambda) = 0$ for some $n\in \mathbb N$ is the center of a hyperbolic component of the complement of the activity locus. On the other hand: apart from the two special hyperbolic components, the unbounded component and the component containing $0$, for each center $\lambda$ of a hyperbolic component there exists an $n \in \mathbb N$ for which $Z_{T_n}(\lambda) = 0$. As a consequence zero-parameters are isolated. 
\end{corollary}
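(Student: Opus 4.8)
The plan is to read the three assertions off from facts already assembled above: that whenever $f_{\lambda,d}$ has an attracting cycle the critical orbit $\{f_{\lambda,d}^{k}(0)\}_{k\ge1}$ converges to it locally uniformly in $\lambda$ (so such $\lambda$ are passive); that the only critical points of $f_{\lambda,d}$ are $-1$ and $\infty$, with forward orbit $-1\mapsto\infty\mapsto 0\mapsto\lambda\mapsto\cdots$; and the Multiplier Theorem in the quasiconformal-surgery form sketched above, so that every hyperbolic component of period $\ge 3$ carries a unique superattracting parameter.

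First I would show that $Z_{T_n}(\lambda_0)=0$ forces $\lambda_0$ to be a center. Since $Z_{T_n}(0)=1$ we have $\lambda_0\neq0$, and then $f_{\lambda_0,d}^{n}(0)=-1$, so $f_{\lambda_0,d}^{n+2}(0)=0$ and $0$ is periodic for $f_{\lambda_0,d}$ of some exact period $p$. Here $p\ge3$: indeed $f_{\lambda_0,d}(0)=\lambda_0\neq0$, and $f_{\lambda_0,d}^{2}(0)=\lambda_0/(1+\lambda_0)^{d}=0$ would again force $\lambda_0=0$. As $0$ lies on the critical orbit, the $p$-cycle is superattracting; by the first fact above the critical orbit converges locally uniformly near $\lambda_0$ to the persisting attracting cycle, so $\lambda_0$ is not in the activity-locus and lies in a connected component $H$ of its complement on which $f_{\lambda,d}$ has an attracting $p$-cycle --- i.e.\ a hyperbolic component of period $p\ge3$. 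Since the multiplier vanishes at $\lambda_0$, the Multiplier Theorem identifies $\lambda_0$ as the unique center of $H$.

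Next I would prove the converse: if $\lambda_0$ is a center of a hyperbolic component $H$ and $H$ is not the cardioid (the component containing $0$), then $\lambda_0$ is a zero-parameter. First, $H$ must have period $p\ge3$. The unique period-$1$ component is the interior of the cardioid, because the set of $\lambda$ admitting an attracting fixed point is the image of the connected Apollonius disk $\{w:d|w|<|1+w|\}$ under $w\mapsto w(1+w)^{d}$ (a fixed point $w$ has multiplier $-dw/(1+w)$), hence connected; and a period-$2$ component --- in particular the unbounded one --- has no center at all, since any superattracting cycle, running through $-1\mapsto\infty\mapsto 0$, necessarily has period $\ge3$. So the Multiplier Theorem furnishes the unique center $\lambda_0$ of $H$, at which the attracting $p$-cycle is superattracting; that cycle then contains a critical point and hence $0$, so $f_{\lambda_0,d}^{p}(0)=0$. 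Since $\infty$ is the only preimage of $0$ and $-1$ the only preimage of $\infty$, it follows that $f_{\lambda_0,d}^{\,p-2}(0)=-1$, i.e.\ $Z_{T_{p-2}}(\lambda_0)=0$.

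Finally, isolation of the zero-parameters follows immediately: each zero-parameter $\lambda_0$ is the center of an (open) hyperbolic component $H$, and every zero-parameter lying in $H$ is superattracting, hence is \emph{the} center of $H$ and thus equals $\lambda_0$; so $H$ is a neighbourhood of $\lambda_0$ free of other zero-parameters. I expect the main obstacle to be the low-period bookkeeping in the converse --- verifying that the period-$1$ component is exactly the cardioid and that period-$2$ components carry no center --- together with making rigorous the quasiconformal-surgery argument underlying the Multiplier Theorem in our family; the remaining steps are routine manipulations of the single critical orbit.
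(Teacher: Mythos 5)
Your proposal is correct and follows essentially the same route as the paper: identify zero-parameters of $Z_{T_n}$ with the superattracting parameters (via the critical orbit $-1\mapsto\infty\mapsto 0$ and the fact that $f_{\lambda,d}^{-2}(0)=\{-1\}$), then invoke the Multiplier Theorem, adapted by quasiconformal surgery, to get existence and uniqueness of the center in each hyperbolic component of period at least $3$, from which isolation of the zeros is immediate. Your low-period bookkeeping (connectedness of the period-$1$ locus via the Apollonius disk, and the impossibility of a superattracting cycle of period $\le 2$) is in fact spelled out more explicitly than in the paper, which simply asserts the two special components; otherwise the arguments coincide.
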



\bibliographystyle{alpha}
\bibliography{biblio}

\end{document}